\documentclass[11pt, leqno]{article}
\usepackage[margin=25mm]{geometry}
\usepackage{amsmath, amssymb, amsthm, amscd, mathrsfs}
\usepackage[all]{xy}
\usepackage{comment}
\usepackage{enumerate}

\numberwithin{equation}{subsection}

\theoremstyle{theorem}
\newtheorem{theorem}{Theorem}[subsection]
\newtheorem{definition}[theorem]{Definition}
\newtheorem{proposition}[theorem]{Proposition}
\newtheorem{lemma}[theorem]{Lemma}
\newtheorem{corollary}[theorem]{Corollary}
\newtheorem{conjecture}[theorem]{Conjecture}

\theoremstyle{definition}
\newtheorem{remark}[theorem]{Remark}{\rm}
{\rm}

\newcommand{\C}{\mathfrak{C}}
\newcommand{\D}{\mathcal{D}}
\newcommand{\DD}{\mathbb{D}}
\newcommand{\E}{\mathcal{E}}
\newcommand{\F}{\mathcal{F}}
\newcommand{\G}{\underline{\Gamma}^\dag}
\renewcommand{\H}{\mathcal{H}}
\renewcommand{\S}{\mathfrak{S}}
\renewcommand{\O}{\mathcal{O}}
\renewcommand{\P}{\mathfrak{P}}
\newcommand{\Q}{\mathbb{Q}}

\newcommand{\Rg}{\mathbb{R}\Gamma}
\newcommand{\RG}{\mathbb{R}\underline{\Gamma}^\dag}
\newcommand{\U}{\mathfrak{U}}
\newcommand{\V}{\mathcal{V}}

\newcommand{\X}{\mathfrak{X}}
\newcommand{\Y}{\mathfrak{Y}}
\newcommand{\Spec}{{\rm Spec}\,}
\newcommand{\Spf}{{\rm Spf}\,}
\newcommand{\card}{\sharp}
\newcommand{\coker}{{\rm coker}\,}
\newcommand{\im}{{\rm im}\,}
\newcommand{\one}{\mathbf{1}}


\makeatletter
\@addtoreset{equation}{section}
\makeatother

\title{$p$-Adic Weight Spectral Sequences of Strictly Semi-stable Schemes over Formal Power Series Rings via Arithmetic $\D$-modules}
\author{Yuanmin Liu}
\date{}
\begin{document}
\maketitle
    \begin{abstract}
        Let $k$ be a perfect field of characteristic $p > 0$. For a strictly semi-stable scheme over $k[[t]]$, we construct the weight spectral sequence in $p$-adic cohomology using the theory of arithmetic $\D$-modules, whose $E_1$ terms are described by rigid cohomologies of irreducible components of the closed fiber and whose $E_\infty$ terms are conjecturally described by the (unipotent) nearby cycle of Lazda-P\'{a}l's rigid cohomology over the bounded Robba ring. We also show its functoriality by pushforward and state the conjecture of its functoriality by pullback and dual.
    \end{abstract}
    
    \section{Introduction}
    For an algebraic variety $X$ smooth over $\mathbb{C}$, Deligne proved that the Betti cohomology $H^n(X_{\rm an}, \mathbb{Q})$ admits the mixed Hodge structure \cite{mHs}. In particular, he defined weight filtration on $H^n(X_{\rm an}, \Q)$ whose graded pieces are described by the cohomologies of proper smooth varieties. For a complex manifold proper over a complex disc with strictly semi-stable reduction at the origin, Steenbrink \cite{char-zero} constructed the mixed Hodge structure and the weight filtration on the Betti cohomology of the general fiber. Then it is natural to expect that we can define weight filtrations on other cohomology thories with some suitable properties. In the context of arithmetic geometry, Rapoport-Zink \cite{RZ} first obtained the weight spectral sequence in $l$-adic cohomology for proper strictly semi-stable scheme. Let $L/\Q_p$ be a finite field extension, $\O_L$ its valuation ring, and $F$ its residue field. Let $\bar{L}$ be an algebraic closure of $L$ and $\bar{F}$ that of $F$. For a proper strictly semi-stable scheme $X$ over $\O_L$, let $D_0, \dots, D_n$ be the irreducible components of the closed fiber of $X$ and put $\displaystyle D^{(p)} = \coprod_{i_0 < \cdots < i_p} D_{i_0} \cap \cdots \cap D_{i_p}.$ Then, for a prime $l \neq p$, the weight spectral sequence for $l$-adic cohomology is written as follows:
    $$E_1^{p,q} = \bigoplus_{i \geq \max(0, -p)} H^{q-2i}_{\rm et}(D^{(p+2i)}_{\bar{F}}, \Q_l(-i)) \Rightarrow H^{p+q}_{\rm et}(X_{\bar{L}}, \Q_l).$$
    Note that $E_1^{p,q}$ has weight $q$ by Weil conjecture. T. Saito \cite{l-adic} gave another definition of the weight spectral sequence for $l$-adic cohomology based on the perversity of nearby cycle sheaf up to shift and proved the functoriality of the weight spectral sequence. Also, Nakayama \cite{Naky} formulated the weight spectral sequence for $l$-adic cohomology only in terms of the log closed fiber of $X$ and the log $l$-adic cohomology.
    
    The weight spectral in $p$-adic cohomology was first obtained by Mokrane \cite{Mok1} \cite{Mok2} using log de Rham-Witt cohomology. There are also works by Nakkajima and Shiho \cite{Nak} \cite{NS} using log crystalline cohomology. However, in order to prove the functoriality of weight spectral sequences, it would be better to construct the $p$-adic weight spectral sequence using $p$-adic cohomology with good theory of coefficients over log schemes. In \cite{logD}, Tsuji took this point of view and constructed the $p$-adic weight spectral sequences using log $\D$-modules. However, the functoriality is not yet available in the literature and the theory of $\D$-modules he used is not the theory of (log) arithmetic $\D$-modules due to Berthelot, which is believed to be the good theory of coefficients in $p$-adic cohomology.
    
    The theory of log arithmetic $\D$-modules is not yet fully developed, although there is a work of Caro-Vauclair \cite{CV}. In this article, we suppose that we are given a projective strictly semi-stable scheme $X$ over $k[[t]]$ with $k$ a perfect field of characteristic $p > 0$ and construct the $p$-adic weight spectral sequence of it by using the theory of arithmetic $\D$-modules over $k[[t]]$ developed by Caro \cite{intro}. We follow the idea of T. Saito \cite{l-adic} and construct the weight spectral sequence by showing the holonomicity of the unipotent nearby cycle up to shift: The unipotent nearby cycle functor in arithmetic $\D$-modules is defined by Abe-Caro \cite{bei} over $k[t]$ and we can imitate their construction in the case over $k[[t]]$. The main theorem of this paper is described as follows:
    \\
    
    \noindent
    {\bf Theorem \ref{main-theorem}.}
        {\it We have a spectral sequence}
        $$ E^{p,q}_1 = \bigoplus_{i \geq \max(0, -p)} H^{q-2i}_{\D}(D^{(p+2i)}/K)(-i) \Rightarrow \Psi H^{p+q}_{\D}(X_\eta/\E^\dag_K).$$
    Here $X_\eta$ is the generic fiber of $X$, $D^{(p)}$ is defined as in the $l$-adic case, $H^n_{\D}$ denotes the arithmetic $\D$-module cohomology (see Definition \ref{ad-coh}) and $\Psi$ denotes the unipotent nearby cycle functor (see Definition \ref{un-def}).
    
    The arithmetic $\D$-module cohomology of varieties over $k$ is known to be isomorphic to the rigid cohomology \cite{comparison}, which describes $E_1$-terms. On the other hand, the arithmetic $\D$-module cohomology of varieties over $k((t))$ will be vector spaces over the bounded Robba ring $\E^\dag_K$ which is conjecturally isomorphic to the rigid cohomology over $\E^\dag_K$ of Lazda-P\'{a}l \cite{LPrig} (see Conjecture \ref{conj-comp}). Hence, under the conjecture, $E_\infty$-terms are described by the Lazda-P\'{a}l's rigid cohomology of $X_\eta$ over $\E^\dag_K$.
    
    Note that Lazda-P\'{a}l's rigid cohomology of $X_\eta$ over $\E^\dag_K$ is naturally a unipotent $\nabla$-module and its unipotent nearby cycle is isomorphic to the log crystalline cohomology of the log special fiber of $X$ tensored with $K$ (see \cite[Theorem 5.46]{LPrig}). We do not know the corresponding properties for the arithmetic $\D$-module cohomology of $X_\eta$. We expect that a suitable formalism of log arithmetic $\D$-modules will allow us to prove them.
    
    We will also show the functoriality of $p$-adic weight spectral sequences by pushforward. The functoriality by pullback follows from the compatibility with dual, which the author do not know how to prove (cf. Conjecture \ref{dual-nc} and Theorem \ref{dual-and-pull}).
    
    We explain the content of each section. In section \ref{sec-preliminaries}, we introduce the notion of schemes and formal schemes locally with finite $p$-basis. It is a slight generalization of smoothness: for example, $k[[t]]$ has finite $p$-basis over $k$, on which we can build the theory of arithmetic $\D$-modules as shown in \cite{intro}. Section \ref{sec-six} collects notions and propositions in order to formulate six functor formalism of arithmetic $\D$-modules and t-structures. Section \ref{sec-purity} is dedicated to prove the purity theorem \ref{purity} analogous to the $l$-adic case. In section \ref{sec-nc}, we define the unipotent nearby cycle functor $\Psi$ following the method of \cite{bei} and calculate $\Psi \one_{X_\eta}.$ We will use these results to define the $p$-adic weight spectral sequence in section \ref{sec-wss}. The functoriality will be discussed in section \ref{sec-func}.
    \newline
    
    \subsection*{Convention}
    Throughout this paper, we fix a perfect field $k$ of characteristic $p > 0$, a complete discrete valuation ring $\V$ of mixed characteristc $(0, p)$ whose residue field is $k$. We let $\varpi$ be a uniformizer of $\V$ and $K$ its quotient field. We assume that there exists an automorphism $\sigma: \V \rightarrow \V$ lifting some power of the Frobenius $F^s : k \rightarrow k$. We put $q := p^s$.
    
    Let $S = \Spec k, \, \S = \Spf \V, \, \DD_S = \Spec k[[t]], \, \DD_\S = \Spf \V[[t]].$ Here $\V$ and $\V[[t]]$ have $p$-adic topology. Let $s$ be the closed point and $\eta$ the generic point of $\DD_S$. All schemes we treat are of finte type over $\DD_S$ and all formal schemes are topologically of finite type over $\DD_\S$.
    
    We use fraktur characters such as $\P$ or $\X$ for formal schemes; we often denote without mentioning by $P$ and $X$ the schemes over $k$ obtained from the corresponding formal schemes modulo $\varpi$ (not $p$). For a morphism of formal schemes $f : \P \rightarrow \P'$, we denote by $f_0$ the induced morphism of schemes $f \mod \varpi : P \rightarrow P'.$
    
    \subsection*{Acknowledgement}
    The author would like to express appreciation for my supervisor Atsushi Shiho, who gave me helpful advices throughout writing this master thesis and for Daniel Caro, who showed interest to the subject of this thesis and answered my questions about nearby cycle.
    This work was supported by the World-leading INnovative Graduate Study Program for Frontiers of Mathematical Sciences and Physics, The University of Tokyo.
    
    \section{Preliminaries} \label{sec-preliminaries}
    \subsection{$p$-basis}
    Although $k[[t]]$ is not smooth over $k$ since it is not finitely generated as a $k$-algebra, it is formally smooth over $k$; more precisely, $k[t] \hookrightarrow k[[t]]$ is formally etale. Since we will treat schemes over $k[[t]]$, we need the notion of schemes ``locally with finite $p$-basis over $k$," which includes smooth schemes over $k$ and $k[[t]]$.
    
    \begin{definition} {\rm \cite[1.1]{rel-perf}}
        Let $f : X \rightarrow Y$ be a morphism of $k$-schemes. We say that $f$ is relatively perfect if the following diagram is cartesian:
        \[
        \xymatrix{
            X \ar[r]^{F_X} \ar[d]_{f} & X \ar[d]^{f}\\
            Y \ar[r]^{F_Y} & Y,\\
        }
        \]
        where $F_X$ and $F_Y$ are the absolute Frobenius.
    \end{definition}
    \begin{proposition}[{\cite[1.2]{rel-perf}}]
        Let $f : X \rightarrow Y$ be a morphism of $k$-schemes. Consider following properties:
        \\
        (i) $f$ is etale;
        \\
        (ii) $f$ is relatively perfect;
        \\
        (iii) $f$ is formally etale.
        \\
        Then we have $(i) \Rightarrow (ii) \Rightarrow (iii).$
    \end{proposition}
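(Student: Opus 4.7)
My approach is to introduce the relative Frobenius and phrase everything in terms of it. Writing $X^{(Y)} := X \times_{Y, F_Y} Y$ and $F_{X/Y} : X \to X^{(Y)}$ for the relative Frobenius (so that $F_X$ factors as the composition of $F_{X/Y}$ with the projection $X^{(Y)} \to X$), being relatively perfect is equivalent to saying $F_{X/Y}$ is an isomorphism.

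For (i)$\Rightarrow$(ii), I would argue that $F_{X/Y}$ is automatically a universal homeomorphism (it is integral, surjective, and radicial since on points it corresponds to $a \mapsto a^p$). Assuming $f$ is \'etale, the base change $X^{(Y)} \to Y$ is \'etale, so $F_{X/Y}$ is a morphism between two schemes \'etale over $Y$ and is therefore itself \'etale. An \'etale universal homeomorphism is an isomorphism, whence the square is cartesian.

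For (ii)$\Rightarrow$(iii), I would check the infinitesimal lifting criterion directly. Let $\iota : T_0 \hookrightarrow T$ be a square-zero closed immersion of affine $k$-schemes, and suppose we are given $g_0 : T_0 \to X$ and $h : T \to Y$ with $h \circ \iota = f \circ g_0$. Because $T_0 \hookrightarrow T$ is defined by an ideal $I$ with $I^2 = 0$, and hence $I^p = 0$, the absolute Frobenius $F_T$ factors as $F_T = \iota \circ \pi$ for a unique $\pi : T \to T_0$ (with $\pi \circ \iota = F_{T_0}$). Now consider the pair $(g_0 \circ \pi, h) : T \to X \times Y$; compatibility $f \circ g_0 \circ \pi = h \circ \iota \circ \pi = h \circ F_T = F_Y \circ h$ shows that this pair defines a morphism $T \to X^{(Y)}$. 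Since (ii) says $F_{X/Y}$ is an isomorphism, this pair lifts to a unique morphism $g : T \to X$ satisfying $F_X \circ g = g_0 \circ \pi$ and $f \circ g = h$. A routine chase (using that the pair $(F_X \circ g \circ \iota, f \circ g \circ \iota) = (g_0 \circ F_{T_0}, f \circ g_0) = (F_X \circ g_0, f \circ g_0)$ represents the same $T_0$-point of $X^{(Y)}$ as $g_0$ itself) gives $g \circ \iota = g_0$, and an analogous argument using the cartesianness shows uniqueness.

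There is not really a serious obstacle here: the only subtle point is the factorization $F_T = \iota \circ \pi$, which uses characteristic $p$ in an essential way, and the verification that the lift $g$ produced via the cartesian square indeed restricts to $g_0$ on $T_0$. I would just be careful to phrase everything via the functor of points of $X^{(Y)}$ so that existence and uniqueness of the lift $g$ follow formally from (ii).
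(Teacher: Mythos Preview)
Your argument is correct. Note, however, that the paper does not supply its own proof of this proposition: it is quoted verbatim from Kato \cite[1.2]{rel-perf} and left unproved in the present text. Your proof is essentially the one Kato gives there --- rewriting relative perfectness as ``$F_{X/Y}$ is an isomorphism'', deducing (i)$\Rightarrow$(ii) from the fact that $F_{X/Y}$ is an \'etale universal homeomorphism when $f$ is \'etale, and deducing (ii)$\Rightarrow$(iii) by factoring the absolute Frobenius $F_T$ through the square-zero thickening $T_0 \hookrightarrow T$ and reading the lift off from the cartesian square. There is nothing to correct; the only cosmetic point is that in the uniqueness step you may want to make explicit the identity $F_X \circ g' = g' \circ F_T$ (naturality of absolute Frobenius), which is what turns $g' \circ \iota = g_0$ into $F_X \circ g' = g_0 \circ \pi$.
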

    
    \begin{definition}
        Let $f : X \rightarrow Y$ be a morphism of $k$-schemes. A (finite) $p$-basis of $X$ over $Y$ is finitely many elements $x_1, \dots, x_n \in \Gamma(X, \O_X)$ such that the corresponding morphism $X \rightarrow \mathbb{A}^n_Y$ is relatively perfect. In this situation $\Omega_{X/Y}$ is a free module generated by $dx_1, \dots, dx_n$. If $X \rightarrow Y$ locally has finite $p$-basis, we define the locally constant function $\delta_{X/Y} : X \rightarrow \mathbb N$ by $x \mapsto {\rm rank}_{\O_{X, x}} \Omega_{X/Y, x}$ (cf. {\rm \cite[3.2.7]{intro}}). When $Y = S$, we abbreviate it to $\delta_X$.
    \end{definition}
    
    \begin{definition}[{\cite[1.1.7]{intro}}]
        (i) Let $f : \mathfrak X \rightarrow \mathfrak Y$ be a morphism of formal $\S$-schemes. We say that $f$ is relatively perfect if $f$ is formally etale and $f_0 : X \rightarrow Y$ is relatively perfect.
        \\
        (ii) Let $f : \mathfrak X \rightarrow \mathfrak Y$ be a morphism of formal $\S$-schemes. A finite $p$-basis of $\mathfrak X$ over $\Y$ is finitely many elements $x_1, \dots, x_n \in \Gamma(\X, \O_\X)$ such that the corresponding morphism $\X \rightarrow \widehat{\mathbb{A}}^n_{\mathfrak Y}$ is relatively perfect.
    \end{definition}
    
    \subsection{Strictly semi-stable scheme over $k[[t]]$} \label{ssss}
        A strictly semi-stable scheme over $k[[t]]$ is an integral scheme $X$ of finite type over $k[[t]]$ such that the generic fiber $X_\eta$ is smooth over $\eta$ and the closed fiber $X_s$ is a strict normal crossings divisor of $X$. Note that since $k$ is perfect, this is equivalent to the usual definition \cite[1.2.1]{intro}.
        
        We let $D_0, \dots, D_r$ be the irreducible components of $X_s$. $X$ can be covered by open subsets $U$ etale over $\Spec k[[t]][t_0, \dots, t_d]/(t - t_0\cdots t_{r'})$ such that $D_i \cap U$ is defined by $t_i = 0$ for $i \leq r'$ and $D_i \cap U = \emptyset$ for $i > r'$ up to reordering. 
        Note that in this situation $t_0, \dots, t_d$ is a $p$-basis of $U$ and therefore $d+1 = \delta_X.$
    
    \section{Six functor formalism and t-structures} \label{sec-six}
    In this section, we establish the six functor formalism of holonomic arithmetic $\D$-modules of realizable schemes,  t-structures on whose triangulated categories and gluing theorem of ($F$-)overholonomic modules. We will frequently use notions and symbols on arithmetic $\D$-modules, following \cite{intro}.
    \subsection{Six functor formalism of frames and couples}
    \begin{definition}
        A frame over $\V[[t]]$ is a triple $(X, Y, \P)$ of a quasi-projective formal scheme $\P$ over $\V[[t]]$ locally with finite $p$-basis over $\V$, a closed subscheme $Y$ of $\P$ and an open subscheme $X$ of $Y$.
        A morphism of frames $\theta = (a, b, f) : (X', Y', \P') \rightarrow (X, Y, \P)$ is a commutative diagram
        \[
        \xymatrix{
            X' \ar@{^{(}->}[r] \ar[d]^a & Y' \ar@{^{(}->}[r] \ar[d]^b & \P' \ar[d]^f \\
            X \ar@{^{(}->}[r] & Y \ar@{^{(}->}[r] & \P. \\
        }
        \]
        
        A couple over $\V[[t]]$ is a pair $(X, Y)$ such that there is a frame $(X, Y, \P)$. We say that $(X, Y, \P)$ is an enclosing frame of $(X, Y).$
        A morphism of couples $(a, b) : (X', Y') \rightarrow (X, Y)$ is a commutative diagram
        \[
        \xymatrix{
            X' \ar@{^{(}->}[r] \ar[d]^a & Y' \ar[d]^b \\
            X \ar@{^{(}->}[r] & Y.
        }
        \]
        \noindent
        When $\theta = (a, b, f) : (X', Y', \P') \rightarrow (X, Y, \P)$ is a morphsim of frames, we say that $\theta$ is an enclosing morphism of $(a, b).$
        
        A realizable scheme is a $k[[t]]$-scheme $X$ such that there is a couple $(X, Y)$ with $Y$ proper over $k[[t]]$.
    \end{definition}
    
    \begin{definition}[{cf. \cite[13.1.4]{intro}}]
        A data of coefficients is a collection of strict full subcategories $\mathfrak{C}(\P)$ of $D^b_{\rm coh}(\D^\dag_{\P/\V, \Q})$ for each formal $\V[[t]]$-scheme $\P$ locally with finite $p$-basis over $\V$.
        
        For a data of coefficients $\mathfrak C$ and a frame $(X, Y, \P)$, we define the strict full subcategory $\mathfrak C(X, Y, \P)$ of $\mathfrak C(\P)$ to be the objects $\E$ such that there exists an isomophism of the form $\E \xrightarrow{\sim} \RG_X \E$.
    \end{definition}
    
    Since $\C(X, Y, \P)$ is independent of $Y$, we often denote it by $\C(X, \P)$.
    
    In the setting of arithmetic $\D$-modules, we can show the independence of the choice of enclosing frames or couples:
    
    \begin{theorem}[{\cite[14.2.2]{intro}}] \label{crys-phil}
        Let $\C$ be a data of coefficients such that for each formal scheme $\P$ over $\V[[t]]$ locally with finite $p$-basis over $\V$, $\mathfrak C(\P)$ contains $\O_{\P, \Q}$ and is stable under shifts, devissages, pushforwards, quasi-projective extraordinary pullbacks and local cohomological functors.
        
        Let $(id, b, f): (X, Y', \P') \rightarrow (X, Y, \P)$ be a morphism of frames such that $b$ is proper. Then $\RG_Xf^!$ and $f_+$ induce quasi-inverse equivalences between the categories $\C(X, Y, \P)$ and $\C(X, Y', \P').$
    \end{theorem}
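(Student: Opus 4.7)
The plan is to split the argument into two stages: first, check that $\RG_X f^!$ and $f_+$ restrict to functors between $\C(X, Y, \P)$ and $\C(X, Y', \P')$; second, construct unit and counit morphisms and verify they are isomorphisms, using that the two frames are tautologically identified over $X$.

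For the first stage, I would use the closure hypotheses on $\C$ directly. If $\E \in \C(X, Y, \P)$, then $f^! \E \in \C(\P')$ by closure under quasi-projective extraordinary pullback, and $\RG_X f^! \E \in \C(\P')$ by closure under local cohomological functors; idempotence of $\RG_X$ then places it in $\C(X, \P')$. For $\F \in \C(X, Y', \P')$, closure under pushforward yields $f_+ \F \in \C(\P)$, and I would verify the support condition $f_+ \F \simeq \RG_X f_+ \F$ by applying $f_+$ to the local cohomology triangle on $\P'$ attached to the closed immersion $Y' \setminus X \hookrightarrow Y'$ and invoking the properness of $b$ to transport the vanishing of the complementary part from $Y'$ to $Y$.

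For the second stage, the decisive geometric input is that the first component of the frame morphism is $\mathrm{id}_X$, so the two frames share the same open part $X$. I would define the unit $\F \to \RG_X f^! f_+ \F$ and the counit $f_+ \RG_X f^! \E \to \E$ via the adjunction $(f_+, f^!)$, which is available on the relevant supported parts because $b$ is proper. To verify these maps are isomorphisms I would proceed by devissage along a stratification of $Y$ (respectively $Y'$) refining the decomposition into $X$ and its complement, ultimately reducing the claim to the tautological identity frame over $X$. This reduction is legitimate because $\C$ is stable under devissages, shifts, and local cohomological functors, and the base case is immediate since over $X$ the frame morphism is the identity.

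The main obstacle is that $f : \P' \to \P$ need not itself be proper, so the standard adjunction $(f_+, f^!)$ is not directly available at the level of the full coherent derived categories. My plan to circumvent this is to exploit the support hypothesis: after applying $\RG_X$, only the behaviour of $f$ near $X$ matters, and there the properness of $b$ supplies the necessary proper base change and adjointness. In effect this transforms the statement into a support-localized version of the proper case, which is the philosophy underlying \cite[14.2.2]{intro}; concretely, one can compactify $f$ to a proper morphism within the quasi-projective formal ambient spaces and then discard the auxiliary compactification using $\RG_X$ and the support conditions imposed by membership in $\C(X, Y, \P)$.
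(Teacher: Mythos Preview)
The paper does not supply its own proof of this theorem: it is stated with the citation \cite[14.2.2]{intro} and no \texttt{proof} environment follows. The only contribution the paper makes is the subsequent Remark, which observes that in Caro's original statement the ambient formal scheme $\P$ is assumed smooth over $\V[[t]]$, whereas the argument of \cite[14.2.2]{intro} in fact only uses that $\P$ locally has finite $p$-basis over $\V$, so the conclusion holds under the paper's weaker hypotheses. There is therefore no in-paper proof to compare your proposal against.

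That said, your outline is a reasonable sketch of the standard strategy and is in the spirit of how such independence-of-frame results are proved in the arithmetic $\D$-module literature. One point deserves sharpening: your devissage in the second stage reduces to ``the tautological identity frame over $X$'', but $X$ need not itself admit a lift to a formal scheme with finite $p$-basis, so the ``base case'' is not literally the identity on a single frame. The way Caro's argument typically handles this is not by stratifying $Y$ but by factoring the frame morphism through the graph $\P' \hookrightarrow \P' \times \P$ (or a suitable product/compactification), thereby replacing $f$ by a closed immersion followed by a projection, and then invoking Berthelot--Kashiwara for the closed immersion together with the support condition $\RG_X$ to control the projection. Your compactification remark at the end gestures at this, but the graph/product factorisation is the concrete mechanism, and it is what makes the argument insensitive to whether $\P$ is smooth or merely has finite $p$-basis---which is exactly the point the paper's Remark is making.
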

    
    \begin{remark}
        In \cite{intro}, a frame $(X, Y, \P)$ over $\V[[t]]$ is defined to be a triple such that $\P$ is in addition smooth over $\V[[t]]$; however in the proof of {\cite[14.2.2]{intro}} he only uses the fact that $\P$ locally has finite $p$-basis over $\V$ and the theorem holds in our context as well.        
        By this theorem, for a couple $(X, Y)$, we define $\C(X, Y)$ to be $\C(X, Y, \P)$ for some enclosing frame $(X, Y, \P)$; for a realizable scheme $X$ over $k[[t]]$, we define $\C(X)$ to be $\C(X, Y)$ for some couple $(X, Y)$.
    \end{remark}
    
    \begin{definition}
        Let $\mathfrak C$ be a data of coefficients such that for each formal scheme $\P$ over $\V[[t]]$ locally with finite $p$-basis over $\V$, $\mathfrak C(\P)$ contains $\O_{\P, \Q}$ and is stable under shifts, devissages, pushforwards, quasi-projective extraordinary pullbacks and local cohomological functors.
        Let $\theta = (a, b, f) : (X', Y', \P') \rightarrow (X, Y, \P)$ be a morphism of frames.
        \\
        (i) Define the extraordinary pullback by $$\theta^! := \RG_{X'} f^! : \mathfrak C(X, Y, \P) \rightarrow \mathfrak C(X', Y', \P').$$
        (ii) If $b$ is proper, define the pushforward by $$\theta_+ := f_+ : \mathfrak C(X', Y', \P') \rightarrow \mathfrak C(X, Y, \P).$$
        (iii) If $\C$ is stable under duals, define the dual functor by $$\DD_{(X, Y, \P)} := \RG_X \DD_\P : \C(X, Y, \P)^\circ \rightarrow \C(X, Y, \P),$$
        and define the pullback by $$\theta^+ := \DD_{(X', Y', \P')}\,\theta^!\,\DD_{(X, Y, \P)}: \C(X, Y, \P) \rightarrow \C(X', Y', \P').$$
        (iv) If $b$ is proper and $\C$ is stable under duals, define the extraordinary pushforward by $$\theta_! = \DD_{(X, Y, \P)}\,\theta_+\,\DD_{(X', Y', \P')} : \C(X', Y', \P') \rightarrow \C(X, Y, \P).$$
        (v) If $\C$ is stable under tensor products, define $$- \tilde{\otimes}_{(X, Y, \P)} -  := - \otimes^\dag_{\O_{\P, \Q}} - [-\delta_P]: \C(X, Y, \P) \times \C(X, Y, \P) \rightarrow \C(X, Y, \P),$$
        where $\otimes^\dag_{\O_{\P, \Q}}$ is the usual weakly completed tensor products and define $$- \otimes - := \DD(\DD(-) \, \tilde{\otimes} \, \DD(-)).$$
    \end{definition}
    In \cite[13.2]{intro}, he constructs the data of coefficients $D^b_{\rm ovhol}$ of overholonomic complexes, which contains $\O$ and is local, stable under shifts, devissages, direct summands, quasi-projective extraordinary pullbacks, pushforwards, local cohomological functors, cohomologies, and duals. The heart of the canonical t-structure (cf. Definition \ref{can-t}) of $D^b_{\rm ovhol}(X, Y, \P)$ is denoted by ${\rm Ovhol}(X, Y, \P)$ and called the category of overholonomic modules. $F\mathchar`-{\rm Ovhol}(X, Y, \P)$ will denote the category of overholonomic modules with Frobenius structures. Note that when there is a divisor $T$ of $\P$ such that $Y-X = Y \cap T$, then ${\rm Ovhol}(X, Y, \P)$ is a full subcategory of ${\rm Mod}(\D^\dag_{\P, \Q})$.
    
    We let ${\rm Hol}_F(X, Y, \P)$ be the thick abelian full subcategory of ${\rm Ovhol}(X, Y, \P)$ generated by objects $\E$ which admits some $a$-th Frobenius structure $F^{*a}\E \rightarrow \E.$ $D^b_{\rm hol}(X, Y, \P)$ is defined to be the full subcategory of $D^b_{\rm ovhol}(X, Y, \P)$ consisting of objects $\E$ such that $\H^i(\E) \in {\rm Hol}_F(X, Y, \P)$. This is the analogue of [AC14, 1.7]. $D^b_{\rm hol}$ is supposed to be stable under tensor products in addition to stability properties of $D^b_{\rm ovhol}.$ However, the stability relies on Kedlaya's semistable reduction theorem, which is necessary for proving that overconvergent $F$-isrocrystals are overholonomic; this is unknown for varieties over $k[[t]]$. In this article, each time when $\E \otimes \E'$ appears we can always show that it belongs to $D^b_{\rm hol}$.
    \newline
    
    We end this subsection by stating a gluing theorem of ($F$-)overholonomic $\D$-modules. Let $(X, Y, \P)$ be a frame and suppose that there is a divisor $T$ of $\P$ such that $Y - X = Y \cap T.$ Choose an affine open covering $(\P_\alpha)_{\alpha \in A}$ of $\P$. For $\alpha_1, \dots, \alpha_n \in A$, put $\P_{\alpha_1 \cdots \alpha_n} = \P_{\alpha_1} \cap \cdots \cap \P_{\alpha_n}, \ Y_{\alpha_1 \cdots \alpha_n} = Y \cap \P_{\alpha_1 \cdots \alpha_n}$ and $X_{\alpha_1 \cdots \alpha_n} = X \cap \P_{\alpha_1 \cdots \alpha_n}$.

    We suppose that we have closed immersions $u_{\alpha_1} : \Y_{\alpha_1} \rightarrow \P_{\alpha_1}, \ u_{\alpha_1\alpha_2} : \Y_{\alpha_1\alpha_2} \rightarrow \P_{\alpha_1\alpha_2}$ and $u_{\alpha_1\alpha_2\alpha_3} : \Y_{\alpha_1\alpha_2\alpha_3} \rightarrow \P_{\alpha_1\alpha_2\alpha_3}$ of formal schemes locally with finite $p$-basis over $\V$ which lifts the closed immersion
    $Y_{\alpha_1} \rightarrow P_{\alpha_1}, \ Y_{\alpha_1\alpha_2} \rightarrow P_{\alpha_1\alpha_2}$ and $Y_{\alpha_1\alpha_2\alpha_3} \rightarrow P_{\alpha_1\alpha_2\alpha_3}$, respectively, for all $\alpha_1, \alpha_2, \alpha_3 \in A$. Furthermore suppose that we have flat lifts $p^i_{\alpha_1\alpha_2} : \Y_{\alpha_1\alpha_2} \rightarrow \Y_{\alpha_i}$ for $i = 1, 2$, $p^{ij}_{\alpha_1\alpha_2\alpha_3} : \Y_{\alpha_1\alpha_2\alpha_3} \rightarrow \Y_{\alpha_i\alpha_j}$ for $1 \leq i < j \leq 3$ and $p^{i}_{\alpha_1\alpha_2\alpha_3} : \Y_{\alpha_1\alpha_2\alpha_3} \rightarrow \Y_{\alpha_i}$ for $i = 1,2,3.$
    
    Note that such collection of lifts $u$ and $p$ exist if $\P$ is separated, as $\P_{\alpha_1 \cdots \alpha_n}$ are affine.
    
    \begin{definition}
        We call $((\P_\alpha), (\Y_\alpha), u)$ a data of lifts of the frame $(X, Y, \P).$
    \end{definition}
    
    \begin{definition}
        The category ${\rm (}F{\rm )}\mathchar`-{\rm Ovhol}((\Y_\alpha)_\alpha, T)$ is defined as follows:
        
        An object is a collection of data $(\E_\alpha, \theta_{\alpha\beta})$ where $\E_\alpha \in {\rm (}F{\rm )}\mathchar`-{\rm Ovhol}(X_\alpha, Y_\alpha, \Y_\alpha)$ and $\theta_{\alpha\beta} : p^{2!}_{\alpha\beta}\E_\beta \xrightarrow{\sim} p^{1!}_{\alpha\beta} \E_\alpha$ is an isomorphism such that for any $\alpha_1, \alpha_2, \alpha_3,$ $\theta_{\alpha_1\alpha_2\alpha_3}^{13} = \theta_{\alpha_1\alpha_2\alpha_3}^{12} \theta_{\alpha_1\alpha_2\alpha_3}^{23}$ where $\theta_{\alpha_1\alpha_2\alpha_3}^{ij}$ is the isomorphism which fits into the following commutative diagram:
        \[\xymatrixcolsep{20mm}
        \xymatrix{
            p^{ij!}_{\alpha_1\alpha_2\alpha_3}p^{2!}_{\alpha_i\alpha_j} \E_{\alpha_j} \ar[r]^{p^{ij!}_{\alpha_1\alpha_2\alpha_3} \theta_{\alpha_i\alpha_j}} \ar[d]^{\wr} & p^{ij!}_{\alpha_1\alpha_2\alpha_3}p^{1!}_{\alpha_i\alpha_j} \E_{\alpha_i} \ar[d]_{\wr}\\
            (p^{2}_{\alpha_i\alpha_j} p^{ij}_{\alpha_1\alpha_2\alpha_3})^!\E_{\alpha_j} \ar[d]^{\wr}_{\tau} & (p^{1}_{\alpha_i\alpha_j} p^{ij}_{\alpha_1\alpha_2\alpha_3})^!\E_{\alpha_i} \ar[d]^{\tau}_{\wr}\\
            p^{j!}_{\alpha_1\alpha_2\alpha_3}\E_{\alpha_j} \ar[r]^{\theta_{\alpha_1\alpha_2\alpha_3}^{ij}} & p^{i!}_{\alpha_1\alpha_2\alpha_3}\E_{\alpha_i};           
        }
        \]
        here $\tau := \tau_{f, f'}$ is the canonical isomorphism $f^! \xrightarrow{\sim} f'^!$ defined for morphisms $f, f' : \X \rightarrow \Y$ of formal schemes locally with finite $p$-basis over $\V$ such that $f_0 = f'_0$. For more details, see {\rm [Car21, 8.5.1]}.
        
        A morphism $\varphi : (\E_\alpha, \theta_{\alpha\beta}) \rightarrow (\E'_\alpha, \theta'_{\alpha\beta})$ is a collection of morphisms $\varphi_\alpha : \E_\alpha \rightarrow \E'_\alpha$ compatible with $\theta_{\alpha\beta}$ and $\theta'_{\alpha\beta}$.
    \end{definition}
    \noindent
    
    We can define the canonical functor $u_0^* : {\rm (}F{\rm )}\mathchar`-{\rm Ovhol}(X, Y, \P) \rightarrow {\rm (}F{\rm )}\mathchar`-{\rm Ovhol}((\Y_\alpha)_\alpha, T)$ by 
    $\E \mapsto (u_\alpha^!(\E|_{\P_\alpha}), c_{\alpha\beta})$ where $c_{\alpha\beta}$ is defined by $\tau$. Note that by Berthelot-Kashiwara theorem, $u_\alpha^!(\E|_{\P_\alpha})$ is an overholonomic $\D$-module.

    \begin{theorem} \label{u_0^*}
         $u_0^*$ is an equivalence of categories.
    \end{theorem}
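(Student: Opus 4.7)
The plan is to construct an explicit quasi-inverse $G$ to $u_0^*$ by applying Theorem~\ref{crys-phil} on each piece of the cover and then invoking Zariski descent for overholonomic $\D$-modules on $\P$. Each $u_\alpha : \Y_\alpha \hookrightarrow \P_\alpha$ is a proper closed immersion lifting $Y_\alpha \hookrightarrow P_\alpha$, so the frame morphism $(X_\alpha, Y_\alpha, \Y_\alpha) \to (X_\alpha, Y_\alpha, \P_\alpha)$ satisfies the hypotheses of Theorem~\ref{crys-phil}. Hence $u_{\alpha+}$ and $\RG_{X_\alpha} u_\alpha^!$ are mutually quasi-inverse equivalences between $\mathrm{Ovhol}(X_\alpha, Y_\alpha, \Y_\alpha)$ and $\mathrm{Ovhol}(X_\alpha, Y_\alpha, \P_\alpha)$. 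Given a descent datum $(\E_\alpha, \theta_{\alpha\beta})$, I set $\E'_\alpha := u_{\alpha +} \E_\alpha$.

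Next I transport the gluing data. The two morphisms $u_\alpha \circ p^1_{\alpha\beta}$ and $u_\beta \circ p^2_{\alpha\beta}$ from $\Y_{\alpha\beta}$ to $\P_{\alpha\beta}$ both lift the closed immersion $Y_{\alpha\beta} \hookrightarrow P_{\alpha\beta}$, as does $u_{\alpha\beta}$. Applying $u_{\alpha\beta +}$ to $\theta_{\alpha\beta}$ and inserting the canonical isomorphisms $\tau$ that compare pushforwards along different lifts yields an isomorphism $\theta'_{\alpha\beta} : \E'_\beta|_{\P_{\alpha\beta}} \xrightarrow{\sim} \E'_\alpha|_{\P_{\alpha\beta}}$ of overholonomic $\D^\dag_{\P_{\alpha\beta}, \Q}$-modules. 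By naturality of $\tau$ and functoriality of pushforward along the $u_{\alpha_1 \alpha_2 \alpha_3}$, the triple-intersection cocycle for $(\E_\alpha, \theta_{\alpha\beta})$ in the definition above translates into the ordinary $\D^\dag$-module cocycle for $(\E'_\alpha, \theta'_{\alpha\beta})$.

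Then I apply Zariski descent: $(\E'_\alpha, \theta'_{\alpha\beta})$ is a descent datum on the affine open cover $(\P_\alpha)$ of $\P$ for the Zariski stack of $\D^\dag_{\P, \Q}$-module complexes. Both overholonomicity and the condition $\E \simeq \RG_X \E$ are local on $\P$, so the datum glues uniquely to an object $\E \in \mathrm{Ovhol}(X, Y, \P)$, giving $G(\E_\alpha, \theta_{\alpha\beta}) := \E$. The same descent applied to $\mathrm{Hom}$-complexes defines $G$ on morphisms and simultaneously shows full faithfulness of $u_0^*$. The identities $u_0^* G \simeq \mathrm{id}$ and $G u_0^* \simeq \mathrm{id}$ then follow from the quasi-inverse property in Theorem~\ref{crys-phil} applied locally and from the uniqueness of descent.

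The main obstacle will be the explicit verification of the cocycle compatibility on triple intersections: one must check that the cocycle condition in $(F)\text{-}\mathrm{Ovhol}((\Y_\alpha)_\alpha, T)$, which intertwines the triple lifts $p^{ij}_{\alpha_1 \alpha_2 \alpha_3}$ with the canonical isomorphism $\tau$, is transported by pushforward into a genuine $\D^\dag$-module cocycle on $\P_{\alpha_1 \alpha_2 \alpha_3}$. This reduces to the compatibility of $\tau$ with pushforward along closed immersions that underlies Caro's construction in the reference cited in the definition above. In the $F$-equivariant case the Frobenius structures are transported functorially by $u_{\alpha +}$ and $\tau$, so no new difficulty arises; the gluing is the same one applied to pairs of objects linked by a Frobenius pullback map.
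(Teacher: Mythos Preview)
Your proposal is correct and follows essentially the same strategy as the paper's proof, which simply cites \cite[5.3.8]{intro0} for the equivalence at the level of coherent $\D^\dag$-modules, then notes that overholonomicity is local and that Frobenius is compatible with $\tau$ by \cite[2.2.6]{BerFrob}. Your write-up is a faithful unpacking of what that reference does: push each $\E_\alpha$ forward by the closed immersion $u_\alpha$ (the paper invokes Berthelot--Kashiwara directly rather than the more general Theorem~\ref{crys-phil}, but either works), transport the gluing data via $\tau$, and apply ordinary Zariski gluing of $\D^\dag_{\P,\Q}$-modules; the cocycle verification on triple overlaps is exactly the compatibility of $\tau$ with composition and with pushforward that you flag as the main obstacle.
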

    \begin{proof}
        We can prove in the same way as \cite[5.3.8]{intro0} that there is the equivalence of categories $u_0^*: {\rm  Coh}(X, Y, \P) \rightarrow {\rm Coh}((\Y_\alpha), T)$ with ${\rm Ovhol}$ replaced by ${\rm Coh}$, the category of coherent $\D$-modules. Note that the data of coefficients $D^b_{\rm ovhol}$ is local, i.e., for any $\E \in D^b_{\rm coh}(\P)$ an open covering $\{\U_i\}_i$ of $\P$, if any $\E|\U_i$ is overholonomic, then so is $\E$. Hence $u_0^*$ induces the equivalence ${\rm Ovhol}(X, Y, \P) \rightarrow {\rm Ovhol}((\Y_\alpha), T)$. Since the Frobenius $F^*$ is compatible with $\tau$ by \cite[2.2.6]{BerFrob}, the functor $u_0^*$ is also compatible with the Frobenius.
    \end{proof}
    
    We can show the compatibility of $u_0^*$ and the dual functor: define the dual functor $\DD$ on ${\rm (}F{\rm )}\mathchar`-{\rm Ovhol}((\Y_\alpha)_\alpha, T)$ by $\DD(\E_\alpha, \theta_{\alpha\beta}) := (\DD_{(X_\alpha, Y_\alpha, \Y_\alpha)}(\E_\alpha), \theta'_{\alpha\beta})$ where $\theta'_{\alpha\beta}$ is the composition $p^{2!}_{\alpha\beta}\DD(\E_\beta) \simeq \DD p^{2!}_{\alpha\beta}(\E_\beta) \xrightarrow{\DD \theta_{\alpha\beta}^{-1}} \DD p^{1!}_{\alpha\beta}(\E_\alpha) \simeq p^{1!}_{\alpha\beta}\DD(\E_\alpha)$.
    
    \begin{proposition} \label{u_0^*-dual}
        $u_0^*\DD \simeq \DD u_0^*.$
    \end{proposition}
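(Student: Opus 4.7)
My plan is to construct the natural isomorphism $u_0^*\DD(\E) \simeq \DD u_0^*(\E)$ locally on each piece of the glueing, and then check that the local isomorphisms are compatible with the transition data $\theta_{\alpha\beta}$ and $\theta'_{\alpha\beta}$. Fix $\E \in {\rm (}F{\rm )}\mathchar`-{\rm Ovhol}(X, Y, \P)$ and write $\E_\alpha := \E|_{\P_\alpha}$; on the $\alpha$-piece, unwinding both sides, I need a natural isomorphism
\[ u_\alpha^!\,\RG_{X_\alpha}^{\P_\alpha}\DD_{\P_\alpha}(\E_\alpha) \xrightarrow{\sim} \RG_{X_\alpha}^{\Y_\alpha}\DD_{\Y_\alpha}\,u_\alpha^!(\E_\alpha). \]

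This is a chain of standard canonical isomorphisms. Since $\E \simeq \RG_X\E$, the restriction $\E_\alpha$ is supported on $Y_\alpha$, so by Berthelot--Kashiwara the adjunction counit $u_{\alpha+}u_\alpha^!(\E_\alpha) \xrightarrow{\sim} \E_\alpha$ is an isomorphism. Substituting this, and then successively applying the compatibility of duality with proper pushforward ($\DD_{\P_\alpha}u_{\alpha+} \simeq u_{\alpha+}\DD_{\Y_\alpha}$), the commutation of local cohomology with pushforward ($\RG_{X_\alpha}^{\P_\alpha}u_{\alpha+} \simeq u_{\alpha+}\RG_{X_\alpha}^{\Y_\alpha}$), and the identity $u_\alpha^!u_{\alpha+} \simeq {\rm id}$, I obtain the required isomorphism as a single composition.

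It remains to check the compatibility with the glueing maps $c_{\alpha\beta}$ (on $u_0^*\DD$) and $\theta'_{\alpha\beta}$ (on $\DD u_0^*$). Both are assembled from the extraordinary pullbacks $p^{i\,!}_{\alpha\beta}$, the dual functor $\DD$, and the canonical isomorphism $\tau_{p^2_{\alpha\beta},\,p^1_{\alpha\beta}}$. Because the flat lifts $p^i_{\alpha\beta}$ lift open immersions, they are formally etale, so $p^{i\,!}_{\alpha\beta}$ coincides with $p^{i\,+}_{\alpha\beta}$ and naturally commutes with $\DD$; this is already implicit in the very definition of $\DD$ on ${\rm (}F{\rm )}\mathchar`-{\rm Ovhol}((\Y_\alpha)_\alpha, T)$ given just before the statement. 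The genuine content is therefore the commutation of $\tau_{f,f'}$ with $\DD$ for two morphisms $f, f'$ with $f_0 = f'_0$, which I would verify by unwinding the construction of $\tau$ in [Car21, 8.5.1] via Frobenius descent, together with the compatibility of Frobenius pullback with duality.

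I expect this last point --- the commutation of $\tau$ with $\DD$ --- to be the only non-formal step; the rest of the argument is a routine splice of canonical isomorphisms already established in \cite{intro}.
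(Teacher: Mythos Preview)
Your approach is correct and is essentially the paper's argument, just spelled out in greater detail. The paper's one-line proof invokes ``commutativity of extraordinary pullback functors and dual functors'' for $\E$ supported on $X$; your chain of isomorphisms (Berthelot--Kashiwara counit, relative duality $\DD u_{\alpha+}\simeq u_{\alpha+}\DD$, and $u_\alpha^! u_{\alpha+}\simeq{\rm id}$) is exactly how one unpacks that sentence, and your treatment of the glueing compatibility is more explicit than the paper, which takes it for granted.

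One small inaccuracy: the canonical isomorphism $\tau_{f,f'}$ of \cite[8.5.1]{intro} is not built via Frobenius descent. It is the ``independence of lift'' isomorphism coming from the crystalline nature of the theory (constructed via PD-envelopes or the formal-\'etale comparison of two lifts with the same reduction). Its compatibility with $\DD$ is a standard part of that package and does not require any Frobenius argument. This does not affect the validity of your plan, only the justification you cite for the last step.
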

    \begin{proof}
        Since $\E$ is supported on $X$, it follows from the commutativity of extraordinary pullback functors and dual functors.
    \end{proof}
    
    We can also show the compatibility of $u_0^*$ and the tensor product. 
    \begin{proposition} \label{u_0^*-tensor}
        Let $\E, \ \F \in {\rm (}F{\rm )}\mathchar`-{\rm Ovhol}(X, Y, \P)$ and suppose that $\E \tilde{\otimes}_{(X, Y, \P)} \F \in {\rm (}F{\rm )}\mathchar`-{\rm Ovhol}(X, Y, \P).$ Let $u_0^*\E = (\E_\alpha, \theta_{\alpha\beta}), \, u_0^* \F = (\F_\alpha, \eta_{\alpha\beta})$. Then
        $$u_0^*(\E \tilde{\otimes}_{(X, Y, \P)} \F) \simeq (\E_\alpha \tilde{\otimes}_{(X_\alpha, Y_\alpha, \Y_\alpha)} \F_\alpha, \theta_{\alpha\beta} \tilde{\otimes}_{(X_{\alpha\beta}, Y_{\alpha\beta}, \Y_{\alpha\beta})} \eta_{\alpha\beta})$$
    \end{proposition}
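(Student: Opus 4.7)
The isomorphism has two pieces to establish: the identification of the underlying modules on each $\Y_\alpha$, and the identification of the gluing isomorphisms. For the modules, I would reduce the $\alpha$-th component to the core assertion that the extraordinary pullback along the closed immersion $u_\alpha : \Y_\alpha \to \P_\alpha$ commutes with the normalized tensor product. Restriction along the open immersion $\P_\alpha \hookrightarrow \P$ is flat and preserves $\delta$, so one immediately has $(\E \tilde{\otimes}_{(X,Y,\P)} \F)|_{\P_\alpha} \simeq \E|_{\P_\alpha} \tilde{\otimes}_{(X_\alpha,Y_\alpha,\P_\alpha)} \F|_{\P_\alpha}$. Writing $c := \delta_{P_\alpha} - \delta_{Y_\alpha}$ for the codimension, the remaining point is equivalent to
$$u_\alpha^!(\E|_{\P_\alpha} \otimes^\dag_{\O} \F|_{\P_\alpha}) \simeq u_\alpha^!(\E|_{\P_\alpha}) \otimes^\dag_{\O} u_\alpha^!(\F|_{\P_\alpha})[c].$$
I would prove this via Berthelot--Kashiwara combined with a projection formula: $\E, \F$ are supported on $X_\alpha \subset Y_\alpha$, so locally each is of the form $u_{\alpha+}(-)$; the projection formula relating $u_{\alpha+}$ and $\otimes^\dag_\O$, together with $u_\alpha^! u_{\alpha+} \simeq \mathrm{id}$, yields the displayed formula, the $[c]$ shift arising exactly to absorb the difference in $\delta$ between $\P_\alpha$ and $\Y_\alpha$.

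For the gluing data, I would then verify that the canonical gluing isomorphism produced by applying $u_0^*$ to $\E \tilde{\otimes} \F$ agrees, under the module identification above, with $\theta_{\alpha\beta} \tilde{\otimes} \eta_{\alpha\beta}$. The latter makes sense as a morphism between $p^{i!}_{\alpha\beta}$ of a tensor product because each $p^i_{\alpha\beta}$ is flat with open-immersion reduction, hence etale, so $p^{i!}$ commutes with $\otimes^\dag_\O$ automatically. Since the canonical gluing produced by $u_0^*$ is built from the isomorphisms $\tau_{f,f'}$ of \cite[8.5.1]{intro} together with the compatibility of $u_\alpha^!$ with $\otimes^\dag_\O$ established in the previous step, the verification reduces to the naturality of $\tau$ with respect to the tensor product, which is a formal check.

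The main obstacle is the shift bookkeeping in the first step: one must confirm that the projection formula for $u_{\alpha+}$ combines with the Berthelot--Kashiwara identification in precisely the way the $\tilde{\otimes}$ normalization is designed to absorb. If the required form of the projection formula is not explicitly available at this level of generality in \cite{intro}, a fallback is to work locally: choose a $p$-basis of $\P_\alpha$ in which $\Y_\alpha$ is cut out by a subset of coordinates, express $u_\alpha^!$ through the associated Koszul-type complex for $\D^\dag_{\P_\alpha,\Q}$, and check the compatibility with $\otimes^\dag_\O$ by a direct computation on this Koszul resolution.
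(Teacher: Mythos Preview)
Your plan is sound, but it reinvents a wheel the paper simply rolls in. The paper's proof is one line: it invokes \cite[7.2.3]{intro}, which states in full generality that extraordinary pullback commutes with the (shifted) tensor product, i.e.\ $f^!(\E \tilde\otimes \F) \simeq f^!\E \tilde\otimes f^!\F$ for any morphism $f$ of formal schemes locally with finite $p$-basis. Applying this to $u_\alpha$ (and to the open restrictions, and to $p^i_{\alpha\beta}$ for the gluing data) gives the result immediately, with the shift bookkeeping already absorbed into the statement of that reference.

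What you do differently is unpack this compatibility for the closed immersion $u_\alpha$ by hand, via Berthelot--Kashiwara plus a projection formula (or, as a fallback, a Koszul computation in local coordinates). This is a legitimate way to \emph{prove} the cited result in the special case of a regular closed immersion, and your shift accounting with $c = \delta_{P_\alpha} - \delta_{Y_\alpha}$ is exactly what makes $\tilde\otimes$ behave well under $u_\alpha^!$. Your treatment of the gluing data is also correct and in fact slightly more explicit than the paper's, which leaves the naturality of the isomorphism (hence compatibility with $\tau$) implicit in the citation. The trade-off: the paper's route is a one-line black box; yours is self-contained but longer, and the projection-formula step does need to be stated carefully in the overconvergent setting (your Koszul fallback is the safer bet if you cannot locate the exact projection formula you need in \cite{intro}).
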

    \begin{proof}
        Use the commutativity of extraordinary pullback and the tensor products \cite[7.2.3]{intro}.
    \end{proof}
    
    \subsection{Constructible t-structure by devissage}
    Let $(X, Y, \P)$ be a frame and let $\C$ be a data of coefficients which is local, stable under shifts, devissages, direct summands, quasi-projective extraordinary pullbacks, pushforwards, local cohomological functors, cohomologies, and duals.
    
    \begin{definition} \label{can-t}
        The canonical (holonomic) t-structure of $\C(X, Y, \P)$ is defined as follows:
        
        for $\E \in \C(X, Y, \P)$, we define $\E \in \C^{\lesseqqgtr 0}(X, Y, \P)$ if and only if for some (any) open subset $\mathcal U \subseteq \P$ which contains $X$ as a closed subset, $\E|\mathcal U \in D^{\lesseqqgtr 0}_{\rm coh}(\D^\dag_{\U, \Q})$ 
    \end{definition}
    
    Before defining constructible t-structures, we need some preparations.
    
    \begin{definition}
        Suppose that $X$ locally has finite $p$-basis over $k$. We say that $\E \in \C^{\heartsuit}(X, \P)$ is an isocrystal if there exist an open subset $\U \subseteq \P$ which contains $X$ as a closed subset, an open covering $\{\P_\alpha\}$ of $\U$ and closed immersions $u_\alpha : \X_\alpha \hookrightarrow \P_\alpha$  which lift the closed immersions $X_\alpha := X \cap P_\alpha \hookrightarrow P_\alpha$ such that $u_\alpha^!(\E|\P_\alpha)$ is coherent over $\O_{\X, \Q}.$
        $\C_{\rm isoc}(X, \P)$ is the full subcategory of $\C(X, \P)$ consisting of objects $\E$ such that $\H^i(\E)$ are isocrystals for all $i$.
        We define $\C^{\lesseqqgtr 0}_{\rm isoc} := \C^{\lesseqqgtr 0} \cap \C_{\rm isoc}$.
    \end{definition}
    
    \begin{definition}
        A $p$-smooth stratification of $X$ is a descending chain of (reduced) closed subschemes $X_{\rm red} = X_0 \supseteq X_1 \supseteq \cdots \supseteq X_m = \emptyset$ such that $X_i - X_{i+1}$ locally have finite $p$-basis over $k$.
    \end{definition}
    
    \begin{definition}[{\cite[14.4]{intro}}] \label{def-cons}
        For $n \geq 0$, we define $\P^{(n)} = \P \times_{\Spf \V[[t]]} \Spf \V[[t^{1/p^n}]]$. Note that the projection $p^{(n)} : \P^{(n)} \rightarrow \P$ is relatively perfect since so is $\Spf \V[[t^{1/p^n}]] \rightarrow \Spf \V[[t]]$. For $\E \in \C(\P)$, we put $\E^{(n)} = p^{(n)+} \E \simeq p^{(n)!} \E.$
        For a locally closed subscheme $X' \subseteq X$, we let $\iota_{X'} : (X', \overline{X'}, \P) \rightarrow (X, Y, \P)$ be the natural locally closed immersion of frames, where $\overline{X'}$ is the closure of $X'$ in $\P$.
        The constructible t-sturcture of $\C(X, Y, \P)$ is defined as follows:
        $\E \in \C^{\leq 0}_{\rm cons}(X, Y, \P)$ (resp. $\E \in \C^{\geq 0}_{\rm cons}(X, Y, \P)$) if and only if there exist an $n \geq 0$ and a $p$-smooth stratification $X_0 \supseteq X_1 \supseteq \cdots \supseteq X_m = \emptyset$ of $X^{(n)} := (X \times_{\Spec k[[t]]} \Spec k[[t^{1/p^n}]])_{\rm red}$ such that 
        $\iota_{U_i}^+(\E^{(n)})[\delta_{U_i}] \in \C_{\rm isoc}^{\leq 0}(U_i, \P^{(n)})$ (resp. $\iota_{U_i}^!(\E^{(n)})[\delta_{U_i}] \in \C_{\rm isoc}^{\geq 0}(U_i, \P^{(n)})$),
        where $U_i = X_i - X_{i+1}$.
    \end{definition}
    
    \begin{remark}
        Note that $p^{(n)} : \P^{(n)} \rightarrow \P$ is finite locally free, surjective and radicial; therefore universally homeomorphic. By \cite[11.4.8]{intro}, $p^{(n)!}$ and $p^{(n)}_+$ induce quasi-inverse equivalences between the categories $\C(\P)$ and $\C(\P^{(n)})$.
    \end{remark}
    
    \begin{proposition}[Extension property] \label{ext-prop}
        Let $\E' \rightarrow \E \rightarrow \E'' \rightarrow$ be a distinguished triangle of $\C(X, Y, \P).$ If $\E', \E'' \in \C^{\lesseqqgtr 0}_{\rm cons}(X, Y, \P)$, then so is $\E$.
    \end{proposition}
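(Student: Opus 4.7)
The plan is to reduce, via the equivalence $\mathfrak{C}(\mathfrak{P}) \simeq \mathfrak{C}(\mathfrak{P}^{(n)})$ recalled in the preceding remark, to checking the extension property stratum by stratum on a common refinement of the stratifications provided by the hypotheses, where it will follow from the standard extension property of the canonical t-structure together with the fact that isocrystals form a Serre subcategory of $\mathfrak{C}^{\heartsuit}$.

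First I would place the two witnesses on a common footing. Let $(n', \{X_i'\})$ witness $\mathcal{E}' \in \mathfrak{C}_{\rm cons}^{\lesseqqgtr 0}$ and $(n'', \{X_i''\})$ witness $\mathcal{E}'' \in \mathfrak{C}_{\rm cons}^{\lesseqqgtr 0}$. I set $n = \max(n', n'')$ and pull back each stratification along the relatively perfect projections $X^{(n)} \to X^{(n')}$ and $X^{(n)} \to X^{(n'')}$; relatively perfect morphisms preserve locally finite $p$-bases, so the pulled-back stratifications are again $p$-smooth and witness the same conditions for $\mathcal{E}'^{(n)}$ and $\mathcal{E}''^{(n)}$ by the remark. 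I then form a common refinement by ordering the closed subschemes $X_i' \cap X_k''$ of $X^{(n)}$ by reverse inclusion. The successive differences of this chain are intersections of original strata $(X_i' - X_{i+1}') \cap (X_k'' - X_{k+1}'')$, which need not themselves carry a finite $p$-basis, so I would further refine each by a $p$-smooth stratification produced by noetherian induction, using that every reduced finite-type scheme over $k[[t]]$ contains a dense open subscheme locally with finite $p$-basis over $k$. This yields a common $p$-smooth refinement $\{Z_j\}$ of $X^{(n)}$ with open strata $V_j = Z_j - Z_{j+1}$.

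Now on each $V_j$, the triangulated functor $\iota_{V_j}^+$ (in the $\leq 0$ case) or $\iota_{V_j}^!$ (in the $\geq 0$ case) sends the triangle $\mathcal{E}'^{(n)} \to \mathcal{E}^{(n)} \to \mathcal{E}''^{(n)} \to$ to a distinguished triangle in $\mathfrak{C}(V_j, \mathfrak{P}^{(n)})$ whose outer terms, after the shift by $\delta_{V_j}$, lie in $\mathfrak{C}_{\rm isoc}^{\lesseqqgtr 0}$ by choice of refinement. The middle term lies in $\mathfrak{C}^{\lesseqqgtr 0}$ automatically by the extension property of the canonical t-structure, and the long exact sequence of cohomology realises each $\mathcal{H}^i$ of the middle term as an extension of a subobject of an isocrystal by a quotient of an isocrystal; since isocrystals form a Serre subcategory of $\mathfrak{C}^{\heartsuit}$, each such $\mathcal{H}^i$ is an isocrystal. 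Assembling these stratum-wise conclusions over $\{V_j\}$ gives $\mathcal{E} \in \mathfrak{C}_{\rm cons}^{\lesseqqgtr 0}(X, Y, \mathfrak{P})$.

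The hard part will be the common refinement step, specifically that any intersection of two $p$-smooth strata admits a further $p$-smooth stratification; this rests on the existence of a $p$-smooth stratification for an arbitrary reduced finite-type scheme over $k[[t]]$, which must be extracted from the foundational material of Section \ref{sec-preliminaries}. The Serre-subcategory property of isocrystals inside the heart is a secondary ingredient to verify but should follow from the local description via the $u_\alpha^!$ and the exactness properties of these functors on the heart.
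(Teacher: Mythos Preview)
Your approach is essentially the same as the paper's: pass to a common $n$, pull back both stratifications, take a common $p$-smooth refinement, and verify the defining condition for $\E$ stratum by stratum. The paper compresses your last two paragraphs into the single sentence ``Then we see that the condition holds for $\E$ with respect to $(X_i)$,'' whereas you correctly unpack the two ingredients this hides --- the existence of a common $p$-smooth refinement, and the fact that $\C_{\rm isoc}^{\lesseqqgtr 0}$ is closed under extensions in the triangulated sense (equivalently, that isocrystals form a Serre subcategory of the heart). Both ingredients are indeed available: the refinement step does not require enlarging $n$ beyond $\max(n',n'')$, since any reduced locally closed subscheme of a scheme with finite $p$-basis over $k$ again admits a $p$-smooth stratification (this is implicit in the setup of \cite{intro} underlying Definition~\ref{def-cons} and Lemma~\ref{dev_isoc}, rather than in Section~\ref{sec-preliminaries} as you guess); and the Serre property follows from the local description via $u_\alpha^!$ together with noetherianity of $\O_{\X,\Q}$, exactly as you indicate.
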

    \begin{proof}
        Take $n', n'' \geq 0$ and $p$-smooth stratifications $(X'_i)$ of $X^{(n')}$ and $X^{(n'')}$ such that the condition of Definition \ref{def-cons} hold for $\E'$ with respect to $(X'_i)$ and for $\E''$ with respect to $(X''_i)$. Let $n = \max(n', n'')$ and take a $p$-smooth stratification $(X_i)$ of $X^{(n)}$ which refines the pullbacks of $(X'_i)$ and $(X''_i)$. Then the condition of Definition \ref{def-cons} hold for both $\E'$ and $\E''$ with respect to $(X_i)$. Then we see that the condition holds for $\E$ with respect to $(X_i)$.
    \end{proof}
    
    Let us prove that the constructible t-structure is indeed a t-structure.
    
    \begin{lemma} \label{recol_oc}
        Let $j : U \hookrightarrow X$ be an open immersion and $i : Z \hookrightarrow X$ its complementary closed immersion of realizable schemes. The functors 
        \begin{align*}
            j_!, \, j_+ : \C(U) \rightarrow \C(X),\\
            j^!, \, j^+ : \C(X) \rightarrow \C(U),\\
            i_!, \, i_+ : \C(Z) \rightarrow \C(X),\\
            i^!, \, i^+ : \C(X) \rightarrow \C(Z)
        \end{align*}
        satisfy the following properties:
        
        (i) $i_! \simeq i_+$ and $j^! \simeq j^+$;
        
        (ii) $i^!i_+ \simeq i^+i_+ \simeq id$, $j^+j_! \simeq j^+j_+ \simeq id$, $i^+j_! \simeq i^!j_+ \simeq 0$, and $j^+i_+ \simeq 0$;
        
        (iii) $(i^+, i_+)$, $(j_!, j^+)$, $(i_+, i^!)$ and $(j^+, j_+)$ are adjoint pairs;
        
        (iv) we have distinguished triangles $j_!j^+ \rightarrow id \rightarrow i_+i^+ \rightarrow$ and $i_+i^! \rightarrow id \rightarrow j_+j^+ \rightarrow $;
        
        (v) ({\rm \cite[1.3.2]{weight}}) $i^!, \, j^!$ and are left t-exact and $i^+, j^+$ are right t-exact.
    \end{lemma}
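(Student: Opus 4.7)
The plan is to choose a single enclosing frame and reduce all six functors to operations on one category of $\D$-modules distinguished by support conditions, after which the lemma becomes largely a formal consequence of standard properties of local cohomology. Concretely, fix a couple $(X,Y)$ for $X$ with $Y$ proper over $k[[t]]$ and an enclosing frame $(X,Y,\P)$; then $(U,Y,\P)$ and $(Z,Y,\P)$ are enclosing frames for $(U,Y)$ and $(Z,Y)$, and the morphisms of frames induced by $j$ and $i$ have identity second and third components. By Theorem \ref{crys-phil} the resulting functors are independent of this choice. Under this identification, for a locally closed subscheme $W$ of $X$ sitting inside $Y\subset\P$, $\C(W,Y,\P)$ is the full subcategory of $\C(X,Y,\P)$ of objects $\E$ with $\E\simeq \RG_W\E$, and $i_+$, $j^!$, $i^!$ all arise from local cohomology functors $\RG_Z$, $\RG_U$ on $D^b_{\rm coh}(\D^\dag_{\P,\Q})$; the remaining $i_!$, $i^+$, $j_+$, $j_!$, $j^+$ are obtained from these by duality or adjunction.

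Next I verify (i) and (ii). The isomorphism $i_!\simeq i_+$ is obtained by unwinding $i_! = \DD_{(X,Y,\P)} i_+ \DD_{(Z,Y,\P)}$ using $\DD_{(W,Y,\P)} = \RG_W\DD_\P$ and the standard commutation of $\DD_\P$ with $\RG_W$ for closed $W$; an analogous computation yields $j^!\simeq j^+$ for open $U$. The identities in (ii) then reduce, via (i), to the idempotency $\RG_W^2\simeq\RG_W$ and the orthogonality $\RG_U\RG_Z\simeq 0$ for complementary locally closed subschemes in $X$. For (iii), the pair $(i_+,i^!)$ is adjoint by the Berthelot--Kashiwara theorem applied to the frame morphism induced by $i$, and $(j^+,j_+)$ is the restriction/open-pushforward adjunction; the remaining two pairs $(i^+,i_+)$ and $(j_!,j^+)$ follow by applying duality and (i). For (iv), the triangle $i_+i^!\to{\rm id}\to j_+j^+\to$ is the defining triangle of the local cohomology functor $\RG_Z$ in Berthelot's theory, and its image under $\DD$, combined with (i), gives the other triangle $j_!j^+\to{\rm id}\to i_+i^+\to$. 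Finally, (v) is cited from \cite[1.3.2]{weight}: it follows from (i) together with t-exactness of $j^+$ (restriction to an open preserves coherence degree for the canonical t-structure) and the Berthelot--Kashiwara t-exactness of $i_+$, whose right adjoint $i^!$ is therefore left t-exact and whose left adjoint $i^+$ is right t-exact.

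The only genuine obstacle is to verify that the compatibilities of Berthelot and Caro (commutation of $\DD_\P$ with $\RG_W$, existence of the local cohomology triangle, Berthelot--Kashiwara for closed immersions of frames) remain valid in the slightly enlarged setting of formal schemes only locally with finite $p$-basis over $\V$, rather than globally smooth. As in the remark following Theorem \ref{crys-phil}, the proofs in \cite{intro} essentially go through verbatim, but this must be checked since the standard references are written in the smooth setting.
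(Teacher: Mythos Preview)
Your proposal is correct and takes the same approach as the paper, which simply says ``Choose a frame $(X, Y, \P)$. Then the assertions follow from the definition.'' You have merely unpacked in detail what the paper leaves as an exercise: identifying all six functors with local cohomology operations on a single ambient $\P$ and invoking the standard properties ($\RG_W$-idempotency, the local cohomology triangle, commutation of $\DD_\P$ with $\RG_W$, Berthelot--Kashiwara) that constitute ``the definition'' in this context.
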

    \begin{proof}
        Choose a frame $(X, Y, \P)$. Then the assertions follows from the definition.
    \end{proof}
    
    \begin{lemma}[{\cite[1.4.10]{perv}}] \label{glue_t}
        Let $j : U \rightarrow X$ be an open immersion and $i : Z \rightarrow X$ its complementary closed immersion. Suppose that we have triangulated categories $D(U), D(Z), D(X)$ and six functor formalism as Lemma \ref{recol_oc} (i)-(iv). Suppose furthermore that we have full triangulated subcategories $T(U)$ of $D(U)$ and $T(Z)$ of $D(Z)$ with t-structures such that $i^+j_+$ maps $T(U)$ into $T(Z)$.
        Then the full subcategory $T(X, U) := \{ E \in D(X) : j^+E \in T(U), \,i^+E, i^!E \in T(Z) \}$ is a triangluated subcategory of $D(X)$.
        If we define
        
        $T^{\leq 0}(X, U) := \{ E \in T(X, U) : j^+E \in T^{\leq 0}(U), \,i^+E \in T^{\leq 0}(Z) \}$ and
        
        $T^{\geq 0}(X, U) := \{ E \in T(X, U) : j^+E \in T^{\geq 0}(U), \,i^!E \in T^{\geq 0}(Z) \}$,
        \\
        then this is a t-structure on $T(X, U).$
    \end{lemma}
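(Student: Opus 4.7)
The plan is to follow the standard gluing argument of Beilinson-Bernstein-Deligne \cite[1.4.10]{perv}, since the properties collected in Lemma \ref{recol_oc} axiomatize exactly the recollement structure that argument needs. There are three items to verify: that $T(X, U)$ is a triangulated subcategory, the Hom-vanishing $\mathrm{Hom}(T^{\leq 0}(X, U), T^{\geq 1}(X, U)) = 0$, and the existence of truncation triangles.

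Triangulatedness of $T(X, U)$ is immediate because $j^+, i^+, i^!$ are triangulated functors and $T(U), T(Z)$ are triangulated subcategories, so the three defining conditions are preserved under shifts and cones. For the Hom-vanishing, take $E \in T^{\leq 0}(X, U)$ and $F \in T^{\geq 1}(X, U)$ and apply $\mathrm{Hom}(-, F)$ to the distinguished triangle $j_! j^+ E \to E \to i_+ i^+ E \to$ of Lemma \ref{recol_oc}(iv). The adjunction $(j_!, j^+)$ gives $\mathrm{Hom}(j_! j^+ E, F) \simeq \mathrm{Hom}(j^+ E, j^+ F) = 0$ by the t-structure on $T(U)$, and the adjunction $(i_+, i^!)$ gives $\mathrm{Hom}(i_+ i^+ E, F) \simeq \mathrm{Hom}(i^+ E, i^! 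F) = 0$ by the t-structure on $T(Z)$; hence $\mathrm{Hom}(E, F) = 0$.

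The main obstacle is constructing, for each $E \in T(X, U)$, a truncation triangle $E^{\leq 0} \to E \to E^{\geq 1} \to$, which I plan to do in two stages. Set $A := \tau^{\geq 1}(j^+ E) \in T^{\geq 1}(U)$ and let $E_1$ be the fiber of the composite $E \to j_+ j^+ E \to j_+ A$. Then $j^+ E_1 \simeq \tau^{\leq 0}(j^+ E) \in T^{\leq 0}(U)$, and applying $i^+$ to the triangle $E_1 \to E \to j_+ A \to$ together with the standing hypothesis that $i^+ j_+$ sends $T(U)$ into $T(Z)$ gives $i^+ E_1 \in T(Z)$. Next, set $B := \tau^{\geq 1}(i^+ E_1) \in T^{\geq 1}(Z)$ and let $E^{\leq 0}$ be the fiber of $E_1 \to i_+ i^+ E_1 \to i_+ B$. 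Since $j^+ i_+ \simeq 0$ by Lemma \ref{recol_oc}(ii), $j^+ E^{\leq 0} \simeq j^+ E_1 \in T^{\leq 0}(U)$; since $i^+ i_+ \simeq \mathrm{id}$, $i^+ E^{\leq 0} \simeq \tau^{\leq 0}(i^+ E_1) \in T^{\leq 0}(Z)$.

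The octahedron on $E^{\leq 0} \to E_1 \to E$ produces the complementary object $E^{\geq 1}$, fitting into a triangle $i_+ B \to E^{\geq 1} \to j_+ A \to$. Applying $j^+$ yields $j^+ E^{\geq 1} \simeq A \in T^{\geq 1}(U)$, and applying $i^!$ together with $i^! i_+ \simeq \mathrm{id}$ and the identity $i^! j_+ = 0$ (which is forced by the $(i_+, i^!)$-adjunction and $j^+ i_+ = 0$) gives $i^! E^{\geq 1} \simeq B \in T^{\geq 1}(Z)$. Finally, $i^! E^{\leq 0} \in T(Z)$ follows by applying $i^!$ to $E^{\leq 0} \to E \to E^{\geq 1}$. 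The only delicate point is the compatible choice of octahedra, but this is exactly the bookkeeping of \cite[1.4.10]{perv}, which transfers verbatim since nothing in it uses more than the recollement axioms of Lemma \ref{recol_oc} and the compatibility $i^+ j_+ (T(U)) \subseteq T(Z)$.
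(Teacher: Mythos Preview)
Your proof is correct and follows exactly the standard Beilinson--Bernstein--Deligne gluing argument; the paper itself gives no proof of this lemma but simply cites \cite[1.4.10]{perv}, so your approach is precisely the one being invoked. One very minor point: you could note that $i^! j_+ \simeq 0$ is already listed explicitly in Lemma~\ref{recol_oc}(ii) rather than deriving it, and you should also observe that $i^+ E^{\geq 1} \in T(Z)$ (from the triangle $i_+ B \to E^{\geq 1} \to j_+ A$ and the hypothesis on $i^+ j_+$) to conclude $E^{\geq 1} \in T(X,U)$.
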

    
    \begin{lemma} \label{cons_isoc}
        Suppose that $X$ locally has finite $p$-basis over $k$ and $\E \in \C_{\rm isoc}(X, \P).$ Then $\E \in \C^{\lesseqqgtr 0}_{\rm cons}(X, \P)$ if and only if $\E[\delta_X] \in \C^{\lesseqqgtr 0}(X, \P).$
    \end{lemma}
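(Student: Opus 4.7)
For the ``if'' direction, I take $n = 0$ and the trivial $p$-smooth stratification $X_0 = X \supseteq X_1 = \emptyset$, which is valid because $X$ locally has finite $p$-basis over $k$. Then $U_0 = X$, $\iota_{U_0}$ is the identity, $\delta_{U_0} = \delta_X$, and the constructible condition $\iota_{U_0}^+\E[\delta_{U_0}] \in \C^{\lesseqqgtr 0}_{\rm isoc}(U_0, \P)$ (respectively the $\iota_{U_0}^!$ version for the $\geq 0$ case) reduces exactly to $\E[\delta_X] \in \C^{\lesseqqgtr 0}(X, \P) \cap \C_{\rm isoc}(X, \P)$, both parts of which are given.

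For the ``only if'' direction, suppose the constructible condition holds with $n \geq 0$ and $p$-smooth stratification $(X_i)_{0 \leq i \leq m}$ of $X^{(n)}$. The projection $p^{(n)} : \P^{(n)} \to \P$ is relatively perfect of relative dimension zero, so $p^{(n)!}$ is t-exact for the canonical t-structure and an equivalence (by the remark following Definition \ref{def-cons}); since $\delta_{X^{(n)}} = \delta_X$, I may replace $(\E, X, \P)$ by $(\E^{(n)}, X^{(n)}, \P^{(n)})$ and assume $n = 0$. The case $m = 1$ (trivial stratification) gives the conclusion immediately from the definition of $\C^{\lesseqqgtr 0}_{\rm isoc}$. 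For $m \geq 2$, the open stratum $U_0 = X - X_1$ is dense in $X$ and $\delta_{U_0} = \delta_X$; by Lemma \ref{recol_oc}(i), $\iota_{U_0}^+ \simeq \iota_{U_0}^!$ for the open immersion $\iota_{U_0}$, and this functor is t-exact for the canonical t-structure, so the constructible condition on $U_0$ translates into $\H^k(\E)|_{U_0} = 0$ for $k > \delta_X$ (respectively $k < \delta_X$ in the $\geq 0$ case).

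The remaining step, which I expect to be the main obstacle, is to promote this open-stratum vanishing to vanishing on all of $X$. Each $\H^k(\E)$ is an isocrystal on $X$: locally on $\P$, picking a closed immersion $u : \X \hookrightarrow \P$ that lifts $X \hookrightarrow P$, the module $u^!\H^k(\E)$ is coherent over $\O_{\X, \Q}$ and carries a $\D^\dag_{\X, \Q}$-action. The integrable connection embedded in this action forces such a module to be locally projective over $\O_{\X, \Q}$ of locally constant rank, so its support is a union of connected components of $|\X|$; since this support sits inside the proper closed subset $X_1$, whose complement $U_0$ is dense in every component, the support must be empty and hence $\H^k(\E) = 0$. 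Applying this to every relevant $k$ yields $\E[\delta_X] \in \C^{\lesseqqgtr 0}(X, \P)$. The delicate point to be established rigorously is the local-freeness of $\O$-coherent $\D^\dag$-modules on $\X$, equivalently, the vanishing of any $\O$-coherent $\D^\dag$-module whose support is a proper closed subset, which should follow from a parallel-transport argument using the horizontal structure of the connection.
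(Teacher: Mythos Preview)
Your proposal is correct and takes a genuinely different route from the paper's proof.

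For the ``only if'' direction, the paper argues by induction on the number of strata: it lets $Z$ be the deepest (closed) stratum, applies the inductive hypothesis on the open complement $U$ to get $j^+\E\in\C^{\le\delta_X}$, so that $\tau^{\ge\delta_X+1}\E$ is supported on $Z$; then the triangle $i^+\tau^{\le\delta_X}\E\to i^+\E\to i^+\tau^{\ge\delta_X+1}\E$, right t-exactness of $i^+$, and the constructible hypothesis on $Z$ (giving $i^+\E\in\C^{\le\delta_Z}$) force $\tau^{\ge\delta_X+1}\E=0$. The $\geq 0$ case is done by duality. Your approach instead exploits the isocrystal hypothesis on $\E$ directly: each $\H^k(\E)$ is locally, after pullback along a lift $u$, an $\O_{\X,\Q}$-coherent $\D^\dag_{\X,\Q}$-module, hence locally free, so its support is open and closed; vanishing on the top open stratum therefore propagates. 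This is more elementary and dispenses with the induction, at the cost of invoking local freeness of $\O$-coherent $\D^\dag$-modules --- a standard fact (the $p$-adic analogue of ``a coherent $\O$-module with integrable connection on a smooth base is locally free''), used throughout the theory of convergent isocrystals, so this is not a real gap.

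Two points should be tightened. First, your claim that $U_0$ is dense in $X$ does not follow from the bare definition of a $p$-smooth stratification (the chain need not be strictly decreasing, and even if it is, $U_0$ may miss whole components). What rescues you is that $X$, having finite $p$-basis over the perfect field $k$, is formally smooth over $k$ and hence regular, so each connected component is irreducible; arguing component by component, the first stratum meeting a given component is dense there and carries the correct $\delta$-value, and your argument goes through. Second, in the reduction to $n=0$ you should also observe that $\E^{(n)}\in\C_{\rm isoc}(X^{(n)},\P^{(n)})$, which holds because $p^{(n)}$ is relatively perfect and the pullback of an isocrystal along such a morphism is again an isocrystal.
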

    \begin{proof}
        'If' part is clear from the definition. Suppose $\E \in \C^{\leq 0}_{\rm cons}(X, \P).$ By base change, there is an open immersion $j : U \hookrightarrow X$ such that $j^+\E \in \C^{\leq 0}_{\rm cons}(U, \P)$ and $i^+\E[\delta_Z] \in \C^{\leq 0}_{\rm isoc}(Z, \P)$, where $i : Z \hookrightarrow X$ is the complement of $j$: here $Z$ is chosen to be the last (nonempty) closed subscheme of some $p$-smooth stratification of $X$. By induction on the number of strata, we may assume that $j^+\E[\delta_U] \in \C^{\leq 0}(U, \P).$
        
        By shrinking $\P$, we may assume that $X$ is closed in $\P$. Since $j^+\E \in \C^{\leq \delta_U}(U, \P)$, $j^+$ is right t-exact and $\delta_U = \delta_X$, the truncation (for the canonical t-structure) $\tau^{\geq \delta_X+1}\E$ is supported on $Z$. Consider the following distinguished triangle:
        $$i^+\tau^{\leq \delta_X}\E \rightarrow i^+\E \rightarrow i^+\tau^{\geq \delta_X + 1}\E \rightarrow.$$
        Since $i^+$ is right t-exact by Lemma \ref{recol_oc}, $i^+\tau^{\leq \delta_X}\E \in \C^{\leq \delta_X}(Z, \P)$. By assumption we have $i^+\E \in \C^{\leq \delta_Z}_{\rm isoc}(Z, \P).$ Therefore we have $i^+\tau^{\geq \delta_X + 1}\E \in \C^{\leq \delta_X}(Z, \P).$
        Then, since $i_+i^+\tau^{\geq \delta_X + 1}\E \simeq \tau^{\geq \delta_X + 1}\E$, we have $\tau^{\geq \delta_X + 1}\E = 0$.
        
        The case for $\geq 0$ is proved by duality.
    \end{proof}
    
    \begin{lemma}[{\cite[14.1.11]{intro}}] \label{dev_isoc}
        Let $\E \in \C(X, Y, \P)$. Then there is an $n \geq 0$ and a $p$-smooth stratification $X_0 \supseteq X_1 \supseteq \cdots \supseteq X_m = \emptyset$ of $(X \times_{\Spec k[[t]]} \Spec k[[t^{1/p^n}]])_{\rm red}$ such that $\RG_{U_i} \E^{(n)} \in \C_{\rm isoc}(U_i, \P^{(n)}),$ where $U_i = X_i - X_{i+1}$.
    \end{lemma}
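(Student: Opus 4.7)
The plan is Noetherian induction on the dimension of the support of $\E$. The key reduction is to exhibit, for some $n \geq 0$, a dense open $U_0 \subseteq X^{(n)}_{\rm red}$ such that (a) $U_0$ locally has finite $p$-basis over $k$ and (b) $\RG_{U_0}\E^{(n)} \in \C_{\rm isoc}(U_0, \P^{(n)})$. Granted this, the inductive hypothesis applied to $\RG_Z\E^{(n)}$, where $Z := X^{(n)} \setminus U_0$, yields some $n' \geq 0$ together with a $p$-smooth stratification of $Z^{(n')}$ fulfilling the conclusion. Because $\Spf \V[[t^{1/p^{n+n'}}]] \rightarrow \Spf \V[[t^{1/p^n}]]$ is relatively perfect, pulling $U_0$ back to $U_0^{(n')}$ preserves both the finite $p$-basis property and, via the equivalence recorded in the Remark following Definition \ref{def-cons}, the isocrystal condition on $\RG_{U_0^{(n')}}\E^{(n+n')}$; assembling $U_0^{(n')}$ with the pullback of the stratification of $Z^{(n')}$ produces a $p$-smooth stratification of $X^{(n+n')}$ with the desired property.

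For step (a), I would invoke a generic $p$-smoothness argument in positive characteristic: since $\{t\}$ is a $p$-basis of $k[[t]]$ over $k$ and $X$ is of finite type over $k[[t]]$, after extracting a sufficiently high $p^n$-th root of $t$ the residue fields at the generic points of $X^{(n)}_{\rm red}$ become separable over the residue fields of their images in $\Spec k[[t^{1/p^n}]]$. Shrinking then produces a dense open with a finite $p$-basis over $k$; components of $X$ lying over the closed point $s \in \DD_S$ are handled by classical generic smoothness over the perfect field $k$.

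For step (b), I would exploit that $\C = D^b_{\rm ovhol}$ is built by devissage from pushforwards of overconvergent $F$-isocrystals along quasi-projective morphisms: after further shrinking $U_0$, the pullback $u_\alpha^!(\RG_{U_0}\E^{(n)}|_{\P_\alpha^{(n)}})$ to a local lift $\U_\alpha$ becomes coherent over $\O_{\U_\alpha,\Q}$ in each degree. This is the main obstacle; it is the arithmetic $\D$-module analogue of the classical fact that a holonomic $\D$-module restricts to an integrable connection on a dense open of its support, and the radicial base change $p^{(n)}$ is indispensable here for resolving the inseparability phenomena that are absent from the classical theory.
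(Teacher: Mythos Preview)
The paper does not supply its own proof of this lemma: it is stated with the citation \cite[14.1.11]{intro} and used as a black box. So there is no argument in the paper to compare against; your proposal is an attempt to reconstruct what the cited reference does.

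Your outline is the standard shape such arguments take and is broadly correct. The Noetherian induction and the compatibility of the stratification with further radicial base change are routine, and your justification for step~(a) --- that extracting $p^n$-th roots of $t$ eventually makes the generic points of $X^{(n)}_{\rm red}$ separable over their images, so that generic smoothness applies --- is the right mechanism and is exactly why the functor $(-)^{(n)}$ appears in the statement.

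Step~(b), however, is where the actual content lies, and your sketch is thin there. Saying that $D^b_{\rm ovhol}$ is ``built by devissage from pushforwards of overconvergent $F$-isocrystals'' is not quite how overholonomicity is defined in \cite{intro}; overholonomicity is a stability condition under iterated local cohomology and extraordinary pullback, and the passage from that definition to ``generically an isocrystal on the support'' requires a genuine argument (in the classical theory this goes through Bernstein's inequality and the characteristic variety). You have correctly identified this as the main obstacle, but you have not actually indicated how to overcome it. If you want a self-contained proof you would need to either invoke the structure theory of overholonomic modules developed in \cite{intro} or reduce to the smooth case via Kashiwara's theorem and then argue with the characteristic cycle.
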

    
    \begin{theorem}
        $\C^{\lesseqqgtr 0}_{\rm cons}$ is a t-structure.
    \end{theorem}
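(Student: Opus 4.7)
The plan is to verify the axioms of a t-structure by reducing to an iterated gluing along a $p$-smooth stratification via Lemma~\ref{glue_t}; stability under shifts is immediate from Definition~\ref{def-cons}, so the work is in orthogonality and the existence of truncation triangles. First I would fix a common stratification and assume $n = 0$: by the remark after Definition~\ref{def-cons}, $p^{(n)!} \simeq p^{(n)+}$ is an equivalence $\C(X^{(n)}, Y^{(n)}, \P^{(n)}) \simeq \C(X, Y, \P)$ which matches up the relevant $\iota^{+}[\delta]$ and $\iota^{!}[\delta]$ functors, so after renaming $X^{(n)}$ as $X$ the ``there exists $n$'' quantifier in Definition~\ref{def-cons} disappears. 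For any two objects $\E, \F$, Lemma~\ref{dev_isoc} produces individual $(n,\,\text{stratification})$-pairs witnessing devissage into $\C_{\rm isoc}$; replacing $n$ by the maximum and refining the two stratifications to a common one (the $p$-smoothness being preserved under intersection of strata) furnishes a common witness.

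I would then induct on the length $m$ of such a fixed $p$-smooth stratification $X = X_0 \supseteq \cdots \supseteq X_m = \emptyset$ with $U_i := X_i - X_{i+1}$. The base case $m = 1$ is Lemma~\ref{cons_isoc}: for $\E \in \C_{\rm isoc}(X, \P)$, constructibility coincides with the canonical t-structure shifted by $\delta_X$, which is evidently a t-structure. For the inductive step I would apply Lemma~\ref{glue_t} to the decomposition $j : U_0 \hookrightarrow X \hookleftarrow X_1 : i$, with $T(U_0) = \C_{\rm isoc}(U_0, \P)$ equipped with the shifted canonical t-structure and $T(X_1, \dots)$ the category supplied by the inductive hypothesis on the truncated stratification. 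Lemma~\ref{recol_oc} supplies the six-functor package needed to invoke Lemma~\ref{glue_t}, and Proposition~\ref{ext-prop} together with the common-refinement argument shows the resulting t-structure is independent of the chosen stratification and agrees with $\C^{\lesseqqgtr 0}_{\rm cons}$ of Definition~\ref{def-cons}.

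The main obstacle is verifying the hypothesis of Lemma~\ref{glue_t} that $i^+ j_+$ carries $T(U_0)$ into $T(X_1, \dots)$: concretely, that the restriction to the closed stratum of the pushforward of an isocrystal on $U_0$ admits a $p$-smooth stratification refining the one already fixed on $X_1$. This is a devissage statement that ought to follow from Lemma~\ref{dev_isoc} applied to $i^+ j_+ \E$, but arranging compatibility of the new stratification with the fixed one, and in particular keeping the level $n$ controlled across inductive steps, is where the bookkeeping requires the most care.
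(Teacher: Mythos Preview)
Your approach via iterated gluing along a \emph{fixed} stratification runs into exactly the obstacle you flag at the end, and it is a genuine one rather than mere bookkeeping: for $\E \in \C_{\rm isoc}(U_0, \P)$ there is no reason $i^+ j_+ \E$ should restrict to an isocrystal on each of the remaining strata $U_1, \dots, U_{m-1}$ of the pre-chosen stratification. Lemma~\ref{dev_isoc} does give \emph{some} stratification of $X_1$ adapted to $i^+ j_+ \E$, but refining the fixed stratification by it increases the length $m$, so your induction on $m$ does not close. The attempt to fix $n$ once and for all at the outset has the same problem: the new stratification coming from Lemma~\ref{dev_isoc} may require a larger $n$.

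The paper dissolves this difficulty by reorganizing the induction. Instead of inducting on the length of a stratification, it runs noetherian induction on proper closed subsets $Z \subsetneq X$, assuming the constructible t-structure is already established on the \emph{entire} category $\C(Z, \P)$. For each nonempty open $U \subseteq X$ one then takes $T(U) = \C'(U)$ (objects that become isocrystals after some $p^{(n)}$-pullback, carrying the shifted canonical t-structure via Lemma~\ref{cons_isoc}) and $T(Z) = \C(Z, \P)$ the full category on the complement $Z = X \setminus U$. With this choice the hypothesis of Lemma~\ref{glue_t} that $i^+ j_+$ land in $T(Z)$ is vacuous, so gluing produces a t-structure on $T(X, U) := \{\E \in \C(X, \P) : j^+\E \in \C'(U)\}$ agreeing with $\C^{\lesseqqgtr 0}_{\rm cons}$ there. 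Finally Lemma~\ref{dev_isoc} guarantees that every object of $\C(X, \P)$ lies in $T(X, U)$ for some $U$, and the t-structures on the various $T(X, U)$ patch to give the t-structure on all of $\C(X, \P)$. The point is that by taking $T(Z)$ to be everything, the $i^+j_+$-compatibility check you were worried about simply disappears.
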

    \begin{proof}
        Let $U \subseteq X$ be a nonempty open subset and let $\C'(U)$ be the full (triangulated) subcategory of $\C(U, \P)$ consisting of objects $\E$ such that $U^{(n)} := (U \times_{\Spec k[[t]]} \Spec k[[t^{1/p^n}]])_{\rm red}$ locally has finite $p$-basis over $k$ and $\E^{(n)} \in \C_{\rm isoc}(U^{(n)}, \P^{(n)})$ for some $n \geq 0$.
        
        By Lemma \ref{cons_isoc}, $\C'^{\lesseqqgtr 0}(U) := \C'(U) \cap \C^{\lesseqqgtr 0}_{\rm cons}(U, \P) = \C'(U) \cap \C^{\lesseqqgtr \delta_U}(U, \P)$ is a t-structure on $\C'(U)$.
        Suppose by noetherian induction that the constructible t-structure on any closed subset $Z \subsetneq X$ is indeed a t-structure. By the gluing lemma \ref{glue_t}, the full subcategory $T(X, U) := \{\E \in \C(X, \P) : j^+\E \in \C'(U)\}$ is a triangluated subcategory with a t-structure $T^{\lesseqqgtr 0}(X, U) = \C^{\lesseqqgtr 0}_{\rm cons}(X, \P) \cap T(X, U)$.
        Finally, Lemma \ref{dev_isoc} implies that by varying $U$, $T(X, U)$ covers $\C(X, \P)$; it is now straightforward to prove the statement.
    \end{proof}
    
    \subsection{Another definition of constructible t-structure}
    In this section, we let $\C = D^b_{\rm hol}.$ Abe defines in \cite[1.3]{lang} the constructible t-structure as follows:
    \begin{itemize}
        \item $\C^{\geq 0}_{\rm c}(X)$ consists of objects $K$ such that $\dim({\rm Supp}\, \H^n(K)) \leq n$ for all $n \geq 0$ and $\H^n(K) = 0$ for $n < 0$;
        \item $\C^{\leq 0}_{\rm c}(X)$ consists of objects $K$ such that $\H^n i^+_W = 0$ for any closed immersion $i : W \hookrightarrow X$ and $n > \dim(W).$
    \end{itemize}
    Here ${\rm Supp} \,\E =: Z$ is the smallest closed subset of $X$ such that $\E|_{X-Z} = 0.$
    
    We can prove that $\C^{\lesseqqgtr 0}_{\rm c}$ is a t-structure by slightly modifying the proof of \cite[1.3.3]{lang}; using the symbols of {\it loc. cit.}, $\bigcup_U T(X, U) = D(X)$ is not true in our context since $X$ might not have any nonempty open subset $U$ locally with finite $p$-basis over $k$. We have to define $T(X, U)$ to be the subcategory consisting of objects $\E \in D(X)$ such that $U^{(n)} := U \times_{\Spec k[[t]]} \Spec k[[t^{1/p^n}]]$ locally has finite $p$-basis over $k$ and $j^{(n)+} \E^{(n)} \in D^b_{\rm hol, isoc}(U^{(n)}).$
    
    By the same slight modification of \cite[2.4.3]{cunip}, we can prove that two definitions of constructible t-structures are equivalent.
    
    We have the following theorem also in our setting by the same proof:
    
    \begin{theorem}[{\cite[1.3.4]{lang}}] \label{ct-exact-pullback}
        For any morphism $f$, $f^+$ is t-exact for the constructible $t$-structure.
        For any locally closed immersion $i$, $i_!$ is t-exact for the constructible $t$-structure.
    \end{theorem}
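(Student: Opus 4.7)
The plan is to adapt Abe's proof of [lang, 1.3.4] to our setting; the only modification is the systematic use of $p^{(n)}$-pullbacks wherever Abe invokes an open subscheme locally with finite $p$-basis over $k$, exactly as indicated in the remarks preceding this theorem.

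For the t-exactness of $f^+$, I would verify separately that $f^+$ sends $\C^{\leq 0}_{\rm c}(X)$ into $\C^{\leq 0}_{\rm c}(X')$ and $\C^{\geq 0}_{\rm c}(X)$ into $\C^{\geq 0}_{\rm c}(X')$. For right t-exactness I would use the second characterization: given $K\in\C^{\leq 0}_{\rm c}(X)$ and a closed immersion $i_{W'}:W'\hookrightarrow X'$, factor $f\circ i_{W'}$ through the scheme-theoretic closure $V:=\overline{f(W')}\subseteq X$ as $W'\xrightarrow{g} V\xrightarrow{i_V} X$; then $i_{W'}^+ f^+ K\simeq g^+ i_V^+ K$, and since $\dim V\leq\dim W'$, the hypothesis $\H^n i_V^+ K=0$ for $n>\dim V$ gives $\H^n i_{W'}^+ f^+ K=0$ for $n>\dim W'$ after invoking the right t-exactness of $g^+$ on the relevant stratum (which, via a $p^{(n)}$-cover, reduces to the $p$-basis case handled by Abe). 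Left t-exactness is handled dually via the first characterization: since $\mathrm{Supp}\,\H^n(f^+ K)\subseteq f^{-1}(\mathrm{Supp}\,\H^n(K))$, the dimension bound $\dim\mathrm{Supp}\,\H^n(K)\leq n$ transfers once the relative-dimension shift inherent in $f^+$ is accounted for.

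For the t-exactness of $i_!$ on a locally closed immersion $i:Z\hookrightarrow X$, factor $i=\bar i\circ j$ with $\bar i:\bar Z\hookrightarrow X$ closed and $j:Z\hookrightarrow\bar Z$ open. For the closed piece, $\bar i_!\simeq\bar i_+$ by Lemma \ref{recol_oc}(i), and its t-exactness is immediate from either characterization (a closed immersion identifies $\C_{\rm c}(\bar Z)$ with the full subcategory of $\C_{\rm c}(X)$ of objects supported on $\bar Z$). For the open piece, let $\bar i':\bar Z\setminus Z\hookrightarrow\bar Z$ be the complementary closed immersion. The recollement triangle $j_! j^+\to{\rm id}\to\bar i'_+\bar i'^+\to$ from Lemma \ref{recol_oc}(iv), together with the already proved t-exactness of $j^+$ (a special case of the first assertion) and of $\bar i'_+$, yields the t-exactness of $j_!$ via the associated long exact sequence in the constructible t-structure.

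The main obstacle is the bookkeeping required for the $p^{(n)}$-covers: one must check that the dimension of support, the $p$-basis condition, and the isocrystal condition are all preserved under the universal homeomorphisms $p^{(n)}$, so that stratification arguments can be applied after a sufficiently large pullback. This is precisely the content of Definition \ref{def-cons}, Lemma \ref{dev_isoc}, and the equivalence $p^{(n)!}\simeq p^{(n)+}$ between $\C(\P)$ and $\C(\P^{(n)})$; once this framework is in place, Abe's original argument carries over essentially unchanged.
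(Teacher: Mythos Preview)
Your proposal is correct and matches the paper's approach exactly: the paper's entire proof is the sentence ``We have the following theorem also in our setting by the same proof,'' i.e., it defers to Abe's argument in \cite[1.3.4]{lang} with the $p^{(n)}$-pullback modification already flagged in the preceding paragraphs, and that is precisely what you outline. Your sketch of the left t-exactness step (via supports and a ``relative-dimension shift'') is a bit loose---in practice one factors $f$ as a closed immersion followed by a smooth projection and handles each piece separately, since $f^+$ does not commute with holonomic cohomology in general---but this is exactly how Abe's proof proceeds, so your plan to follow it is sound.
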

    
    Lastly we remark that the equivalence of categories $u_0^* : F\mathchar`-{\rm Ovhol}(X, Y, \P) \rightarrow F\mathchar`-{\rm Ovhol}((\Y_\alpha)_\alpha, T)$ preserves constructible t-structures.
    
    \begin{proposition} \label{u_0^*-cons-t}
        In the situation of Theorem \ref{u_0^*}, $\E \in F\mathchar`-{\rm Ovhol}(X, Y, \P)$ is in $D^{b, \lesseqqgtr 0}_{\rm hol, cons}(X, Y, \P)$ if and only if $u_\alpha^!(\E|\P_\alpha)$ is in $D^{b, \lesseqqgtr 0}_{\rm hol, cons}(X_\alpha, Y_\alpha, \Y_\alpha)$ for all $\alpha$.
    \end{proposition}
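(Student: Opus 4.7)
The plan is to establish both directions using the locality on $X$ of the constructible t-structure, combined with the independence-of-enclosing-frame equivalence provided by Theorem \ref{crys-phil}.

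For the ``only if'' direction, I would first observe that the open immersion of frames $\iota_\alpha : (X_\alpha, Y_\alpha, \P_\alpha) \hookrightarrow (X, Y, \P)$ induces a pullback $\iota_\alpha^+$ which is t-exact for the constructible t-structure by Theorem \ref{ct-exact-pullback}, so that $\E|\P_\alpha$ lies in $D^{b, \lesseqqgtr 0}_{\rm hol, cons}(X_\alpha, Y_\alpha, \P_\alpha)$. Then I would apply Theorem \ref{crys-phil} to the morphism of frames $(\mathrm{id}, \mathrm{id}, u_\alpha) : (X_\alpha, Y_\alpha, \Y_\alpha) \to (X_\alpha, Y_\alpha, \P_\alpha)$, whose middle component is trivially proper, to get that $u_\alpha^!$ is an equivalence of the two enclosing categories. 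Since every functor appearing in Definition \ref{def-cons}---the base change $p^{(n)}$, the quasi-projective extraordinary pullbacks $\iota_{U_i}^+$ to strata, the local cohomological functors, and the isocrystal condition together with the canonical t-structure---is compatible with this equivalence, the image $u_\alpha^!(\E|\P_\alpha)$ must lie in $D^{b, \lesseqqgtr 0}_{\rm hol, cons}(X_\alpha, Y_\alpha, \Y_\alpha)$.

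For the ``if'' direction, I would invoke the alternative description of the constructible t-structure on $D^b_{\rm hol}$ discussed just above (following Abe \cite{lang}): membership in $\C^{\geq 0}_{\rm c}(X)$ is characterized by $\dim\,\mathrm{Supp}\,\H^n(K) \leq n$ together with $\H^n(K) = 0$ for $n < 0$, and membership in $\C^{\leq 0}_{\rm c}(X)$ by the vanishing of $\H^n i_W^+(K)$ for every closed immersion $i_W : W \hookrightarrow X$ with $n > \dim W$. Both of these conditions are local on $X$: the canonical cohomology sheaves and their supports commute with restriction to each $X_\alpha$, and any closed subscheme $W \subseteq X$ satisfies $\dim W = \max_\alpha \dim(W \cap X_\alpha)$, so the vanishing condition can be checked on the covering. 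Combining this locality with the frame equivalence of Theorem \ref{crys-phil} applied as in the other direction, the hypothesis that each $u_\alpha^!(\E|\P_\alpha)$ lies in $D^{b, \lesseqqgtr 0}_{\rm hol, cons}(X_\alpha, Y_\alpha, \Y_\alpha)$ transfers first to $\E|\P_\alpha$ and then to $\E$ itself.

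The delicate point I expect is the verification that the equivalence of Theorem \ref{crys-phil} genuinely commutes with the specific functors $p^{(n)!}$ and $\iota_{U_i}^+$ that enter Definition \ref{def-cons}; this should be automatic from the construction of $u_\alpha^!$ as a composition of local cohomology and extraordinary pullback, but must be spelled out carefully so that the constructible bound really transports across the equivalence.
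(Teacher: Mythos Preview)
Your approach is correct in outline and reaches the right conclusion, but it takes a longer route than the paper. Both arguments ultimately reduce to two ingredients: Zariski locality of the constructible t-structure (to pass between $\P$ and the $\P_\alpha$), and the claim that the frame-change equivalence $u_\alpha^!$ between $\C(X_\alpha, Y_\alpha, \P_\alpha)$ and $\C(X_\alpha, Y_\alpha, \Y_\alpha)$ preserves the constructible t-structure. You propose to establish this second point by checking that $u_\alpha^!$ commutes with each constituent of Definition~\ref{def-cons} (the base change $p^{(n)}$, the pullbacks $\iota_{U_i}^+$ to strata, the isocrystal condition), and you correctly flag this as the delicate step. The paper sidesteps this verification entirely with a single observation: since $\E$ is supported on $X \subseteq Y_\alpha$, one has $u_\alpha^+(\E|\P_\alpha) \simeq u_\alpha^!(\E|\P_\alpha)$ (a consequence of Berthelot--Kashiwara together with $u_{\alpha!} \simeq u_{\alpha+}$ for the closed immersion $u_\alpha$). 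Then the t-exactness of $u_\alpha^+$ supplied directly by Theorem~\ref{ct-exact-pullback} makes $u_\alpha^!$ t-exact on these objects, and since $u_\alpha^!$ is an equivalence it both preserves and reflects membership in $\C^{\lesseqqgtr 0}_{\rm cons}$. This one-line reduction eliminates exactly the compatibility check you identified as delicate; your argument would go through, but the paper's shortcut is substantially cleaner.
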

    \begin{proof}
        If $\E \in F\mathchar`-{\rm Ovhol}(X, Y, \P)$, then $u^+ \E \simeq u^! \E.$ As constructible t-structure is Zariski local, Theorem \ref{ct-exact-pullback} implies the assertion.
    \end{proof}
    
    \subsection{Dual constructible t-structure}
    The dual constructible t-structure is defined by the dual of constructible t-structure:

    \begin{itemize}
        \item $\E \in \C^{\geq 0}_{\rm dcons}$ if and only if $\DD \E \in \C^{\leq 0}_{\rm cons}$;
        \item $\E \in \C^{\leq 0}_{\rm dcons}$ if and only if $\DD \E \in \C^{\geq 0}_{\rm cons}$.
    \end{itemize}
    
    By duality and results in the previous section, we have following properties:
    
    \begin{itemize}
        \item for any morphism $f$, $f^!$ is t-exact for the dual constructible t-structure;
        \item for any locally closed immersion $i$, $i_+$ is t-exact for the dual constructible t-structure;
    \end{itemize}
    
    Let $j$ be an open immersion and $i$ its complement. Since we have a distinguished triangle $$i_+i^! \rightarrow 1 \rightarrow j_+j^! \rightarrow$$
    of t-exact functors, by the 9-lemma, we get the following lemma:
    
    \begin{lemma} \label{dev_exact}
        Let $\Sigma : 0 \rightarrow \E' \xrightarrow{f} \E \xrightarrow{g} \E'' \rightarrow 0$ be a sequence of $D^{b,  \heartsuit}_{\rm hol, dcons}$ such that $gf = 0$. Then $\Sigma$ is an exact sequence if and only if $i^!\Sigma$ and $j^!\Sigma$ are exact sequences.
    \end{lemma}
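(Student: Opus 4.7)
The plan is to deduce the lemma from the classical $9$-lemma in the abelian category $D^{b,\heartsuit}_{\rm hol, dcons}(X)$, exploiting the t-exactness properties of $i_+, i^!, j_+, j^!$ recorded at the beginning of this subsection. The ``only if'' direction is immediate: since $i^!$ and $j^!$ are t-exact for the dual constructible t-structure, they send the short exact sequence $\Sigma$ to short exact sequences.

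For the ``if'' direction, apply the distinguished triangle $i_+i^! \to 1 \to j_+j^! \to$ termwise to each of $\E', \E, \E''$. By naturality this produces a commutative $3 \times 3$ diagram in $D^b_{\rm hol}(X)$ with middle row $\Sigma$, top row $0 \to i_+i^!\E' \to i_+i^!\E \to i_+i^!\E'' \to 0$, bottom row $0 \to j_+j^!\E' \to j_+j^!\E \to j_+j^!\E'' \to 0$, and three columns given by the triangle applied to $\E', \E, \E''$ respectively. Because $i_+, i^!, j_+, j^!$ are all t-exact for the dual constructible t-structure and $\E', \E, \E''$ lie in the heart, every vertex of the diagram lies in $D^{b,\heartsuit}_{\rm hol, dcons}(X)$; in particular, each column, being a distinguished triangle whose three terms all lie in the heart, becomes a short exact sequence.

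By hypothesis $i^!\Sigma$ and $j^!\Sigma$ are exact, and by the t-exactness of $i_+$ and $j_+$ this propagates to exactness of the top and bottom rows. The middle row is a complex by the assumption $gf = 0$. The $9$-lemma (three short exact columns, short exact top and bottom rows, middle row a complex) then forces $\Sigma$ to be exact.

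The only mildly technical point is verifying the commutativity of the $3 \times 3$ diagram, i.e.\ the naturality of $f$ and $g$ with respect to the natural transformations $i_+i^! \to 1$ and $1 \to j_+j^!$; this is automatic from functoriality, so I expect no substantive obstacle.
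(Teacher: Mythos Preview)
Your proposal is correct and follows essentially the same approach as the paper: the paper's proof consists of the single sentence that the distinguished triangle $i_+i^! \to 1 \to j_+j^! \to$ is a triangle of t-exact functors, so the 9-lemma applies. You have simply spelled out the details of this argument.
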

    
    \section{Purity} \label{sec-purity}
    \subsection{Calculation of $\RG_Y(\O_{\P, \Q})$} \label{RG-calc}
    Let $\P$ be a projective formal scheme over $\V[[t]]$ locally with finite $p$-basis over $\V$, $Z_0, \dots, Z_n$ closed subschemes of $P$ of codimension $r$ and let $Y = Z_0 \cup \cdots \cup Z_n$. For $0 \leq i_0 < \cdots < i_p \leq n$, we put $Z_{i_0 \cdots i_p} = Z_{i_0} \cap \cdots \cap Z_{i_p}$. We suppose that locally there exists a finite $p$-basis $t_i$ of $\P$ over $\S$ such that for any $i_0 < \cdots < i_p$, $Z_{i_0\cdots i_p}$ is defined by $t_{j_0 } = \cdots = t_{j_{p+r}} = 0$ for some distinct $j_0 < \cdots < j_{p+r}$. For each $i = 0, \dots, r$, we fix a set of divisors $T_{i, 1}, \dots, T_{i, r}$ of $P$ such that $Z_i = T_{i, 1} \cap \cdots \cap T_{i, r}.$
    
    \begin{lemma}[{\cite[2.5.3]{intro}}] \label{sp_resol}
        Let $T_0, \dots, T_m$ be divisors of $P$ and put $U_i = P - T_i$. We define the complex $\mathcal{C}^\dag(T_0, \dots, T_m)$ to be
        $$\displaystyle \O_{\P^{\rm ad}} \rightarrow \bigoplus_{i_0} j^\dag_{U_{i_0}}\O_{\P^{\rm ad}} \rightarrow \bigoplus_{i_0<i_1} j^\dag_{U_{i_0} \cap U_{i_1}}\O_{\P^{\rm ad}} \rightarrow \cdots \rightarrow j^\dag_{U_0 \cap \cdots \cap U_m}\O_{\P^{\rm ad}},$$
        where the left-most term is put at degree 0. Then there is a canonical quasi-isomorphism $\G_{T_0 \cap \cdots \cap T_m} \O_{\P^{\rm ad}} \xrightarrow{\sim} \mathcal{C}^\dag(T_0, \dots, T_m).$ Furthermore, $\mathcal C^\dag(T_0, \dots, T_m)$ is an ${\rm sp}_*$-acyclic resolution of $\G_{T_0 \cap \cdots \cap T_m}\O_{\P^{\rm ad}}$.
        
        For $0 \leq i_0 < \cdots < i_{m'} \leq m$, the quasi-isomorphism is compatible with the canonical map of complexes $\mathcal C^\dag(T_0, \dots, T_m) \rightarrow \mathcal C^\dag(T_{i_0}, \dots, T_{i_{m'}})$.
    \end{lemma}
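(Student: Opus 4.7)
The plan is to proceed by induction on $m$, using as the engine the fundamental distinguished triangle
$$\G_T\O_{\P^{\rm ad}} \to \O_{\P^{\rm ad}} \to j^\dag_U\O_{\P^{\rm ad}} \to $$
associated to a divisor $T$ with complement $U = P - T$, together with the composition formula $j^\dag_{U} j^\dag_{V}\O_{\P^{\rm ad}} = j^\dag_{U \cap V}\O_{\P^{\rm ad}}$ for two open complements of divisors. For the base case $m=0$, the complex $\mathcal C^\dag(T_0)$ is the two-term complex $[\O_{\P^{\rm ad}} \to j^\dag_{U_0}\O_{\P^{\rm ad}}]$ in degrees $0$ and $1$, and the fundamental triangle above identifies $\G_{T_0}\O_{\P^{\rm ad}}$ with the fiber of the canonical arrow, giving the claimed quasi-isomorphism.

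For the induction step, I would observe that applying $j^\dag_{U_0}$ termwise to $\mathcal C^\dag(T_1, \dots, T_m)$ and using the composition rule yields a complex whose terms are $j^\dag_{U_0 \cap U_{i_1 \cdots i_p}}\O_{\P^{\rm ad}}$; then the mapping cone (with the appropriate shift) of the natural morphism of complexes $\mathcal C^\dag(T_1, \dots, T_m) \to j^\dag_{U_0}\mathcal C^\dag(T_1, \dots, T_m)$ is canonically isomorphic to $\mathcal C^\dag(T_0, T_1, \dots, T_m)$. By the inductive hypothesis, the source and target of this morphism represent $\G_{T_1 \cap \cdots \cap T_m}\O_{\P^{\rm ad}}$ and $j^\dag_{U_0}\G_{T_1 \cap \cdots \cap T_m}\O_{\P^{\rm ad}}$ respectively in the derived category. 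Applying the fundamental triangle once more (to $T_0$, with the functor $\G_{T_1 \cap \cdots \cap T_m}$ applied) and invoking $\G_{T_0}\G_{T_1\cap \cdots \cap T_m} \simeq \G_{T_0 \cap T_1 \cap \cdots \cap T_m}$ then identifies the cone with $\G_{T_0 \cap T_1 \cap \cdots \cap T_m}\O_{\P^{\rm ad}}$. The compatibility with the restriction morphism $\mathcal C^\dag(T_0, \dots, T_m) \to \mathcal C^\dag(T_{i_0}, \dots, T_{i_{m'}})$ then follows because every step in this inductive construction is natural in the collection of divisors.

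For the ${\rm sp}_*$-acyclicity, it is enough to show that each individual term $j^\dag_{V}\O_{\P^{\rm ad}}$ appearing in $\mathcal C^\dag(T_0, \dots, T_m)$ is ${\rm sp}_*$-acyclic. This is a local question on $\P$, so one may reduce to the case where $\P$ is affine; there the claim follows from the standard acyclicity results for overconvergent structure sheaves in Berthelot's theory (the higher direct images under ${\rm sp}_*$ of $j^\dag_V\O_{\P^{\rm ad}}$ vanish because overconvergent sections over an affinoid are a filtered colimit of sections of affinoid coherent sheaves, whose higher cohomology vanishes).

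The main obstacle I expect is bookkeeping: pinning down signs, degree shifts, and the precise form of the mapping cone so that the cone of the termwise-$j^\dag_{U_0}$ arrow is genuinely identified on the nose with the Cech-type complex $\mathcal C^\dag(T_0, \dots, T_m)$ (rather than some variant shifted or re-signed), and doing so compatibly with restriction morphisms. The appeal to Berthelot's acyclicity is the other nontrivial ingredient, but it is well-documented in the literature rather than an open difficulty.
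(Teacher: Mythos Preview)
The paper does not give its own proof of this lemma: it is stated with a citation to \cite[2.5.3]{intro} and used as a black box. So there is no argument in the paper to compare against.

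That said, your proposal is correct and is the standard way this statement is proven. The inductive identification of $\mathcal C^\dag(T_0,\dots,T_m)$ with the cone of $\mathcal C^\dag(T_1,\dots,T_m) \to j^\dag_{U_0}\mathcal C^\dag(T_1,\dots,T_m)$ is exactly right; an equivalent and slightly cleaner way to package the same computation is to note that $\mathcal C^\dag(T_0,\dots,T_m)$ is the tensor product over $\O_{\P^{\rm ad}}$ of the two-term complexes $[\O_{\P^{\rm ad}} \to j^\dag_{U_i}\O_{\P^{\rm ad}}]$, each of which represents $\G_{T_i}\O_{\P^{\rm ad}}$, and then invoke $\G_{T_0}\cdots\G_{T_m} \simeq \G_{T_0\cap\cdots\cap T_m}$. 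This is the form in which the argument appears in Caro's reference, and it makes the compatibility with restriction to a subfamily of divisors (your last claim) immediate, since dropping divisors corresponds to projecting off tensor factors. Your concern about signs and shifts in the mapping-cone bookkeeping is legitimate but routine; the tensor-product formulation sidesteps it entirely. The ${\rm sp}_*$-acyclicity of each $j^\dag_V\O_{\P^{\rm ad}}$ is, as you say, Berthelot's result and not in question.
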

    
    By this lemma, we calculate $\RG_{T_0 \cap \cdots \cap T_m} \O_{\P, \Q} = \mathbb{R} {\rm sp}_* \G_{T_0 \cap \cdots \cap T_m}\O_{\P^{\rm ad}} \simeq {\rm sp}_* \mathcal C^\dag(T_0, \dots, T_m).$
    
    \begin{lemma}[{Berthelot, \cite[9.4.6]{intro}}] \label{rg_concentrate}
        In the situation of the previous lemma, suppose that locally there is a $p$-basis $t_i$ of $\P$ such that $T_i$ is cut out by $t_i$. Then
        $\H^{\dag i}_{T_0 \cap \cdots \cap T_m}(\O_{\P, \Q}) = 0$ if $i \neq m+1.$
    \end{lemma}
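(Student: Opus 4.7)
The plan is to argue by induction on $m$, using the local nature of the assertion and the composition property of local cohomological functors. The statement is local on $\P$, so I may assume $\P = \Spf A$ is affine, that the $p$-basis $t_1, \ldots, t_N$ of $\P$ over $\S$ is global, and that $T_i = V(t_i)$ for $i = 0, \ldots, m$.

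For the base case $m = 0$, the complex $\mathcal{C}^\dag(T_0)$ has only two nonzero terms: $\O_{\P, \Q}$ in degree $0$ and $j^\dag_{U_0}\O_{\P, \Q}$ in degree $1$. Concentration in degree $1$ is therefore equivalent to the injectivity of the natural map $\O_{\P, \Q} \to j^\dag_{U_0}\O_{\P, \Q}$, which holds because $t_0$, being part of a $p$-basis, is a non-zero-divisor in the sections of $\O_{\P, \Q}$. For the inductive step, I would apply the composition property $\RG_{T_0 \cap \cdots \cap T_m}(\O_{\P, \Q}) \simeq \RG_{T_m}\bigl(\RG_{T_0 \cap \cdots \cap T_{m-1}}(\O_{\P, \Q})\bigr)$ of local cohomological functors. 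By the inductive hypothesis, $\RG_{T_0 \cap \cdots \cap T_{m-1}}(\O_{\P, \Q}) \simeq \mathcal{F}[-m]$ for $\mathcal{F} := \H^{\dag m}_{T_0 \cap \cdots \cap T_{m-1}}(\O_{\P, \Q})$. It therefore suffices to show that $\RG_{T_m}\mathcal{F}$ is concentrated in degree $1$, or equivalently, that the natural map $\mathcal{F} \to j^\dag_{U_m}\mathcal{F}$ is injective.

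The hard part is this injectivity, for which I would make $\mathcal{F}$ explicit. From the top of the Čech-type resolution $\mathcal{C}^\dag(T_0, \ldots, T_{m-1})$, the global sections of $\mathcal{F}$ are a quotient of $A_\Q\{t_0^{-1}, \ldots, t_{m-1}^{-1}\}^\dag$ by the images of the sub-summands involving only $m-1$ of the localizing variables; one then identifies this quotient with the ``purely negative exponent'' subspace, consisting of dagger-convergent series in monomials $t_0^{-\alpha_0}\cdots t_{m-1}^{-\alpha_{m-1}}$ with every $\alpha_i \geq 1$ (and arbitrary exponents in the remaining $p$-basis variables). Since $t_m$ is part of the $p$-basis and distinct from $t_0, \ldots, t_{m-1}$, multiplication by $t_m$ sends such monomials to monomials of the same type, so it is injective on $\mathcal{F}$. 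The technical obstacle lies in rigorously justifying the identification of $\mathcal{F}$ with this ``purely negative'' subspace in the weakly completed setting: one must verify that a canonical splitting of $A_\Q\{t_0^{-1}, \ldots, t_{m-1}^{-1}\}^\dag \twoheadrightarrow \mathcal{F}$ respects the dagger convergence conditions, which requires careful bookkeeping of dagger norms under the Čech differentials. This convergence analysis is the core computation underlying the lemma.
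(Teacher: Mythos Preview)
The paper does not give its own proof of this lemma; it is stated with attribution to Berthelot and a citation to \cite[9.4.6]{intro}, with no argument supplied. So there is nothing in the paper to compare against directly.

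That said, your inductive strategy via the composition formula $\RG_{T_0\cap\cdots\cap T_m}\simeq\RG_{T_m}\circ\RG_{T_0\cap\cdots\cap T_{m-1}}$, together with the explicit identification of $\H^{\dag m}_{T_0\cap\cdots\cap T_{m-1}}(\O_{\P,\Q})$ with the ``purely negative exponent'' part of the dagger localization, is exactly the standard route and is essentially how the cited reference proceeds. The one place I would tighten your write-up is the passage from ``$t_m$ is a non-zero-divisor on $\mathcal F$'' to ``$\mathcal F\to(^\dag T_m)\mathcal F$ is injective'': the latter is a statement about the direct limit $\varinjlim\,\widehat{\mathcal F[t_m^{-1}]}$ over a system of $p$-adic completions, not just about ordinary localization, so you should check that no nonzero element of $\mathcal F$ becomes $p$-divisible after inverting $t_m$. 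With the explicit monomial description you outline this is immediate, but it is worth saying explicitly rather than leaving it as a consequence of non-zero-divisorship alone.
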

    
    \begin{lemma} \label{GY-resol}
        Let $E$ be an $\O_{\P^{\rm ad}}$-module. There is a canonical quasi-isomorphism of complexes
        $$\displaystyle [\G_{Z_{0 \cdots n}} E \rightarrow \cdots \rightarrow \bigoplus_{i_0 < i_1}\G_{Z_{i_0i_1}} E \rightarrow \bigoplus_{i_0}\G_{Z_{i_0}} E] \xrightarrow{\sim} \G_Y E,$$
        where on the left hand side, the right-most term is put at degree 0.
    \end{lemma}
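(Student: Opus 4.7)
The plan is to proceed by induction on $n$, using a Mayer--Vietoris distinguished triangle for $\G$ as the main ingredient. The base case $n = 0$ is immediate, since the complex reduces to $\G_{Z_0} E$ in degree $0$ and $Y = Z_0$.

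The key input is a Mayer--Vietoris triangle: for any two closed subsets $Z', Z''$ of $P$ and any $\O_{\P^{\rm ad}}$-module $E$, one has a distinguished triangle
$$\G_{Z' \cap Z''} E \to \G_{Z'} E \oplus \G_{Z''} E \to \G_{Z' \cup Z''} E \to .$$
This is the overconvergent analog of the classical Mayer--Vietoris for local cohomology. I would derive it from the defining triangles $\G_Z E \to E \to j^\dag_{P-Z} E \to$, combined with the Mayer--Vietoris triangle for the complementary opens $U' = P - Z'$, $U'' = P - Z''$,
$$j^\dag_{U' \cup U''} E \to j^\dag_{U'} E \oplus j^\dag_{U''} E \to j^\dag_{U' \cap U''} E \to ,$$
via the octahedral axiom.

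For the inductive step, write $Y' := Z_0 \cup \cdots \cup Z_{n-1}$, so that $Y = Y' \cup Z_n$, and apply the triangle above to the pair $(Y', Z_n)$:
$$\G_{Y' \cap Z_n} E \to \G_{Y'} E \oplus \G_{Z_n} E \to \G_Y E \to .$$
Since $Y' \cap Z_n = \bigcup_{i=0}^{n-1} (Z_i \cap Z_n)$ is again a union of $n$ closed subsets of $P$, the induction hypothesis furnishes a Čech-type resolution of $\G_{Y' \cap Z_n} E$ with term $\bigoplus_{i_0 < \cdots < i_p < n} \G_{Z_{i_0 \cdots i_p n}} E$ in degree $-p$, and one of $\G_{Y'} E$ with term $\bigoplus_{i_0 < \cdots < i_p < n} \G_{Z_{i_0 \cdots i_p}} E$ in degree $-p$. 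Representing $\G_Y E$ as the mapping cone of the first arrow and sorting terms according to whether the multi-index contains $n$ (indices not containing $n$ come from $\G_{Y'} E$, with $\G_{Z_n} E$ supplying the singleton $\{n\}$, while indices containing $n$ come from the shifted copy $\G_{Y' \cap Z_n} E[1]$) recovers exactly the desired Čech-type complex at the level of terms.

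The main obstacle I anticipate is sign and orientation bookkeeping: one must verify that after this re-indexation the differential inherited from the mapping cone coincides, with the correct signs, with the Čech differential obtained by alternating deletion of indices. This is best handled by presenting the entire construction as the totalization of a natural double (or simplicial) complex, so that the alternating signs and the compatibility with the face maps $\G_{Z_{i_0 \cdots i_p}} E \to \G_{Z_{i_0 \cdots \widehat{i_j} \cdots i_p}} E$ emerge automatically from the naturality of the Mayer--Vietoris triangle in both entries.
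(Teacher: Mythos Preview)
Your approach is correct and close in spirit to the paper's, but the paper organizes the induction a bit differently in a way that sidesteps the sign bookkeeping you flag.

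Rather than working with $\G_Y E$ directly, the paper passes to the augmented complex
\[
K_p := [\G_{Z_{0\cdots p}} E \to \cdots \to \bigoplus_{i_0}\G_{Z_{i_0}} E \to E]
\]
(with $E$ in degree $0$) and proves the equivalent statement $K_p \simeq j^\dag_{P - Z_0\cup\cdots\cup Z_p} E$ by induction on $p$. The inductive step is the single observation that the cone of the natural map $\G_{Z_{p+1}} K_p \to K_p$ is, \emph{on the nose as a complex}, equal to $K_{p+1}$: applying $\G_{Z_{p+1}}$ to each term $\G_{Z_I} E$ gives $\G_{Z_{I\cup\{p+1\}}} E$, so the cone terms are exactly the multi-indices in $\{0,\dots,p+1\}$ split according to whether they contain $p+1$. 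Since the cone of $\G_{Z_{p+1}}(-)\to(-)$ is $j^\dag_{P-Z_{p+1}}(-)$, the induction hypothesis gives $K_{p+1}\simeq j^\dag_{P-Z_{p+1}} j^\dag_{P-Z_0\cup\cdots\cup Z_p} E$.

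Compared with your route, the paper (i) invokes the induction hypothesis only once rather than twice (you need it for both $Y'$ and $Y'\cap Z_n$), and (ii) avoids any compatibility check between the Mayer--Vietoris map and the inductively constructed resolutions, because the identification of $K_{p+1}$ with the cone is literal at the level of complexes. Your argument is fine, but if you want to eliminate the sign discussion entirely, switching to the augmented complex and applying $\G_{Z_n}$ to it is the cleaner move.
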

    \begin{proof}
        Let $K_p$ be the complex
        $$\displaystyle \G_{Z_{0 \cdots p}}E \rightarrow \cdots \rightarrow \bigoplus_{0\leq i_0 < i_1 \leq p}\G_{Z_{i_0i_1}}E \rightarrow \bigoplus_{0 \leq i_0 \leq p}\G_{Z_{i_0}}E \rightarrow E.$$
        This lemma is equivalent to saying that $K_p$ is quasi-isomorphic to $j^\dag_{P - Z_0 \cup \cdots \cup Z_p} E.$ We proceed by induction on $p$. Observe that the mapping fiber of $\G_{Z_{p+1}} K_p \rightarrow K_p$ is isomorphic to $K_{p+1}[-1].$ This means that there is a distinguished triangle $\G_{Z_{p+1}} K_p \rightarrow K_p \rightarrow K_{p+1} \rightarrow$; by induction hypothesis, $K_{p+1}$ is quasi-isomorphic to $j^\dag_{P-Z_{p+1}} j^\dag_{P - Z_0 \cup \cdots \cup Z_p} E \simeq j^\dag_{P - Z_0 \cup \cdots \cup Z_{p+1}} E.$
    \end{proof}
    
    In the lemma above, each $\displaystyle \G_{Z_{i_0} \cap \cdots \cap Z_{i_p}}\O_{\P^{\rm ad}}$ of the complex on the left hand side has an ${\rm sp}_*$-acyclic resolution $\mathcal C^\dag_{i_0 \cdots i_p} := \mathcal C^\dag(T_{i_{p'} j})_{0 \leq p' \leq p, 1 \leq j \leq r}$ by Lemma \ref{sp_resol}. 
    We can formulate the double complex $\mathcal C^{\dag \cdot \cdot}$
    $$\displaystyle \mathcal C^\dag_{1 \cdots n} \rightarrow \cdots \rightarrow \bigoplus_{i_0 < i_1}\mathcal C^\dag_{i_0 i_1}\O_{\P^{\rm ad}} \rightarrow \bigoplus_{i_0}\mathcal C^\dag_{i_0}.$$
    Let $\mathcal C^{\dag \cdot}$ denote its total complex. $\mathcal C^{\dag \cdot}$ consists of ${\rm sp}_*$-acyclic terms and is isomorphic to $\G_Y\O_{\P^{\rm ad}}$ in the derived category; therefore ${\rm sp}_* \mathcal C^{\dag \cdot} \simeq \RG_Y\O_{\P, \Q}.$ We have a biregular spectral sequence
    $$\displaystyle E^{-p, q}_1 = \bigoplus_{i_0 < \cdots < i_p} \H^q( {\rm sp}_* \mathcal C^\dag_{i_0 \cdots i_p}) \Rightarrow \H^{\dag-p+q}_Y(\O_{\P, \Q}).$$
    Since $\mathcal C^\dag_{i_0 \cdots i_p}$ is an ${\rm sp}_*$-acyclic resolution of $\G_{Z_{i_0} \cap \cdots \cap Z_{i_p}}\O_{\P^{\rm ad}}$ by Lemma \ref{sp_resol}, we have $\displaystyle E^{-p, q}_1 \simeq \bigoplus_{i_0 < \cdots < i_p} \H^{\dag q}_{Z_{i_0\cdots i_p}}(\O_{\P, \Q}).$
    By our assumption, $Z_{i_0\cdots i_p}$ is locally cut out by some elements of finite $p$-basis of $\P$ and has codimension $r + p$. Therefore, Lemma \ref{rg_concentrate} implies that $E^{-p, q}_1 = 0$ if $-p + q \neq r.$ Thus the spectral sequence degenerates at $E_1$ and we have $\displaystyle E_1^{-p, q} = E_\infty^{-p, q} \simeq \bigoplus_{i_0 < \cdots < i_p} \H^{\dag q}_{Z_{i_0\cdots i_p}}(\O_{\P, \Q})$ and $\H^{\dag q}_{Z_{i_0\cdots i_p}}(\O_{\P, \Q}) = 0$ if $q \neq p + r.$
    Also, we have the descending filtration $F^p$ on $\H^{\dag r}_Y(\O_{\P, \Q})$ induced by the spectral sequence and we have by definition $$\displaystyle Gr_F^{-p} \H^{\dag r}_Y(\O_{\P, \Q}) \simeq E_\infty^{-p, r+p} \simeq \bigoplus_{i_0 < \cdots < i_p} \H^{\dag r+p}_{Z_{i_0\cdots i_p}}(\O_{\P, \Q}).$$
    Define $F_p := F^{-p}$; then $\displaystyle Gr^F_p \H^{\dag r}_Y(\O_{\P, \Q}) \simeq \bigoplus_{i_0 < \cdots < i_p} \H^{\dag r+p}_{Z_{i_0\cdots i_p}}(\O_{\P, \Q}).$
    
    Let us summarize the argument above as a theorem:
    \begin{theorem} \label{calc_rg}
        Let $\P$ be a projective, smooth formal scheme over $\V[[t]]$, $Z_0, \dots, Z_n$ closed subschemes of $P$ of codimension $r$ and let $Y = Z_0 \cup \cdots \cup Z_n$. For $0 \leq i_0 < \cdots < i_p \leq n$, we put $Z_{i_0 \cdots i_p} = Z_{i_0} \cap \cdots \cap Z_{i_p}$. We suppose that locally there exists a $p$-basis $t_i$ of $\P$ over $\S$ such that $Z_{i_0\cdots i_p}$ is defined by $t_{j_0 } = \cdots = t_{j_{p+r}} = 0$ for some $j_0 < \cdots < j_{p+r}$.
        
        Then there is an increasing filtration $F_p$ of $\H^{\dag r}_Y(\O_{\P, \Q})$ such that $F_{-1} = 0, F_n = \H^{\dag r}_Y(\O_{\P, \Q})$ and 
        
        $$\displaystyle Gr^F_p \H^{\dag r}_Y(\O_{\P, \Q}) \simeq \bigoplus_{i_0 < \cdots < i_p} \H^{\dag r+p}_{Z_{i_0\cdots i_p}}(\O_{\P, \Q}).$$
    \end{theorem}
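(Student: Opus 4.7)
The proof is essentially laid out in the discussion preceding the theorem statement; my plan is to organize that discussion into a clean argument.

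First, I would apply Lemma \ref{GY-resol} to the divisor union $Y = Z_0 \cup \cdots \cup Z_n$ with $E = \O_{\P^{\rm ad}}$ to obtain a canonical quasi-isomorphism
\[
\bigl[\G_{Z_{0\cdots n}}\O_{\P^{\rm ad}} \to \cdots \to \bigoplus_{i_0<i_1}\G_{Z_{i_0i_1}}\O_{\P^{\rm ad}} \to \bigoplus_{i_0}\G_{Z_{i_0}}\O_{\P^{\rm ad}}\bigr] \xrightarrow{\sim} \G_Y \O_{\P^{\rm ad}}.
\]
Next, I would replace each term $\G_{Z_{i_0\cdots i_p}}\O_{\P^{\rm ad}}$ by the ${\rm sp}_*$-acyclic Čech-style resolution $\mathcal{C}^\dag_{i_0\cdots i_p} := \mathcal{C}^\dag(T_{i_{p'},j})_{0\leq p'\leq p,\,1\leq j\leq r}$ provided by Lemma \ref{sp_resol}, using its compatibility with restriction to subsets of indices to assemble a double complex $\mathcal{C}^{\dag\cdot\cdot}$. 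Its associated total complex $\mathcal{C}^{\dag\cdot}$ is quasi-isomorphic to $\G_Y\O_{\P^{\rm ad}}$ and consists of ${\rm sp}_*$-acyclic terms, so ${\rm sp}_*\mathcal{C}^{\dag\cdot} \simeq \RG_Y\O_{\P,\Q}$.

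Then I would run the first-quadrant spectral sequence associated to this double complex (filtered by the Mayer–Vietoris degree $-p$). Its $E_1$-page reads
\[
E_1^{-p,q} = \bigoplus_{i_0<\cdots<i_p} \H^q({\rm sp}_* \mathcal{C}^\dag_{i_0\cdots i_p}) \simeq \bigoplus_{i_0<\cdots<i_p} \H^{\dag q}_{Z_{i_0\cdots i_p}}(\O_{\P,\Q}),
\]
and converges to $\H^{\dag -p+q}_Y(\O_{\P,\Q})$. The key input is Lemma \ref{rg_concentrate}: each $Z_{i_0\cdots i_p}$ is, locally, cut out by exactly $r+p$ members of a $p$-basis of $\P$, so $\H^{\dag q}_{Z_{i_0\cdots i_p}}(\O_{\P,\Q})$ vanishes unless $q = r+p$. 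Consequently $E_1^{-p,q}$ is concentrated on the single diagonal $-p+q = r$, the spectral sequence degenerates at $E_1$ with no nonzero differentials for dimension reasons, and only $\H^{\dag r}_Y(\O_{\P,\Q})$ receives contributions.

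Finally, the filtration induced by the spectral sequence on $\H^{\dag r}_Y(\O_{\P,\Q})$ is a descending filtration $F^{-p}$ with $F^0$ the full module and $F^{-n-1}=0$, whose graded pieces are precisely $E_\infty^{-p,r+p} = \bigoplus_{i_0<\cdots<i_p}\H^{\dag r+p}_{Z_{i_0\cdots i_p}}(\O_{\P,\Q})$. Reindexing by $F_p := F^{-p}$ yields the desired increasing filtration with $F_{-1}=0$, $F_n$ the whole module, and the stated graded pieces. I do not anticipate a serious obstacle: the only place requiring care is the local nature of Lemma \ref{rg_concentrate}, which holds because the hypothesis on the $p$-basis is imposed \emph{locally} on $\P$, and the vanishing of a coherent $\D^\dag$-module cohomology sheaf can be checked locally; this licenses the codimension/concentration argument globally.
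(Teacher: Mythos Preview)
Your argument is correct and follows exactly the approach the paper takes in the discussion immediately preceding the theorem: Lemma~\ref{GY-resol} for the Mayer--Vietoris resolution, Lemma~\ref{sp_resol} for the ${\rm sp}_*$-acyclic replacement, the resulting double-complex spectral sequence, and Lemma~\ref{rg_concentrate} for the degeneration at $E_1$. One small bookkeeping slip: in the descending filtration $F^p$ induced by the spectral sequence it is $F^{-n}$ (not $F^0$) that equals the full module and $F^{1}=0$ (not $F^{-n-1}$); your subsequent reindexing $F_p := F^{-p}$ is stated correctly, so this is just a typo in the intermediate sentence.
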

    
    Suppose that $X$ is a strictly semistable scheme over $k[[t]]$ which is embedded as a closed subscheme of $P$ of codimension $r$. Let $D_1, \dots, D_n$ be the irreducible components of the closed fiber $X_s$. We can apply the theorem above to obtain a filtration $F_p'$ of $\H^{\dag r+1}_{X_s}(\O_{\P, \Q})$, where $r$ is the codimension of $X$ in $P$.
    
    Since $X_s = X \cap P_s$, there is a distinguished triangle $ \RG_{X_s} \O_{\P, \Q} \rightarrow \RG_{X} \O_{\P, \Q} \rightarrow \RG_{X} \O_{\P}(^\dag P_s)_{\Q} \rightarrow.$
    Taking the long exact sequence, we have an exact sequence
    $$0 \rightarrow \H^{\dag r}_X(\O_{\P, \Q}) \rightarrow \H^{\dag r}_X(\O_{\P}(^\dag P_s)_{\Q}) \rightarrow \H^{\dag r+1}_{X_s}(\O_{\P, \Q}) \rightarrow 0.$$
    We can thus define the (finite) filtration $F_p$ on $\H^{\dag r}_X(\O_{\P}(^\dag P_s)_{\Q})$ such that $F_{-1} = 0, F_0 = \H^{\dag r}_X(\O_{\P, \Q})$ and 
    $$\displaystyle Gr^F_p \H^{\dag r}_X(\O_\P(^\dag P_s)_\Q) \simeq \bigoplus_{i_1 < \cdots < i_p} \H^{\dag r+p}_{D_{i_1 \cdots i_p}}(\O_{\P, \Q}) \simeq \bigoplus_{i_1 < \cdots < i_p} \RG_{D_{i_1 \cdots i_p}}(\O_{\P, \Q})[r+p].$$
    Observe that $\RG_{D_{i_1\cdots i_p}}(\O_{\P, \Q})[r+p]$ is a $(\delta_X - p)$-shifted constructible sheaf on the frame $(D_{i_1\cdots i_p},D_{i_1\cdots i_p},\P)$. Indeed, if $u : \mathfrak D \hookrightarrow \P$ is a closed immersion lifting $D_{i_1\cdots i_p} \hookrightarrow P$, we have $\RG_{D_{i_1\cdots i_p}}(\O_{\P, \Q})[r+p] \simeq u_+u^! \O_{\P, \Q}[r+p] \simeq u_+\O_{\mathfrak D, \Q}$. Since $\O_{\mathfrak D, \Q}$ is a $\delta_D$-shifted constructible sheaf on $\mathfrak D$ and pushforward by closed immersion preserves constructible sheaves, we get the assertion.

    Now consider the distinguished trigangle $F_p/F_q \rightarrow F_{p+1}/F_q \rightarrow Gr_{p+1} \rightarrow$; if $F_p/F_q \in \C^{[-\delta_X + q+1, -\delta_X + p]}_{\rm cons}$, then since $Gr_{p+1} \in \C^{-\delta_X + p+1}_{\rm cons}$ and by the extension property Proposition \ref{ext-prop}, we inductively get $F_{p+1}/F_q \in \C^{[-\delta_X + q+1, -\delta_X + p+1]}_{\rm cons}$. 

    Therefore, $F_p \in \C^{\leq -\delta_X + p}_{\rm cons}$ and $F_\infty / F_p \in \C^{\geq -\delta_X + p+1}_{\rm cons}$, where $F_\infty = \H^{\dag r}_X(\O_{\P}(^\dag P_s)_{\Q})$. Considering the distinguished triangle $F_p \rightarrow F_\infty \rightarrow F_\infty/F_p \rightarrow$, we obtain $F_p \simeq \tau^{\leq -\delta_X + p}_{\rm cons} \H^{\dag r}_X(\O_{\P}(^\dag P_s)_{\Q})$. Since $\RG_{D_{i_1 \cdots i_p}}(\O_{\P, \Q})$ belongs to $D^b_{\rm hol}(X, X, \P)$, we deduce that $Gr^F_p \H^{\dag r}_X(\O_{\P}(^\dag P_s)_{\Q} \in {\rm Hol}(X, X, \P)$; by devissage, $F_p \in {\rm Hol}(X, X, \P)$.
    
    We summarize the above argument as the following theorem:
    
    \begin{theorem} \label{cons_trun}
        The filtration $F_p$ on $\H^{\dag r}_X(\O_{\P}(^\dag P_s)_{\Q})$ in ${\rm Hol}(X, X, \P)$ gives the constructible truncation $\tau^{\leq -\delta_X + p}_{\rm cons}$ of $\H^{\dag r}_X(\O_{\P}(^\dag P_s)_{\Q})$ in $D^b_{\rm hol}(X, X, \P).$
        
        $F_{-1} = 0, F_{\delta_X} = \H^{\dag r}_X(\O_{\P, \Q})$ and $\displaystyle Gr^F_p \simeq \bigoplus_{i_1 < \cdots < i_p} \RG_{D_{i_1 \cdots i_p}}(\O_{\P, \Q})[r+p].$
    \end{theorem}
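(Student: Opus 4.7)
The plan is to build the filtration by transporting the one produced by Theorem \ref{calc_rg} to $\H^{\dag r}_X(\O_\P(^\dag P_s)_\Q)$ across the localization sequence, to identify each graded piece with a shifted constructible sheaf supported on an intersection $D_{i_1\cdots i_p}$, and finally to promote this geometric filtration to the constructible truncation by invoking the extension property of Proposition \ref{ext-prop} twice — once going up and once going down.

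I would begin by applying Theorem \ref{calc_rg} to the closed subscheme $X_s = D_1 \cup \cdots \cup D_n$ of $P$, which locally satisfies the $p$-basis hypothesis by the description in Section \ref{ssss} and has codimension $r+1$ in $\P$. This produces an increasing filtration $F'_p$ on $\H^{\dag r+1}_{X_s}(\O_{\P,\Q})$ with the stated graded pieces. The short exact sequence $0 \to \H^{\dag r}_X(\O_{\P,\Q}) \to \H^{\dag r}_X(\O_\P(^\dag P_s)_\Q) \to \H^{\dag r+1}_{X_s}(\O_{\P,\Q}) \to 0$ coming from the localization triangle $\RG_{X_s}\O_{\P,\Q} \to \RG_X \O_{\P,\Q} \to \RG_X \O_\P(^\dag P_s)_\Q \to$ then lets me define $F_0 := \H^{\dag r}_X(\O_{\P,\Q})$ and, for $p \geq 1$, take $F_p$ to be the preimage of $F'_{p-1}$ under the quotient map. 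This immediately gives the $Gr^F_p$-description and the endpoint values $F_{-1}=0$, $F_{\delta_X}=\H^{\dag r}_X(\O_\P(^\dag P_s)_\Q)$ claimed in the theorem.

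The core technical step is to place each graded piece $\RG_{D_{i_1\cdots i_p}}(\O_{\P,\Q})[r+p]$ in the correct degree for the constructible t-structure. Locally choosing a closed lift $u : \mathfrak D \hookrightarrow \P$ of $D_{i_1\cdots i_p} \hookrightarrow P$ and invoking Berthelot–Kashiwara, the object becomes $u_+\O_{\mathfrak D, \Q}$. Since $\O_{\mathfrak D, \Q}[\delta_D]$ sits in constructible degree $0$ and pushforward along a closed immersion is t-exact for the constructible t-structure (Theorem \ref{ct-exact-pullback} together with Lemma \ref{recol_oc}), this piece lies in constructible degree $-\delta_D$; the local strict-semistability description gives $\delta_D = \delta_X - p$, so $Gr^F_p \in \C^{\,-\delta_X+p}_{\rm cons}$ as required.

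With the graded pieces so located, the extension property applied inductively to the triangles $F_{p-1} \to F_p \to Gr^F_p \to$ yields $F_p \in \C^{\leq -\delta_X+p}_{\rm cons}$, while the dual induction on $Gr^F_p \to F_\infty/F_{p-1} \to F_\infty/F_p \to$ (starting from $F_\infty/F_\infty = 0$) yields $F_\infty/F_p \in \C^{\geq -\delta_X+p+1}_{\rm cons}$. The uniqueness clause of a t-structure applied to the distinguished triangle $F_p \to F_\infty \to F_\infty/F_p \to$ then identifies $F_p$ with $\tau^{\leq -\delta_X+p}_{\rm cons}\H^{\dag r}_X(\O_\P(^\dag P_s)_\Q)$, which is the main assertion. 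Holonomicity of each $F_p$ follows by devissage from the same triangles once each $\RG_{D_{i_1\cdots i_p}}(\O_{\P,\Q}) \simeq u_+\O_{\mathfrak D, \Q}[-r-p]$ is noted to lie in $D^b_{\rm hol}(X, X, \P)$. I expect the main obstacle to be the bookkeeping around the $p$-basis condition and the $\delta$-computation on each intersection $D_{i_1\cdots i_p}$, since everything else is formal manipulation of triangles, the extension property, and the t-exactness of closed pushforward.
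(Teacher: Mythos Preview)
Your proposal is correct and follows essentially the same line as the paper's own argument: transport the filtration from Theorem~\ref{calc_rg} across the localization triangle, identify each $Gr^F_p$ with $u_+\O_{\mathfrak D,\Q}$ sitting in constructible degree $-\delta_X+p$, and then conclude via the extension property (Proposition~\ref{ext-prop}) and the uniqueness of t-structure truncations. One small slip: by Lemma~\ref{cons_isoc} it is $\O_{\mathfrak D,\Q}[-\delta_D]$, not $\O_{\mathfrak D,\Q}[\delta_D]$, that sits in constructible degree $0$ --- but your conclusion that the graded piece lies in degree $-\delta_D = -\delta_X + p$ is correct, so this is only a sign typo in the intermediate step.
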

    
    \begin{remark} \label{cons_trun_form}
        Before ending this subsection, we remark that when there is a lift $\X$ of $X$ to a formal scheme over $\V[[t]]$ locally having finite $p$-basis over $\V$, the filtration $F_p$ on $\O_{\X}(^\dag X_s)_{\Q}$ defined above is given by $F_p = \sum_{i_1 < \cdots < i_p} \O_{\X}(^\dag D_{i_1} \cup \cdots \cup D_{i_p})_{\Q}$. To see this, recall that the spectral sequence is obtained from the double complex
        $$ {\rm sp}_* \mathcal{C}^{\dag \cdot}(D_1, \dots, D_n) \rightarrow \cdots \rightarrow \bigoplus_{i_0} \mathcal {\rm sp}_* \mathcal{C}^{\dag \cdot}(D_{i_0}).$$
        Since ${\rm sp}_* \mathcal{C}^{\dag \cdot}(D_{i_0}, \dots, D_{i_m}) = [\O_{\X, \Q} \rightarrow \bigoplus_{j_0} \O_{\X}(^\dag D_{i_{j_0}})_\Q \rightarrow \cdots \rightarrow \O_{\X}(^\dag D_{i_0} \cup \cdots \cup D_{i_m})_\Q]$, we get $F_p = \sum_{i_1 < \cdots < i_p} \O_{\X}(^\dag D_{i_1} \cup \cdots \cup D_{i_p})_{\Q}.$
        \\
        Fix a data of lifts $(\X_\alpha, \P_\alpha, u)$ of the frame $(X, X, \P)$. By definition, we have $\varepsilon : u_0^* \RG_{X_\eta}(\O_{\P, \Q})[r] \xrightarrow{\sim} (\O_{\X_\alpha}(^\dag X_{\alpha, s})_\Q, \star)$ where $\star$ is the canonical isomorphism induced by $\tau$. Since $u_0^*$ preserves constructible t-structure in the sense of Proposition \ref{u_0^*-cons-t}, by Theorem \ref{cons_trun}, the $\varepsilon$ induces an isomorphism $u_0^* F_p \RG_{X_\eta}(\O_{\P, \Q})[r] \simeq (F_p \O_{\X_\alpha}(^\dag X_{\alpha, s})_\Q, \star).$
    \end{remark}
    
    \subsection{Purity theorem}
    Let $\pi : (X, Y, \P) \rightarrow (S, S, \S)$ be a morphism of frames. We define the unit of tensor products to be $\one_{(X, Y, \P)} := \pi^+ K \in D^b_{\rm hol}(X, Y, \P)$, where $K$ is regarded as an object of $D^b_{\rm hol}(S, S, \S).$ Note that since $K$ is a constructible sheaf, so is $\one_{(X, Y, \P)}.$
    By definition, we have $\one_{(X, Y, \P)} = \DD_{(X, Y, \P)}\pi^!\DD_{(S, S, \S)}K = \DD_{(X, Y, \P)}(\RG_{X}\O_{\P, \Q}[\delta_X]).$
    If $Y$ locally has finite $p$-basis over $k$, is of codimension $r$ in $P$ and there is a divisor $T$ of $P$ such that $Y - X = Y \cap T$, then since $\DD_\P(\RG_{Y}(\O_{\P, \Q})[r]) \simeq \RG_{Y}(\O_{\P, \Q})[r](-\delta_Y)$ by [Car21, 9.4.11] and [Abe14, 3.14] for Frobenius, we have
    
        \begin{align} \label{one_calc}
            \one_{(X, Y, \P)} &= (^\dag T)\DD_\P(\RG_{Y}(\O_{\P, \Q})[r][\delta_P - r]) \\
            &\simeq (^\dag T)\RG_{Y}(\O_{\P, \Q})(-\delta_Y)[r][r - \delta_P] \notag \\
            &\simeq \RG_{X}\O_{\P, \Q}(-\delta_Y)[2r-\delta_P]. \notag
        \end{align}
    
    \begin{proposition} \label{purityA}
        Let $X$ be a scheme locally having finite $p$-basis over $k$ which embeds as a closed subscheme into a projective formal scheme $\P$ over $\V[[t]]$ with locally finite $p$-basis over $\V$ and let $\pi : (X, X, \P) \rightarrow (S, S, \S)$ be the morphism of frames. Then $\one_{(X,X,\P)} \simeq \pi^! \one_S(-\delta_X)[-2\delta_X].$
    \end{proposition}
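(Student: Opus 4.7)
The plan is to specialize the computation (\ref{one_calc}) to the case $Y = X$ and compare the result with $\pi^!\one_S(-\delta_X)[-2\delta_X]$ computed directly from the definitions. Since $X$ locally has finite $p$-basis over $k$ and embeds as a closed subscheme of $\P$, the hypothesis of (\ref{one_calc}) is met with $Y = X$ by taking $T = \emptyset$: the condition $Y - X = Y \cap T$ degenerates to $\emptyset = \emptyset$ and $(^\dag T)$ becomes the identity functor. Writing $r = \delta_P - \delta_X$ for the (locally constant) codimension of $X$ in $P$, the formula (\ref{one_calc}) yields
$$\one_{(X,X,\P)} \simeq \RG_X \O_{\P,\Q}(-\delta_X)[2r - \delta_P] \simeq \RG_X \O_{\P,\Q}(-\delta_X)[\delta_P - 2\delta_X].$$

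Next I compute the right-hand side. By definition of the extraordinary pullback for a frame morphism, $\pi^! = \RG_X f^!$ where $f : \P \to \S$ is formally smooth with relative $p$-basis of rank $\delta_{\P/\S} = \delta_P$; hence $f^! K \simeq \O_{\P,\Q}[\delta_P]$, and therefore
$$\pi^!\one_S(-\delta_X)[-2\delta_X] \simeq \RG_X \O_{\P,\Q}(-\delta_X)[\delta_P - 2\delta_X].$$
Comparing this with the previous display proves the proposition.

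There is no serious obstacle: the statement reduces to the elementary shift identity $2r - \delta_P = \delta_P - 2\delta_X$, together with the fact that $f^! K \simeq \O_{\P,\Q}[\delta_P]$ for the formally smooth structural morphism and the purity isomorphism of \cite{intro} already invoked in deriving (\ref{one_calc}). The only mild care needed is that $r$ may fail to be globally constant; since $\delta_X$ and $\delta_P$ are locally constant, one runs the argument on each connected component of $X$ separately.
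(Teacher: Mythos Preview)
Your proof is correct and follows essentially the same approach as the paper: both specialize the computation (\ref{one_calc}) to the case $Y=X$, $T=\emptyset$, compute $\pi^!\one_S(-\delta_X)[-2\delta_X]$ via $\pi^! = \RG_X f^!$ with $f^!K \simeq \O_{\P,\Q}[\delta_P]$, and match the two via the identity $2r-\delta_P = \delta_P - 2\delta_X$. Your remark about working componentwise to handle the local constancy of $r$ is a reasonable clarification but does not change the argument.
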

    \begin{proof}
        Let $f : \P \rightarrow \S$ be the structure morphism. Let $r = \delta_P - \delta_X$ be the codimension of $X$ in $P$. Then
        $$\one_{(X, X, \P)} \simeq \RG_{X}\O_{\P, \Q}(-\delta_X)[2r - \delta_P],$$
        $$\pi^!\one_S(-\delta_X)[-2\delta_X] = \RG_X f^! K (-\delta_X)[-2\delta_X]= \RG_X \O_{\P, \Q}(-\delta_X)[-2\delta_X + \delta_P].$$
        Noting that $2r-\delta_P = -2\delta_X + \delta_P$, we get the proposition.
    \end{proof}
    
    In the rest of this subsection, let $X$ be a strictly semistable scheme over $k[[t]]$ which embeds as a closed subscheme into a projective formal scheme $\P$ over $\V[[t]]$ with locally finite $p$-basis over $\V$.
    Let $D_0, \dots, D_r$ be the irreducible components of $X_s$. For $I \subseteq \{0, \dots, r\},$ we put $D_I := \cap_{i \in I} D_i.$
    \\
    Let $\pi : (X, X, \P) \rightarrow (S, S, \S)$ be the structure morphism, $j : (X_\eta, X, \P) \hookrightarrow (X, X, \P)$ the open immersion, and let $i: (X_s, X_s, \P) \hookrightarrow (X, X, \P)$ and $i_I : (D_I, D_I, \P) \hookrightarrow (X, X, \P)$ be the closed immersions.
    
    \begin{proposition} \label{purityB}
        Let $I \subseteq \{0, \dots, r\}$ with $\card I = p+1.$
        Then there is an isomorphism $$\one_{D_I} \simeq i_I^!\one_X(p+1)[2(p+1)].$$
    \end{proposition}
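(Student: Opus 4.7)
The plan is to apply Proposition \ref{purityA} separately to $X$ and to $D_I$, then combine using the factorization $\pi_{D_I} = \pi_X \circ i_I$ of morphisms of frames. The statement is morally just a compatibility of duality shifts under the closed immersion $D_I \hookrightarrow X$, and nothing more serious than bookkeeping should be needed once the hypotheses of Proposition \ref{purityA} are verified on both ends.

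First I would verify the hypothesis for $X$. Although $X$ is not smooth over $k$, the local description recalled in Subsection \ref{ssss} shows that $X$ is covered by open subsets étale over $\Spec k[[t]][t_0, \dots, t_d]/(t - t_0 \cdots t_{r'})$, on which $t_0, \dots, t_d$ form a finite $p$-basis with $\delta_X = d+1$. Combined with the hypothesis that $X$ embeds as a closed subscheme of the projective formal scheme $\P$, this is precisely what Proposition \ref{purityA} requires, giving
$$\one_X \simeq \pi_X^! \one_S(-\delta_X)[-2\delta_X].$$
Next I would do the same for $D_I$. With $I = \{i_0 < \cdots < i_p\}$, locally $D_I$ is cut out of $X$ by the vanishing of the $p+1$ $p$-basis elements $t_{i_0}, \dots, t_{i_p}$, so $D_I$ is smooth over $k$ with the remaining $t_j$ providing a $p$-basis; in particular $\delta_{D_I} = \delta_X - (p+1)$. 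Since $D_I \hookrightarrow X \hookrightarrow \P$ realizes $D_I$ as a closed subscheme of $\P$, Proposition \ref{purityA} applies and yields
$$\one_{D_I} \simeq \pi_{D_I}^! \one_S(-\delta_{D_I})[-2\delta_{D_I}].$$

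To finish, I would use that $\pi_{D_I} = \pi_X \circ i_I$ as morphisms of frames (both are induced by the structure morphism $\P \to \S$), so functoriality of the extraordinary pullback gives $\pi_{D_I}^! = i_I^! \pi_X^!$. Substituting, and using $\delta_{D_I} = \delta_X - (p+1)$, one obtains
$$\one_{D_I} \simeq i_I^! \bigl(\pi_X^! \one_S(-\delta_X)[-2\delta_X]\bigr)(p+1)[2(p+1)] \simeq i_I^! \one_X (p+1)[2(p+1)],$$
the last isomorphism being the purity formula for $X$ applied inside the brackets. The only conceptual point in the whole proof is the observation that strictly semistable schemes over $k[[t]]$, despite their non-smoothness in the classical sense, still admit a finite $p$-basis over $k$; this is what allows Proposition \ref{purityA} to be invoked directly for $X$ rather than only for its smooth strata, and removes what would otherwise be the main obstacle.
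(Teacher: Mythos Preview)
Your proposal is correct and follows essentially the same route as the paper: apply Proposition \ref{purityA} to both $X$ and $D_I$, use $\delta_{D_I}=\delta_X-(p+1)$, and combine via $\pi_{D_I}^! \simeq i_I^!\pi_X^!$. The paper's proof is terser but identical in content; your added verification that $X$ itself locally has a finite $p$-basis (so that Proposition \ref{purityA} applies to it and not just to the smooth $D_I$) is a helpful clarification the paper leaves implicit.
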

    \begin{proof}
        Using Proposition \ref{purityA}, we have
        $$\one_{D_I} \simeq (fi_I)^! \one_S(-\delta_X + p+1)[2(-\delta_X+p+1)],$$
        $$i_I^!\one_X(p+1)[2(p+1)] \simeq i_I^!f^!\one_S(-\delta_X+p+1)[-2\delta_X + 2(p+1)].$$
    \end{proof}

    \begin{proposition} \label{purityC}
        We have an isomorphism $\displaystyle \H^{p+2}_{\rm cons}(i_+i^!\one_X) \simeq \bigoplus_{\card I = p+1} i_{I+}i_I^!\one_X[2p+2]$ for $p \geq 0$.
    \end{proposition}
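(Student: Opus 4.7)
The plan is to combine the localization distinguished triangle with the filtration furnished by Theorem \ref{cons_trun} and to read off $\H^{p+2}_{\rm cons}(i_+i^!\one_X)$ from its graded pieces.

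First, from Lemma \ref{recol_oc}(iv) we have the distinguished triangle $i_+i^!\one_X \to \one_X \to j_+j^+\one_X \to$. By \eqref{one_calc} (applied to $Y = X$, using $2r - \delta_P = r - \delta_X$), $\one_X \simeq \RG_X\O_{\P, \Q}(-\delta_X)[r - \delta_X]$, and since $j_+j^+$ on objects supported in $X$ is localization away from $P_s$, the same formula gives $j_+j^+\one_X \simeq \RG_X\O_\P({}^\dag P_s)_\Q(-\delta_X)[r - \delta_X]$. Both objects are concentrated in canonical degree $r$, and extracting $\H^{\dag r}_X$ identifies the natural map between them with the inclusion $F_0 = \H^{\dag r}_X(\O_{\P, \Q}) \hookrightarrow \H^{\dag r}_X(\O_\P({}^\dag P_s)_\Q) = F_\infty$ appearing in Theorem \ref{cons_trun}.

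Consequently the cone of $F_0 \hookrightarrow F_\infty$ is $F_\infty/F_0$ (both lie in ${\rm Hol}(X, X, \P)$), so $i_+i^!\one_X \simeq (F_\infty/F_0)(-\delta_X)[-\delta_X - 1]$, and the filtration $\{F_p\}$ of Theorem \ref{cons_trun} transfers to an ascending filtration $G_p := (F_{p+1}/F_0)(-\delta_X)[-\delta_X - 1]$ on $i_+i^!\one_X$. Its graded pieces compute as
\[ Gr^G_p \simeq Gr^F_{p+1}(-\delta_X)[-\delta_X - 1] \simeq \bigoplus_{\card I = p+1}\RG_{D_I}\O_{\P, \Q}(-\delta_X)[r - \delta_X + p]. \]
A computation analogous to \eqref{one_calc} yields $i_{I+}i_I^!\one_X \simeq \RG_{D_I}\O_{\P, \Q}(-\delta_X)[r - \delta_X]$, whence $Gr^G_p \simeq \bigoplus_{\card I = p+1}i_{I+}i_I^!\one_X[p]$.

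Since Theorem \ref{cons_trun} asserts $F_p = \tau^{\leq -\delta_X + p}_{\rm cons}(F_\infty)$, and the shift $[-\delta_X - 1]$ translates constructible degrees by $\delta_X + 1$ while Tate twists are irrelevant, the filtration $G_p$ realizes $\tau^{\leq p + 2}_{\rm cons}(i_+i^!\one_X)$. In particular the graded piece $Gr^G_p$ is concentrated in constructible degree $p + 2$, so
\[ \H^{p+2}_{\rm cons}(i_+i^!\one_X) \simeq Gr^G_p[p+2] \simeq \bigoplus_{\card I = p+1}i_{I+}i_I^!\one_X[2p+2], \]
as desired. The main subtlety lies in the first step: one must verify that the unit map $\one_X \to j_+j^+\one_X$ literally agrees, after the identifications of \eqref{one_calc}, with the inclusion $F_0 \hookrightarrow F_\infty$—not merely up to an abstract isomorphism—so that the quotient identification is legitimate. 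Once this is pinned down, the remaining bookkeeping of shifts and twists is routine.
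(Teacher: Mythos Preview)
Your argument is correct and follows the same underlying route as the paper: identify $i_+i^!\one_X$ with a shift and twist of $\H^{\dag r+1}_{X_s}(\O_{\P,\Q})$, then use the filtration whose graded pieces are the $\RG_{D_I}\O_{\P,\Q}$ and which, by the argument proving Theorem~\ref{cons_trun}, realizes the constructible truncations.

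The only difference is packaging. The paper computes $i_+i^!\one_X$ directly as $\RG_{X_s}\O_{\P,\Q}(-\delta_X)[r-\delta_X]$, using that $i_+i^!=\RG_{X_s}$ on objects supported in $X$ together with \eqref{one_calc}; this avoids invoking the localization triangle and the identification of the map $\one_X\to j_+j^+\one_X$ with $F_0\hookrightarrow F_\infty$. The ``subtlety'' you flag is thus bypassed entirely in the paper's presentation: one works with $\H^{\dag r+1}_{X_s}(\O_{\P,\Q})$ and its filtration $F'_\bullet$ from Theorem~\ref{calc_rg} (which is what the discussion before Theorem~\ref{cons_trun} sets up), rather than with the quotient $F_\infty/F_0$. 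Of course the two are identified by the exact sequence displayed just after Theorem~\ref{calc_rg}, so your route is equivalent---just one step longer.
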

    \begin{proof}
        We have $i_+i^!\one_X = \RG_{X_s}\O_{\P, \Q}(-\delta_X)[r-\delta_X], \ i_{I+}i_I^!\one_X = \RG_{D_I}\O_{\P, \Q}(-\delta_X)[r-\delta_X].$
        By Theorem \ref{cons_trun}, we get $$\displaystyle \H^{p+2}_{\rm cons}(i_+i^!\one_X) = \H^{p+2}_{\rm cons}(\RG_{X_s}\O_{\P, \Q}[r-\delta_X]) \simeq \bigoplus_{\card I = p+1} \RG_{D_I}\O_{\P, \Q}[r-\delta_X+2p+2].$$
    \end{proof}
    
    \begin{proposition} \label{purityD}
        We have an isomorphism $\H^{p+1}_{\rm cons}(j_+\one_{X_\eta}) \simeq \H^{p+2}_{\rm cons}(i_+i^!\one_X)$ for $p \geq 0$.
    \end{proposition}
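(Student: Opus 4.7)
The plan is to derive this isomorphism from the localization distinguished triangle attached to the open/closed decomposition $j : X_\eta \hookrightarrow X \hookleftarrow X_s : i$, combined with the fact that the unit $\one_X$ sits in degree zero of the constructible t-structure, so that its cohomology vanishes in every nonzero degree.

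First I would invoke Lemma \ref{recol_oc}(iv) to produce the distinguished triangle
$$i_+i^!\one_X \to \one_X \to j_+j^+\one_X \to$$
in $D^b_{\rm hol}(X,X,\P)$. Writing $\pi_X : (X,X,\P) \to (S,S,\S)$ and $\pi_{X_\eta} : (X_\eta,X,\P) \to (S,S,\S)$ for the structural morphisms of frames, the factorization $\pi_{X_\eta} = \pi_X \circ j$ gives $j^+\one_X = j^+\pi_X^+K = \pi_{X_\eta}^+K = \one_{X_\eta}$, so the triangle may be rewritten as $i_+i^!\one_X \to \one_X \to j_+\one_{X_\eta} \to$.

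Applying the cohomological functor $\H^\bullet_{\rm cons}$ yields the four-term exact sequence
$$\H^{p+1}_{\rm cons}(\one_X) \to \H^{p+1}_{\rm cons}(j_+\one_{X_\eta}) \to \H^{p+2}_{\rm cons}(i_+i^!\one_X) \to \H^{p+2}_{\rm cons}(\one_X),$$
so the task reduces to showing that the two outer terms vanish. This is the one substantive point, and it is essentially recorded in the paper just before Proposition \ref{purityA}: $K$ lies in the heart of the constructible t-structure on the point frame $(S,S,\S)$, and $\pi_X^+$ is t-exact for the constructible t-structure by Theorem \ref{ct-exact-pullback}, so $\one_X = \pi_X^+K \in \C^{\heartsuit}_{\rm cons}$ and hence $\H^k_{\rm cons}(\one_X) = 0$ for every $k \neq 0$. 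For $p \geq 0$, both $p+1$ and $p+2$ are at least $1$, so the outer terms in the sequence above vanish and the middle arrow is forced to be an isomorphism. I do not expect any genuine obstacle in this argument: once the localization triangle is in hand, everything else is formal from the t-exactness statement of Theorem \ref{ct-exact-pullback}.
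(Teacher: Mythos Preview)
Your proof is correct and follows exactly the same route as the paper: the localization triangle $i_+i^!\one_X \to \one_X \to j_+j^+\one_X \to$, the identification $j^+\one_X = \one_{X_\eta}$, and the vanishing of $\H^k_{\rm cons}(\one_X)$ for $k\neq 0$ because $\one_X$ is a constructible sheaf. You have simply made explicit (via Theorem~\ref{ct-exact-pullback}) what the paper records as the one-line observation ``$\one_X$ is a constructible sheaf.''
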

    \begin{proof}
            Consider the long exact sequence of constructible sheaves associated to the basic distinguished triangle
            $$i_+i^!\one_X \rightarrow \one_X \rightarrow j_+j^+\one_X \rightarrow.$$
            Note that $\one_X$ is a constructible sheaf and $j^+\one_X = \one_{X_\eta}$.
    \end{proof}
    
    \begin{theorem} {\rm (purity)}\label{purity}
        $\displaystyle\bigoplus_{\card I = p+1} i_{I+} \one_{D_I} \simeq \H^{p+1}_{\rm cons}(j_+\one_{X_\eta})(p+1)$ for $p \geq 0$.
    \end{theorem}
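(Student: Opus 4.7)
The plan is to simply chain together the three preceding propositions, after applying the appropriate Tate twist to match the shifts.

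First, I would rewrite Proposition \ref{purityB}: for $|I|=p+1$, we have $\one_{D_I}\simeq i_I^!\one_X(p+1)[2(p+1)]$, which rearranges to
$$i_{I+}i_I^!\one_X[2p+2] \simeq i_{I+}\one_{D_I}(-(p+1)).$$
Applying $i_{I+}$ preserves the isomorphism and commutes with shifts and Tate twists, so taking the direct sum over $|I|=p+1$ gives
$$\bigoplus_{|I|=p+1} i_{I+}i_I^!\one_X[2p+2] \simeq \bigoplus_{|I|=p+1} i_{I+}\one_{D_I}(-(p+1)).$$

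Next, I would invoke Proposition \ref{purityC} to identify the left-hand side with $\H^{p+2}_{\rm cons}(i_+i^!\one_X)$, and then Proposition \ref{purityD} to identify that in turn with $\H^{p+1}_{\rm cons}(j_+\one_{X_\eta})$. Concatenating these three isomorphisms yields
$$\H^{p+1}_{\rm cons}(j_+\one_{X_\eta}) \simeq \bigoplus_{|I|=p+1} i_{I+}\one_{D_I}(-(p+1)).$$
Twisting both sides by $(p+1)$ gives the statement of Theorem \ref{purity}.

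There is essentially no obstacle here: the geometric and t-structure content has already been absorbed in the three lemmas (Proposition \ref{purityB} is the relative purity computation via Proposition \ref{purityA}, Proposition \ref{purityC} is the gradeds of the filtration of Theorem \ref{cons_trun}, and Proposition \ref{purityD} is the long exact sequence from the localization triangle, using that $\one_X$ is a constructible sheaf so contributes only in degree $0$). The only bookkeeping to watch is that the cohomological shift $[2p+2]$ from Proposition \ref{purityC} matches the shift $[2(p+1)]$ in Proposition \ref{purityB}, so that after absorbing shifts into the object $\one_{D_I}$ only a Tate twist by $-(p+1)$ remains; this is precisely cancelled by the twist $(p+1)$ on the right-hand side of the theorem.
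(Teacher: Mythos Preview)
Your proof is correct and is essentially identical to the paper's own proof: the paper also simply chains together Propositions \ref{purityB}, \ref{purityC}, and \ref{purityD}, carrying the Tate twist $(p+1)$ through the chain rather than applying it at the end as you do.
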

    \begin{proof}
        Using Propositions \ref{purityB}, \ref{purityC} and \ref{purityD}, we have
        \begin{eqnarray*}
            \bigoplus_{\card I = p+1} i_{I+} \one_{D_I} & \simeq & \displaystyle\bigoplus_{\card I = p+1} i_{I+} i_I^!\one_X(p+1)[2p+2]\\
            & \simeq & \H^{p+2}_{\rm cons}(i_+i^!\one_X)(p+1)\\
            & \simeq & \H^{p+1}_{\rm cons}(j_+\one_{X_\eta})(p+1).
        \end{eqnarray*}
    \end{proof}
    
    \section{Nearby cycle} \label{sec-nc}
    In this section, we let $X$ be a strictly semi-stable scheme over $k[[t]]$ which embeds into a smooth projective formal scheme over $\P$ as a closed subscheme. Let $\pi_\eta : (X_\eta, X, \P) \rightarrow (\eta, \DD_S, \DD_\S)$ be the structure morphism. Let $\O_{X_\eta} := \RG_{X_\eta} \O_{\P, \Q}[\delta_P - \delta_X] \in {\rm Hol}(X_\eta, X, \P).$
    \\
    Let $D_0, \dots, D_r$ denote the irreducible components of the closed fiber $X_s$. For $I \subseteq \{ 0, \dots, r\},$ we define $D_I := \bigcap_{i \in I} D_i.$ 
    Let $j : X_\eta \rightarrow X$ be the open immersion. Note that $j_!$ and $j_+$ are exact functors from ${\rm Hol}(X_\eta, X, \P)$ to ${\rm Hol}(X, X, \P)$ since $j_+$ is the inclusion functor and $j_! = \DD_\P (^\dag P_s) \DD_\P$ by definition.
    
    For the construction of the (unipotent) nearby cyle $\Psi$, we follow the paper [AC14]. In the case over $k[[t]]$, the construction $\Psi \E$ for general $\E$ does not work for the moment; the proof of the key lemma $\displaystyle \lim_{\longleftrightarrow} j_! \E^{\cdot, \cdot} \xrightarrow{\sim} \lim_{\longleftrightarrow} j_+ \E^{\cdot, \cdot}$ of [AC14, 2.4] uses Kedlaya's semistable reduction theorem, which is not known in our context. However, we can show that the nearby cycle of $\O_{X_\eta}$ can be defined by locally investigating the intermediate extension $j_{!+}I^{0, N}_X$.
    
    \subsection{Construction of $\Psi$} \label{nc-construct}
    We recall the $\D$-module $I = ``t^x"$ from \cite{bei}.
    
    Let $\O_\eta := \O_{\DD_{\S}}(^\dag s)_\Q, \ \D_\eta := \O_\eta \otimes_{\O_{\DD_\S,\Q}} \D_{\DD_\S}.$ We introduce the $\D_\eta$-module $I_{\DD_\S} := \O_\eta[x, x^{-1}]\cdot t^x$ by
    $$\partial_t x^k t^x = t^{-1} x^{k+1} t^x.$$
    For $a \in \mathbb{Z}$, put $I^a_{\DD_\S} := \O_\eta[x]x^a \cdot t^x$ and for $a \leq b \in \mathbb{Z}$, put $I^{a,b}_{\DD_\S} := I^a_{\DD_\S} / I^b_{\DD_\S}.$ The structure of $\D_\eta$-module on $I^{a,b}_{\DD_\S}$ extends uniquely by continuity to the structure of $\D^\dag_{\DD_\S}(^\dag s)_\Q$-module. 

    The map $I^{a,b}_{\DD_S} \rightarrow F^*I^{a,b}_{\DD_S}, \ gx^kt^x \mapsto q^kg \otimes x^k t^x$ is a $\D^\dag_{\DD_S}(^\dag s)_\Q$-linear isomorphism and therefore defines a Frobenius structure on $I^{a,b}_{\DD_S}.$ $I^{a,b}_{\DD_S}$ is an overconvergent $F$-isocrystal on $(\eta, \DD_S, \DD_\S)$ in the sense that it is coherent over $\O_\eta$ and admits a Frobenius structure. Noting that $I^{a,b}$ is an iterated extension by $\O_\eta \in {\rm Hol}(\eta),$ we have $I_{\DD_S}^{a,b} \in {\rm Hol}(\eta)$.

    Multiplying by $x^n$ induces morphisms of $F$-$\D^\dag_{\DD_S}(^\dag s)_\Q$-modules $I^{a,b}_{\DD_S} \rightarrow I^{a,b}_{\DD_S}(-n)$ for $n \leq 0$ and $I^{a,b}_{\DD_S} \xrightarrow{\sim} I^{a+n,b+n}_{\DD_S}(-n)$ for any $n \in \mathbb Z$.

    There is a perfect pairing $I^{a,b}_{\DD_S} \tilde \otimes_\eta I^{-b,-a}_{\DD_S} \rightarrow \O_\eta(-1), \ (f(x), g(x)) \mapsto {\rm Res}_{x=0} f(x)g(-x).$ This induces an isomorphism $\DD_\eta(I^{a,b}_{\DD_S}) \simeq I^{-b,-a}_{\DD_S}$. Moreover, this pairing induces a commutative diagram
    \begin{equation} \label{dual-x}
    \xymatrixcolsep{20mm}
    \xymatrix{
        \DD I_{\DD_S}^{a, b} \ar[r]^{\DD x} \ar[d]_{\wr} & \DD I_{\DD_S}^{a, b}(-1) \ar[d]_{\wr}\\
        I_{\DD_S}^{-b, -a} \ar[r]^{x} & I_{\DD_S}^{-b, -a}(-1). \\
    }
    \end{equation}
    \noindent
    We put $I^{a,b}_X := \pi_\eta^+(I^{a,b}_{\DD_S})[\delta_X - 1] \in {\rm Hol}(X_\eta).$ $I^{a,b}_X$ is a holonomic module on $X_\eta$ and $\pi_\eta^! I^{a,b}_{\DD_S} \simeq \pi_\eta^+I^{a,b}_{\DD_S}(\delta_X-1)[2\delta_X-2]$ (see \cite[5.5]{frob}, which holds in our setting). By \cite[3.10]{frob} (which also holds in our setting), we also have
    \begin{align}
        \DD_{X_\eta}(I_X^{a,b}) & = \DD_{X_\eta}\pi_\eta^+ (I_{\DD_S}^{a,b})[1-\delta_X] \notag\\
        & \simeq \pi_\eta^+ \DD_{\eta}(I_{\DD_S}^{a,b})(\delta_X-1)[\delta_X-1] \notag\\
        & \simeq \pi_\eta^+ I_{\DD_S}^{-b, -a}(\delta_X-1) = I_X^{-b, -a}(\delta_X-1).\notag
    \end{align}
    
    \begin{remark} \label{ix-loc}
        Suppose that there exist an open subset $\U \subseteq \P$, a lift $\X \rightarrow \DD_\S$ of $X \cap U \rightarrow \DD_\S$ such that $\X$ locally has finite $p$-basis over $\S$ and a closed immersion $u : \X \hookrightarrow \U$ which lifts $X \cap U \hookrightarrow U$. Then $u^!(I^{a,b}_X|_\U)$ is represented by the $\D^\dag_\X(^\dag X_s \cap U)_\Q$-module $\displaystyle \bigoplus_{a \leq k < b} \O_\X(^\dag X_s)_\Q \, x^k t^x$; for a vector field $P$ of $\D^\dag_\X(^\dag X_s \cap U)_\Q$, we have  $P(gx^k t^x) = P(g)x^kt^x + gP(t)t^{-1}x^{k+1}t^x.$ The Frobenius structure $I_X^{a, b} \xrightarrow{\sim} F^*I_X^{a, b}$ is given by $gx^kt^x \mapsto q^{k + 1-\delta_X} g \otimes x^k t^x.$
    \end{remark}
    
    \begin{definition}[{\cite[2.5]{bei}}] \label{un-def}
        For $\E \in {\rm Hol}(X_\eta, X, \P),$ we define $\E^{a, b} := \E \otimes I_X^{a,b}$ and for any $k \in \mathbb{Z}$, $\E_k^{a, b} := \E^{\max(a, k), \max(b, k)}.$
        \\
        The (unipotent) nearby cycle is defined to be $\displaystyle \Psi \E := \coker(\lim_{\longleftrightarrow} j_!\E_0^{\cdot, \cdot} \rightarrow \lim_{\longleftrightarrow} j_+\E_0^{\cdot, \cdot}) \in \lim_{\longleftrightarrow} {\rm Hol}(X, X, \P).$
    \end{definition}
    
    \begin{proposition} \label{ie_char}
        Let $j : (X_\eta, X, \P) \hookrightarrow (X, X, \P)$ be the open immersion of frames. As explained in the beginning of this section, $j_!$ and $j_+$ define exact functors $F{\rm \mathchar`-Ovhol}(X_\eta, X, \P) \rightarrow F{\rm \mathchar`-Ovhol}(X, X, \P)$.
        Intermediate extension $j_{!+} \E := \im(j_! \E \rightarrow j_+ \E)$ has the following property: it is the smallest subobject $\E'$ of $j_+ \E$ in $F{\rm \mathchar`-Ovhol}(X, X, \P)$ such that $j^! \E' = \E.$
    \end{proposition}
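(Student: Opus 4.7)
The plan is to verify the two defining properties of $j_{!+}\E := \mathrm{im}(j_!\E \to j_+\E)$ directly from the six-functor identities of Lemma \ref{recol_oc}, using that $j_!$ and $j_+$ are exact between the hearts (as noted at the beginning of the section), so that the image is a well-defined object of $F\mathchar`-\mathrm{Ovhol}(X, X, \P)$ and the factorization $j_!\E \twoheadrightarrow j_{!+}\E \hookrightarrow j_+\E$ makes sense in this abelian category.

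First I would check $j^!(j_{!+}\E) \simeq \E$. Applying the exact functor $j^! \simeq j^+$ (Lemma \ref{recol_oc}(i)) to the epi-mono factorization above gives an epi-mono factorization $j^!j_!\E \twoheadrightarrow j^!(j_{!+}\E) \hookrightarrow j^!j_+\E$. By Lemma \ref{recol_oc}(ii) the two outer terms are canonically identified with $\E$, and the induced composite is the identity (it comes from the counit/unit of the adjunctions), forcing $j^!(j_{!+}\E) \simeq \E$.

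For minimality, suppose $\E' \hookrightarrow j_+\E$ in $F\mathchar`-\mathrm{Ovhol}(X, X, \P)$ satisfies $j^!\E' \simeq \E$. Set $Q := j_+\E/\E'$ and apply the exact functor $j^+$: the hypothesis says that $j^+(\E' \hookrightarrow j_+\E)$ is an isomorphism (both sides are canonically $\E$), hence $j^+Q = 0$. By the adjunction $(j_!, j^+)$ of Lemma \ref{recol_oc}(iii),
\[
\mathrm{Hom}(j_!\E, Q) \simeq \mathrm{Hom}(\E, j^+Q) = 0,
\]
so the composite $j_!\E \to j_+\E \twoheadrightarrow Q$ vanishes. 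Therefore $j_!\E \to j_+\E$ factors through $\ker(j_+\E \to Q) = \E'$, and taking images yields $j_{!+}\E \subseteq \E'$.

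The argument is essentially formal given Lemma \ref{recol_oc}; the only point that deserves a sentence of care is the identification of the canonical adjunction morphism $j_!\E \to j_+\E$ under $(j_!, j^+)$-adjunction with the identity $\E \to j^+j_+\E \simeq \E$, which is how this canonical arrow is defined. There is no serious obstacle, since we are working in the heart where $j_!$ and $j_+$ are honest exact functors.
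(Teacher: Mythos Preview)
Your argument is correct and complete. It is, however, a genuinely different route from the paper's own proof, so let me briefly contrast the two.

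You work entirely with the formal recollement identities of Lemma \ref{recol_oc}: after checking $j^!(j_{!+}\E)=\E$, you kill the quotient $Q=j_+\E/\E'$ on $X_\eta$ and then invoke the adjunction $(j_!,j^+)$ to force the canonical map $j_!\E\to j_+\E$ to factor through $\E'$. This is the standard ``perverse'' argument and uses nothing beyond exactness of $j^!=j^+$ on the heart and the adjunction.

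The paper instead forms $\E''=j_{!+}\E\cap\E'$, sets $\F=j_{!+}\E/\E''$, and shows $\F=0$ by \emph{duality}: since $\DD_X$ exchanges subobjects and quotients and satisfies $\DD_X j_{!+}\simeq j_{!+}\DD_{X_\eta}$, one gets $\DD_X\F\hookrightarrow j_+\DD_{X_\eta}\E$; but $\F$ (hence $\DD_X\F$) is supported on $X_s$, and $\H^0\RG_{X_s}\circ j_+=0$ forces $\DD_X\F=0$. This argument trades the $(j_!,j^+)$ adjunction for the self-duality of $j_{!+}$ together with the concrete vanishing $\H^0\RG_{X_s}j_+=0$. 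Your approach is more elementary and portable (it works in any recollement situation), while the paper's approach foreshadows the heavy use of duality and of $\RG_{X_s}$ that recurs throughout the section. Either is perfectly acceptable here.
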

    \begin{proof}
        Suppose $j^! \E' = \E$ and put $\E'' = j_{!+} \E \cap \E'$. By the left t-exactness of $j^!$, we have $j^!(\E'') = j^!(j_{!+}\E) \cap j^!(\E') = \E.$ Put $\mathcal F = j_{!+} \E / \E''$. We have the inclusion
        $$\DD_X (\F) \subseteq \DD_X(j_{!+} \E) \simeq j_{!+} \DD_{X_\eta} (\E) \subseteq j_+ \DD_{X_\eta}(\E).$$
        Since $\F$ is supported on $X_s$, so is $\DD_X(\F)$. Hence $\DD_X(\F) \simeq \H^0(\RG_{X_s}\DD_X(\F)) \subseteq \H^0(\RG_{X_s}j_+\DD_{X_\eta}(\E)) = 0.$ Thus $\F = 0$.
    \end{proof}
    \begin{proposition} \label{ie_form}
        In the situation of Remark \ref{ix-loc}, we have
        $$\displaystyle u^!(j_{!+} I^{0, N}_X|_\U) = \D^\dag_\X(^\dag X_s \cap U)_\Q \, t^x = \bigoplus_{k = 0}^{N-1} F_k\O_{\X}(^\dag X_s)_{\Q} \, x^k t^x.$$
    \end{proposition}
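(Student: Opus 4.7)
The strategy is to apply the characterization of the intermediate extension in Proposition~\ref{ie_char}: $j_{!+}I^{0,N}_X$ is the smallest subobject $\E'\subseteq j_+I^{0,N}_X$ in $F\mathchar`-{\rm Ovhol}(X,X,\P)$ with $j^!\E'=I^{0,N}_X$. Locally, Remark~\ref{ix-loc} identifies $u^!(j_+I^{0,N}_X|_\U)$ with $\bigoplus_{0\le k<N}\O_\X(^\dag X_s)_\Q\, x^k t^x$. The plan is to show that the sub-$\D$-module $M$ generated by $t^x$ equals $\bigoplus_{k=0}^{N-1}F_k\cdot x^k t^x$, and then that this $M$ realizes the required minimal subobject.

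For the inclusion $M\subseteq \bigoplus_k F_k\cdot x^k t^x$, I would verify that the displayed sum is $\D$-stable. In local coordinates where $t=t_0\cdots t_{r'}$ is the product of local equations of the components $D_0,\dots,D_{r'}$ of $X_s$ meeting $U$, any vector field $P$ on $\X$ satisfies
\[
P(t)/t=\sum_{j\le r'}P(t_j)/t_j\in F_1,
\]
since each summand lies in $\O_\X(^\dag D_j)_\Q$. Combined with the multiplicative property $F_a\cdot F_b\subseteq F_{a+b}$ (immediate from $\O_\X(^\dag D_I)_\Q\cdot\O_\X(^\dag D_J)_\Q=\O_\X(^\dag (D_I\cup D_J))_\Q$), the action formula $P(g\,x^k t^x)=P(g)\,x^k t^x+g(P(t)/t)\,x^{k+1}t^x$ of Remark~\ref{ix-loc} sends $F_k\cdot x^k t^x$ into $F_k\cdot x^k t^x\oplus F_{k+1}\cdot x^{k+1}t^x$. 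Since $t^x\in F_0\cdot t^x$, induction on $k$ yields the claimed inclusion.

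For the reverse inclusion, I would exhibit explicit generators in $M$. The identity $(t_j\partial_{t_j})^k(t^x)=x^k t^x$ for $j\le r'$ places $x^k t^x\in M$, and iterating $P=\partial_{t_j}$ gives $\partial_{t_j}^n(t^x)=t_j^{-n}\cdot x^{\underline n}\cdot t^x$ where $x^{\underline n}:=x(x-1)\cdots(x-n+1)$; more generally, for distinct $i_1,\dots,i_r\le r'$ and positive integers $n_1,\dots,n_r$,
\[
\partial_{t_{i_1}}^{n_1}\cdots\partial_{t_{i_r}}^{n_r}(t^x)=t_{i_1}^{-n_1}\cdots t_{i_r}^{-n_r}\cdot x^{\underline{n_1}}\cdots x^{\underline{n_r}}\cdot t^x\in M.
\]
Taking $\O_\X$-linear combinations of such elements and reducing modulo $x^N$ extracts every $f\cdot x^k t^x$ with $f\in\O_\X(^\dag (D_{i_1}\cup\cdots\cup D_{i_k}))_\Q$; summing over $k$-element subsets of $\{0,\dots,r'\}$ then recovers all of $F_k\cdot x^k t^x$.

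It remains to identify $M$ with $u^!(j_{!+}I^{0,N}_X|_\U)$. That $j^!M=I^{0,N}_X$ is immediate: $t^x$ is a cyclic generator of $I^{0,N}_X$ as a $\D^\dag_\X(^\dag X_s)_\Q$-module on $X_\eta$. For the minimality, given $\E'\subseteq j_+I^{0,N}_X$ with $j^!\E'=I^{0,N}_X$, I would use the projection to the top quotient $I^{0,N}_X/x\cdot I^{0,N}_X\simeq\O_{X_\eta}\cdot t^x$ to locate in $u^!(\E')$ an element whose image is a pole-free lift of $t^x$; iterating through the $x$-adic filtration then forces $M\subseteq u^!(\E')$. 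The main obstacle is precisely this minimality step: ensuring a pole-free lift of $t^x$ lies in $\E'$ requires carefully combining the overholonomicity of $\E'$ with the Frobenius-eigenvalue structure of Remark~\ref{ix-loc} to rule out spurious poles on $X_s$.
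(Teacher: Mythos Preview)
Your identification of the right-hand equality $\D^\dag_{\X,\Q}\cdot t^x=\bigoplus_k F_k\cdot x^kt^x$ is on track, but the reverse inclusion argument is incomplete: $\O_\X$-linear combinations of the monomials $t_{i_1}^{-n_1}\cdots t_{i_r}^{-n_r}$ do not exhaust $\O_\X(^\dag D_{i_1}\cup\cdots\cup D_{i_r})_\Q$, which is an overconvergent completion. The paper instead uses that $\O_\X(^\dag D_I)_\Q$ is generated as a $\D^\dag_{\X,\Q}$-module by $\prod_{i\in I}t_i^{-1}$, and runs a descending induction on $k$: for $f=P(\prod t_{i_j}^{-1})$ with $P\in\D^\dag_{\X,\Q}$, one has $P\,\partial_{t_{i_1}}\cdots\partial_{t_{i_k}}(t^x)=f\,x^kt^x+(\text{higher in }x)$, and the higher terms are already known to lie in $M$ by induction.

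The serious gap is, as you yourself flag, the minimality step. Your proposed mechanism---finding a pole-free lift of $t^x$ in an arbitrary $\E'$ via ``Frobenius-eigenvalue structure''---is not how the paper proceeds, and it is unclear how to make it work. The paper's argument is structural rather than spectral. First it reduces via smooth pullback (which is exact and commutes with $j_!,j_+$, hence with $j_{!+}$) to the model $\X=\Spf\V[[t]]\langle t_0,\dots,t_r\rangle/(t-t_0\cdots t_r)$. Then, writing $\mathcal A_k$ for the image of $I^{k,N}_X\cap j_{!+}I^{0,N}_X$ in $I^{k,k+1}_X\simeq\O_\X(^\dag X_s)_\Q$, the key input is \emph{irreducibility}: the characteristic cycle of $\O_{\X,\Q}$ is $[X]$, so $\O_{\X,\Q}$ is simple in $F\mathchar`-{\rm Ovhol}(\X)$ and hence $j_{!+}\O_\X(^\dag X_s)_\Q=\O_{\X,\Q}$. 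Since the projection $I^{0,N}_X\to I^{0,1}_X$ carries $j_{!+}$ into $j_{!+}$, one gets $\mathcal A_0=\O_{\X,\Q}=F_0$ immediately. For the inductive step, each $Gr^F_{k+1}\simeq\bigoplus_{|I|=k+1}\H^{\dag k+1}_{D_I}(\O_{\X,\Q})$ is semisimple with pairwise non-isomorphic simple summands (again by characteristic cycles), so to show $\mathcal A_{k+1}/F_k=Gr^F_{k+1}$ it suffices to exhibit, for each $I$, one element of $\mathcal A_{k+1}$ with nonzero image in the $I$-th summand. This last point requires an explicit analytic decomposition of elements of $\Gamma(\X,\O_\X(^\dag X_s))$ into a regular part plus a principal part $\sum a_{\underline\nu}\,\underline t^{-\underline\nu}$, available only on the standard model; it is used to check that applying $\partial_{t_{i_0}}$ to a suitable element already in $j_{!+}I^{0,N}_X$ produces the required nonzero projection. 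None of this is captured by a Frobenius-eigenvalue heuristic, and without the irreducibility/semisimplicity input there is no reason an arbitrary $\E'$ should contain a pole-free representative.
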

    \begin{proof}
        The question is local and we assume that $\U = \P$ and $\X$ are affine. In this situation, $I_X^{a,b}$ is represented by the coherent $\D^\dag_\X(^\dag X_s)_\Q$-module $\displaystyle \bigoplus_{a \leq k < b} \O_\X(^\dag X_s)_\Q \, x^k t^x.$
        
        (i) Let us show the equality on the right hand side. Recall that $F_p \O_\X(^\dag X_s)_\Q = \sum_{j_1 < \dots < j_p} \O_\X(^\dag D_{j_1} \cup \dots \cup D_{j_p})_\Q$ by Remark \ref{cons_trun_form}. Note that $\bigoplus_k F_k\O_{\X}(^\dag X_s)_{\Q} \, x^k t^x$ is a $\D^\dag_{\X, \Q}$-module. Indeed, if we take a $p$-basis $t_0, \dots, t_n$ with $t = t_0 \cdots t_r$, we have $\partial_{t_i} (gx^k t^x) = \partial_{t_i}(g) x^k t^x + gt_i^{-1} x^{k+1} t^x$ for $i \leq r$ and $\partial_{t_i} (gx^k t^x) = \partial_{t_i}(g) x^k t^x$ for $i > r.$ If $g \in F_k,$ we have $\partial_{t_i}(g) t_i^{-1} \in F_{k+1}$ for $i \leq r$ (see Remark \ref{cons_trun_form}). It follows that $\D^\dag_{\X, \Q} \, t^x \subseteq \bigoplus_k F_k\O_{\X}(^\dag X_s)_{\Q} \, x^k t^x.$ 
        
        Let us show the converse inclusion. Remark that $\O^\dag_\X(^\dag D_{i_1} \cup \cdots \cup D_{i_p})_\Q$ is generated by $t_{i_1}^{-1} \cdots t_{i_p}^{-1}$ as a $D^\dag_{\X, \Q}$-module (\cite[9.4.6]{intro}). Let us show $F_{N-1} \O_{\X}(^\dag X_s)_\Q \, x^{N-1} t^x \subseteq \D^\dag_{\X, \Q} \, t^x.$
        We suppose $r+1 \leq N-1$ (the case of $N-1 < r+1$ is quite similar).
        Put $\zeta := \partial_{t_0}\cdots \partial_{t_r}(t^x) = t_0^{-1}\cdots t_r^{-1} x^{r+1} t^x = t^{-1}x^{r+1}t^x$; we have $(-t_0\partial_{t_0})^{k}(\zeta) = t^{-1} x^{r+1+k} t^x.$ For any $P \in \D^\dag_{\X, \Q},$ we have $P(t^{-1} x^{N-1}) = P(t^{-1}) x^{N-1}$; hence $\O_\X(^\dag X_s)_\Q x^{N-1} t^x \subseteq \D^\dag_{\X, \Q} \, t^x.$ Suppose by induction $\O_\X(^\dag X_s)_\Q x^{r+1+l} t^x \subseteq \D^\dag_{\X, \Q} \, t^x$ for all $l > k.$ Then for any $P \in \D^\dag_{\X, \Q},$ we have $P(t^{-1} x^{r+1+k} t^x) = P(t^{-1}) x^{r+1+k} t^x + ({\rm higher \  terms}).$ As the left hand side and the higher terms belong to $\D^\dag_{\X, \Q} \, t^x$, so is $P(t^{-1}) x^{r+1+k}.$
        This proves that for $k \geq r+1$, $\O_{\X}(^\dag X_s)_\Q \,t^k x^k \subseteq \D^\dag_{\X, \Q} \, t^x.$ Let $k \leq r$. Suppose by induction for $l > k$, $F_l \O_{\X}(^\dag X_s)_\Q \, t^l t^x \subseteq \D^\dag_{\X, \Q} \, t^x.$ For $0 \leq i_1 < \cdots < i_k \leq r, \ f \in \O_\X(^\dag D_{i_1} \cup \cdots \cup D_{i_p} )_\Q$, let $f = P(t_{i_1}^{-1} \cdots t_{i_p}^{-1})$ for some $P \in \D^\dag_{\X, \Q}.$ Then $P\partial_{t_{i_1}}\cdots\partial_{t_{i_p}}(t^x) = P(t_{i_1}^{-1} \cdots t_{i_p}^{-1} t^k t^x) = P(t_{i_1}^{-1} \cdots t_{i_p}^{-1}) x^kt^k + ({\rm higher \ terms})$. Similarly we get $fx^kt^x \in \D^\dag_{\X, \Q} \, t^x.$
        
        (ii) By the similar argument, using the fact that $\O_\X(^\dag X_s)_\Q$ is generated by $t^{-1}$ as a $\D^\dag_{\X, \Q}$-module, we can show that $I_X^{0, N}$ is generated by $t^{-1}t^x$ as a $\D^\dag_{\X, \Q}$-module. Thus $j^! (\D^\dag_{\X, \Q} \, t^x) = I_X^{0, N}$ and therefore $j_{!+} I_X^{0, N} \subseteq \D^\dag_{\X, \Q} \, t^x$. Let us prove $t^x \in j_{!+} I_X^{0, N}$. As explained in subsection \ref{ssss}, we may assume that $X$ is smooth over $\Spec k[[t]][t_0, \dots, t_r]/(t - t_0 \cdots t_r)$ and that $\X$ is smooth over $\Spf \V[[t]]\langle t_0, \dots t_r \rangle/(t - t_0 \cdots t_r)$. Since the pullback functors of smooth morphisms are exact after shifting by relative dimensions, we may assume that $\X = \Spf \V[[t]]\langle t_0, \dots t_r \rangle/(t - t_0 \cdots t_r).$ More precisely, if $g : \X \rightarrow \Spf \V[[t]]\langle t_0, \dots, t_r \rangle/(t - t_0 \cdots t_r) := \X'$ denotes the smooth morphism of relative dimension $d$, then $g^+[d] \simeq g^![-d] : {\rm Ovhol}(\X') \rightarrow {\rm Ovhol}(\X)$ and $g_\eta^+[d] \simeq g_\eta^![-d] : {\rm Ovhol}(X'_\eta, X', \X') \rightarrow {\rm Ovhol}(X_\eta, X, \X)$ are exact functors and $g^+ j_! [d] \simeq j_! g_\eta^+ [d] $ and $g^+ j_+ [d] \simeq j_+ g_\eta^+ [d]$. Therefore we have $g^+j'_{!+}[d] = j_{!+}g_\eta^+[d]$ and in order to show $t^x \in j_{!+} I_X^{0, N}$, it suffices to show $t^x \in j'_{!+} I_{X'}^{0, N}$.
         
        (ii-a) Let us show the following statement: any element $f$ of $\Gamma(\X, \O_\X(^\dag X_s))$ is uniquely of the form $f_+ + \sum_{\underline\nu \in \mathbb{N}^{r+1}} a_{\underline\nu} t_0^{-\nu_0} \cdots t_r^{-\nu_r}$, where $f_+ \in \Gamma(\X, \O_\X)$ and $a_{\underline\nu} \in \V$ such that $|a_{\underline\nu}| < c\eta^{|\underline\nu|}$ for some constants $c > 0$ and $0 < \eta < 1$. Moreover in this situation, $f \in \Gamma(\X, \O_\X(^\dag D_{i_0} \cup \dots \cup D_{i_k}))$ if and only if $a_{\underline\nu} = 0$ if $\nu_j > 0$ for some $j \not\in \{i_0, \dots, i_k\}$.
        
        Recall that $f \in \Gamma(\X, \O_\X(^\dag X_s))$ is of the form $\sum_{\underline\nu} f_{\underline\nu} t_{0}^{-\nu_0} \cdots t_{p}^{-\nu_p}$ for some $f_{\underline\nu} \in \Gamma(\X, \O_\X)$ such that $|f_{\underline\nu}| < c \eta^{|\underline\nu|}$ for some constants $c > 0$ and $0 < \eta < 1$. Let $f_{\underline\nu} = \sum_{\underline\mu \in \mathbb{N}^{r+1}} f_{\underline\nu, \underline{\mu}} \, t_0^{\mu_0} \cdots t_r^{\mu_r}$ with $f_{\underline\nu, \underline{\mu}} \in \V[[t]]$ and $|f_{\underline\nu, \underline{\mu}}| < c \eta^{|\underline\nu|}$, $|f_{\underline\nu, \underline{\mu}}| \rightarrow 0$ as $|\underline{\mu}| \rightarrow \infty$. Let $f_{\underline\nu, \underline{\mu}} = \sum_n f_{\underline\nu, \underline{\mu}, n} t^n$ with $f_{\underline\nu, \underline{\mu}, n} \in \V$. We have $|f_{\underline\nu, \underline{\mu}, n}| < c \eta^{|\underline\nu|}$. Using $f_{\underline\nu, \underline{\mu}, n}$, we get $f = \sum_{\underline\nu, \underline{\mu}, n} f_{\underline\nu, \underline{\mu}, n} t^n t_0^{\mu_0 - \nu_0} \cdots t_r^{\mu_r - \nu_r}.$
        Noting that $t = t_0 \cdots t_r$ in $\Gamma(\X, \O_\X)$, for $\underline\nu \in \mathbb{N}^{p+1}$, we put $a_{\underline\nu} := \sum_{n, \underline{\mu}} f_{\underline n + \underline\mu + \underline\nu, \underline{\mu}, n}$, where $\underline n = (n, \dots, n)$. This sum converges in $\V$ since $|f_{\underline n + \underline\mu + \underline\nu, \mu, n}| < c \eta^{n(r+1) + |\underline\mu| + |\underline\nu|}$. Obviously $|a_{\underline\nu}| < c\eta^{|\underline\nu|}$. To see that $f_+ := f - \sum_{\underline\nu} a_{\underline\nu} \underline{t}^{-\underline\nu}$ belongs to $\Gamma(\X, \O_{\X}),$ it suffices to show it modulo $p^{m}$ for all $m \geq 1$; in this case all sums are finite and using $t = t_0 \cdots t_r$, it is easy to check. The uniqueness is also checked modulo $p^m$. If $f \in \Gamma(\X, \O_\X(^\dag D_{i_0} \cup \dots \cup D_{i_k}))$, we may assume $f_{\underline\nu} = 0$ if $\nu_j > 0$ for some $j \not\in \{i_0, \dots, i_k\}$. Therefore $a_{\underline\nu} = 0$ if $\nu_j > 0$ for some $j \not\in \{i_0, \dots, i_k\}$.
        
        (ii-b) Define $\mathcal{A}_k := \{ f \in \O_{\X}(^\dag X_s)_\Q : f x^k t^x + x^{k+1} \xi \in j_{!+} I_X^{0, N} \ {\rm for \ some} \ \xi \in I_X^{0, N}\}.$ This is a $F$-$\D^\dag_{\X, \Q}$-submodule of $\O_{\X}(^\dag X_s)_\Q$ (with some suitable Tate twist). Indeed, it is the image of the homomorphism $I_X^{k, N} \cap j_{!+} I_X^{0, N} \hookrightarrow I_X^{k, N} \rightarrow I_X^{k, k+1} \simeq \O_\X(^\dag X_s)_\Q$. Let us show $\mathcal{A}_k = F_k \O_\X(^\dag X_s)_\Q$. Since $j_{!+}I_X^{0,N} \subseteq \D^\dag_\X(^\dag X_s)_\Q \, t^x = \bigoplus_k F_k \O_{\X}(^\dag X_s)_\Q \, x^kt^x$, we get $\mathcal{A}_k \subseteq F_k \O_\X(^\dag X_s)_\Q$. We will show the converse inclusion by induction on $k$.
        Let $p : I_X^{0, N} \rightarrow I_X^{0, 1} \simeq \O_{\X}(^\dag X_s)_\Q$ be the canonical homomorphism. By abuse of notation we also let $p$ denote the homomorphism $j_+(p) : j_+I_X^{0, N} \rightarrow j_+I_X^{0, 1}$. Since the characteristic cycle ${\rm ZCar}(\O_{\X, \Q})$ of $\O_{\X, \Q}$ is $[X]$, where $[X]$ is viewed as an algebraic cycle of the tangent space $TX$, $\O_{\X, \Q}$ is an irreducible object of $F\mathchar`-{\rm Ovhol}(\X).$ Therefore $j_{!+} \O_{\X}(^\dag X_s)_\Q = \O_{\X, \Q}.$ Since $p$ maps $j_{!+} I_X^{0, N}$ into $j_{!+} \O_{\X}(^\dag X_s)_\Q = \O_{\X, \Q}$, $p(j_{!+} I_X^{0, N})$ is $0$ or $\O_{\X, \Q}.$ As $j^! = (^\dag X_s)$ is an exact functor, we have $(^\dag X_s)p(j_{!+} I_X^{0, N}) = p((^\dag X_s) j_{!+}I_X^{0, N}) = p(I_X^{0, N}) \neq 0$ and hence $\mathcal{A}_0 = p(j_{!+} I_X^{0, N}) = \O_{\X, \Q} = F_0 \O_\X(^\dag X_s)_\Q.$ Suppose that the assertion holds for $k$. Since multiplication by $x$ preserves $j_{!+} I_X^{0, N}$, $F_k \O_\X(^\dag X_s)_\Q = \mathcal {A}_k \subseteq \mathcal {A}_{k+1}$. The characteristic cycle of $\H^{\dag k+1}_{D_{i_0} \cap \dots \cap D_{i_k}}(\O_{\X, \Q})$ is $[D_{i_0} \cap \dots \cap D_{i_k}]$, so it is an irreducible object of $F\mathchar`-{\rm Ovhol}(\X)$. By the isomorphism of Theorem \ref{calc_rg}, $F_{k+1} \O_\X(^\dag X_s)_\Q / F_k \O_\X(^\dag X_s)_\Q$ is a semi-simple object of $F\mathchar`-{\rm Ovhol}(\X)$. Since the characteristic cycles of each irreducible constituents are all distinct, each of which is not isomorphic to each other (as $F\mathchar`-\D^\dag$-modules). Hence in order to prove that the subobject $\mathcal{A}_{k+1} / F_{k} \O_\X(^\dag X_s)_\Q$ of $F_{k+1} \O_\X(^\dag X_s)_\Q / F_k \O_\X(^\dag X_s)_\Q$ is equal to $F_{k+1} \O_\X(^\dag X_s)_\Q / F_k \O_\X(^\dag X_s)_\Q$, it suffice to show by considering the irreducible constituents that the projection $F_{k+1} \O_\X(^\dag X_s)_\Q / F_k \O_\X(^\dag X_s)_\Q \rightarrow \H^{\dag k+1}_{D_{i_0} \cap \dots \cap D_{i_k}}(\O_{\X, \Q})$ restricted to  $\mathcal{A}_{k+1} / F_{k} \O_\X(^\dag X_s)_\Q$ is a nonzero homomorphism. Let us find some element of $\mathcal{A}_{k+1}$ which maps to a nonzero element. There exists an $f \in \Gamma(X, F_{k+1}\O_{\X}(^\dag X_s)_\Q)$ such that $t_{i_1}^{-1} \cdots t_{i_k}^{-1} x^k t^x + f x^{k+1} t^x + ({\rm higher \ terms}) \in j_{!+}I_X^{0, N}.$ We have
        $$\partial_{i_0}(t_{i_1}^{-1} \cdots t_{i_k}^{-1} x^k t^x + fx^{k+1}t^x + ({\rm higher})) = (t_{i_0}^{-1} \cdots t_{i_k}^{-1} + \partial_{i_0}(f))x^{k+1}t^x + ({\rm higher}).$$
        By (ii-a), we have $f = p^{-n}(f_+ + \sum_{\underline\nu} a_{\underline\nu} t_0^{-\nu_0} \cdots t_r^{-\nu_r})$ for some $n \geq 0, \ f_+ \in \Gamma(\X, \O_\X), \ a_{\underline\nu} \in \V$ such that $|a_{\underline\nu}| < c\eta^{|\underline\nu|}$ for some constants $c > 0$ and $0 < \eta < 1$. We have
        $$t_{i_0}^{-1} \cdots t_{i_k}^{-1} + \partial_{i_0}(f) = p^{-n}(\partial_{i_0}(f_+) + p^nt_{i_0}^{-1} \cdots t_{i_k}^{-1} - \sum_{\nu_{i_0} \neq 0} \nu_{i_0}a_{\underline\nu} t_0^{-\nu_0} \cdots t_{i_0}^{-\nu_{i_0}-1} \cdots t_r^{-\nu_r}).$$
        Again by (ii-a), we conclude that $t_{i_0}^{-1} \cdots t_{i_k}^{-1} + \partial_{i_0}(f) \in \mathcal{A}_{k+1}$ maps to a nonzero element of $\H^{\dag k+1}_{D_{i_0} \cap \dots \cap D_{i_k}}(\O_{\X, \Q})$. To see this, recall that the map $\H^{\dag k+1}_{D_{j_0} \cap \dots \cap D_{j_k}}(\O_{\X, \Q}) \hookrightarrow F_{k+1} \O_\X(^\dag X_s)_\Q / F_{k} \O_\X(^\dag X_s)_\Q$ is induced by the canonical map $ \O_\X(^\dag D_{j_0} \cup \dots \cup D_{j_k})_\Q / \sum_l \O_\X(^\dag D_{j_0} \cup \dots \cup \hat{D}_{j_l} \cup \dots \cup D_{j_k})_\Q \hookrightarrow F_{k+1} \O_\X(^\dag X_s) / F_{k} \O_\X(^\dag X_s)$. Thus by induction we get $\mathcal{A}_k = F_k \O_\X(^\dag X_s)_\Q$. We can now prove by descending induction on $k$ that $\mathcal{A}_k x^k t^x = F_k \O_\X(^\dag X_s)_\Q x^k t^x \subseteq j_{!+} I_X^{0, N}$ as follows: for any $f_k \in \mathcal {A}_k$, by definition one can find $f_{k+1}, \dots, f_{N-1}$ such that $f_kx^kt^x + f_{k+1}x^{k+1}t^x + \cdots + f_{N-1}x^{N-1}t^x \in j_{!+} I_X^{0, N}$. Since $j_{!+} I_X^{0, N} \subseteq \bigoplus_l F_l \O_\X(^\dag X_s)_\Q x^l t^x$ by (i), we get $f_{l} \in F_l \O_\X(^\dag X_s)_\Q$ for $l > k$. By induction, $f_lx^lt^x \in j_{!+} I_X^{0, N}$. Thus we conclude that $f_k x^k t^x \in j_{!+} I_X^{0, N}$. In particular $t^x \in j_{!+} I_X^{0,N}$ since $1 \in \mathcal A_{0} = F_0 \O_\X(^\dag X_s)_\Q = \O_{\X, \Q}$.
    \end{proof}
    
    \begin{proposition}
        The (unipotent) nearby cycle $\Psi := \Psi \O_{X_\eta}$ is isomorphic to $j_+ I_X^{0, N} / j_{!+} I_X^{0, N}(\delta_X) \in {\rm Hol}(X, X, \P)$ for $N \geq \delta_X.$
    \end{proposition}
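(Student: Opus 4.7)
The plan is to reduce the inverse limit in the definition of $\Psi \O_{X_\eta}$ to a single fixed value of $N$ by using the explicit formula for $j_{!+} I_X^{0,N}$ given in Proposition \ref{ie_form}. By the definition of intermediate extension, each term of the inverse system satisfies
$$C_N := \coker(j_! I_X^{0,N} \to j_+ I_X^{0,N}) \;\simeq\; j_+ I_X^{0,N}/j_{!+} I_X^{0,N}.$$
The short exact sequence $0 \to I_X^{N, N+1} \to I_X^{0, N+1} \to I_X^{0, N} \to 0$, combined with the exactness of $j_!$ and $j_+$ recalled at the opening of Section \ref{sec-nc}, produces a surjective transition map $C_{N+1} \twoheadrightarrow C_N$. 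The goal is therefore to show that this surjection is an isomorphism whenever $N \geq \delta_X$, which forces the pro-system $\{C_N\}_N$ to stabilize and to represent the cokernel in the definition of $\Psi$ (up to the stated Tate twist).

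The stabilization will be checked locally on a data of lifts $(\X_\alpha, \P_\alpha, u)$ of the frame $(X, X, \P)$. On such an affine chart, Remark \ref{ix-loc} identifies $u^!(I_X^{0,N}|_\U)$ with $\bigoplus_{k=0}^{N-1} \O_\X(^\dag X_s)_\Q \, x^k t^x$, and Proposition \ref{ie_form} identifies $u^!(j_{!+} I_X^{0,N}|_\U)$ with $\bigoplus_{k=0}^{N-1} F_k \O_\X(^\dag X_s)_\Q \, x^k t^x$. Hence locally
$$C_N \;\simeq\; \bigoplus_{k=0}^{N-1} (\O_\X(^\dag X_s)_\Q / F_k \O_\X(^\dag X_s)_\Q)\, x^k t^x.$$
Theorem \ref{cons_trun} (together with the local description of strictly semi-stable schemes from Subsection \ref{ssss}, which bounds the number of irreducible components of $X_s$ meeting the chart by $\delta_X$) shows that the filtration stabilizes at $F_{\delta_X} \O_\X(^\dag X_s)_\Q = \O_\X(^\dag X_s)_\Q$; the summands for $k \geq \delta_X$ therefore vanish, so $C_N = \bigoplus_{k=0}^{\delta_X-1}(\O_\X(^\dag X_s)_\Q/F_k)\, x^k t^x$ for every $N \geq \delta_X$, and the transition map $C_{N+1} \to C_N$ is the identity on each surviving summand. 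Gluing via Theorem \ref{u_0^*} and Proposition \ref{u_0^*-cons-t} upgrades this chart-by-chart identification to a global isomorphism in ${\rm Hol}(X, X, \P)$.

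The main obstacle will be the bookkeeping of the Frobenius structure and of the Tate twist $(\delta_X)$ asserted in the statement. Remark \ref{ix-loc} records that $F^*(g x^k t^x) = q^{k+1-\delta_X}\, g \otimes x^k t^x$, so the generator $x^k t^x$ already carries a Frobenius weight shift of $1-\delta_X$; combined with the shift $[\delta_X - 1]$ built into $I_X^{a,b} = \pi_\eta^+ I_{\DD_S}^{a,b}[\delta_X - 1]$, this is the source of the Tate twist that must appear when passing from the limit-cokernel to the explicit module $j_+ I_X^{0,N}/j_{!+} I_X^{0,N}$. Verifying that the transition maps in the pro-system of $F$-$\D$-modules are Frobenius-equivariant, and that these twists match on the nose under the local identification above, is the place where the argument will require the most care.
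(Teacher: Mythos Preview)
Your local computation is correct and matches the paper's: Proposition~\ref{ie_form} gives $u^!(j_{!+}I_X^{0,N})=\bigoplus_k F_k\O_\X(^\dag X_s)_\Q\,x^kt^x$, and since $F_k=\O_\X(^\dag X_s)_\Q$ for $k\geq\delta_X$, the cokernels $C_N$ visibly stabilise for $N\geq\delta_X$. This is exactly the final step of the paper's argument.

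The gap is the step before that. The definition of $\Psi\E$ is a cokernel taken in the pro--ind category $\lim_{\longleftrightarrow}\mathrm{Hol}$, not the inverse limit of the levelwise cokernels $C_N$. To identify the two one must invoke the Beilinson--Lichtenstein machinery: the paper first shows that \emph{both} kernel and cokernel of $j_!I_X^{a,b}\to j_+I_X^{a,b}$ are annihilated by $x^{\delta_X}$, which by \cite[3.17]{Lich} forces $\lim_{\longleftrightarrow}j_!I_X^{\cdot,\cdot}\xrightarrow{\sim}\lim_{\longleftrightarrow}j_+I_X^{\cdot,\cdot}$, and then \cite[3.21]{Lich} reduces $\Psi$ to $\varprojlim_b C_b$. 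You treat only the cokernel; the kernel is handled in the paper by duality, using $\DD\ker(j_!\E\to j_+\E)\simeq\coker(j_!\DD\E\to j_+\DD\E)$ together with $\DD I_X^{a,b}\simeq I_X^{-b,-a}$ and $\DD x=x$ from~(\ref{dual-x}). Without this, the stabilisation of $C_N$ does not by itself pin down the pro--ind cokernel.

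Your account of the Tate twist is also off target. The twist $(\delta_X)$ does not come from the Frobenius formula in Remark~\ref{ix-loc}; that formula is the Frobenius on $I_X^{a,b}$ itself and is already built into every $C_N$. The twist arises one step earlier, from the identification $\O_{X_\eta}^{a,b}:=\O_{X_\eta}\otimes I_X^{a,b}\simeq I_X^{a,b}(\delta_X)$: since $\DD_{X_\eta}(\O_{X_\eta})$ is the unit for $\otimes$ and $\O_{X_\eta}\simeq\DD_{X_\eta}(\O_{X_\eta})(\delta_X)$ (cf.\ \cite[3.11]{frob}), tensoring with $\O_{X_\eta}$ is a pure Tate twist by $\delta_X$. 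This is what converts the system $\{\O_{X_\eta,0}^{\cdot,\cdot}\}$ appearing in Definition~\ref{un-def} into $\{I_{X,0}^{\cdot,\cdot}(\delta_X)\}$.
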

    \begin{proof}
        Let us fix a data of lifts $((\P_\alpha), (\X_\alpha), u)$. Under the categorical equivalence of Theorem \ref{u_0^*},
        the kernel and cokernel of $j_! I^{a, b}_X \rightarrow j_+ I^{a, b}_X$ correspond to the kernel and cokernel of $u_\alpha^! (j_! I^{a, b}_X|_{\P_\alpha}) \rightarrow u_\alpha^! (j_+ I^{a, b}_X|_{\P_\alpha})$. By Proposition \ref{ie_form} and Remark \ref{cons_trun_form}, the cokernel is killed by $x^{\delta_X}$ since $F_{\delta_X} \O_{\X_\alpha}(^\dag X_{\alpha, s})_\Q = \O_{\X_\alpha}(^\dag X_{\alpha, s})_\Q$. The kernel is also killed by $x^{\delta_X},$ since in general $\DD \ker(j_! \E \rightarrow  j_+ \E) \simeq \coker(j_! \DD \E \rightarrow j_+ \DD \E)$ and we have $\DD(x) = x$ by (\ref{dual-x}).
        This implies that $\displaystyle \lim_{\longleftrightarrow} j_! I_X^{\cdot, \cdot} \xrightarrow{\sim} \lim_{\longleftrightarrow} j_+ I_X^{\cdot, \cdot}$ is an isomorphism by \cite[3.17]{Lich}. Let $\O_{X_\eta}^{a, b} := \O_{X_\eta} \otimes I_X^{a, b}$. Since $\O_{X_\eta}^{a, b} \simeq \DD_{X_\eta}(\O_{X_\eta})(\delta_X)$ by \cite[3.11]{frob} and $\DD_{X_\eta}(\O_{X_\eta}) \otimes \E \simeq \E$, we have $\O_{X_\eta}^{a, b} \simeq I_X^{a, b}(\delta_X).$ We get
        \begin{eqnarray*}
            \Psi \O_{X_\eta} & = & \coker(\lim_{\longleftrightarrow}j_!\O_{X_\eta, 0}^{\cdot, \cdot} \rightarrow \lim_{\longleftrightarrow}j_+\O_{X_\eta, 0}^{\cdot, \cdot})\\
            & \simeq & \varprojlim_b \coker (j_!I_X^{0,b} \rightarrow j_+ I_X^{0,b})(\delta_X)\\
            & \simeq & j_+ I_X^{0,N} / j_{!+} I_X^{0,N}(\delta_X).
        \end{eqnarray*}
        The middle isomorphism comes from the proof of \cite[3.21]{Lich}.
    \end{proof}
    
    \begin{proposition}
        There is a distinguished triangle $\Psi(1) \xrightarrow{x} \Psi \rightarrow i^+j_+ \O_{X_\eta}(1) \rightarrow.$
    \end{proposition}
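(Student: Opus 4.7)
The plan is to apply $j_+$ and $j_{!+}$ to the short exact sequence in ${\rm Hol}(X_\eta, X, \P)$
\begin{equation*}
0 \rightarrow I^{0, N-1}_X(1) \xrightarrow{x} I^{0, N}_X \rightarrow I^{0, 1}_X \rightarrow 0
\end{equation*}
(first map multiplication by $x$, second projection onto $x$-degree zero) for $N \geq \delta_X + 1$, and exploit the explicit description $\Psi = (j_+ I^{0, N}_X / j_{!+} I^{0, N}_X)(\delta_X)$ from the previous proposition.

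Since $j_+$ is exact on ${\rm Hol}(X_\eta, X, \P)$, applying it yields a short exact sequence on $X$. Writing $\tilde \Psi_n := j_+ I^{0, n}_X / j_{!+} I^{0, n}_X$, functoriality of $j_{!+}$ fits this into a commutative diagram
\[
\xymatrix{
0 \ar[r] & j_{!+} I^{0, N-1}_X(1) \ar[r] \ar[d]^x & j_+ I^{0, N-1}_X(1) \ar[r] \ar[d]^x & \tilde \Psi_{N-1}(1) \ar[r] \ar[d]^x & 0 \\
0 \ar[r] & j_{!+} I^{0, N}_X \ar[r] & j_+ I^{0, N}_X \ar[r] & \tilde \Psi_N \ar[r] & 0.
}
\]
The middle vertical $x$ is injective with cokernel $j_+ I^{0, 1}_X$; a diagram chase through the factorization $j_! \twoheadrightarrow j_{!+} \hookrightarrow j_+$ shows the left vertical $x$ is injective with cokernel $j_{!+} I^{0, 1}_X$. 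By the previous proposition, $\tilde \Psi_{N-1} = \tilde \Psi_N =: \tilde \Psi$ for $N, N-1 \geq \delta_X$, so the snake lemma reduces to
\begin{equation*}
0 \rightarrow \ker(x|_{\tilde \Psi}) \rightarrow j_{!+} I^{0, 1}_X \hookrightarrow j_+ I^{0, 1}_X \rightarrow \coker(x|_{\tilde \Psi}) \rightarrow 0,
\end{equation*}
forcing $\ker(x|_{\tilde \Psi}) = 0$ and $\coker(x|_{\tilde \Psi}) = \tilde \Psi_1$. Twisting the resulting short exact sequence by $(\delta_X)$ produces $0 \to \Psi(1) \xrightarrow{x} \Psi \to \tilde \Psi_1(\delta_X) \to 0$.

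Finally, to identify $\tilde \Psi_1(\delta_X) \simeq i^+ j_+ \O_{X_\eta}(1)$, I would use the basic distinguished triangle $j_! \to j_+ \to i_+ i^+ j_+ \xrightarrow{+1}$ to write $\tilde \Psi_1 = H^0(i_+ i^+ j_+ I^{0, 1}_X)$, verify that $j_! I^{0, 1}_X \hookrightarrow j_+ I^{0, 1}_X$ so that the complex is concentrated in degree zero, and then identify $I^{0, 1}_X(\delta_X) \simeq \O_{X_\eta}(1)$ by direct computation of $\pi_\eta^+$ applied to $I^{0, 1}_{\DD_S} = \O_\eta$ (the equality of $\D$-modules being visible from $\partial_t(t^x) = t^{-1} x t^x \equiv 0 \pmod{x}$ inside $I^{0, 1}_{\DD_S}$). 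The main obstacle is the careful tracking of Tate twists and cohomological shifts in this final identification, and verifying the degree-zero concentration of $i_+ i^+ j_+ I^{0, 1}_X$.
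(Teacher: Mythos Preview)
Your argument has a genuine gap: the claim that the cokernel of $x:j_{!+}I^{0,N-1}_X(1)\to j_{!+}I^{0,N}_X$ is $j_{!+}I^{0,1}_X$ is false, because $j_{!+}$ is not right exact. Indeed, using the local description of Proposition~\ref{ie_form}, $j_{!+}I^{0,N}_X=\bigoplus_{k=0}^{N-1}F_k\,x^kt^x$, and the image of multiplication by $x$ is $\bigoplus_{k=1}^{N-1}F_{k-1}\,x^kt^x$, so the cokernel is $F_0\oplus\bigoplus_{k=1}^{N-1}(F_k/F_{k-1})\,x^k$, which is strictly larger than $j_{!+}I^{0,1}_X\simeq F_0=\O_{\X,\Q}$. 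The snake lemma then gives $\ker(x|_{\tilde\Psi})\simeq\bigoplus_{k\geq 1}(F_k/F_{k-1})\,x^k\neq 0$, contradicting your conclusion. One can also see this directly from the formula $\Psi\simeq\bigoplus_k(\O_\X(^\dag X_s)_\Q/F_k)\,x^kt^x$ in Theorem~\ref{nc_form}: the kernel of $x$ is $\bigoplus_k(F_{k+1}/F_k)\,x^k$.

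The upshot is that the proposition is a genuine distinguished triangle, not a short exact sequence in ${\rm Hol}$: the third term $i^+j_+\O_{X_\eta}(1)$ is not concentrated in a single holonomic degree. The paper's proof accordingly works differently: it introduces the auxiliary object $\Xi=\coker(j_!I^{1,N+1}_X\to j_+I^{0,N+1}_X)(\delta_X)$ and produces two short exact sequences $0\to\Psi(1)\to\Xi\to j_+I^{0,1}_X(\delta_X)\to 0$ and $0\to j_!I^{0,1}_X(\delta_X)\to\Xi\to\Psi\to 0$ (the latter requiring a nontrivial vanishing argument via duality and Proposition~\ref{ie_form}). Combining these via an explicit $3\times 3$ diagram of complexes yields the distinguished triangle, with the cone of $x$ identified with the cone of $j_!I^{0,1}_X(\delta_X)\to j_+I^{0,1}_X(\delta_X)$, i.e.\ $i^+j_+\O_{X_\eta}(1)$.
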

    \begin{proof}
        Let $N$ be a sufficiently large integer and let $\Xi = \coker (j_! I^{1, N+1} \rightarrow j_+ I^{0,N+1})(\delta_X).$ By the commutative diagram with exact rows
        \[
        \xymatrix {
            0 \ar[r] & j_! I_X^{1,N+1}(\delta_X) \ar@{=}[r] \ar[d] & j_! I_X^{1, N+1}(\delta_X) \ar[r]\ar[d] & 0 \ar[r] \ar[d] & 0 \\
            0 \ar[r] & j_+ I_X^{1,N+1}(\delta_X) \ar[r]             & j_+ I_X^{0, N+1}(\delta_X) \ar[r] & j_+I_X^{0,1}(\delta_X) \ar[r] & 0,
        }
        \]
        the snake lemma induces the exact sequence
        $$0 \rightarrow \Psi(1) \xrightarrow{\beta_+} \Xi \xrightarrow{\alpha_+} j_+ I_X^{0,1}(\delta_X) \rightarrow 0.$$
        By the commutative diagram with exact rows:
        \[
        \xymatrix{
            0 \ar[r] & j_! I_X^{1,N+1}(\delta_X) \ar[r] \ar[d] & j_! I_X^{0, N+1}(\delta_X) \ar[r]\ar[d] & j_! I_X^{0,1}(\delta_X) \ar[r] \ar[d] & 0 \\
            0 \ar[r] & j_+ I_X^{0,N+1}(\delta_X) \ar@{=}[r]    & j_+ I_X^{0, N+1}(\delta_X) \ar[r] & 0 \ar[r] & 0,
        }
        \]
        the snake lemma induces an exact sequence
        $$\ker(j_! I_X^{0,N+1} \rightarrow j_+I_X^{0,N+1}(\delta_X)) \xrightarrow{a} j_!I_X^{0,1}(\delta_X) \rightarrow \Xi \rightarrow \Psi \rightarrow 0.$$
        The dual of $a$ is the canonical morphism $j_+ I_X^{-1, 0} \rightarrow \coker(j_!I_X^{-N-1, 0} \rightarrow j_+I_X^{-N-1, 0})$, which is the zero map for $N$ large enough, by Proposition \ref{ie_form}. Therefore we get an exact sequence
        $$0 \rightarrow j_!I_X^{0,1}(\delta_X) \xrightarrow{\alpha_-} \Xi \xrightarrow{\beta_-} \Psi \rightarrow 0.$$
        The composition $\alpha_+ \alpha_-$ is equal to the canonical homomorphism $\gamma_- : j_! I_X^{0,1}(\delta_X) \rightarrow j_+ I_X^{0,1}(\delta_X)$ and $\beta_-\beta_+ : \Psi(1) \rightarrow \Psi$ is equal to the multiplication by $x$.
        We have an exact sequence of complexes of holonomic modules
        \[
        \xymatrixcolsep{15mm}
        \xymatrix{
        & 0\ar[d] & 0\ar[d] & 0\ar[d] &\\
        0 \ar[r] & 0 \ar[r] \ar[d] & \Xi \ar[r]^{\alpha_+} \ar[d] & j_+I_X^{0,1}(\delta_X) \ar[r] \ar@{=}[d] & 0\\
        0 \ar[r] & j_!I_X^{0,1}(\delta_X) \ar[r]^{(\alpha_-, \gamma_-)} \ar@{=}[d] & \Xi \oplus j_+I_X^{0,1}(\delta_X) \ar[r]^{(\alpha_+, -1)} \ar[d] & j_+I_X^{0,1}(\delta_X) \ar[r] \ar[d] & 0\\
        0 \ar[r] & j_!I_X^{0,1}(\delta_X) \ar[r]^{\gamma_-}\ar[d] & j_+I_X^{0,1}(\delta_X) \ar[r]\ar[d] & 0 \ar[r]\ar[d] & 0\\
        & 0 & 0 & 0. &
        }
        \]
        This induces a distinguished triangle
        $$ \Psi(1) \xrightarrow{x} \Psi \rightarrow i^+j_+I_X^{0,1}(\delta_X) \rightarrow.$$
        Since $I_X^{0,1} \simeq \O_{X_\eta}(1-\delta_X), $ we get the assertion.
        
    \end{proof}
    
    Next, let us calculate $\DD_\X(\Psi).$ We have a commutative diagram of $F$-overholonomic modules on $\X$ with exact rows:
    \[
    \xymatrix{
        0 \ar[r] & j_!I_X^{0, N} \ar[r]\ar[d] & j_!I_X^{-N, N} \ar[r]\ar[d] & j_!I_X^{-N, 0} \ar[r]\ar[d] & 0\\
        0 \ar[r] & j_+I_X^{0, N} \ar[r]       & j_+I_X^{-N, N} \ar[r]       & j_+I_X^{-N, 0} \ar[r]       & 0
    }
    \]
    By the snake lemma, this induces an exact sequence
    $$\ker(j_!I_X^{-N, N} \rightarrow j_+I_X^{-N, N}) \xrightarrow{a} \ker(j_!I_X^{-N, 0} \rightarrow j_+I_X^{-N, 0}) \xrightarrow{b} \coker(j_!I_X^{0, N} \rightarrow j_+I_X^{0, N}) \xrightarrow{c} \coker(j_!I_X^{-N, N} \rightarrow j_+I_X^{-N, N}).$$
    $c$ is a zero map for $N$ sufficiently large by a variant of Proposition \ref{ie_form}. Since $a$ is the dual of $c$, $a$ is also the zero map; we conclude that $b$ is an isomorphism.
    \begin{eqnarray*}
        \ker(j_!I_X^{-N, 0} \rightarrow j_+I_X^{-N, 0}) & \simeq & \ker(j_!\DD I_X^{0, N} \rightarrow j_+ \DD I_X^{0, N}) (\delta_X-1)\\
        & \simeq & \ker(\DD j_+I_X^{0, N} \rightarrow \DD j_!I_X^{0, N}) (1-\delta_X)\\
        & \simeq & \DD(\coker(j_!I_X^{0, N} \rightarrow j_+I_X^{0, N})) (1-\delta_X)\\
        & \simeq & \DD(\Psi)(1).
    \end{eqnarray*}
    Thus $b$ induces an isomorphism $b: \DD(\Psi)(1) \xrightarrow{\sim} \Psi(-\delta_X).$ Moreover the commutative diagram
    \[
    \xymatrixcolsep{15mm}
    \xymatrix{
        \DD I_{X}^{a, b}(1) \ar[r]^{\DD x}\ar[d]^{\wr} & \DD I_{X}^{a, b} \ar[d]^{\wr}\\
        I_{X}^{-b, -a}(-\delta_X) \ar[r]^{x} & I_{X}^{-b, -a}(-\delta_X-1)
    }
    \]
    induces a commutative diagram
    \[
    \xymatrixcolsep{15mm}
    \xymatrix{
        \DD (\Psi)(1) \ar[r]^{\DD x}\ar[d]^{\wr} & \DD\Psi \ar[d]^{\wr}\\
        \Psi(-\delta_X) \ar[r]^{x} & \Psi(-\delta_X-1).
    }
    \]
    Let us summarize the results of this subsection.
    \begin{theorem} \label{nc_form}
        The nearby cycle $\Psi$ is $j_+ I_X^{0,N} / j_{!+} I_X^{0,N}(\delta_X)$ for $N \geq \delta_X.$ Locally if there is a lift $\X \rightarrow \DD_\S$ of $X \rightarrow \DD_S$ with $\X$ locally having finite $p$-basis, $\Psi$ is represented by the $\D^\dag_\X(^\dag X_s)_\Q$-module $\bigoplus_{k \geq 0} \O_\X(^\dag X_s)_\Q / F_k \O_\X(^\dag X_s)_\Q\, x^k t^x.$
        
        There is a distingusihed triangle $ \Psi(1) \xrightarrow{x} \Psi \rightarrow i^+j_+ \O_{X_\eta}(1) \rightarrow.$
        
        Also there is a canonical isomorphism $b : \DD(\Psi)(1) \xrightarrow{\sim} \Psi(-\delta_X)$ which fits into the commutative diagram
    \[
    \xymatrixcolsep{15mm}
    \xymatrix{
        \DD (\Psi)(1) \ar[r]^{\DD x}\ar[d]^\wr & \DD\Psi \ar[d]^\wr\\
        \Psi(-\delta_X) \ar[r]^x & \Psi(-\delta_X-1).
    }
    \]
    \end{theorem}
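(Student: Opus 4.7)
The plan is to assemble the three claims of the theorem from material already established in this subsection; nothing new is needed beyond careful bookkeeping. For the global formula $\Psi \simeq j_+ I_X^{0,N}/j_{!+} I_X^{0,N}(\delta_X)$ for $N \geq \delta_X$, I would simply cite the preceding proposition, whose proof combines the vanishing of the cokernel of $j_! I_X^{\cdot,\cdot} \to j_+ I_X^{\cdot,\cdot}$ under multiplication by $x^{\delta_X}$ (a direct consequence of Proposition \ref{ie_form} since $F_{\delta_X}\O_\X(^\dag X_s)_\Q = \O_\X(^\dag X_s)_\Q$) with the dual statement for the kernel via (\ref{dual-x}), together with the isomorphism $\O_{X_\eta}^{a,b} \simeq I_X^{a,b}(\delta_X)$ that absorbs the Tate twist. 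For the local description, I would combine Remark \ref{ix-loc}, which represents $u^!(j_+ I_X^{0,N}|_\U)$ by $\bigoplus_{0 \le k < N} \O_\X(^\dag X_s)_\Q\, x^k t^x$ (using that $j_+$ is the inclusion since $X - X_\eta$ is cut out by the divisor $P_s$), with Proposition \ref{ie_form}, which identifies $u^!(j_{!+} I_X^{0,N}|_\U)$ with $\bigoplus_{0 \le k < N} F_k \O_\X(^\dag X_s)_\Q\, x^k t^x$; taking the quotient and extending the sum to all $k \geq 0$ is harmless since $F_k \O_\X(^\dag X_s)_\Q = \O_\X(^\dag X_s)_\Q$ for $k \geq \delta_X$ by Remark \ref{cons_trun_form}.

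For the distinguished triangle $\Psi(1) \xrightarrow{x} \Psi \to i^+ j_+ \O_{X_\eta}(1) \to$, I would invoke the proposition proved immediately above: the two snake-lemma applications yield short exact sequences $0 \to \Psi(1) \xrightarrow{\beta_+} \Xi \xrightarrow{\alpha_+} j_+ I_X^{0,1}(\delta_X) \to 0$ and $0 \to j_! I_X^{0,1}(\delta_X) \xrightarrow{\alpha_-} \Xi \xrightarrow{\beta_-} \Psi \to 0$ (the vanishing of the connecting map $a$ in the second sequence uses Proposition \ref{ie_form} applied to $I_X^{-N-1,0}$ for $N$ large). The $3 \times 3$ lemma then pastes these into the asserted triangle with connecting arrow $\beta_- \beta_+ = x$, and one rewrites $j_+ I_X^{0,1}(\delta_X) \simeq j_+ \O_{X_\eta}(1)$ via $I_X^{0,1} \simeq \O_{X_\eta}(1-\delta_X)$.

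For the duality isomorphism $b \colon \DD(\Psi)(1) \xrightarrow{\sim} \Psi(-\delta_X)$, I would trace through the snake-lemma calculation written just before the theorem: the rows $0 \to j_? I_X^{0,N} \to j_? I_X^{-N,N} \to j_? I_X^{-N,0} \to 0$ for $? = !, +$ induce a four-term sequence in which the outer connecting maps vanish for $N$ large (again by Proposition \ref{ie_form} and its dual via (\ref{dual-x})), forcing the middle map $b$ to be an isomorphism. Identifying source and target using $\DD I_X^{a,b} \simeq I_X^{-b,-a}$ and compatibility of $\DD$ with $j_!, j_+$ produces $\DD(\Psi)(1)$ and $\Psi(-\delta_X)$ respectively. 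Finally, the commutativity of the square with $x$ is inherited directly from the naturality of the snake lemma applied to the diagram (\ref{dual-x}), which already records that $\DD(x)$ corresponds to $x$ under the self-duality of $I_X^{a,b}$.

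The entire theorem is essentially a restatement, so the principal obstacle, namely the explicit determination of $\mathcal{A}_k = F_k \O_\X(^\dag X_s)_\Q$ inside $j_{!+} I_X^{0,N}$, has been overcome in Proposition \ref{ie_form}; the remaining work is purely diagrammatic. If one were to write this out linearly without reference to the earlier propositions, the hard step would still be that local identification of the intermediate extension, which depends delicately on the strictly semi-stable structure and on the filtration $F_\bullet$ from Theorem \ref{cons_trun}.
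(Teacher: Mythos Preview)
Your proposal is correct and matches the paper's approach exactly: the theorem is explicitly introduced in the paper as a summary of the subsection, and you have accurately traced each claim back to the corresponding proposition (the identification of $\Psi$ via Proposition~\ref{ie_form} and the stabilization argument, the distinguished triangle via the two snake-lemma sequences and the $3\times 3$ diagram, and the duality isomorphism $b$ via the four-term snake sequence together with (\ref{dual-x})). The only point worth making slightly more explicit is that the third term of the triangle arises as the cone of $\gamma_-\colon j_! I_X^{0,1}(\delta_X)\to j_+ I_X^{0,1}(\delta_X)$, which is $i_+i^+ j_+ I_X^{0,1}(\delta_X)$ by the localization triangle, rather than $j_+ I_X^{0,1}(\delta_X)$ itself; but your reading of the argument is otherwise on the mark.
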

    
    \subsection{Filtrations on $\Psi$}
    The canonical map $x : I^{a, b}_X \rightarrow I^{a, b}_X(-1)$ induces an endomorphism $x : \Psi \rightarrow \Psi(-1)$. $x^{\delta_X} = 0$ by Theorem \ref{nc_form} and we can define the kernel, image and monodromy filtrations on $\Psi$.
    
    \begin{definition}
        We define the kernel and image filtrations on $\Psi$:
        $$F_p \Psi := \ker (x^{p+1} : \Psi \rightarrow \Psi),$$
        $$G^q \Psi := \im (x^q : \Psi \rightarrow \Psi).$$
        The monodromy filtration on $\Psi$ is defined by:
        $$M_r \Psi := \sum_{p-q = r} (F_p \Psi \cap G_q \Psi).$$ 
        We also define follwing filtrations:
        $$F_p Gr_G^q \Psi := \im (F_p \Psi \cap G^q \Psi \rightarrow Gr_G^q \Psi),$$
        $$G^q Gr^F_p \Psi := \im (G^q \Psi \cap F_p \Psi \rightarrow Gr^F_p \Psi).$$
    \end{definition}
    
    If (locally) $X$ lifts to $\X$, then we have
    $$F_p \Psi \simeq \bigoplus_{k \geq 0} F_{k+p+1} \O_{\X}(^\dag X_s)_\Q/ F_{k} \O_{\X}(^\dag X_s)_\Q\, x^k t^x,$$
    $$G^q \Psi \simeq \bigoplus_{k \geq q} \O_{\X}(^\dag X_s)_\Q/ F_{k} \O_{\X}(^\dag X_s)_\Q\, x^k t^x.$$
    Thus we have 
    $$Gr^F_p Gr_G^q \Psi = Gr_{p+q+1} \O_{\X}(^\dag X_s)_\Q(1) \, x^q t^x \simeq \bigoplus_{\card I = p+q+1} \RG_{D_I}(\O_{\X, \Q})(q+1)[p+q+1].$$
    These isomorphisms glue via the equivalence Theorem \ref{u_0^*}:
    
    \begin{theorem} \label{nc_gr}
        Let $c = \delta_P - \delta_X$.
        $$\displaystyle Gr^F_p Gr_G^q \Psi \simeq \bigoplus_{\card I = p+q+1} \RG_{D_I}(\O_{\P, \Q})(q+1)[c+p+q+1].$$
        As a consequence, $$\displaystyle Gr^M_r \Psi \simeq \bigoplus_{\substack{p-q=r, \\ p,q \geq 0}} \bigoplus_{\card I = p+q+1} \RG_{D_I}(\O_{\P, \Q})(q+1)[c+p+q+1].$$
    \end{theorem}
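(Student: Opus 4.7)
The plan is to reduce everything to the explicit local description of $\Psi$ given by Theorem \ref{nc_form} and then glue using Theorem \ref{u_0^*}. First, suppose a local lift $\X$ exists, so that $\Psi \simeq \bigoplus_{k \geq 0} \O_{\X}(^\dag X_s)_\Q/F_k\O_{\X}(^\dag X_s)_\Q \cdot x^k t^x$. Multiplication by $x$ sends the $k$-th summand to the $(k+1)$-th summand via the identity modulo the filtration jump. A direct kernel-image computation on this explicit form (already recorded in the paragraph preceding the theorem statement) yields
\[
F_p \Psi \simeq \bigoplus_{k \geq 0} F_{k+p+1}/F_k \cdot x^k t^x,\qquad G^q \Psi \simeq \bigoplus_{k \geq q} \O_{\X}(^\dag X_s)_\Q /F_k \cdot x^k t^x.
\]

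Next I would compute $Gr^F_p Gr^q_G \Psi$ from these formulas. The subspace $G^{q+1} \Psi$ removes the summand at $k = q$, so passing to $Gr^q_G$ isolates that summand; intersecting with $F_p$ replaces $\O_{\X}(^\dag X_s)_\Q$ by $F_{p+q+1}$, and modding by $F_{p-1}$ replaces it by $F_{p+q+1}/F_{p+q}$. Thus the whole bigraded piece is concentrated at $k = q$ and equal to $Gr^F_{p+q+1} \O_{\X}(^\dag X_s)_\Q \cdot x^q t^x$. Applying Theorem \ref{cons_trun} (together with Remark \ref{cons_trun_form}) identifies this with $\bigoplus_{\card I = p+q+1} \RG_{D_I}(\O_{\P,\Q})[c+p+q+1]$; the Tate twist $(q+1)$ appears because the Frobenius structure on $x^q t^x$ (cf. Remark \ref{ix-loc}) contributes a factor $q^{q+1-\delta_X}$, which combines with the $(\delta_X)$ twist in the definition $\Psi = j_+I_X^{0,N}/j_{!+}I_X^{0,N}(\delta_X)$ from Theorem \ref{nc_form} to give the net twist $(q+1)$.

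To globalize, I would fix a data of lifts $((\P_\alpha), (\X_\alpha), u)$ of the frame $(X, X, \P)$ and verify that the local isomorphisms on each $\X_\alpha$ are compatible with the canonical isomorphisms $\tau$ between different lifts, exactly as in Remark \ref{cons_trun_form}. Since the filtrations $F, G$ and the endomorphism $x$ are defined globally in ${\rm Hol}(X, X, \P)$, their $u_\alpha^!$-images yield the corresponding local filtrations on each $\X_\alpha$; by Theorem \ref{u_0^*} and Proposition \ref{u_0^*-cons-t}, a collection of compatible local isomorphisms descends to a canonical global one. This gives the first claimed isomorphism.

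Finally, for the statement about $Gr^M_r \Psi$, I would verify the decomposition $Gr^M_r \simeq \bigoplus_{p-q=r,\, p,q \geq 0} Gr^F_p Gr^q_G$ on the local model: a direct computation of $M_r \Psi = \sum_{p-q=r}(F_p \cap G^q)$ in coordinates shows that, at the $k$-th summand, $M_r \Psi$ contributes $F_{2k+r+1}/F_k$ (and zero when $k < \max(0,-r)$), so $Gr^M_r$ at the $k$-th summand is $Gr^F_{2k+r+1} \O_{\X}(^\dag X_s)_\Q \cdot x^k t^x$. Reindexing $k = q$, $p = k+r$ recovers exactly the sum of the bigraded pieces indexed by $(p,q)$ with $p-q=r$ and $p,q \geq 0$. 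The main obstacle is the bookkeeping of Tate twists in the third step and confirming that the Frobenius-compatible local isomorphisms genuinely satisfy the cocycle condition for $\tau$; both should reduce to the compatibilities already established in Section \ref{sec-preliminaries} and the previous parts of Section \ref{sec-nc}.
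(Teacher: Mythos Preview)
Your proposal is correct and follows essentially the same approach as the paper: compute the bigraded pieces locally using the explicit description of $\Psi$ from Theorem \ref{nc_form}, identify them with $Gr^F_{p+q+1}\O_\X(^\dag X_s)_\Q$ (with the appropriate twist) via Theorem \ref{cons_trun} and Remark \ref{cons_trun_form}, and then glue through the equivalence $u_0^*$ of Theorem \ref{u_0^*}. Your treatment is in fact somewhat more explicit than the paper's, which simply invokes the fact that $u_0^*$ is an equivalence of abelian categories to commute it past $Gr^F_pGr_G^q$, and which deduces the $Gr^M_r$ statement without spelling out the decomposition $Gr^M_r \simeq \bigoplus_{p-q=r}Gr^F_pGr_G^q$ as you do.
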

    \begin{proof}
        Fix a data of lifts $(X_\alpha, \P_\alpha, u)$. We have
        $$u_0^* \Psi = (u_\alpha^!(\Psi|\P_\alpha), \star) \simeq (\bigoplus_{k\geq0} F_k \O_{\X_\alpha}(^\dag X_{\alpha, s})_\Q \, x^kt^x(\delta_X), \star).$$
        $\star$ is the canonical isomorphisms defined by $\tau$ (see the definition of $u_0^*$) and the second isomorphism is induced by Proposition \ref{ie_form}.
        Since $u_0^*$ is an equivalence of abelican categories, we have a canonical isomorphism
        $$u_0^* Gr^F_p Gr_G^q \Psi \simeq Gr^F_p Gr_G^q u_0^* \Psi \simeq (Gr_{p+q+1} \O_{\X_\alpha}(^\dag X_{\alpha, s})_\Q (1) \, x^q t^k, \star) \simeq (Gr_{p+q+1} \O_{\X_\alpha}(^\dag X_{\alpha, s})_\Q (q+1), \star).$$
        By Remark \ref{cons_trun_form}, we have $(Gr_{p+q+1} \O_{\X_\alpha}(^\dag X_{\alpha, s})_\Q (q+1), \star) \simeq u_0^* Gr_{p+q+1} \O_{X_\eta}(q+1).$ Finally Theorem \ref{cons_trun} implies the assertion.
    \end{proof}
    
    Imitating the $l$-adic case, we want the ``correct" nearby cycle $R\psi\one_{X_\eta}$ which fits into the distinguished trianle $$R\psi\one_{X_\eta} \rightarrow i^+j_+ \one_{X_\eta}(1)[1] \rightarrow R\psi\one_{X_\eta}(1)[1] \rightarrow .$$
    As $\one_{X_\eta} \simeq \O_{X_\eta}(-\delta_X)[-\delta_X]$, it is natural to make the following definition:
    
    \begin{definition}
        $R\psi\one_{X_\eta} := \Psi(-\delta_X)[1-\delta_X].$
    \end{definition}
    
    By definition, $R\psi\one_{X_\eta}$ is a $(1-\delta_X)$-shifted holonomic $\D$-module. It admits filtrations induced by those of $\Psi.$ By Theorem \ref{nc_gr} and (\ref{one_calc}), we calculate
    \begin{align} \label{rpsi_grgr}
        Gr^F_pGr_G^q R\psi\one_{X_\eta} & \simeq \bigoplus_{\card I = p+q+1} \RG_{D_I}(\O_{\P, \Q})(-\delta_X + q + 1)[c+p+q+2-\delta_X] \\
        & \simeq \bigoplus_{\card I = p+q+1} \one_{D_I}(-p)[-(p+q)]. \notag
    \end{align}
    \begin{align} \label{rpsi_gr}
        Gr^M_r R\psi\one_{X_\eta} & \simeq \bigoplus_{\substack{p-q=r, \\ p,q \geq 0}} \bigoplus_{\card I = p+q+1} \RG_{D_I}(\O_{\P, \Q})(-\delta_X + q + 1)[c+p+q+2-\delta_X] \\
        & \simeq \bigoplus_{\substack{p-q=r, \\ p,q \geq 0}} \bigoplus_{\card I = p+q+1} \one_{D_I}(-p)[-(p+q)]. \notag
    \end{align}

    The rest of this subsection is dedicated to show that $F_p \Psi$ is a constructible truncation of $\Psi$.
    
    \begin{theorem} \label{cons_trun_nc}
        $Gr^F_p \Psi \in D^{b, -\delta_X +p+1}_{\rm cons}$. As a consequence, 
        $F_p \Psi \simeq \tau^{\leq -\delta_X+p+1}_{\rm cons} \Psi.$
    \end{theorem}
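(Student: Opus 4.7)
The plan is to establish concentration $Gr^F_p \Psi \in D^{b, -\delta_X + p + 1}_{\rm cons}$ by a spectral sequence argument on the $G$-filtration, and then derive the truncation identity from the extension property (Proposition \ref{ext-prop}) applied to the ascending $F$-filtration.

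The descending $G$-filtration on $\Psi$ restricts to a filtration of $Gr^F_p \Psi$ with graded pieces $Gr^F_p Gr^G_q \Psi$ for $q \geq 0$. By Theorem \ref{nc_gr}, each such piece is the direct sum $\bigoplus_{|I| = p+q+1} \RG_{D_I}(\O_{\P, \Q})(q+1)[c+p+q+1]$; locally, using a lift $\mathfrak{D}_I$ of $D_I$, every summand is the pushforward $u_{I+}\O_{\mathfrak{D}_I, \Q}$ of a constant isocrystal from a smooth closed subscheme, hence sits in the single constructible degree $-\delta_X + p + q + 1$. Setting $r_0 := -\delta_X + p + 1$, the spectral sequence of the restricted $G$-filtration on $Gr^F_p \Psi$ with respect to the constructible t-structure therefore has $E_1$-page supported on the single row $r = r_0$, which forces $d_k = 0$ for all $k \geq 2$ on degree grounds; the cohomology $\H^n_{\rm cons}(Gr^F_p \Psi)$ is the $d_1$-cohomology, in position $n - r_0$, of the row complex
\[
0 \to Gr^F_p Gr^G_0 \Psi \xrightarrow{d_1} Gr^F_p Gr^G_1 \Psi \xrightarrow{d_1} Gr^F_p Gr^G_2 \Psi \to \cdots.
\]

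To finish the concentration it remains to show this row complex is acyclic in positive degrees. Unwinding the local description of $\Psi$ from Theorem \ref{nc_form} together with the $\D$-module formula $\partial_{t_j}(g x^k t^x) = \partial_{t_j}(g) x^k t^x + g t_j^{-1} x^{k+1} t^x$ used in the proof of Proposition \ref{ie_form}, $d_1$ is identified on each summand with the Koszul-type multiplication-by-$t_j^{-1}$ map $\H^{\dag p+q+1}_{D_I}(\O_{\X, \Q}) \to \H^{\dag p+q+2}_{D_{I \cup \{j\}}}(\O_{\X, \Q})$ for $j \notin I$. This row complex is the exact analogue of the Čech-type bi-complex $\mathcal{C}^{\dag\cdot\cdot}$ appearing in the proof of Theorem \ref{calc_rg}; combined with Lemma \ref{rg_concentrate}, the same argument yields acyclicity in positive degrees. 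Gluing the local computation through the equivalence of Theorem \ref{u_0^*} and Proposition \ref{u_0^*-cons-t} transports the acyclicity to the global row complex, establishing $\H^n_{\rm cons}(Gr^F_p \Psi) = 0$ for $n \neq r_0$.

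Granted this concentration for every $p$, Proposition \ref{ext-prop} applied to the $F$-filtration gives $F_p \Psi \in \C^{\leq -\delta_X + p + 1}_{\rm cons}$ (from the graded pieces $Gr^F_q \Psi$ with $q \leq p$) and $\Psi / F_p \Psi \in \C^{\geq -\delta_X + p + 2}_{\rm cons}$ (from those with $q > p$), which together characterize $F_p \Psi$ as the constructible truncation $\tau^{\leq -\delta_X + p + 1}_{\rm cons}(\Psi)$. The main obstacle is identifying the $d_1$ differential with the Koszul-type map and verifying its acyclicity; this ultimately reduces to the same local Čech computation that powered Theorem \ref{calc_rg}, after which the spectral-sequence mechanism is formal.
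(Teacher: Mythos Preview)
Your overall strategy—filter $Gr^F_p\Psi$ by the induced $G$-filtration, observe that the graded pieces sit in a single constructible row, and then argue that the $d_1$-row complex is acyclic in positive degrees—is reasonable and is essentially dual to what the paper does. But the two load-bearing steps are not established in your proposal, and the references you invoke do not supply them.

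First, the identification of $d_1$. You assert that $d_1$ is the Koszul-type ``multiplication by $t_j^{-1}$'' map by ``unwinding the local description,'' but this unwinding is exactly the nontrivial content of Proposition~\ref{alphabeta} in the paper (there carried out for the $F$-filtration on $\O_\X(^\dag X_s)_\Q$ after passing to the dual side via $\DD(Gr^F_p\Psi)\simeq F_\infty/F_p$). The connecting morphism lives in the derived category, and matching it with the explicit \v{C}ech/Koszul map requires a careful comparison of complexes, not just the formula for $\partial_{t_j}(gx^kt^x)$. Without an analogue of Proposition~\ref{alphabeta} on your side, the claim that the row complex \emph{is} a Koszul complex is unsupported.

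Second, the acyclicity. You cite the bi-complex $\mathcal C^{\dag\cdot\cdot}$ in the proof of Theorem~\ref{calc_rg} and Lemma~\ref{rg_concentrate}, but those results prove something different: they show that $\H^{\dag q}_{Z_I}(\O_{\P,\Q})$ vanishes outside a single degree, which makes a certain spectral sequence degenerate at $E_1$ and produces the \emph{filtration} on $\H^{\dag r}_Y$. They do not show that the complex $\bigoplus_{|I|=p+1}\to\bigoplus_{|I|=p+2}\to\cdots$ with Koszul differentials is exact in positive degrees. The paper obtains the needed exactness separately, as Proposition~\ref{koszul-exact}, via a stratification argument in the dual constructible heart; your proposal would need the dual of that statement, and it does not follow from Theorem~\ref{calc_rg}.

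In short: the paper avoids working directly with the twisted module $Gr^F_p\Psi$ by dualizing to $F_\infty/F_p$, where the connecting maps (Proposition~\ref{alphabeta}) and the Koszul exactness (Proposition~\ref{koszul-exact}) can be computed explicitly, and then runs a descending induction. Your route can be made to work, but it requires proving analogues of both propositions on the primal side; as written, those steps are asserted rather than argued, and the cited lemmas do not deliver them.
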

    Since $u_0^*$ preserves constructible t-structure and dual constuctible t-structure by Proposition \ref{u_0^*-cons-t} and Proposition \ref{u_0^*-dual}, we consider locally and fix a lift $\X$ of $X$. By the characterization of kernel and image filtrations, $G^q\DD(\Psi) \simeq \DD(\Psi/F_{q-1}\Psi) $ and hence $\DD (Gr^F_p\Psi) \simeq Gr_G^p(\DD (\Psi))$. By Theorem \ref{nc_form}, $Gr_G^p(\DD (\Psi)) \simeq Gr_G^p(\Psi) \simeq \O_\X(^\dag X_s)_\Q / F_p \O_\X(^\dag X_s)_\Q.$ Thus
    \begin{align} \label{dgrfpsi}
        \DD(Gr^F_p \Psi) \simeq \O_\X(^\dag X_s)_\Q / F_p \O_\X(^\dag X_s)_\Q.
    \end{align}
    
    We are going to show that $\O_\X(^\dag X_s)_\Q / F_p \O_\X(^\dag X_s)_\Q \in D^{b, \delta_X -p-1}_{\rm dcons}(X).$
    
    For $I \subseteq \{0, \dots, r\}$, let $\tilde{\one}_{D_I} := \DD(\one_{D_I}) = \RG_{D_I} \O_{\X, \Q}[\delta_X]$ and $\tilde{\one}_{X_s} := \DD(\one_{X_s}) = \RG_{X_s} \O_{\X, \Q}[\delta_X].$
    \\
    Note that $\tilde{\one}_{X_s}$ is the unit of $\tilde{\otimes} := \tilde{\otimes}_{(X_s, X_s, \X)}$. Since $\O_{\X, \Q}[-\delta_X]$ is a constructible sheaf by Lemma \ref{cons_isoc}, its dual $\O_{\X, \Q}[\delta_X]$ is a dual constructible sheaf; as $\RG_Z$ preserves dual constructible sheaves, both $\tilde{\one}_{X_s}$ and $\tilde{\one}_{D_I}$'s are dual constructible sheaves.
    \\
    Since $\tilde{\one}_{D_I} \tilde\otimes \tilde{\one}_{D_J} \simeq \tilde{\one}_{D_{I \cup J}}$, we may regard $\displaystyle \bigoplus_{\card I = p} \tilde\one_{D_I}$ as $\tilde{\bigwedge}^p (\tilde{\one}_{D_0} \oplus \cdots \oplus \tilde{\one}_{D_r}).$
    
    We have a canonical morphism $\tilde{\one}_{D_0} \oplus \cdots \oplus \tilde{\one}_{D_r} \xrightarrow{d} \tilde{\one}_{X_s} $ and can form a complex of dual constructible sheaves:
        \begin{align} \label{koszul}
            K^\cdot : \cdots \xrightarrow{d} \bigoplus_{\card I = 2} \tilde{\one}_{D_I} \xrightarrow{d} \bigoplus_{\card I = 1} \tilde{\one}_{D_I}
            \xrightarrow{d} \tilde{\one}_{X_s} \rightarrow 0.
        \end{align}
    \\
    Informally, this sequence can be regarded as a Koszul complex and $d$ is given by $$d(e_{i_0} \wedge \cdots \wedge e_{i_p}) = \sum_k (-1)^k e_{i_0} \wedge \cdots \wedge \hat{e}_{i_k} \wedge \cdots \wedge e_{i_p}.$$
    \begin{proposition} \label{koszul-exact}
        The sequence {\rm (\ref{koszul})} is exact in $D^{b, \heartsuit}_{\rm hol, dcons}(X)$.
    \end{proposition}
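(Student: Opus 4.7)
The plan is to prove exactness stratum by stratum and to assemble the pieces via iterated use of Lemma \ref{dev_exact}. Since every term of $K^\cdot$ is supported on $X_s$, pulling back along $j : X_\eta \hookrightarrow X$ yields the zero complex, so it suffices to verify exactness after $i^!$ for $i : X_s \hookrightarrow X$. I would then stratify $X_s$ into the locally closed smooth subschemes
$$D_J^\circ := D_J \setminus \bigcup_{J' \supsetneq J} D_{J'}, \qquad \emptyset \neq J \subseteq \{0,\dots,r\},$$
with inclusions $\iota_J : D_J^\circ \hookrightarrow X_s$, and check exactness of $\iota_J^! K^\cdot$ for each such $J$.

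Using $\iota_J^! = \DD \iota_J^+ \DD$ together with base change for closed immersions and the elementary observations that $D_J^\circ \subseteq D_I$ when $I \subseteq J$ (while $D_J^\circ \cap D_I = \emptyset$ otherwise), one obtains $\iota_J^! \tilde{\one}_{D_I} \simeq \tilde{\one}_{D_J^\circ}$ for $I \subseteq J$ and $0$ otherwise, and $\iota_J^! \tilde{\one}_{X_s} \simeq \tilde{\one}_{D_J^\circ}$. Hence $\iota_J^! K^\cdot$ is identified with
$$\cdots \to \bigoplus_{\substack{I \subseteq J \\ |I|=2}} \tilde{\one}_{D_J^\circ} \to \bigoplus_{\substack{I \subseteq J \\ |I|=1}} \tilde{\one}_{D_J^\circ} \to \tilde{\one}_{D_J^\circ} \to 0,$$
with the inherited Koszul differentials. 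This is the augmented simplicial chain complex of the $(|J|-1)$-simplex, tensored with the single object $\tilde{\one}_{D_J^\circ}$ in the additive heart on $D_J^\circ$; since a simplex is contractible, an explicit integer-valued contracting homotopy (e.g.\ coning on any fixed vertex of $J$) gives exactness of $\iota_J^! K^\cdot$ in $D^{b, \heartsuit}_{\rm hol, dcons}(D_J^\circ)$.

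To pass from stratum-wise vanishing to exactness of $K^\cdot$, I would perform noetherian induction on the support, peeling off a deepest non-empty stratum $D_{J_{\max}}$ (which is closed in $X_s$) via Lemma \ref{dev_exact} and iterating on the open complement. Since $\iota_J^!$ is t-exact for the dual constructible t-structure, it commutes with taking cohomology in the heart, so vanishing of the cohomology objects of every $\iota_J^! K^\cdot$ forces vanishing of those of $K^\cdot$.

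The main technical obstacle I anticipate is verifying that, under the canonical isomorphisms $\iota_J^! \tilde{\one}_{D_I} \simeq \tilde{\one}_{D_J^\circ}$ constructed above, the pulled-back adjunction maps $\iota_J^!(\tilde{\one}_{D_I} \to \tilde{\one}_{D_{I \setminus \{k\}}})$ really become (up to the prescribed signs) the identity on $\tilde{\one}_{D_J^\circ}$, so that $\iota_J^! K^\cdot$ is genuinely the simplicial chain complex with standard Koszul differentials. This reduces to a functoriality check on counits of the closed-immersion adjunctions for the tower $D_J^\circ \hookrightarrow D_I \hookrightarrow D_{I \setminus \{k\}} \hookrightarrow X_s$, and is standard within the six-functor formalism but requires careful bookkeeping.
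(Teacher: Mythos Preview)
Your proposal is correct and follows essentially the same approach as the paper: both reduce via iterated use of Lemma \ref{dev_exact} to the locally closed strata $D_J^\circ = D_J \setminus \bigcup_{J' \supsetneq J} D_{J'}$, compute that the restriction kills the summands indexed by $I \not\subseteq J$ and makes the remaining summands canonically isomorphic to a single copy of the unit, and conclude by recognizing the resulting complex as the acyclic augmented chain complex of a simplex. The only cosmetic difference is that the paper works in the concrete model $\RG_{D_I}\O_{\X,\Q}$ on a local lift $\X$ and uses the idempotence $\RG_{D_J^\circ}\RG_{D_I} \simeq \RG_{D_J^\circ \cap D_I}$ to identify the restricted differentials directly, whereas you phrase the same computation through six-functor base change for $\iota_J^!$; the bookkeeping you flag as the main obstacle is exactly what the paper's phrase ``it is indeed a Koszul complex in this situation'' is implicitly asserting.
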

    \begin{proof}
        Consider the $p$-smooth stratification $\displaystyle X_p := \bigcup_{\card I = p} D_I.$ Since $\displaystyle X_p - X_{p+1} = \coprod_{\card I = p}(D_I -  X_{p+1})$ and by using Lemma \ref{dev_exact} repeatedly, we are reduced to showing that for each $I \subseteq \{0, \dots, r\}$ with $\card I = p$,  $\RG_{D_I - X_{p+1}} K^\cdot$ is an exact sequence of $D^{b, \heartsuit}_{\rm hol, dcons}(D_I - X_{p+1})$.
        \\
        Let $\U := \X - X_{p+1}.$ For $J \subseteq \{0, \dots, r\}$, we have $(\RG_{D_I - X_{p+1}} \RG_{D_J}(\O_{\X, \Q}))|_\U \simeq \RG_{D_{I\cup J}\cap\U}(\O_{\U, \Q})$.
        If $J \not\subseteq I$, then $D_{I \cup J} \cap \U = \emptyset.$ Thus $(\RG_{D_I - X_{p+1}}K^\cdot)|_\U$ is the sequence
        $$\cdots \xrightarrow{d} \bigoplus_{\card J = 2, J \subseteq I} \RG_{D_I \cap U} \O_{\U, \Q}[\delta_X] \xrightarrow{d} \bigoplus_{\card J = 1, J \subseteq I} \RG_{D_I \cap U} \O_{\U, \Q}[\delta_X] \xrightarrow{d} \RG_{D_I\cap U} \O_{\U, \Q}[\delta_X] \rightarrow 0.$$
        This is an exact sequence of $D^{b, -\delta_X + p}_{\rm hol, isoc}(D_I\cap U, \U)$ (it is indeed a Koszul complex in this situation) and therefore $\RG_{D_I - X_{p+1}} K^\cdot$ is an exact sequence of $D^{b, \heartsuit}_{\rm hol, dcons}(D_I\cap U, \U).$
    \end{proof}
    
    We have a distinguished triangle induced by the exact sequence of holonomic modules
    $$ Gr^F_p \O_\X(^\dag X_s)_\Q \rightarrow \O_\X(^\dag X_s)_\Q / F_{p-1}\O_\X(^\dag X_s)_\Q \rightarrow \O_\X(^\dag X_s)_\Q / F_p\O_\X(^\dag X_s)_\Q \rightarrow .$$
    For simplicity, we will write $F_\infty = \O_\X(^\dag X_s)_\Q, F_p = F_p \O_\X(^\dag X_s)_\Q$ and $Gr_p = Gr^F_p.$
    Let $\alpha_p : F_\infty/F_p[-1] \rightarrow Gr_p$ and $\beta_p : Gr_p \rightarrow F_\infty/F_{p-1}$ be the canonical morphisms in the derived category.
    
    \begin{proposition} \label{alphabeta}
        The diagram
        \[
        \xymatrixcolsep{20mm}
        \xymatrix{
                    Gr_{p+1}[-1] \ar[r]^{-\alpha_p \circ (\beta_{p+1}{[-1]})} & Gr_p\\
                    \displaystyle\bigoplus_{\card I = p+1} \RG_{D_I}(\O_{\X, \Q})[p] \ar[r]^{d[-\delta_X+p]} \ar[u]^{\wr}_{\rm \ref{cons_trun}} & \displaystyle\bigoplus_{\card I = p} \RG_{D_I}(\O_{\X, \Q})[p] \ar[u]^{\rm \ref{cons_trun}}_{\wr}
        }
        \]
        is commutative.
    \end{proposition}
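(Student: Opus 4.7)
The plan is to realize both sides of the diagram through a single chain-level model and to identify the connecting morphism of the filtration $F_\bullet$ with the Čech–Koszul differential $d$.

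Since $u_0^*$ is compatible with the constructible t-structure (Proposition \ref{u_0^*-cons-t}) and with duals (Proposition \ref{u_0^*-dual}), and since both the filtration $F_\bullet$ and the identifications of Theorem \ref{cons_trun} descend through $u_0^*$ by Remark \ref{cons_trun_form}, the statement is local on $X$. So I would fix a lift $\X$ of $X$ over $\V[[t]]$ locally with finite $p$-basis and work with the explicit description $F_p = \sum_{|I|=p} \O_\X(^\dag D_I)_\Q$ furnished by Remark \ref{cons_trun_form}.

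The next step is to build a filtered complex model. Applying Lemma \ref{GY-resol} to $Y = X_s = \bigcup_i D_i$ with $Z_i = D_i$, together with the ${\rm sp}_*$-acyclic resolutions of Lemma \ref{sp_resol}, produces a complex
$$
C^\bullet = \Bigl[\RG_{D_{0\cdots r}}(\O_{\X, \Q}) \to \cdots \to \bigoplus_{|I|=2}\RG_{D_I}(\O_{\X, \Q}) \to \bigoplus_{|I|=1}\RG_{D_I}(\O_{\X, \Q})\Bigr]
$$
whose differential is the alternating sum of the natural morphisms $\RG_{D_I} \to \RG_{D_{I\setminus\{i_k\}}}$ induced by inclusion of supports, and whose total complex is quasi-isomorphic to $\RG_{X_s}(\O_{\X, \Q})$. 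By Lemma \ref{rg_concentrate}, each $\RG_{D_I}(\O_{\X, \Q})$ with $|I|=p$ is concentrated in cohomological degree $p$, so the ``stupid'' filtration on $C^\bullet$ by the index $|I|$ induces a spectral sequence degenerating at $E_1$. Tracing through the double complex $\mathcal{C}^{\dag\cdot\cdot}$ of the proof of Theorem \ref{calc_rg}, this filtration is precisely the filtration $F_\bullet$ on $\H^{\dag 1}_{X_s}(\O_{\X, \Q}) \simeq \O_\X(^\dag X_s)_\Q / \O_{\X, \Q}$. It is then a standard fact about filtered complexes that the two-step connecting morphism attached to $F_{p-1} \subset F_p \subset F_{p+1}$, namely $-\alpha_p \circ \beta_{p+1}[-1] : Gr^F_{p+1}[-1] \to Gr^F_p$, is represented at the chain level by the corresponding differential of $C^\bullet$, i.e. the alternating sum of the natural maps $\bigoplus_{|I|=p+1}\RG_{D_I}(\O_{\X, \Q}) \to \bigoplus_{|I|=p}\RG_{D_I}(\O_{\X, \Q})$. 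Under the identification $\tilde\one_{D_I} = \RG_{D_I}(\O_{\X, \Q})[\delta_X]$, this is precisely the Koszul differential $d$ of (\ref{koszul}), shifted by $[-\delta_X + p]$.

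The main technical obstacle is the matching of sign conventions. On one side, the sign $-$ in $-\alpha_p \circ \beta_{p+1}[-1]$ encodes the standard convention for a two-step connecting morphism in a filtered derived category; on the other side, the Koszul differential carries the $k$-th face map with sign $(-1)^k$. Verifying agreement of these two conventions is a careful but routine bookkeeping exercise, which I would carry out by choosing an explicit (e.g. Čech) acyclic resolution of $\O_{\X, \Q}$, representing $\alpha_p$ and $\beta_{p+1}$ as octahedral boundary maps at the chain level, and comparing with the explicit formula for $d$. A secondary subtlety is extracting from the proof of Lemma \ref{GY-resol} that the differential of $C^\bullet$ is indeed the alternating sum of the natural maps $\RG_{D_I} \to \RG_{D_{I\setminus\{i_k\}}}$ induced by inclusion of supports, but this is built into the construction of the complex via the iterated mapping fibers.
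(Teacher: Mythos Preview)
Your approach is essentially the same as the paper's: both realize the graded pieces via the explicit \v{C}ech-type complexes $\mathcal{C}^\dag_I$ resolving $\RG_{D_I}(\O_{\X,\Q})$ and identify the two-step connecting morphism with the Koszul differential. Two small remarks: first, your opening localization via $u_0^*$ is redundant, since a lift $\X$ has already been fixed before the proposition is stated; second, where you invoke the ``standard fact'' and defer the sign check as routine bookkeeping, the paper actually carries this out by writing down an explicit chain map $\iota'_I : \mathcal{C}^\dag_I[-1] \to [F_p/F_{p-1} \hookrightarrow F_{p+1}/F_{p-1}]$ lifting $\iota_I[-1]$, and then comparing $(\oplus\iota'_I)$ with $(\oplus\iota_{I'}) \circ d'$ term by term at degree $0$ to extract the factor $(-1)$.
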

    \begin{proof}
        Let $I = \{ i_1 < \cdots < i_p\} \subseteq \{0, \dots, r\}$. Consider the following morphism
        $$
            \iota_I : \mathcal{C}^\dag_I := [\O_{\X, \Q} \rightarrow \bigoplus_{j_1} \O_\X(^\dag D_{i_{j_1}})_\Q \rightarrow \cdots \rightarrow \O_\X(^\dag D_{i_1} \cup \cdots \cup D_{i_p})_\Q] \rightarrow F_p/F_{p-1},
        $$
        where the homomorphism at the degree 0 is $\O_\X(^\dag D_{i_1} \cup \cdots \cup D_{i_p})_\Q \hookrightarrow F_p \rightarrow F_p/F_{p-1}.$ Via the canonical isomorphism $\RG_{D_I}(\O_{\X, \Q})[p] \simeq \mathcal C^\dag_I$ of Lemma \ref{sp_resol}, the vertical isomorphisms of the diagram in the statement of this proposition are $\oplus \iota_I.$
        For $I' \subseteq I$ with $\card I' = p'$ and $\card I = p$, there is a canonical morphism $c_{II'} : \mathcal C^\dag_I[-p] \rightarrow \mathcal C^\dag_{I'}[-p'],$ which is simply the projection at each degree.
        The morphism $d' := \displaystyle d[-\delta_X+p] : \displaystyle\bigoplus_{\card I = p+1} \mathcal{C}^\dag_I[-1] \rightarrow \displaystyle\bigoplus_{\card I = p} \mathcal{C}^\dag_I$ is defined as follows: for $I = \{i_0 < \cdots < i_p \},$ $d'$ on its $I$-th direct summand is $d'_I = \sum_k (-1)^k c_{I, I-\{i_k\}}[p].$
        Let us calculate the composition $\alpha_p\circ(\beta_{p+1}[-1]) : Gr_{p+1}[-1] \rightarrow Gr_{p}.$ We have a commutative diagram of complexes
        \begin{equation}
            \label{alpha'beta'}
            \xymatrix{
                {[F_p/F_{p-1} \hookrightarrow F_{p+1}/F_{p-1}]} \ar[d]^{\rm q\mathchar`-iso} \ar[r] & {[F_p/F_{p-1} \hookrightarrow F_\infty/F_{p-1}]} \ar[d]^{\rm q\mathchar`-iso} \ar[rd] & \\
                {[0 \rightarrow F_{p+1}/F_p]}  \ar[r]^{\beta_{p+1}{[-1]}} & {[0 \rightarrow F_\infty/F_p]} \ar@{-->}[r]^{\alpha_p} & {[F_p/F_{p-1} \rightarrow 0]},
            }
        \end{equation}
        where ``q-iso" means quasi-isomorphism. This diagram is the composition $\alpha_p(\beta_{p+1}[-1])$. 
        Let $I = \{i_0 < \cdots < i_p\}$. Define the morphism of complexes $\iota'_I : \mathcal C^\dag_I[-1] \rightarrow [F_p/F_{p-1} \rightarrow F_{p+1} / F_{p-1}]$ as follows: 
        at degree 0, ${\iota'_I}^0 : \bigoplus_{q} \O_\X(^\dag D_{i_0} \cup \cdots \cup \hat{D}_{i_q} \cup \cdots \cup D_{i_p})_\Q \rightarrow F_p \rightarrow F_p / F_{p-1}$ is given by $(f_0, \dots, f_p) \mapsto \sum_q (-1)^{q+1} f_q$; 
        at degree 1, ${\iota'_I}^1$ is simply $\O_\X(^\dag D_{i_0} \cup \cdots \cup D_{i_p})_\Q \hookrightarrow F_{p+1} \rightarrow F_{p+1}/F_{p-1}.$
        It is easy to see that $\iota'_I$ is a morphism of complexes and the diagram below is commutative:
        \begin{equation} \label{iota'}
        \tag{$\star$}
        \xymatrix {
            \displaystyle\bigoplus_{\card I = p+1} \mathcal C^\dag_I[-1] \ar[rr]^{\oplus \iota_I[-1]}_{\rm q\mathchar`-iso} \ar[rd]^{\oplus \iota'_I} &  & F_{p+1}/F_p[-1]\\
            & {[F_p/F_{p-1} \hookrightarrow F_{p+1}/F_p],} \ar[ur]^{\rm q\mathchar`-iso} &
        }
        \end{equation}
        where the right arrow is the left vertical arrow of (\ref{alpha'beta'}). It follows that $\oplus \iota'_I$ is a quasi-isomorphism.
        
        Consider the following diagram of complexes:
        \[\xymatrix{
            {[F_p/F_{p-1} \hookrightarrow F_{p+1}/F_{p-1}]} \ar[dr]^{\rm q\mathchar`-iso} \ar[rr] & & {[F_p/F_{p-1} \hookrightarrow F_\infty/F_{p-1}]} \ar[dr] &\\
            & F_{p+1}/F_p[-1] \ar@{-->}[rr]^{\alpha_p(\beta_{p+1}{[-1]})} & & F_p/F_{p-1}\\
            \displaystyle\bigoplus_{\card I = p+1} \mathcal C^\dag_I[-1] \ar[ur]^{\oplus \iota_I{[-1]}} \ar[rr]^{d'} \ar[uu]^{\oplus \iota'_I}_{\rm q\mathchar`-iso} & & \displaystyle\bigoplus_{\card I = p} \mathcal C^\dag_I. \ar[ur]^{\oplus \iota_I} &
        }\]
        The left triangle is (\ref{iota'}) and the top square is (\ref{alpha'beta'}).
        The dashed arrow is a morphism in the derived category, and the assertion of this proposition is that the bottom square is commutative up to the multiplication by $-1$. As $\oplus \iota'_I$ is a quasi-isomorphism, it suffices to show that the outer pentagon is commutative in the category of complexes. We only have to consider the maps at degree 0. The composition of left and top two maps is
        $$\displaystyle \bigoplus_{i_0 < \cdots < i_p} {\iota'_I}^0 : \bigoplus_{i_0 < \cdots < i_p} \bigoplus_q \O_\X(^\dag D_{i_0} \cup \cdots \cup \hat{D}_{i_q} \cup \cdots \cup D_{i_p})_\Q \rightarrow F_p/F_{p-1}.$$
        The composition of bottom two maps is
        $$\bigoplus_{i_0 < \cdots < i_p} \bigoplus_q \O_\X(^\dag D_{i_0} \cup \cdots \cup \hat{D}_{i_q} \cup \cdots \cup D_{i_p})_\Q \xrightarrow{d'^0} \bigoplus_{i_1 < \cdots < i_p}\O_\X(^\dag D_{i_1} \cup \cdots \cup D_{i_p})_\Q \xrightarrow{\oplus \iota_I^0} F_p/F_{p-1}.$$
        Fix an $I = {i_0 < \cdots < i_p}$ and take $(f_0, \dots, f_p) \in \bigoplus_q \O_\X(^\dag D_{i_0} \cup \cdots \cup \hat{D}_{i_q} \cup \cdots \cup D_{i_p})_\Q.$ We have
        $${\iota'_I}^0(f_0, \dots, f_p) = \sum_q (-1)^{q+1} f_q.$$
        On the other hand,
        $$(\oplus \iota_{I'})d'^0(f_0, \dots, f_p) = (\oplus \iota_{I'})(\sum_q (-1)^q f_q) = \sum_q (-1)^q f_q,$$
        where in the middle term, $f_q$ is regarded as an element of $\mathcal C^{\dag 0}_{I-\{i_q\}}.$ So we have proved the proposition.
    \end{proof}
    
    \noindent
    (Proof of Theorem \ref{cons_trun_nc})
    
    We keep the symbols of Proposition \ref{alphabeta}. Put $K_p := Gr_p[\delta_X - p], \ I_p := F_\infty/F_p[\delta_X-p-1], \ \alpha'_p := \alpha_p[\delta_X-p]:I_p \rightarrow K_p, \ \beta'_p := \beta_p[\delta_X-p]: K_p \rightarrow I_{p-1}.$ Let $d'_p : K_p \rightarrow K_{p-1}$ be the morphism induced by the isomorphism of Remark \ref{cons_trun_form} and (\ref{koszul}). We have to show that each $I_p$ is a dual constructible sheaf.
    
    Proposition \ref{koszul-exact} asserts that $\cdots \rightarrow K_{p+1} \xrightarrow{d'_{p+1}} K_p \xrightarrow{d'_p} K_{p-1} \rightarrow \cdots$ is an exact sequence in $D^{b, \heartsuit}_{\rm hol, dcons}$ Alos, by Proposition \ref{alphabeta}, $\alpha'_{p-1}\beta'_p = -d'_p.$ By definition, there is a distinguished triangle in $D^{b}_{\rm hol}$:
    \begin{equation} \label{alphabeta_t}
        I_p \xrightarrow{\alpha'_p} K_p \xrightarrow{\beta'_p} I_{p-1} \rightarrow. \tag{$\ast$}
    \end{equation}
    
    Let us prove that $I_p \in D^{b, \heartsuit}_{\rm hol, dcons}$, $\alpha'_p$ is injective and $\im(\alpha'_p) = \ker(d'_p)$ by descending induction on $p$. It is obvious that it is true for $p$ sufficiently large. Suppose that the assertion is true for $p$. Then by the distinguished triangle (\ref{alphabeta_t}), we conclude that $I_{p-1} \in D^{b, \heartsuit}_{\rm hol, dcons}$ and that $\beta'_p$ is surjective. Since $\alpha'_{p-1}\beta'_{p} = -d'_p,$ we have $\ker(d'_{p-1}) = \im(d'_p) = \im(\alpha'_{p-1}).$ Let us show that $\alpha'_{p-1}$ is injective by diagram chasing: for $x \in \ker(\alpha'_{p-1}),$ choose some $y \in K_p$ such that $\beta'_p(y) = x.$ Then since $d'_p(y) = -\alpha'_{p-1} \beta'_p(y) = -\alpha'_{p-1}(x) = 0$, we have $y \in \ker(d'_p) = \im(\alpha'_p).$ Choose some $z \in I_p$ such that $y = \alpha'_p(z).$ Then we have $x = \beta'_p(y) = \beta'_p \alpha'_p(z) = 0.$
    
    As a by-product, we have the following theorem:
    \begin{corollary}
        (i) We have an exact sequence of constructible sheaves
        $$ \displaystyle 0 \rightarrow \one_{X_s} \xrightarrow{\delta} \bigoplus_{\card I = 1} \one_{D_I} \xrightarrow{\delta\wedge} \bigoplus_{\card I = 2} \one_{D_I} \rightarrow \cdots.$$
        \\
        (ii) There exist short exact sequences of constructible sheaves
        $$0 \rightarrow \H^p_{\rm cons}(R\psi\one_{X_\eta}) \xrightarrow{\alpha_p} i^+\H^{p+1}_{\rm cons}(j_+ 
\one_{X_\eta})(1) \xrightarrow{\beta_{p+1}} \H^{p+1}_{\rm cons}(R\psi\one_{X_\eta})(1) \rightarrow 0$$
        such that $\alpha_p \beta_p$ agrees with $\delta \wedge$ via the isomorphism $i^+\H^{p+1}_{\rm cons}(j_+\one_{X_\eta})(p+1) \simeq \bigoplus_{\card I = p+1} \one_{D_I}$ of {\rm Proposition \ref{purity}}.
    \end{corollary}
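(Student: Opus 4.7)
My plan is to obtain part (i) as the dual of the Koszul sequence (\ref{koszul}) from Proposition \ref{koszul-exact}, which lives in the heart of the dual constructible $t$-structure. Since the dual functor $\DD$ exchanges the constructible and dual constructible $t$-structures, and by definition $\DD(\tilde{\one}_{D_I}) = \one_{D_I}$ and $\DD(\tilde{\one}_{X_s}) = \one_{X_s}$, applying $\DD$ termwise yields the claimed exact sequence of constructible sheaves.

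For part (ii), I would start from the distinguished triangle
$$R\psi\one_{X_\eta} \to i^+ j_+\one_{X_\eta}(1)[1] \to R\psi\one_{X_\eta}(1)[1] \xrightarrow{x[1]} R\psi\one_{X_\eta}[1]$$
obtained from the basic triangle of Theorem \ref{nc_form} by the twist-and-shift $\Psi \rightsquigarrow R\psi\one_{X_\eta} = \Psi(-\delta_X)[1-\delta_X]$, together with the identification $\O_{X_\eta} \simeq \one_{X_\eta}(\delta_X)[\delta_X]$ recalled before Theorem \ref{nc_form}. Applying $\H^\cdot_{\rm cons}$ gives a long exact sequence whose connecting map is induced by $x : R\psi\one_{X_\eta}(1) \to R\psi\one_{X_\eta}$. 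The key observation is that $x$ strictly decreases the kernel filtration, sending $F_p\Psi = \ker(x^{p+1})$ into $F_{p-1}\Psi(-1)$, so its action on each graded piece vanishes. Since $\H^p_{\rm cons}(R\psi\one_{X_\eta}) \simeq Gr^F_p R\psi\one_{X_\eta}$ by Theorem \ref{cons_trun_nc}, the connecting map is the zero map, and the long exact sequence decomposes into the claimed short exact sequences with $\alpha_p$ and $\beta_{p+1}$ induced by the first and second morphisms of the triangle.

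To identify the composition $\alpha_p \beta_p$ with $\delta\wedge$ via the purity isomorphism of Proposition \ref{purity}, I would trace the maps through the filtration on $R\psi\one_{X_\eta}$. By the cons-truncation identification of Theorem \ref{cons_trun_nc}, $\alpha_p$ and $\beta_p$ are precisely the canonical morphisms induced by the inclusion $F_p \hookrightarrow F_{p+1}$ and its cokernel, so their composition is the boundary map of the three-term filtration $F_{p-1} \subset F_p \subset F_{p+1}$. After dualizing via the canonical isomorphism $\DD(Gr^F_p\Psi) \simeq \O_\X(^\dag X_s)_\Q / F_p\O_\X(^\dag X_s)_\Q$ of (\ref{dgrfpsi}), this boundary map corresponds to the differential $d'_p$ studied in Proposition \ref{alphabeta}, which was shown there to satisfy $\alpha'_{p-1}\beta'_p = -d'_p$ and to agree, via the comparison with the complex (\ref{koszul}), with the Koszul-type differential $\delta\wedge$. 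The main obstacle is the careful bookkeeping of duality, shifts, and Tate twists: one must verify that the signs encoded in $\alpha'_{p-1}\beta'_p = -d'_p$, combined with the signs introduced by the dualization of the filtration and by the shift $[1-\delta_X]$, conspire to give $\alpha_p\beta_p = \delta\wedge$ on the nose and not merely up to sign.
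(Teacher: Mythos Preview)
Your argument for part (i) is exactly the paper's: dualize the Koszul sequence (\ref{koszul}).

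For part (ii) you take a different route from the paper. The paper does \emph{not} pass through the distinguished triangle $\Psi(1)\xrightarrow{x}\Psi\to i^+j_+\O_{X_\eta}(1)\to$. Instead it simply observes that the proof of Theorem \ref{cons_trun_nc} has already produced short exact sequences
\[
0 \to I_p \xrightarrow{\alpha'_p} K_p \xrightarrow{\beta'_p} I_{p-1} \to 0
\]
in $D^{b,\heartsuit}_{\rm hol,dcons}$, where $I_p = F_\infty/F_p[\delta_X-p-1]$ and $K_p = Gr_p[\delta_X-p]$. By (\ref{dgrfpsi}) and Theorem \ref{cons_trun_nc} the dual of $I_p$ is $\H^{p+1}_{\rm cons}(R\psi\one_{X_\eta})$, and $K_p \simeq \bigoplus_{\card I = p}\tilde\one_{D_I}$ by Remark \ref{cons_trun_form}; dualizing the exact sequence termwise gives the claimed short exact sequence, and the relation $\alpha'_{p-1}\beta'_p = -d'_p$ from Proposition \ref{alphabeta} dualizes to $\alpha_p\beta_p = \delta\wedge$ (up to the sign the paper absorbs into ``suitable signs'').

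Your route via the triangle and the vanishing of $\H^\bullet_{\rm cons}(x)$ does yield short exact sequences with the correct outer and middle terms, and your observation that $x$ lowers the kernel filtration (hence acts by zero on $Gr^F$) is correct. The gap is the sentence ``$\alpha_p$ and $\beta_p$ are precisely the canonical morphisms induced by the inclusion $F_p \hookrightarrow F_{p+1}$ and its cokernel''. The maps $\alpha_p,\beta_{p+1}$ in your construction are the constructible cohomology of the two morphisms in the triangle $\Psi\to i^+j_+\O_{X_\eta}(1)\to \Psi(1)[1]$, whereas the maps to which Proposition \ref{alphabeta} applies are built from the filtration $F_\bullet$ on $\O_\X(^\dag X_s)_\Q$, not from the kernel filtration on $\Psi$. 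These two filtrations are related but not identical (locally $F_p\Psi = \bigoplus_k F_{k+p+1}/F_k\cdot x^k t^x$), and the morphism $\Psi\to i^+j_+\O_{X_\eta}(1)$ was constructed in the paper via the auxiliary object $\Xi$, not as an obvious filtration map. So to invoke Proposition \ref{alphabeta} you still owe a verification that your $\alpha_p,\beta_p$ dualize to the paper's $\beta'_p,\alpha'_p$; the paper's approach sidesteps this entirely because its $\alpha_p,\beta_p$ are \emph{defined} as those duals.
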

    \begin{proof}
        (i) Take the dual of (\ref{koszul}). (ii) Using the notations of (\ref{alphabeta_t}), we have proved that the sequence
        $$ 0 \rightarrow I_p \xrightarrow{\alpha'_p} K_p \xrightarrow{\beta'_p} I_{p-1} \rightarrow 0$$
        is exact in $D^\heartsuit_{\rm hol, dcons}$. By (\ref{dgrfpsi}) and Theorem \ref{cons_trun_nc}, the dual of $I_p$ is $\H^{p+1}_{\rm cons}(R\psi\one_{X_\eta})$. $K_p$ is canonically isomorphic to $\bigoplus_{\card I = p} \tilde{\one}_{D_I}$ by Theorem \ref{cons_trun_form}. Since $\alpha'_{p-1}\beta'_p = -d'_p$, the dual of this exact sequence with suitable signs is what (ii) asserts.
    \end{proof}
    
    \begin{remark}
        In $l$-adic case, one can construct weight spectral sequence from this corollary; choose a complex $K$ and a morphism $K \rightarrow K(1)[1]$ representing $i^*Rj_* \Q_l$ and the cup product $i^*Rj_* \Q_l \rightarrow i^*Rj_* \Q_l(1)[1].$ Then by this corollary, $R \psi \Q_l$ is isomorphic to the total complex of $A := [(\tau^{\geq 1}K)[1] \rightarrow (\tau^{\geq 2}K)[2] \rightarrow \cdots].$ Endow $A$ with the filtration $P_kA := [(\tau^{[1, k+1]}K)[1] \rightarrow (\tau^{[2, k+3]}K)[2] \rightarrow \cdots]$; this filtration induces the weight spectral sequence. In $l$-adic case, this is equivalent to the construction using perverse sheaves and monodromy filtrations. For more detail, see \cite{l-adic}.
        
        It is not easy to imitate this construction in our situation since we do not have an explicit morphism of complexes $K \rightarrow K(1)[1]$ representing $i^+j_+\one_{X_\eta} \rightarrow i^+j_+\one_{X_\eta}(1)[1]$. In the spirit of Riemann-Hilbert correspondence, perverse t-structure of $l$-adic coefficients ``corresponds" holonomic t-structure of arithmetic $\D$-modules and it is much easier to treat the monodromy filtration.
    \end{remark}
    
    \section{Weight spectral sequence} \label{sec-wss}
    \subsection{Construction}
    Let $X, \P$ and $D_0, \dots, D_r$ be the same as the previous section.
    Let $\pi : (X, X, \P) \rightarrow (\DD_S, \DD_S, \DD_\S)$, $\pi_s : (X_s, X_s, \P_s) \rightarrow (s, S, \S)$ and $\pi_\eta : (X_\eta, X, \P) \rightarrow (\eta, \DD_S, \DD_\S)$ be the structure morphisms. Let $\E^\dag_K = \Gamma(\DD_\S, \O_{\DD_\S}(^\dag s)_\Q) = \{\sum_i a_i t^i \in K[[t, t^{-1}]]: \sup_i |a_i| < \infty, \ \exists \eta < 1 (|a_i|\eta^i \rightarrow 0 \ {\rm as} \ i \rightarrow -\infty)\}$ be the bounded Robba ring over $K$. Note that this is a field which is contained in the Amice ring $\E_K := \V[[t]] \langle t^{-1}\rangle_\Q$ which is also a field.
    
    \begin{definition} \label{ad-coh}
        For $\E \in D^b_{\rm hol}(X_s, X_s, \P_s),$ define the arithmetic $\D$-module cohomology of $\E$ by $\Rg_\D(X_s; \, \E/K) := \pi_{s+}\E$ and $H^i_{\D}(X_s; \, \E/K) := H^i(\Rg_\D(X_s; \, \E/K))$.
        
        For $\F \in D^b_{\rm hol}(X_\eta, X, \P),$ define the arithmetic $\D$-module cohomology of $\F$ by $\Rg_\D(X_\eta; \, \F/\E^\dag_K) := \Rg(\DD_\S, \pi_{\eta+}\F)$ and $H^i_{\D}(X_\eta; \, \F/\E^\dag_K) := H^i(\Rg_\D(X_\eta; \, \F/\E^\dag_K))$. When $\F = \one_{X_\eta}[1]$, we abbreviate them to $\Rg_\D(X_\eta/\E^\dag_K)$ and $H^i_\D(X_\eta/\E^\dag_K)$.
    \end{definition}
    
    $R\psi\one_{X_\eta}$ is supported on $X_s$ and we regard it as an object in $D^b_{\rm hol}(X_s, X_s, \P_s)$ by Theorem \ref{crys-phil}; since $R\psi\one_{X_\eta}$ is equipped with the monodromy filtration, by applying $\Rg_\D(-)$ we get the spectral sequence
    $$E_1^{p, q} = H_\D^{p+q}(X_s; \, Gr^M_{-p} R\psi\one_{X_\eta}/K) \Rightarrow H_\D^{p+q}(X_s; \,  R\psi\one_{X_\eta}/K).$$
    Let us calculate the $E_1$-term. By (\ref{rpsi_gr}), we have
    $$\displaystyle Gr^M_{-p}(R\psi\one_{X_\eta}) \simeq \bigoplus_{\substack{i-j=-p, \\ i,j \geq 0}} \bigoplus_{\card I = i+j+1} \one_{D_I}(-i)[-(i+j)]$$
    and
    \begin{align} \label{E1term}
    E_1^{p,q} & \simeq \bigoplus_{i-j = -p} \bigoplus_{\card I = i+j+1} H_\D^{p+q-(i+j)}(X_s; \,  \one_{D_I}(-i)/K) \\ 
    & = \bigoplus_{i \geq \max(0, -p)} \bigoplus_{\card I = p+2i+1} H^{q-2i}_\D(X_s; \,  \one_{D_I}/K)(-i) \notag \\
    & = \bigoplus_{i \geq \max(0, -p)} H^{q-2i}_\D(D^{(p+2i)}/K)(-i). \notag
    \end{align}
    where we put $\displaystyle D^{(m)} = \coprod_{\card I = m+1} D_I.$
    Fix an $I \subseteq \{0, \dots, r\}, \ \card I = p$. Let $\pi_I : (D_I, D_I, \P_s) \rightarrow (s, s, \S)$ be the structure morphism. $\one_{D_I}$ is represented by $\pi_I^+(K) \in D^b_{\rm hol}(D_I, D_I, \P_s)$ and $\Rg_\D(X_s; \, \one_{D_I})$ is isomorphic to $\pi_{I+}\pi_I^+(K).$
    By \cite{comparison}, this is isomorphic to $\Rg_{\rm rig}(D_I/K).$ Thus we have
    
    \begin{equation} \label{E1term_rig}
        E_1^{p,q} \simeq \bigoplus_{i \geq \max(0, -p)} H^{q-2i}_{\rm rig}(D^{(p+2i)}/K)(-i).
    \end{equation}
    
    Next, let us calculate the $E_\infty$-term. Since $X$ is proper, $\pi_! \simeq \pi_+$ and therefore we have
    $$\pi^n_{s+} \Psi \O_{X_\eta} \simeq \Psi \pi^n_{\eta+} \O_{X_\eta}$$
    where $\theta^n_+ := \H^n(\theta_+)$ for a morphism of frames $\theta$. This isomorphism belongs to the category $\displaystyle \lim_{\longleftrightarrow} {\rm Hol}(s, s, \S).$ Since $\pi$ is proper, the left hand side belongs to ${\rm Hol(s, s, \S)}$; hence so is the right hand side.
    \begin{align} \label{E_infty_term}
        H^n_\D(X_s; \, R\psi \one_{X_\eta}/K) & = H^n_\D(X_s; \, \Psi \O_{X_\eta}(-\delta_X)[1-\delta_X]/K) \\
        & = \pi^{n+1-\delta_X}_{s+} \Psi \O_{X_\eta}(-\delta_X) \notag \\
        & \simeq \Psi \pi^{n+1-\delta_X}_{\eta+} \O_{X_\eta}(-\delta_X) \notag \\
        & \simeq \Psi \pi^n_{\eta+}\one_{X_\eta}[1] \notag \\
        & = \Psi H^n_\D(X_\eta/\E^\dag_K). \notag
    \end{align}
    By (\ref{E1term}) and (\ref{E_infty_term}), we get the following theorem:
    \begin{theorem} \label{main-theorem}
        We have a spectral sequence of $K$-vector spaces
        $$ E^{p,q}_1 = \bigoplus_{i \geq \max(0, -p)} H^{q-2i}_{\D}(D^{(p+2i)}/K)(-i) \Rightarrow \Psi H^{p+q}_{\D}(X_\eta/\E^\dag_K).$$
    \end{theorem}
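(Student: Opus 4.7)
The plan is to produce the spectral sequence by applying the cohomology functor $\Rg_\D(X_s;-/K) = \pi_{s+}$ to the monodromy-filtered object $R\psi \one_{X_\eta} \in D^b_{\rm hol}(X_s, X_s, \P_s)$. Since $R\psi\one_{X_\eta}$ is supported on $X_s$, Theorem \ref{crys-phil} justifies the above formulation, and the monodromy filtration $M_\bullet$ from the previous section gives the filtered complex needed to produce a spectral sequence with
$$E_1^{p,q} = H^{p+q}_\D(X_s;\, Gr^M_{-p} R\psi\one_{X_\eta}/K).$$

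For the $E_1$-term, I would plug in the formula (\ref{rpsi_gr}) for $Gr^M_{-p} R\psi\one_{X_\eta}$ as a direct sum over pairs $(i,j)$ with $i-j = -p$ of shifted Tate twists of $\one_{D_I}$ with $\card I = i+j+1$. Reindexing the sum (setting $i$ the nonnegative index and $p+2i+1 = \card I$) yields a direct sum of $H^{q-2i}_\D(D^{(p+2i)}/K)(-i)$ by pulling out the Tate twist and the shift. At this point, since the $D_I$ are smooth over $k$ (they are intersections of the components $D_0,\dots,D_r$, which locally look like $t_{j_1} = \cdots = t_{j_\ell} = 0$ inside a chart having $p$-basis $t_0,\dots,t_d$), the identification of $H^*_\D(D_I/K)$ with rigid cohomology via \cite{comparison} applies and produces the $E_1$-term exactly as stated.

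For the abutment, I would use properness of $X/\DD_S$ to identify $\pi_+$ with $\pi_!$ and conclude that $\pi_{s+}$ commutes with the unipotent nearby cycle functor $\Psi$ after the appropriate shift and Tate twist; concretely, apply $\pi_{s+}$ termwise to the defining presentation $\Psi = \coker(\varprojlim j_! I_X^{\cdot,\cdot} \to \varprojlim j_+I_X^{\cdot,\cdot})$ of Definition \ref{un-def} and use proper base change for $\pi$ (so that $\pi_+$ commutes with $j_!$, $j_+$ and with tensoring by $\pi_\eta^+I_{\DD_S}^{a,b}$). The computation in (\ref{E_infty_term}) then gives
$$H^n_\D(X_s;\, R\psi\one_{X_\eta}/K) \simeq \Psi\, H^n_\D(X_\eta/\E^\dag_K),$$
after absorbing the Tate twist and shift $[-\delta_X](-\delta_X)$ built into the definition $R\psi\one_{X_\eta} = \Psi(-\delta_X)[1-\delta_X]$.

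The main obstacle is the commutation $\pi_{s+}\Psi \simeq \Psi\pi_{\eta+}$. Since $\Psi$ is defined as a colimit/cokernel of the system $j_+I_X^{\cdot,\cdot}$ and pushforward along $\pi$ is a priori only well-behaved on honest (over)holonomic modules, one must argue that properness of $\pi$ ensures that the intermediate pro-systems are preserved and that $\pi_{\eta+}^n\O_{X_\eta}$ actually lands in ${\rm Hol}(\eta,\DD_S,\DD_\S)$ so that applying $\Psi$ to the right-hand side makes sense — this is where properness enters through $\pi_! \simeq \pi_+$ and compatibility of $\pi_+$ with $\otimes I_X^{a,b}$. Once this commutation is in place, assembling the $E_1$-term identification from (\ref{E1term}) with the abutment identification from (\ref{E_infty_term}) finishes the proof.
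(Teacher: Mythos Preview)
Your proposal is correct and follows essentially the same route as the paper: apply $\pi_{s+}$ to the monodromy-filtered object $R\psi\one_{X_\eta}$ (viewed on $X_s$ via Theorem~\ref{crys-phil}), identify the $E_1$-terms using (\ref{rpsi_gr}), and identify the abutment using properness of $X$ to get $\pi^n_{s+}\Psi\O_{X_\eta}\simeq\Psi\pi^n_{\eta+}\O_{X_\eta}$ as in (\ref{E_infty_term}). One minor remark: the comparison with rigid cohomology via \cite{comparison} that you invoke for the $E_1$-term is not needed for Theorem~\ref{main-theorem} as stated (the $E_1$-terms are $H^*_\D$, not $H^*_{\rm rig}$); that comparison only enters afterwards in the paper for (\ref{E1term}) and Theorem~\ref{main-theorem2}.
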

    
    Let us state some conjectures.
    \begin{conjecture} \label{ovhol_eta}
        Any $E \in {\rm Hol}(\eta, \DD_S, \DD_\S)$ is coherent as an $\O_{\DD_\S}(^\dag s)_\Q$-module.
    \end{conjecture}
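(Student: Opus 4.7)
The plan is to exploit the fact that $\O_{\DD_\S}(^\dag s)_\Q$ is (the sheaf associated to) the bounded Robba ring $\E^\dag_K$, which is a field. Hence being coherent over $\O_{\DD_\S}(^\dag s)_\Q$ is the same as being finite-dimensional as an $\E^\dag_K$-vector space, and the full subcategory of ${\rm Ovhol}(\eta, \DD_S, \DD_\S)$ consisting of $\O_{\DD_\S}(^\dag s)_\Q$-coherent $F$-modules---that is, of overconvergent $F$-isocrystals on $\eta$---is automatically thick abelian inside ${\rm Ovhol}$, since sub-objects, quotients, and extensions of finite-dimensional $\E^\dag_K$-vector spaces are finite-dimensional. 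Because ${\rm Hol}_F(\eta, \DD_S, \DD_\S)$ is by definition the thick abelian subcategory of ${\rm Ovhol}$ generated by objects admitting some $F^{a}$-structure, the conjecture is equivalent to showing that every $F$-overholonomic module on the frame $(\eta, \DD_S, \DD_\S)$ is already an overconvergent $F$-isocrystal on $\eta$.

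To attack this reduced statement, I would first use $E \simeq \RG_\eta E$ to regard $E$ as a coherent module over $\D^\dag_{\DD_\S,\Q}(^\dag s)$, a ring of differential operators whose coefficient ring is the field $\E^\dag_K$. Next I would introduce a Frobenius-compatible level filtration on $E$ and extract an associated characteristic variety living in a cotangent bundle over $\eta$; the goal is to show that overholonomicity, combined with the Frobenius structure, forces this variety to be zero-dimensional, which in turn forces $E$ to be of finite rank over $\E^\dag_K$. This is the exact analogue of the classical statement that a holonomic $\D$-module on a point is a finite-dimensional vector space, with the role of the point played here by $\eta = \Spec k((t))$, which is zero-dimensional as a scheme over itself.

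The main obstacle is precisely the bridge between overholonomicity---defined in Caro's framework via stability under the six operations, not via a characteristic variety---and $\E^\dag_K$-coherence. In the classical case over a perfect field $k$, the analogous identification of overholonomic $F$-modules on smooth varieties with overconvergent $F$-isocrystals rests on the theorem of Caro-Tsuzuki together with Kedlaya's semistable reduction, neither of which is currently available for varieties over $k((t))$. Consequently the hard step is a direct structural argument, presumably via a Frobenius-equivariant good filtration and slope estimates, showing that any $F$-overholonomic module on $\eta$ is $\E^\dag_K$-coherent. I expect that the zero-dimensionality of $\eta$ makes such a direct argument more tractable here than in higher-dimensional cases, but a complete proof appears to require pieces of the theory of characteristic varieties for arithmetic $\D$-modules over $\V[[t]]$ that are not yet in the literature, which is why the statement is left as a conjecture.
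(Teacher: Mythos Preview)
The paper offers no proof of this statement: it is labelled \textbf{Conjecture} and is immediately followed by a remark that it would imply finite-dimensionality of $H^i_\D(X_\eta;\,\F/\E^\dag_K)$, with no further argument. There is therefore nothing to compare your attempt against.

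Your proposal is not a proof either, and you are candid about this. Your reduction---that $\O_{\DD_\S}(^\dag s)_\Q$-coherence is finite-dimensionality over the field $\E^\dag_K$, and that by thickness of the category of finite-dimensional $\E^\dag_K$-spaces it suffices to show every $F$-overholonomic module on $(\eta,\DD_S,\DD_\S)$ is an overconvergent $F$-isocrystal---is correct and is exactly the shape of the analogous classical statement. Your diagnosis of the obstruction is also accurate: in Caro's framework overholonomicity is defined by stability under the six operations rather than by a characteristic-variety condition, and the passage from one to the other (Caro--Tsuzuki, Kedlaya's semistable reduction) is precisely what is missing over $k((t))$, as the paper itself notes just after introducing $D^b_{\rm hol}$.

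The one place where your sketch is optimistic is the suggestion that zero-dimensionality of $\eta$ might make a direct Frobenius-slope argument tractable. The difficulty is not the dimension of the base but the analytic nature of $\E^\dag_K$: good filtrations and characteristic varieties for $\D^\dag_{\DD_\S}(^\dag s)_\Q$-modules are not available in the literature, and a Frobenius structure alone does not obviously bound the rank without some input of this kind. So your proposal correctly identifies the gap but does not close it; this is consistent with the statement being left open in the paper.
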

    This conjecture implies that the arithmetic $\D$-module cohomology $H^i_\D(X_\eta;\, \F/\E^\dag_K)$ for $\F \in D^b_{\rm hol}(X_\eta, X, \P)$ is a finite dimensional $\E^\dag_K$-vector space; in particular $H^i_\D(X_\eta/\E^\dag_K)$ is finite dimensional.
    
    Lazda and P\'{a}l defined in \cite{LPrig} the rigid cohomology $\Rg_{\rm rig}(X_\eta/\E^\dag_K)$ over $\E^\dag_K$. It has a structure of $\nabla$-module. We expect that the methods of \cite{comparison} still works in our situation so that there is a comparison theorem between arithmetic $\D$-module cohomology over Laurent series fields and Lazda-P\'{a}l's rigid cohomology over bounded Robba rings:
    \begin{conjecture} \label{conj-comp}
        We have a canonical isomorphism
        $$\Rg_\D(X_\eta/\E^\dag_K) \simeq \Rg_{\rm rig}(X_\eta / \E^\dag_K)$$
        such that the induced isomorphism
        $$H^i_\D(X_\eta/\E^\dag_K) \simeq H^i_{\rm rig}(X_\eta / \E^\dag_K)$$
        is compatible with the structure of $\nabla$-modules and the pullbacks.
    \end{conjecture}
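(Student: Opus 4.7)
The plan is to follow the strategy of Caro \cite{comparison}, adapted from the setting of varieties over the perfect field $k$ to the setting of varieties over the Laurent series field $k((t))$ and the bounded Robba ring $\E^\dag_K$. First I would reduce, by Zariski descent and cohomological descent under a proper hypercovering, to the case where $X_\eta$ embeds as a closed subscheme into a formal scheme $\P$ over $\Spf \V[[t]]$ admitting a lift $\X$ of $X$ locally with finite $p$-basis over $\S$. In this situation, $\one_{X_\eta}$ has an explicit description as a shift and twist of $\O_\X(^\dag X_s)_\Q$, and $\pi_{\eta +}\one_{X_\eta}$ can be computed as the pushforward along the structure morphism of a de Rham-type complex.

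The key steps in order: first, construct the comparison morphism in the derived category of $\E^\dag_K$-modules by producing a de Rham resolution of $\one_{X_\eta}$ on the formal lift and identifying its pushforward with the rigid complex of Lazda-P\'{a}l, which is defined via overconvergent sections on a frame over $\Spf \V[[t]]$; second, prove that this morphism is a quasi-isomorphism locally, by reducing via the description of holonomic $\D$-modules as overconvergent $\nabla$-modules to the smooth affine case in which both sides are computed by the relative overconvergent de Rham complex of $\X \to \Spf \V[[t]]$; third, glue the local isomorphisms using Theorem \ref{u_0^*} and compatibility of both constructions with restriction to affine opens; and fourth, verify the compatibility with the $\nabla$-module structures by identifying the operator induced by $\partial_t$ on both sides, and the compatibility with pullbacks by the functoriality of the de Rham complex.

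The main obstacle I expect is ensuring that $H^i_\D(X_\eta / \E^\dag_K)$ really lies in the category of finite-dimensional $\E^\dag_K$-vector spaces, which is precisely the content of Conjecture \ref{ovhol_eta}. Without this finiteness result the comparison morphism would at best produce an isomorphism in a larger target category, and matching this with the finite-dimensional rigid cohomology of Lazda-P\'{a}l would be delicate. Establishing Conjecture \ref{ovhol_eta} seems to require a form of Kedlaya's semistable reduction theorem for varieties over $k((t))$, which is currently unavailable. A secondary difficulty is reconciling the two frame conventions: Lazda-P\'{a}l use a frame $(X_\eta, \overline{X_\eta}, \P)$ in which the overconvergent structure is imposed symmetrically along the special fiber $P_s$ and along the boundary $\overline{X_\eta} - X_\eta$, whereas on the arithmetic $\D$-module side these two overconvergences are packaged asymmetrically (the second via $\RG_{X_\eta}$, the first via the choice of $\pi_\eta$). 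A careful bookkeeping of the weak completions, in the spirit of \cite{comparison}, will be required to identify the two complexes.
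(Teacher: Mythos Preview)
The statement is labeled a \emph{conjecture} in the paper and is not proved in general; the paper only establishes the special case in which $X$ admits a global lift $\X$ to a formal scheme over $\V[[t]]$ locally with finite $p$-basis over $\V$ (see the paragraphs preceding Theorem~\ref{main-theorem2}). In that liftable case the paper's argument is precisely your second step: one uses the locally free resolution
\[
\D^\dag_{\DD_\S \leftarrow \X}(^\dag s,\,^\dag X_s)_\Q \;\xrightarrow{\sim}\; (^\dag X_s)\Omega^\cdot_{\X/\DD_\S,\Q}[\delta_X-1]\otimes_{\O_\X(^\dag X_s)_\Q}\D^\dag_\X(^\dag X_s)_\Q
\]
to identify $\pi_{\eta+}\one_{X_\eta}[1]$ with $\mathbb{R}\pi_*((^\dag X_s)\Omega^\cdot_{\X/\DD_\S,\Q})$, and then passes through ${\rm sp}_*$ and the adic space to reach $\Rg_{\rm rig}(X_\eta/\E^\dag_K)$ in the sense of Lazda--P\'al. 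So on the part of the conjecture that the paper actually settles, your sketch and the paper's argument coincide. (A minor slip: \cite{comparison} is Abe--Lazda, not Caro.)

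Your broader plan---reduce the general case to the liftable one via Zariski descent and a proper hypercovering, then glue via Theorem~\ref{u_0^*}---is a reasonable outline in the spirit of \cite{comparison}, but the paper does not attempt it, and you have yourself named the real obstructions: the finiteness statement of Conjecture~\ref{ovhol_eta} and the absence of a semistable reduction theorem over $k((t))$. Without these inputs the descent does not close, because one cannot control the terms of the hypercover on the $\D$-module side, nor guarantee that the comparison map lands in the correct finite category. Your proposal is therefore an honest program for the conjecture rather than a proof, and this matches the status the paper gives it.
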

    \begin{remark}
        Let $p: \DD_S \rightarrow S$ be the structure morphism.
        By definition, $\one_{X_\eta} = (p_\eta \pi_\eta)^+ K \simeq \pi_\eta^+ p_\eta^+ K \simeq \pi_\eta^+ \O_{\DD_\S}(^\dag s)_\Q [-1].$ The object $\pi_\eta^+(\O_{\DD_S}(^\dag s)_\Q)$ should be the constant coefficient of rigid cohomology over $\E^\dag_K$, which explains the shift $[1]$ in the definition of arithmetic $\D$-module cohomology.
    \end{remark}
    
    The comparison theorem for rigid cohomology over $K$ is known in \cite{comparison}. Conjecture \ref{conj-comp} together with \ref{main-theorem} implies the following theorem:
    
    \begin{theorem}
        Under the Conjecture \ref{conj-comp}, we have a spectral sequence
        $$ E^{p,q}_1 = \bigoplus_{i \geq \max(0, -p)} H^{q-2i}_{\rm rig}(D^{(p+2i)}/K)(-i) \Rightarrow \Psi H^{p+q}_{\rm rig}(X_\eta/\E^\dag_K).$$
    \end{theorem}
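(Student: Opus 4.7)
The plan is to deduce this statement as a formal consequence of Theorem \ref{main-theorem}, together with two comparison results identifying arithmetic $\D$-module cohomology with rigid cohomology in the respective contexts of the $E_1$ page and of the abutment.

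First I would invoke Theorem \ref{main-theorem} directly, which yields unconditionally the spectral sequence
\[
E^{p,q}_1 = \bigoplus_{i \geq \max(0, -p)} H^{q-2i}_{\D}(D^{(p+2i)}/K)(-i) \Rightarrow \Psi H^{p+q}_{\D}(X_\eta/\E^\dag_K).
\]
Next I would reinterpret the $E_1$ terms. Each $D^{(m)}$ is a disjoint union of proper smooth intersections of irreducible components of the closed fiber $X_s$, hence a variety over $k$. For such varieties the comparison theorem of \cite{comparison} supplies canonical isomorphisms $H^n_\D(D^{(m)}/K) \simeq H^n_{\rm rig}(D^{(m)}/K)$ compatible with Tate twists (and with Frobenius), and substituting these term by term identifies the $E_1$ page above with the one in the statement. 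This step is unconditional.

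For the abutment I would appeal to Conjecture \ref{conj-comp}, which provides canonical isomorphisms $H^{p+q}_\D(X_\eta/\E^\dag_K) \simeq H^{p+q}_{\rm rig}(X_\eta/\E^\dag_K)$ compatible with the $\nabla$-module structures on both sides. Since the unipotent nearby cycle functor $\Psi$ depends only on the underlying $\nabla$-module structure (it extracts the unipotent part of the monodromy), these isomorphisms induce $\Psi H^{p+q}_\D(X_\eta/\E^\dag_K) \simeq \Psi H^{p+q}_{\rm rig}(X_\eta/\E^\dag_K)$, transferring the abutment to the one required.

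The only point requiring attention, and the sole place where the conjectural input is used, is the compatibility of the isomorphism of Conjecture \ref{conj-comp} with the formation of $\Psi$; this is automatic from the $\nabla$-module compatibility already built into the statement of the conjecture. Consequently the argument reduces to a direct substitution on both ends of the spectral sequence of Theorem \ref{main-theorem}, with no genuine obstacle beyond accepting Conjecture \ref{conj-comp} itself.
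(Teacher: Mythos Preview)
Your proposal is correct and follows exactly the route the paper takes: the paper simply records that the comparison theorem of \cite{comparison} already identifies the $E_1$-terms with rigid cohomology (this is the computation culminating in (\ref{E1term})), and then states that Conjecture~\ref{conj-comp} together with Theorem~\ref{main-theorem} yields the result for the abutment. Your added remark that the compatibility of $\Psi$ with the conjectural isomorphism follows from the $\nabla$-module compatibility built into Conjecture~\ref{conj-comp} is a reasonable elaboration of the one point the paper leaves implicit.
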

   
    We remark that Conjecture \ref{conj-comp} is true if $X$ lifts to a formal scheme $\X$ over $\V[[t]]$ locally with finite $p$-basis over $\V$.
    
    To see this, put $\pi : \X \rightarrow \DD_\S$ to be the structure morphism; let us calculate $\pi_+ \one_{X_\eta} \simeq \pi_{s, X_s+} \O_\X(^\dag X_s)_\Q[-\delta_X].$ Here $\pi_{s, X_s+}$ is the pushforward $D^b_{\rm coh}(\D^\dag_\X(^\dag X_s)_\Q) \rightarrow D^b(\D^\dag_{\DD_\S}(^\dag s)_\Q)$ defined in \cite[7.1.4.1]{intro}. As in \cite[2.4.6.2]{Berintro}, we have a locally free resolution
    $$\D^\dag_{\DD_\S \leftarrow \X}(^\dag s, \, ^\dag X_s)_\Q \xrightarrow{\sim} (^\dag X_s)\Omega^\cdot_{\X/\DD_\S, \Q}[\delta_X-1] \otimes_{\O_\X(^\dag X_s)_\Q} \D^\dag_\X(^\dag X_s)_\Q$$
    of $\D^\dag_{\DD_\S \leftarrow \X}(^\dag s, \, ^\dag X_s)_\Q$ as complexes of $(\pi^{-1}\O_{\DD_S}(^\dag s)_\Q, \D^\dag_\X(^\dag X_s)_\Q)$-modules. This is induced by a locally free resolution
    $$ (^\dag X_s)\Omega^\cdot_{\X/\DD_\S}[\delta_X-1] \xrightarrow{\sim} (^\dag X_s)\omega_{\X/\DD_\S} = (^\dag X_s)\omega_{\X} \otimes_{\pi^{-1} \O_{\DD_\S}(^\dag s)} \pi^{-1} ((^\dag s)\omega_{\DD_\S}).$$
    Thus we obtain
    $$\pi_{\eta+} \one_{X_\eta}[1] \simeq \pi_{s, X_s+} \O_\X(^\dag X_s)_\Q[1-\delta_X] \simeq \mathbb{R}\pi_*((^\dag X_s) \Omega^\cdot_{\X/\DD_\S, \Q})$$
    as $\O_{\DD_\S}(^\dag s)_\Q$-modules. We have a sequence of isomorphisms of $E^\dag_K$-vector spaces
    \begin{align}
        \Rg(\DD_\S, \pi_{\eta+} \one_{X_\eta}[1])
        & \simeq \Rg(\DD_\S, \mathbb{R}\pi_*((^\dag X_s)\Omega^\cdot_{\X/\DD_\S, \Q})) \notag\\
        & \simeq \Rg(\X, (^\dag X_s)\Omega^\cdot_{\X/\DD_\S, \Q}) \notag\\
        & \simeq \Rg(\X, \mathbb{R} {\rm sp}_* j^\dag_{X_\eta} \Omega^\cdot_{\X^{\rm ad}/\DD_\S^{\rm ad}}) \notag\\
        & \simeq \Rg(\X^{\rm ad}, j^\dag_{X_\eta} \Omega^\cdot_{\X^{\rm ad}/\DD_\S^{\rm ad}}) \notag\\
        & \simeq \Rg_{\rm rig}(X_\eta/\E^\dag_K). \notag
    \end{align}
    Taking cohomology, we get the isomorphism $\Rg_\D(X_\eta/\E^\dag_K) \simeq \Rg_{\rm rig}(X_\eta/\E^\dag_K)$. As in the case of algebraic $\D$-modules, the Gauss-Manin connection on $\mathbb{R}^{q}\pi_*((^\dag X_s) \Omega^\cdot_{\X/\DD_\S, \Q})$ is compatible with the structure of $\D^\dag$-module on $\pi^{q+1}_{+}(\one_{X_\eta}).$
    
    \begin{theorem} \label{main-theorem2}
        Let $X$ be a projective, strictly semi-stable scheme over $k[[t]]$. If $X$ lifts to a formal scheme over $\V[[t]]$ locally with finite $p$-basis over $\V$, then we have a spectral sequence
        $$ E^{p,q}_1 = \bigoplus_{i \geq \max(0, -p)} H^{q-2i}_{\rm rig}(D_{(p+2i)}/K)(-i) \Rightarrow \Psi H^{p+q}_{\rm rig}(X_\eta/\E^\dag_K).$$
    \end{theorem}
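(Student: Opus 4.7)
The plan is to deduce Theorem \ref{main-theorem2} from Theorem \ref{main-theorem} by replacing each occurrence of arithmetic $\D$-module cohomology with rigid cohomology via two separate comparison isomorphisms: one for the $E_1$-terms, which live over $K$, and one for the abutment, which lives over $\E^\dag_K$. Both are standard in spirit, but only the latter requires the lifting hypothesis.

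\textbf{Step 1: Invoke Theorem \ref{main-theorem}.} Since $X$ is projective and strictly semi-stable over $k[[t]]$, it embeds as a closed subscheme of some projective formal scheme $\P$ over $\V[[t]]$ locally with finite $p$-basis over $\V$ (take a projective embedding of the lift $\X$). Theorem \ref{main-theorem} then produces the spectral sequence
$$E^{p,q}_1 = \bigoplus_{i \geq \max(0, -p)} H^{q-2i}_{\D}(D^{(p+2i)}/K)(-i) \Rightarrow \Psi H^{p+q}_{\D}(X_\eta/\E^\dag_K).$$

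\textbf{Step 2: Compare $E_1$-terms with rigid cohomology over $K$.} Each $D_I$ is a proper smooth scheme over $k$, so $D^{(p+2i)}$ is a disjoint union of proper smooth $k$-varieties. For such schemes the comparison theorem of Caro \cite{comparison} gives a canonical isomorphism $H^n_\D(D^{(p+2i)}/K) \simeq H^n_{\rm rig}(D^{(p+2i)}/K)$. Applying this isomorphism term by term, together with the obvious compatibility with Tate twists, rewrites the $E_1$-page in the desired form.

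\textbf{Step 3: Compare the abutment with Lazda--P\'{a}l's rigid cohomology over $\E^\dag_K$.} This is exactly the content of the computation performed immediately before the statement of Theorem \ref{main-theorem2}: using the lift $\X$, the locally free resolution of $\D^\dag_{\DD_\S \leftarrow \X}(^\dag s, \, ^\dag X_s)_\Q$ by the relative de Rham complex identifies $\pi_{\eta+}\one_{X_\eta}[1]$ with $\mathbb R\pi_*((^\dag X_s)\Omega^\cdot_{\X/\DD_\S, \Q})$, and taking $\Rg(\DD_\S, -)$ and passing to the associated rigid analytic space yields the identification $H^n_\D(X_\eta/\E^\dag_K) \simeq H^n_{\rm rig}(X_\eta/\E^\dag_K)$ as $\E^\dag_K$-vector spaces. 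The Gauss--Manin connection on the right agrees with the $\D^\dag$-module structure on the left, so the isomorphism is one of $\nabla$-modules over $\E^\dag_K$; hence it is compatible with the formation of the unipotent nearby cycle $\Psi$, which depends only on this $\nabla$-module structure.

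\textbf{Main obstacle.} The only non-formal point is Step 3, and specifically the compatibility of the comparison isomorphism with $\Psi$. This requires verifying that the unipotent part of the monodromy operator, as defined on the arithmetic $\D$-module side via the constructions of Section \ref{sec-nc}, is matched with the unipotent part of the $\nabla$-module structure on Lazda--P\'{a}l's rigid cohomology used to define their unipotent nearby cycle. Once the $\nabla$-module comparison is in place this amounts to naturality of the unipotent part of monodromy under isomorphisms of $\nabla$-modules over $\E^\dag_K$, which is formal; the Frobenius compatibility (needed to interpret both sides as $(\varphi,\nabla)$-modules if desired) similarly follows from the functoriality built into the de Rham computation above. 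Combining Steps 1--3 then yields the stated spectral sequence.
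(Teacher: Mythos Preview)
Your proposal is correct and follows exactly the paper's own argument: invoke Theorem \ref{main-theorem}, replace the $E_1$-terms via the Abe--Lazda comparison \cite{comparison} (note: not Caro), and replace the abutment via the explicit de Rham computation carried out just before the statement, whose compatibility with the $\nabla$-module structure is precisely what justifies applying $\Psi$ on both sides. There is nothing to add.
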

    
    \section{Functorality} \label{sec-func}
    Let $f : X \rightarrow X'$ be a morphism of projective, strictly semi-stable schemes over $k[[t]]$.
    Let $D_0, \dots D_r$ be the irreducible components of $X_s$ and $D'_0, \dots, D'_{r'}$ those of $X'$. As in \cite{l-adic}, we may assume that there is a nondecreasing map $\varphi : \{0, \dots, r\} \rightarrow \{0, \dots, r'\}$ such that $f(D_i) \subseteq D_{i'}$ if and only if $i' = \varphi(i).$

    We suppose that $f$ extends to a morphism of frames $\theta = (f,f,g) : (X, X, \P) \rightarrow (X', X', \P').$
    Let $j : (X_\eta, X, \P) \rightarrow (X, X, \P)$ and $j' : (X'_\eta, X', \P') \rightarrow (X', X', \P')$ be open immersions.
    We denote by $\Psi$ (resp. $\Psi'$) the (unipotent) nearby cycle of $\O_{X_\eta}$ (resp. $\O_{X'_\eta}$).
    
    \subsection{Pushforward}
    \begin{proposition} \label{hdag_finv}
        Suppose that there exists a lift $\X \hookrightarrow \P$ (resp. $\X' \hookrightarrow \P'$) of the closed immersion $X \hookrightarrow P$ (resp. $X' \hookrightarrow P$), where $\X$ (resp. $\X'$) locally has finite $p$-basis over $\V$. We also assume that $f$ lifts to a morphism $\X \rightarrow \X'$, which by abuse of notation we will denote by $f$.
        Let $I' \subseteq \{0, \dots, r'\}, \ \card I' = p'.$ Then $\H^{\dag i}_{f^{-1}(D'_{I'})}(\O_{\X, \Q}) = 0$ for $i \neq p'$ and
        $$f^! \H^{\dag p'}_{D'_{I'}}(\O_{\X', \Q}) \simeq \H^{\dag p'}_{f^{-1}(D'_{I'})}(\O_{\X, \Q})[\delta_X - \delta_{X'}].$$
    \end{proposition}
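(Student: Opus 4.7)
The plan is to deduce both parts of the proposition from a base change identity for $f^!$ and local cohomology, combined with a codimensional concentration statement for $\RG_{f^{-1}(D'_{I'})}(\O_{\X, \Q})$. First, since $f : \X \to \X'$ is a morphism of formal $\S$-schemes (with $\S = \Spf \V$), the functoriality $(\pi_{\X'} \circ f)^! = f^! \circ \pi_{\X'}^!$ combined with the identification $\pi_{\X}^! K \simeq \O_{\X, \Q}[\delta_X]$ used in the proof of Proposition \ref{purityA} yields $f^! \O_{\X', \Q} \simeq \O_{\X, \Q}[\delta_X - \delta_{X'}]$. Combining this with the base change $f^! \circ \RG_Z \simeq \RG_{f^{-1}(Z)} \circ f^!$ for closed $Z \subseteq \X'$, and with Lemma \ref{rg_concentrate} applied on $\X'$ (which, since $D'_{I'}$ is cut out locally by the $p$-basis elements $t'_{i'}$ of $\X'$, gives $\RG_{D'_{I'}}(\O_{\X', \Q}) \simeq \H^{\dag p'}_{D'_{I'}}(\O_{\X', \Q})[-p']$), one obtains
\begin{equation*}
f^! \H^{\dag p'}_{D'_{I'}}(\O_{\X', \Q})[-p'] \;\simeq\; \RG_{f^{-1}(D'_{I'})}(\O_{\X, \Q})[\delta_X - \delta_{X'}]. \tag{$\star$}
\end{equation*}
Both claims of the proposition will then follow from $(\star)$ after a shift by $[p']$, once $\RG_{f^{-1}(D'_{I'})}(\O_{\X, \Q})$ is shown to be concentrated in cohomological degree $p'$.

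To prove this concentration I would argue locally on $\X$. The strictly semi-stable structure together with the condition $f(D_i) \subseteq D'_{\varphi(i)}$ forces $f^*(t'_{i'}) = u_{i'} \prod_{\varphi(i) = i'} t_i$ for some units $u_{i'}$, so the ideal of $f^{-1}(D'_{I'})$ is locally generated by the $p' = \card I'$ elements $g_{i'} := f^*(t'_{i'})$ for $i' \in I'$. Because the preimages $\varphi^{-1}(i')$ are pairwise disjoint as $i'$ varies in $I'$ and the $t_i$'s form part of a $p$-basis, the sequence $(g_{i'})_{i' \in I'}$ is regular of length $p'$. Applying Lemma \ref{sp_resol} to the divisors $T_{i'} := \{g_{i'} = 0\} = f^{-1}(D'_{i'})$ identifies $\RG_{f^{-1}(D'_{I'})}(\O_{\X, \Q})$ with the \v{C}ech complex $\mathcal{C}^\dag(T_{i'_0}, \ldots, T_{i'_{p'-1}})$, and concentration in the top degree $p'$ is then a Koszul-type vanishing for a regular sequence. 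This can be proved by induction on $p'$: the inductive step uses the fiber triangle $\RG_{T_{i'_{p'-1}}} E \to E \to E(^\dag T_{i'_{p'-1}}) \to$ applied to $E := \RG_{f^{-1}(D'_{I''})}(\O_{\X, \Q})$ with $I'' := I' \setminus \{i'_{p'-1}\}$, together with the fact that $g_{i'_{p'-1}}$ remains a non-zero-divisor on the appearing localizations.

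The hardest point will be this concentration step, which amounts to extending Lemma \ref{rg_concentrate} (Berthelot [9.4.6]) from single $p$-basis element cuts to arbitrary locally regular-sequence cuts in the weakly completed $\D^\dag$-module framework. The classical analog (for $\O$-modules on a regular Cohen-Macaulay scheme) is standard via Koszul complexes, but one must verify that the weak completion and passage to $\Q$-coefficients preserve both the regularity of the sequence and the Koszul vanishing. A possible detour, should the direct argument prove awkward, is to locally embed $\X$ into $\widetilde{\X} := \X \times_\S \widehat{\mathbb{A}}^{p'}_\S$ via the closed immersion sending $x$ to $(x, g_{i'_0}(x), \ldots, g_{i'_{p'-1}}(x))$, so that the preimage of the $p$-basis-cut subscheme $\{s_{i'} = 0\}_{i' \in I'}$ of $\widetilde{\X}$ is exactly $f^{-1}(D'_{I'})$, and then transfer the concentration provided by Lemma \ref{rg_concentrate} on $\widetilde{\X}$ back to $\X$ via base change for $\iota^!$.
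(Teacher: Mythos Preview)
Your reduction to the base-change identity $(\star)$ is correct and is precisely what yields the displayed isomorphism once concentration is known; the paper in fact only writes out the concentration part and leaves the isomorphism implicit, so on that half you are doing exactly what is needed.

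Where you and the paper diverge is the concentration step, and here the paper's route is cleaner and sidesteps the difficulty you yourself flag. Rather than working with the regular sequence $(g_{i'})_{i'\in I'}$ of products of $p$-basis elements, the paper exploits the decomposition $f^{-1}(D'_{I'})=\bigcup_{I\in\mathbf I_{I',p'}}D_I$, where $\mathbf I_{I',p}:=\{I\subseteq\{0,\dots,r\}:\card I=p,\ \varphi(I)=I'\}$. It then builds, in the style of Lemma~\ref{GY-resol}, a complex
\[
\mathcal C'^{\dag}_{f,I'}(E):\ \cdots\to\bigoplus_{I\in\mathbf I_{I',p+1}}\G_{D_I}E\to\bigoplus_{I\in\mathbf I_{I',p}}\G_{D_I}E\to\cdots\to\bigoplus_{I\in\mathbf I_{I',p'}}\G_{D_I}E\to E
\]
and shows by induction on $p'$ (using the factorisation of $\mathcal C'^{\dag}_{f,I'}$ as the total complex of $\mathcal C'^{\dag}_{f,\{i'\}}$ applied to $\mathcal C'^{\dag}_{f,I''}$ for $I''=I'\setminus\{i'\}$) that this is quasi-isomorphic to $j^\dag_{X\setminus f^{-1}(D'_{I'})}E$. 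Replacing each $\G_{D_I}\O_{\X^{\rm ad}}$ by its $\mathrm{sp}_*$-acyclic \v Cech resolution as in \S\ref{RG-calc} gives a double complex whose $E_1$-terms are $\H^{\dag q}_{D_I}(\O_{\X,\Q})$; since every $D_I$ is cut out by honest $p$-basis elements of $\X$, Lemma~\ref{rg_concentrate} applies directly and forces $E_1^{-p+p',q}=0$ unless $q=p$, hence degeneration and concentration in degree $p'$.

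Your Koszul proposal would instead require extending Lemma~\ref{rg_concentrate} from $p$-basis cuts to arbitrary regular sequences in the $\D^\dag$-setting; this is plausible but is exactly the extra work you identify, and your detour through $\widetilde\X=\X\times_\S\widehat{\mathbb A}^{p'}_\S$ does not obviously close the gap, since applying $\iota^!$ to the single sheaf $\H^{\dag p'}_{\{s=0\}}(\O_{\widetilde\X,\Q})$ is not a priori concentrated in one degree. The paper's trick---resolve by the $D_I$'s that are already $p$-basis cuts on $\X$---avoids all of this.
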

    \begin{proof}
        For $p \geq p',$ define ${\bold I}_{I', p} := \{I \subseteq \{0, \dots, r\} : \card I = p, \ \phi(I) = I'\}.$ We have $\displaystyle f^{-1}(D_{I'}) = \bigcup_{I \in {\bold I}_{I', p'}} D_I.$
        For a $\O_{\X^{\rm ad}}$-module $E$, define
        $$\displaystyle \mathcal{C}'^\dag_{f, I'}(E) := [\cdots \rightarrow \bigoplus_{I \in {\bold I}_{I', p+1} }\G_{D_I} E \rightarrow \bigoplus_{I \in {\bold I}_{I', p} }\G_{D_I} E \rightarrow \cdots \rightarrow \bigoplus_{I \in {\bold I}_{I', p'} }\G_{D_I} E \rightarrow E],$$
        where the right-most term is at degree zero.
        
        Let us prove that the canonical morphism $\mathcal{C}'^\dag_{f, I'}(E) \rightarrow j^\dag_{X - f^{-1}(D'_{I'})} E$ is a quasi-isomorphism by induction on $p'$. If $p' = 1$, this is Lemma \ref{GY-resol}.
        For a complex $K^\cdot$, we can define the double complex $\mathcal{C}'^\dag_{f, I'}(K^\cdot).$ If we take any $i' \in I'$ and put $I'' = I' - \{i'\}$, the total complex of $\mathcal{C}'^\dag_{f, \{i'\}}(\mathcal{C}'^\dag_{f, I''}(E))$ is just $\mathcal{C}'^\dag_{f, I'}(E).$ By inductive hypothesis, $\mathcal{C}'^\dag_{f, I''}(E) \xrightarrow{\sim} j^\dag_{X - f^{-1}(D'_{I''})} E$ is a quasi-isomorphism and since $j^\dag_{X - f^{-1}(D'_{i'})} \, j^\dag_{X - f^{-1}(D'_{I''})}\, E \simeq j^\dag_{X - f^{-1}(D'_{I'})}\, E,$ we get the assertion.
        As in \ref{RG-calc}, we can replace each $\G_{D_I} \O_{\X^{\rm ad}}$ by an ${\rm sp}_*$-acyclic resolution to get a double complex and a spectral sequence, which degenerates at page 2 and implies that $\H^{\dag i}_{f^{-1}(D'_{I'})}(\O_{\X, \Q}) = 0$ for $i \neq p'$.
    \end{proof}
    
    \begin{proposition} \label{f!-sheaves}
        Let $\alpha : j'_! I_X^{0, N} \rightarrow j'_+ I_X^{0, N}$ be the canonical morphism and put $K = \ker \alpha, \ C = \coker \alpha.$
        Then $f^! K, \ f^! j'_! I_{X'}^{0, N}, \ f^! j'_{!+} I_{X'}^{0, N}, \ f^! j'_+ I_{X'}^{0, N}, \ f^! C$ are $(\delta_X - \delta_{X'})$-shifted holonomic modules.
    \end{proposition}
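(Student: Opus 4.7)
The plan is to exhibit each of the five objects as an iterated extension, in the heart of $D^b_{\rm hol}(X', X', \P')$, whose successive quotients are direct sums of shifts and Tate twists of $\RG_{D'_{I'}}(\O_{\P', \Q})$, and then to apply $f^!$, invoking Proposition~\ref{hdag_finv} on each graded piece together with devissage. Fix a data of lifts $((\P'_\alpha), (\X'_\alpha), u')$ of the frame $(X', X', \P')$. By Proposition~\ref{ie_form} applied to $X'$, $u'^!_\alpha(j'_{!+} I_{X'}^{0,N})$ is locally $\bigoplus_{k=0}^{N-1} F_k \O_{\X'_\alpha}(^\dag X'_{\alpha, s})_\Q \, x^k t^x$, and analogously $u'^!_\alpha(j'_+ I_{X'}^{0,N})$ is locally $\bigoplus_{k=0}^{N-1} \O_{\X'_\alpha}(^\dag X'_{\alpha, s})_\Q \, x^k t^x$. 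The object $j'_! I_{X'}^{0,N} \simeq \DD j'_+ \DD I_{X'}^{0,N}$ admits an analogous description via $\DD I_{X'}^{0,N} \simeq I_{X'}^{-N, 0}(\delta_{X'} - 1)$. The filtration $F_p$ on $\O_{\X'_\alpha}(^\dag X'_{\alpha, s})_\Q$ of Theorem~\ref{cons_trun}, with $p$-th graded piece $\bigoplus_{|I'| = p}\RG_{D'_{I'}}(\O_{\X'_\alpha, \Q})[p]$, therefore induces finite filtrations on each of $j'_+ I_{X'}^{0,N}$, $j'_{!+} I_{X'}^{0,N}$, $j'_! I_{X'}^{0,N}$; by Remark~\ref{cons_trun_form} these local filtrations glue to global filtrations via Theorem~\ref{u_0^*} (and Proposition~\ref{u_0^*-dual} for the duality case of $j'_!$).

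Since $j'_{!+} I_{X'}^{0,N} = \im \alpha$, the kernel $K$ and cokernel $C$ of $\alpha$ fit into short exact sequences
\begin{equation*}
0 \to K \to j'_! I_{X'}^{0,N} \to j'_{!+} I_{X'}^{0,N} \to 0, \qquad 0 \to j'_{!+} I_{X'}^{0,N} \to j'_+ I_{X'}^{0,N} \to C \to 0,
\end{equation*}
so $K$ and $C$ inherit filtrations with graded pieces of the same type. Now apply $f^! = \RG_X g^!$. After choosing a data of lifts on $(X, X, \P)$ compatible (after shrinking) with a local lift of $f$ to $g : \X \to \X'$, Proposition~\ref{hdag_finv} shows that each graded piece is sent to a shift of $\RG_{f^{-1}(D'_{I'})}(\O_{\X_\alpha, \Q})[\delta_X - \delta_{X'}]$, i.e., to a $(\delta_X - \delta_{X'})$-shifted holonomic module. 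The full subcategory of $(\delta_X - \delta_{X'})$-shifted holonomic modules is the heart of the canonical t-structure on $D^b_{\rm hol}(X, X, \P)$ shifted by $\delta_X - \delta_{X'}$, hence closed under extensions within the ambient triangulated category. A short induction along the filtrations then yields the claim for each of $f^! K$, $f^! j'_! I_{X'}^{0,N}$, $f^! j'_{!+} I_{X'}^{0,N}$, $f^! j'_+ I_{X'}^{0,N}$, $f^! C$.

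The main technical obstacle is the gluing step: verifying that the local filtrations by $F_p \O_{\X'_\alpha}(^\dag X'_{\alpha, s})_\Q \cdot x^k t^x$ are preserved by the transition isomorphisms $\theta_{\alpha\beta}$. Remark~\ref{cons_trun_form} records this for $\O_{\X'}(^\dag X'_s)_\Q$ itself, and extending to the $x^k t^x$-components should follow from the $\D$-linearity of the $\tau$-isomorphisms (these are canonical and preserve $F_\bullet$ since $F_\bullet$ is defined intrinsically in terms of pole orders along the $D'_i$), but requires a careful check. A further subtlety is that a global lift of $f : X \to X'$ to a morphism $\X \to \X'$ of formal schemes with finite $p$-basis over $\V$ may not exist; one must therefore work locally on a refinement of the covering and verify that the argument is independent of these choices, analogous to the constructions already made in Section~\ref{sec-six}.
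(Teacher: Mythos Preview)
Your strategy---d\'evissage by filtrations whose graded pieces are $Gr^F_p \O_{\X'}(^\dag X'_s)_\Q$, then Proposition~\ref{hdag_finv} on each piece---matches the paper's. Two simplifications in the paper's argument are worth noting. First, the property of being a $(\delta_X-\delta_{X'})$-shifted holonomic module is Zariski-local on $\P$, so the paper localizes at the outset and assumes lifts $\X \hookrightarrow \P$, $\X' \hookrightarrow \P'$ and $f : \X \to \X'$ exist globally; this dissolves all the gluing concerns you raise in your final paragraph. Second, rather than building explicit filtrations on $j'_! I_{X'}^{0,N}$ and on $K$, the paper handles these via duality (using $\DD_{X'} I_{X'}^{0,N} \simeq I_{X'}^{0,N}$, $\DD K \simeq C$, and the self-duality $\DD Gr^F_p \simeq Gr^F_p$), reducing them to the cases of $j'_+ I_{X'}^{0,N}$ and $C$ already treated; similarly $j'_+ I_{X'}^{0,N}$ itself is handled just with the coarse filtration $I_{X'}^{k,N}$ and the isomorphism $f^! \O_{\X'}(^\dag X'_s)_\Q \simeq \O_\X(^\dag X_s)_\Q[\delta_X-\delta_{X'}]$. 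Your assertion that $K$ ``inherits a filtration with graded pieces of the same type'' from the short exact sequence $0 \to K \to j'_! I_{X'}^{0,N} \to j'_{!+} I_{X'}^{0,N} \to 0$ is not automatic---the kernel of a filtered map need not have the expected associated graded unless the map is strict---so the duality route is actually the clean way to close this case.
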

    \begin{proof}
        The assertion is local on $\P$ and we may assume that there exists a lift $\X \hookrightarrow \P$ (resp. $\X' \hookrightarrow \P'$) of the closed immersion $X \hookrightarrow P$ (resp. $X' \hookrightarrow P$), where $\X$ (resp. $\X'$) locally has finite $p$-basis over $\V$. We also assume that $f$ lifts to $\X \rightarrow \X'$, which by abuse of notation will be denoted by $f$.
        
        (i) Let us prove that $f^!j'_+I_{X'}^{0, N}$ is a $(\delta_X - \delta_{X'})$-shifted holonomic module. Since $I_{X'}^{0, N}$ admits a filtration $0 \subseteq I_{X'}^{N-1, N} \subseteq \cdots \subseteq I_{X'}^{0,N}$ whose graded pieces are all isomorphic to $\O_{\X'}(^\dag X'_s)_\Q,$ it suffices to show that $f^! \O_{\X'}(^\dag X'_s)_\Q$ is a $(\delta_X - \delta_{X'})$-shifted holonomic module. By \cite[7.2.1]{intro}, we have $f^! \O_{\X'}(^\dag X'_s)_\Q \simeq \O_\X(^\dag X_s)_\Q[\delta_X - \delta_{X'}]$, which implies the assertion.

        (ii) By Proposition \ref{ie_form}, to show that $f^!j_{!+}I_X^{0, N}$ and $f^! C$ are $(\delta_X - \delta_{X'})$-shifted holonomic modules, it suffices to show that $f^! Gr^F_p \O_{\X'}(^\dag X'_s)_\Q$ are $(\delta_X - \delta_{X'})$-shifted holonomic modules. This follows from Theorem \ref{calc_rg} and Proposition \ref{hdag_finv}.

        (iii) For $f^! j'_! I_{X'}^{0, N}$, as there is an isomorphism $\DD I_{X'}^{0, N} \simeq I_{X'}^{0, N},$ it suffices to show that $f^! \DD j_+ I_{X'}^{0, N}$ is a $(\delta_X - \delta_{X'})$-shifted holonomic module. Again considering graded pieces, it suffices to show $f^! \DD Gr^F_p \O_{\X'}(^\dag X'_s)_\Q$ are $(\delta_X - \delta_{X'})$-shifted holonomic modules. By Theorem \ref{calc_rg}, $\DD Gr^F_p \O_{\X'}(^\dag X'_s)_\Q \simeq Gr^F_p \O_{\X'}(^\dag X'_s)_\Q$ and we get the assertion.

        Lastly, by the isomorphism $\DD_{X'} I_{X'}^{0, N} \simeq I_{X'}^{0, N},$ we have $\DD K \simeq C.$ Using arguments in (ii) and (iii), we get the assertion.
    \end{proof}
    
    By the proof, we also get the following proposition:
    
    \begin{proposition}
        $f^! \Psi', \ f^! F_p\Psi', \ f^! Gr^F_p\Psi', f^! G^q\Psi', \ f^! Gr_G^q\Psi',\  f^! F_pGr_G^q\Psi', \ f^!G^qGr^F_p\Psi', \ f^! Gr^F_pGr_G^q\Psi'$ are $(\delta_X - \delta_{X'})$-shifted holonomic modules.
    \end{proposition}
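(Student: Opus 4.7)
The plan is a devissage over the filtrations $F_\bullet$ and $G^\bullet$ on $\Psi'$ together with their bi-indexed refinements, reducing everything to the explicit description of the double graded pieces $Gr^F_p Gr^q_G \Psi'$ provided by Theorem \ref{nc_gr}. The engine is the following closure property of the class $\mathcal{H}$ of $(\delta_X - \delta_{X'})$-shifted holonomic modules: if $A \to B \to C \to$ is a distinguished triangle in $D^b_{\rm hol}(X, X, \P)$ with $A, C \in \mathcal{H}$, then $B \in \mathcal{H}$. This is immediate from the long exact sequence of cohomology, which forces $H^i(B) = 0$ for $i \neq \delta_{X'} - \delta_X$ and produces a short exact sequence $0 \to H^{\delta_{X'} - \delta_X}(A) \to H^{\delta_{X'} - \delta_X}(B) \to H^{\delta_{X'} - \delta_X}(C) \to 0$ of holonomic modules.

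The base case is that $f^! Gr^F_p Gr^q_G \Psi' \in \mathcal{H}$. By Theorem \ref{nc_gr}, this object is a direct sum of copies of $\RG_{D'_I}(\O_{\P', \Q})(q+1)[c'+p+q+1]$ for $\card I = p+q+1$, with $c' = \delta_{P'} - \delta_{X'}$. Since membership in $\mathcal{H}$ is Zariski-local on $\P$, I reduce to an affine open on which lifts $\X \hookrightarrow \P$, $\X' \hookrightarrow \P'$ of $X, X'$ with finite $p$-basis over $\V$ and a lift of $f$ to $\X \to \X'$ exist; such local lifts are available by the explicit etale-local description of strictly semi-stable schemes recalled in subsection \ref{ssss}. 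On such an open, Proposition \ref{hdag_finv} gives $f^! \RG_{D'_I}(\O_{\X', \Q}) \simeq \RG_{f^{-1}(D'_I)}(\O_{\X, \Q})[\delta_X - \delta_{X'}]$ with holonomic cohomology concentrated in a single degree, and the same argument as in the proof of Proposition \ref{f!-sheaves}(ii) then places $f^! Gr^F_p Gr^q_G \Psi'$ in $\mathcal{H}$.

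Next, I build up the remaining seven objects by iterated use of the closure property. Since $Gr^F_p \Psi'$ carries the decreasing filtration $G^\bullet Gr^F_p \Psi'$ whose graded pieces are $Gr^F_p Gr^q_G \Psi'$, a decreasing induction on $q$ starting from $G^{\delta_{X'}} Gr^F_p \Psi' = 0$, using the short exact sequences $0 \to G^{q+1} Gr^F_p \Psi' \to G^q Gr^F_p \Psi' \to Gr^F_p Gr^q_G \Psi' \to 0$, shows $f^! G^q Gr^F_p \Psi' \in \mathcal{H}$ for all $q$, and in particular $f^! Gr^F_p \Psi' \in \mathcal{H}$. A symmetric argument handles $f^! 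F_p Gr^q_G \Psi'$ and $f^! Gr^q_G \Psi'$. Then an increasing induction on $p$ via $0 \to F_{p-1} \Psi' \to F_p \Psi' \to Gr^F_p \Psi' \to 0$, starting from $F_{-1} \Psi' = 0$, gives $f^! F_p \Psi' \in \mathcal{H}$; an analogous decreasing induction on $q$ gives $f^! G^q \Psi' \in \mathcal{H}$. Taking $p$ sufficiently large (so that $F_p \Psi' = \Psi'$ by $x^{\delta_{X'}} = 0$) finally yields $f^! \Psi' \in \mathcal{H}$.

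The main obstacle is the bookkeeping around the base case: Proposition \ref{hdag_finv} presupposes global lifts of $X, X'$ and a lift of $f$, which are not assumed in the statement of the present proposition. This is resolved by Zariski-localization together with the local existence of such lifts, but requires some care to ensure compatibility of the identifications made on overlapping opens. Beyond that technical point, the entire argument is routine homological algebra based on the closure property of $\mathcal{H}$.
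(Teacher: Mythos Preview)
Your proposal is correct and follows essentially the same approach as the paper, which simply remarks that the proposition follows from the proof of Proposition \ref{f!-sheaves}; you have spelled out the devissage that this remark compresses. Your worry about compatibility on overlapping opens is unnecessary: being a $(\delta_X-\delta_{X'})$-shifted holonomic module is a Zariski-local condition on $\P$, so no gluing of identifications is required---it suffices to check concentration in a single cohomological degree locally.
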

    
    We have a commutative diagram
    \[
    \xymatrix{
        X_\eta \ar[d]_{f_\eta} \ar[r]^{j} & X \ar[d]^f\\
        X'_\eta \ar[r]^{j'} & X'.\\
    }
    \]
    Since $f$ is proper, we have an isomorphism $f_! \xrightarrow{\sim} f_+$ and by adjunctions we can form $j_! f_\eta^! \rightarrow f^!j'_!$ and $j_+ f_\eta^! \rightarrow f^!j'_+$, which are compatible with $j_! \rightarrow j_+$. We therefore have a commutative diagram
    \begin{equation} \label{push-bc}
        \xymatrix{
           j_!f_\eta^! \ar[d] \ar[r] & f^!j'_! \ar[d]\\
           j_+f_\eta^! \ar[r] & f^!j'_+.
        }
    \end{equation}
    Suppose that we have lifts $\pi : \X \rightarrow \DD_\S$, $\pi' : \X' \rightarrow \DD_\S$ and $f : \X \rightarrow \X'$. In this case, by \cite[5.5]{frob} (which also holds in our setting) we have $\pi_\eta^! I_{\DD_S}^{0, N} \simeq \pi_\eta^+ I_{\DD_S}^{0, N}(\delta_X - 1)[2(\delta_X-1)] = I_X^{0,N}(\delta_X-1)[\delta_X-1]$ and ${\pi'}_\eta^! I_{\DD_S}^{0, N} \simeq I_{X'}^{0,N} (\delta_{X'}-1)[\delta_{X'}-1]$. Therefore $f_\eta^!I_{X'}^{0,N} \simeq f_\eta^!{\pi'}_\eta^! I_{\DD_S}^{0, N}(1-\delta_{X'})[1-\delta_{X'}] \simeq \pi_\eta^! I_{\DD_S}^{0, N}(1-\delta_{X'})[1-\delta_{X'}] \simeq I_X^{0,N}(\delta_X-\delta_X')[\delta_X - \delta_{X'}].$
    \\
    By Proposition \ref{f!-sheaves} and the commutative diagram (\ref{push-bc}), we get a canonical morphism
    \begin{align}
        \coker(j_!I_{X}^{0,N} \rightarrow j_+I_{X}^{0,N})(\delta_X-\delta_{X'})[\delta_X-\delta_{X'}] & \simeq \coker(j_!f_\eta^!I_{X'}^{0,N} \rightarrow j_+f_\eta^!I_{X'}^{0,N}) \notag\\
        & \rightarrow \coker(f^!j'_!I_{X'}^{0,N} \rightarrow f^!j_+I_{X'}^{0,N}) \notag\\
        & \simeq f^! \coker(j'_!I_{X'}^{0,N} \rightarrow j_+I_{X'}^{0,N}). \notag
    \end{align}
    Taking $N$ large enough, we get the morphism
    \begin{align} \label{push-nc}
        \Psi[\delta_X - \delta_{X'}] \rightarrow f^!\Psi'.
    \end{align}
    It is easy to see that this morphism is compatible with the multiplication by $x$. Thus we have a canonical morphism
    \begin{align} \label{push-grnc}
        Gr^F_pGr_G^q \Psi[\delta_X - \delta_{X'}] \rightarrow Gr^F_pGr_G^q f^! \Psi' \simeq f^! Gr^F_pGr_G^q \Psi'.
    \end{align}
    Let us investigate the relation of the morphism (\ref{push-grnc}) and the isomorphism in Theorem \ref{cons_trun}. Recall that $I_X^{0,N}$ is represented by $\displaystyle \bigoplus_{0 \leq k < N} \O_\X(^\dag X_s)_\Q \, x^k t^x$.
    Since $f^!j'_+ I_{X'}^{0,N} = \D^\dag_{\X \rightarrow \X', \Q} \otimes^{\mathbb{L}}_{f^{-1}\D^\dag_{\X', \Q}} f^{-1} I_{X'}^{0,N}[\delta_X - \delta_{X'}]$ is a $(\delta_X - \delta_{X'})$-shifted sheaf, the canonical isomomorphism $\rho : f^! j'_+ I_{X'}^{0,N}[\delta_{X'} - \delta_X] \xrightarrow{\sim} j_+ f_\eta^!I_{X'}^{0,N}[\delta_{X'} - \delta_X] \xrightarrow{\sim} j_+ I_X^{0,N}$ is given by
    $$\D^\dag_{\X \rightarrow \X', \Q} \otimes_{f^{-1}\D^\dag_{\X', \Q}} f^{-1} I_{X'}^{0,N} \rightarrow I_{X}^{0,N}, \ (a \hat{\otimes} Q)\otimes\xi \mapsto af^\#(Q(\xi)),$$  
    where $a \in \O_{\X, \Q}, \ Q \in f^{-1}\D^\dag_{\X', \Q}, \ \xi \in f^{-1} I_{\X'}^{0,N}$ and $f^\# : f^{-1} I_{X'}^{0,N} \rightarrow I_{X}^{0,N}$ is induced by the canonical homomorphism $f^\# : f^{-1}\O_{\X'}(^\dag X'_s)_{\Q} \rightarrow \O_{\X}(^\dag X_s)_{\Q}$.

    By Proposition \ref{ie_form}, the image of $f^!j'_{!+} I_{X'}^{0, N}[\delta_{X'} - \delta_X]$ by $\rho$ is $\displaystyle \bigoplus_{0 \leq k < N} F'_k \O_\X(^\dag X_s)_\Q \, x^k t^x,$ where $F'_k$ is the $\D^\dag_{\X, \Q}$-submodule of generated by $f^\#(f^{-1} F_k \O_{\X'}(^\dag X'_s)).$ Explicitly, $\displaystyle F'_k = \sum_{i_1 < \cdots < i_k} \O_\X(^\dag f^{-1}(D'_{i_1} \cup \cdots \cup D'_{i_k}))_\Q.$ 
    Thus we get $\displaystyle f^! \Psi'[\delta_{X'} - \delta_X] = \bigoplus_{0 \leq k < N} \O_{\X}(^\dag X_s)_\Q / F'_k\O_{\X}(^\dag X_s)_\Q \, x^k t^x.$
    Since $D_i \subseteq f^{-1}(D'_{\varphi(i)}),$ we have $F_k \O_{\X}(^\dag X_s)_\Q \subseteq F'_k \O_{\X}(^\dag X_s)_\Q$. The homomorphism (\ref{push-nc}) is the canonical homomorphism
    $$\displaystyle \bigoplus_{0 \leq k < N} \O_{\X}(^\dag X_s)_\Q / F_k\O_{\X}(^\dag X_s)_\Q \, x^k t^x \rightarrow \bigoplus_{0 \leq k < N} \O_{\X}(^\dag X_s)_\Q / F'_k\O_{\X}(^\dag X_s)_\Q \, x^k t^x$$
    with obvious Frobenius structures, and therefore (\ref{push-grnc}) is the canonical homomorphism
    \begin{equation} \label{push-nc-gr1}
        Gr^F_k \O_\X(^\dag X_s)_\Q(k) \rightarrow Gr^{F'}_k \O_\X(^\dag X_s)_\Q(k).
    \end{equation}
    with $k = p+q+1$, induced by $F_k \O_{\X}(^\dag X_s)_\Q \subseteq F'_k \O_{\X}(^\dag X_s)_\Q$. Since the filtration $F'_k \O_\X(^\dag X_s)_\Q$ is isomorphic to $f^! F_k \O_{\X'}(^\dag X'_s)_\Q[\delta_{X'} - \delta_X],$ we have 
    \begin{equation} \label{push-nc-gr2}
        Gr^{F'}_k \O_\X(^\dag X_s)_\Q \simeq f^! Gr^F_k \O_{\X'}(^\dag X'_s)_\Q[\delta_{X'} - \delta_X].
    \end{equation}
    Fix an $I = \{i_1 < \cdots < i_k\}$. As in the proof of Proposition \ref{alphabeta}, we have a canonical morphism
    $$
        \iota_I : \mathcal{C}^\dag_{\X, I} = [\O_{\X, \Q} \rightarrow \bigoplus_{j_1} \O_\X(^\dag D_{i_{j_1}})_\Q \rightarrow \cdots \rightarrow \O_\X(^\dag D_{i_1} \cup \cdots \cup D_{i_k})_\Q] \rightarrow Gr^F_k \O_{\X}(^\dag X_s)_\Q.
    $$
    The direct sum $\bigoplus \iota_I$ gives the isomorphism of Remark \ref{cons_trun_form}.
    Put $i'_j := \varphi(i_j) $ and $I' = \{i'_1, \dots, i'_k\}.$ If $\card I' < k$, then the composition $\mathcal{C}^\dag_{\X, I} \xrightarrow{\iota_{I}} Gr^F_k \O_{\X}(^\dag X_s)_\Q \rightarrow Gr^{F'}_k \O_{\X}(^\dag X_s)_\Q$ is zero. Suppose $\card I' = k.$ Put $U_j = X - f^{-1}(D'_{i'_j})$ and $U_{j_1 \cdots j_m} = U_{j_1} \cap \cdots \cap U_{j_m}.$ We have an ${\rm sp}_*$-acyclic resolution
    $$\displaystyle \G_{f^{-1}(D'_{I'})} \O_{\X^{\rm ad}} \xrightarrow{\sim} [\O_{\X^{\rm ad}} \rightarrow \bigoplus_{j_1} j^\dag_{U_{j_1}}\O_{\X^{\rm ad}} \rightarrow \bigoplus_{j_1 < j_2} j^\dag_{U_{j_1 j_2}} \O_{\X^{\rm ad}} \rightarrow \cdots \rightarrow j^\dag_{U_{1\cdots n}} \O_{\X^{\rm ad}}].$$
    Thus $\displaystyle \RG_{f^{-1}(D'_{I'})}(\O_{\X, \Q}) \simeq [\O_{\X, \Q} \rightarrow \bigoplus_{j_1} \O_\X(^\dag f^{-1}(D'_{i'_{j_1}}))_\Q \rightarrow \cdots \rightarrow \O_\X(^\dag f^{-1}(D'_{i_1} \cup \cdots \cup D'_{i_k}))_\Q] =: \mathcal{C}'^\dag_{\X, I'}.$ We have a commutative diagram
    \begin{equation} \label{push_gr1}
    \xymatrixcolsep{20mm}
    \xymatrix{
        \mathcal{C}^\dag_{\X, I} \ar[r]^(0.40){\iota_I}\ar[d] & Gr^F_k \O_{\X}(^\dag X_s)_\Q \ar[d] \\
        \mathcal{C}'^\dag_{\X, I'} \ar[r]^(0.40){\iota'_{I'}} & Gr^{F'}_k \O_{\X}(^\dag X_s)_\Q,
    }
    \end{equation}
    where $\iota'_{I'}$ is defined as $\iota_I$ and vertical arrows are canonical maps.

    Since $f^! \O_{\X'}(^\dag D'_{i'_{j_1}} \cup \cdots \cup D'_{i'_{j_l}})_\Q \simeq \O_\X(^\dag f^{-1}(D'_{i'_{j_1}} \cup \cdots \cup D'_{i'_{j_l}}))_\Q[\delta_X-\delta_{X'}]$, we have
    \begin{equation} \label{push_gr2}
        f^! \mathcal{C}^\dag_{\X', I'} \simeq \mathcal{C}'^\dag_{\X, I'}[\delta_X-\delta_{X'}].
    \end{equation}
    This induces the canonical commutative diagram
    \begin{equation} \label{push_gr_diag}
    \xymatrixcolsep{25mm}
    \xymatrix{
        \mathcal{C}^\dag_{\X, I} \ar[r]^(0.43){\iota_I}\ar[d] & Gr^F_k \O_{\X}(^\dag X_s)_\Q \ar[d] \\
        f^! \mathcal{C}^\dag_{\X', I'}[\delta_{X'}-\delta_X] \ar[r]^(0.43){f^!\iota_{I'}[\delta_{X'}-\delta_X]} & f^! Gr^{F}_k \O_{\X'}(^\dag X'_s)_\Q[\delta_{X'}-\delta_X],
    }
    \end{equation}
    The isomorphism (\ref{push-nc-gr2}) is compatible with (\ref{push_gr2}) the following proposition:
    \begin{proposition}
        Suppose that $f : X \rightarrow X'$ lifts to $f : \X \rightarrow \X'$. Then we have a commutative diagram
        \[\hspace{-30pt}
        \xymatrix{
            Gr^F_pGr_G^q \Psi \ar[r]{(\ref{push-grnc})} \ar[d]^{\wr}_{\ref{nc_gr}} & f^! Gr^F_pGr_G^q \Psi' [\delta_{X'}-\delta_X] \ar[d]^{\ref{nc_gr}}_{\wr} \\
            Gr^F_{p+q+1} \O_{\X}(^\dag X_s)_\Q(q+1) \ar@{-->}[r] & f^! Gr^F_{p+q+1} \O_{\X'}(^\dag X'_s)_\Q(q+1)[\delta_{X'}-\delta_X]\\
        \displaystyle \bigoplus_{\card I' = p+q+1} \RG_{D_I} \O_{\X}(^\dag X_s)_\Q(q+1)[\delta_X - (p+q+1)] \ar[u]^{\ref{cons_trun}}_{\wr} \ar[r] & \displaystyle\bigoplus_{\card I' = p+q+1} \RG_{f^{-1}(D'_{I'})} \O_{\X}(^\dag X_s)_\Q(q+1)[\delta_X - (p+q+1)] \ar[u]^{\wr}_{\ref{cons_trun}},
        }
        \]
        where the right bottom vertical isomorphism comes from
       $$f^! \RG_{D'_{I'}} \O_{\X', \Q} \simeq \RG_{f^{-1}(D'_{I'})} f^!\O_{\X',_\Q} \simeq \RG_{f^{-1}(D'_{I'})} \O_{\X,_\Q}[\delta_X - \delta_{X'}]$$
       and the bottom horizontal morphism is defined by the canonical morphisms $\RG_{D_I} \rightarrow \RG_{f^{-1}(D'_{I'})}$ for $\card I = \card I' = p+q+1$ such that $D_I \subseteq f^{-1}(D'_{I'}).$
    \end{proposition}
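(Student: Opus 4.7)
The plan is to decompose the three-row diagram vertically into a top square (between the nearby-cycle graded pieces and $Gr^F \O_\X(^\dag X_s)_\Q$) and a bottom square (between $Gr^F$ and the sum of $\RG_{D_I}\O_{\X,\Q}$), and verify each separately. Virtually all the geometric content has been assembled in the paragraphs preceding the statement, so the proof will reduce to checking that the isomorphisms of Theorem \ref{nc_gr} and Theorem \ref{cons_trun} are compatible with the morphisms sitting in each row.

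For the top square, I would appeal to the local calculation above (\ref{push-nc-gr1}), which identifies the morphism (\ref{push-grnc}) with the canonical map $Gr^F_{p+q+1}\O_\X(^\dag X_s)_\Q \rightarrow Gr^{F'}_{p+q+1}\O_\X(^\dag X_s)_\Q$ induced by the inclusion $F_{p+q+1}\subseteq F'_{p+q+1}$, twisted by $(q+1)$ and post-composed with the isomorphism (\ref{push-nc-gr2}). Since the isomorphism of Theorem \ref{nc_gr} is built directly from the local description $\Psi \simeq \bigoplus_k \O_\X(^\dag X_s)_\Q / F_k\O_\X(^\dag X_s)_\Q\, x^k t^x$ of Theorem \ref{nc_form} together with multiplication by $x^q$, the top square commutes essentially by construction. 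The only non-trivial bookkeeping is matching the Tate twists and homological shifts arising from $I_X^{a,b} \simeq I_X^{a+n,b+n}(-n)$.

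For the bottom square, the key input is the commutative diagram (\ref{push_gr_diag}) together with (\ref{push_gr2}). I would argue summand by summand: fix $I \subseteq \{0,\dots,r\}$ with $\card I = p+q+1$ and set $I' := \varphi(I)$. If $\card I' = p+q+1$, then (\ref{push_gr_diag}) gives precisely the desired commutativity, after using Lemma \ref{sp_resol} to identify $\mathcal{C}^\dag_{\X, I} \simeq \RG_{D_I}\O_{\X,\Q}[p+q+1]$ and similarly $\mathcal{C}'^\dag_{\X, I'} \simeq \RG_{f^{-1}(D'_{I'})}\O_{\X,\Q}[p+q+1]$. If $\card I' < p+q+1$, the composition $\mathcal{C}^\dag_{\X, I} \xrightarrow{\iota_I} Gr^F_{p+q+1}\O_\X(^\dag X_s)_\Q \rightarrow Gr^{F'}_{p+q+1}\O_\X(^\dag X_s)_\Q$ vanishes, as noted above (\ref{push_gr1}); this matches the vanishing of the corresponding component in the bottom row, since no index $I'$ of size $p+q+1$ satisfies $D_I \subseteq f^{-1}(D'_{I'})$ in that case. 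Taking direct sums finishes the bottom square.

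The main obstacle is not geometric but combinatorial: tracking the Tate twists, the homological shifts $[\delta_X - (p+q+1)]$ versus $[\delta_{X'}-\delta_X]$, and the signs coming from the exterior-algebra structure of the $\mathcal{C}^\dag_{\X, I}$ and $\mathcal{C}'^\dag_{\X, I'}$ consistently across both squares. Once these conventions are pinned down once and for all (as in the proof of Proposition \ref{alphabeta}), the commutativity of the full diagram becomes a direct consequence of the preparatory lemmas (\ref{push-nc-gr1})--(\ref{push_gr_diag}).
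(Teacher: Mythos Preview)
Your proposal is correct and follows essentially the same approach as the paper: decompose the diagram into a top square (handled by (\ref{push-nc-gr1}) and (\ref{push-nc-gr2})) and a bottom square (handled by (\ref{push_gr1}), (\ref{push_gr2}), and their combination (\ref{push_gr_diag})), with the case $\card I' < p+q+1$ disposed of by the vanishing noted just before (\ref{push_gr1}). The paper's own proof is only two sentences and invokes exactly these ingredients; your concern about Tate twists, shifts, and signs is legitimate bookkeeping but does not hide any missing idea.
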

    \begin{proof}
        The middle dashed arrow comes from (\ref{push-nc-gr1}), (\ref{push-nc-gr2}) and so the top square is commutative. Since the isomorphism (\ref{push_gr2}) is compatible with $\iota$ and $f^! \iota$, by the commutativity of (\ref{push_gr1}), the boottom square is commutative.
    \end{proof}
    
    Noting that this commutative diagram can be glued via Theorem \ref{u_0^*}. By rewriting it using $R\psi\one$, we get the following theorem which is analogous to the $l$-adic case \cite[2.13]{l-adic}:
    
    \begin{theorem} \label{pushforward-psi}
        Let $d$ = $\delta_X - \delta_{X'}$. Then we have a commutative diagram
        \[
        \xymatrix{
            Gr^M_n R\psi \one_{X_\eta} (d)[2d] \ar[r] \ar[d]^{\wr} & f^! Gr^M_n R\psi \one_{X'_\eta} \ar[d]^{\wr}\\
            \displaystyle \bigoplus_{\substack{p-q = n \\ \card I = p+q+1}} \one_{D_I}(d-p)[2d-(p+q)] \ar[r] & \displaystyle \bigoplus_{\substack{p-q = n \\ \card I' = p+q+1}} f^!\one_{D'_{I'}}(-p)[-(p+q)],
        }
        \]
        where the bottom arrow is the canonical morphism.
    \end{theorem}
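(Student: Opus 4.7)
The plan is to globalize the preceding proposition (which handled the case of an existing lift $f : \X \to \X'$) via the descent equivalence of Theorem \ref{u_0^*}, and then to translate from $\Psi$ to $R\psi\one_{X_\eta}$ using the definition $R\psi\one_{X_\eta} = \Psi(-\delta_X)[1-\delta_X]$.

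First I would observe that the morphism $\Psi[\delta_X - \delta_{X'}] \to f^!\Psi'$ of (\ref{push-nc}) is already defined without assuming a global lift: it is produced from the base-change square (\ref{push-bc}) applied to $j'_+ I_{X'}^{0,N}$ and $j'_! I_{X'}^{0,N}$ for $N$ large, and Proposition \ref{f!-sheaves} ensures that all objects involved are $(\delta_X - \delta_{X'})$-shifted holonomic modules so that cokernels make sense. Compatibility with multiplication by $x$ is formal, so the morphism passes to each bigraded piece
\[
Gr^F_p Gr^q_G \Psi [\delta_X - \delta_{X'}] \longrightarrow f^! Gr^F_p Gr^q_G \Psi'.
\]

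Next I would pick a data of lifts $((\P_\alpha),(\X_\alpha),u_\alpha)$ of $(X,X,\P)$ and $((\P'_\beta),(\X'_\beta),u'_\beta)$ of $(X',X',\P')$, and after refining the covering, arrange that each $g|_{\P_\alpha}$ factors through some $\P'_{\beta(\alpha)}$ and that $f$ lifts to $f_\alpha : \X_\alpha \to \X'_{\beta(\alpha)}$ intertwining the closed immersions; this is possible as each $\X_\alpha$ is affine. Over each chart the preceding proposition supplies the commutative square identifying $Gr^F_p Gr^q_G \Psi|_{\P_\alpha}$ with $\bigoplus_{\card I = p+q+1} \RG_{D_I \cap X_\alpha} \O_{\X_\alpha,\Q}(q+1)[\delta_X-(p+q+1)]$, and similarly for $\Psi'$. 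By Propositions \ref{u_0^*-cons-t}, \ref{u_0^*-dual}, and \ref{u_0^*-tensor}, these local squares assemble into a global commutative square via Theorem \ref{u_0^*}: the vertical arrows from Theorems \ref{cons_trun} and \ref{nc_gr} are built from natural maps compatible with the $\tau$-isomorphisms, while the horizontal arrow becomes the canonical morphism $\RG_{D_I}\O_{\P,\Q} \to \RG_{f^{-1}(D'_{I'})}\O_{\P,\Q}$ composed with the base-change isomorphism $f^! \RG_{D'_{I'}}\O_{\P',\Q} \simeq \RG_{f^{-1}(D'_{I'})}\O_{\P,\Q}[\delta_X - \delta_{X'}]$.

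Finally, I would sum over pairs $(p,q)$ with $p-q = n$ and $p,q \geq 0$ to obtain the diagram for $Gr^M_n \Psi$, and then replace $\Psi$ by $R\psi\one_{X_\eta}$ via the definition and similarly for $\Psi'$. The extra shift $[2d]$ and twist $(d)$ in the claimed diagram are exactly what remains after absorbing the factor $[\delta_X - \delta_{X'}]$ from (\ref{push-nc}) into the difference between $(-\delta_X)[1-\delta_X]$ and $(-\delta_{X'})[1-\delta_{X'}]$. The main obstacle is the bookkeeping in the gluing step: one must verify that the local identifications of Theorems \ref{cons_trun} and \ref{nc_gr} are compatible with $u_0^*$, $f^!$, and $\RG_Z$, and in particular that the canonical $\tau$-isomorphisms used in the descent datum are respected on both sides of the square. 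Since every constituent map is natural, this is tedious rather than delicate; the genuine content of the theorem already resides in the preceding lift-case proposition.
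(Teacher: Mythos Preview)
Your proposal is correct and follows essentially the same approach as the paper: the paper's entire argument for this theorem is the sentence preceding it, namely that the commutative diagram from the lift-case proposition ``can be glued via Theorem \ref{u_0^*}'' and then rewritten in terms of $R\psi\one$. Your expansion of the gluing step (choosing compatible local lifts of $f$ and checking naturality with respect to the descent data) and the explicit tracking of shifts and twists is exactly what the paper leaves implicit.
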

    
    \begin{corollary}
        Let $d = \delta_X - \delta_{X'}$. We have a map of spectral seqences
        \[
        \xymatrix{
            E_1^{p, q+2d} = \displaystyle \bigoplus_{i \geq \max(0,-p)} H_{\D}^{q+2d-2i}(D_{(p+2i)}/K)(-i+d) \ar@{=>}[r] \ar[d]_(0.55){\bigoplus f_{(p+2i)+}} & \Psi H_{\D}^{p+q+2d}(X_\eta/\E^\dag_K)(d) \ar[d]^{\Psi f_+} \\
            E_1'^{p, q} = \displaystyle \bigoplus_{i \geq \max(0,-p)} H_{\D}^{q-2i}(D'_{(p+2i)}/K)(-i) \ar@{=>}[r] & \Psi H_{\D}^{p+q}(X'_\eta/\E^\dag_K).
        }
        \]
    \end{corollary}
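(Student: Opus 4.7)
The plan is to upgrade the graded-level statement of Theorem \ref{pushforward-psi} to a filtered statement, and then apply the standard functoriality of spectral sequences of filtered complexes. First, I would observe that the morphism \eqref{push-nc} is compatible with multiplication by $x$ by construction, and therefore preserves the kernel filtration $F_p$, the image filtration $G^q$, and hence also the monodromy filtration $M_r$. Under the identification $R\psi\one = \Psi(-\delta_X)[1-\delta_X]$, a suitable Tate twist and shift converts \eqref{push-nc} into a filtered morphism
\[
R\psi\one_{X_\eta}(d)[2d] \longrightarrow f^!R\psi\one_{X'_\eta}
\]
whose induced map on $Gr^M_n$ is exactly the one described in Theorem \ref{pushforward-psi}.

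Next, since $f$ is proper we have $f_+ = f_!$, so the adjunction $(f_+, f^!)$ converts the above into a filtered morphism $f_+R\psi\one_{X_\eta}(d)[2d] \to R\psi\one_{X'_\eta}$. Applying $(\pi'_s)_+$ and using $(\pi'_s)_+ \circ f_+ = (\pi_s)_+$, we obtain a filtered morphism in $D^b_{\rm hol}(s,s,\S)$,
\[
\Phi : ((\pi_s)_+ R\psi\one_{X_\eta})(d)[2d] \longrightarrow (\pi'_s)_+ R\psi\one_{X'_\eta}.
\]
The standard spectral sequence of a filtered complex then produces the required map of spectral sequences. Taking $H^{p+q}$ and invoking \eqref{E_infty_term} identifies the map on the abutment with $\Psi f_+$, as desired.

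Finally, the map on $E_1$ is determined by $\Phi$'s action on $Gr^M_{-p}$, which by Theorem \ref{pushforward-psi} is the direct sum of canonical morphisms $\one_{D_I}(d-p)[2d-(p+q)] \to f^!\one_{D'_{\varphi(I)}}(-p)[-(p+q)]$ over $I \subseteq \{0,\dots,r\}$ with $\card I = p+q+1$; summands with $\varphi|_I$ not injective contribute the zero map to those $f^!\one_{D'_{I'}}$ with $\card I' = p+q+1$, as $D'_{\varphi(I)}$ is then not among these strata. After applying $(\pi'_s)_+$ and adjunction, each nonzero summand yields the pushforward between the arithmetic $\D$-module cohomologies of $D_I$ and $D'_{\varphi(I)}$, and the total map assembles into $\bigoplus_i f_{(p+2i)+}$, as claimed. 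The remaining work is a careful bookkeeping of Tate twists, degree shifts, and the reindexing $q \mapsto q + 2d$ of the source spectral sequence; no substantial obstacle arises, since the geometric and $\D$-module-theoretic content is entirely contained in Theorem \ref{pushforward-psi}.
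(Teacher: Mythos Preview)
Your proposal is correct and follows precisely the argument the paper leaves implicit: the corollary is stated immediately after Theorem \ref{pushforward-psi} with no written proof, because the morphism \eqref{push-nc} is already noted to be compatible with $x$ (hence with $M_\bullet$), and the spectral sequence of Theorem \ref{main-theorem} was defined by applying $\pi_{s+}$ to the monodromy filtration on $R\psi\one$. Your use of the adjunction $(f_+,f^!)$ (valid since $f$ is proper) and the identification $\pi_{s+}=\pi'_{s+}\circ f_+$ to produce the filtered map, together with Theorem \ref{pushforward-psi} to identify the $E_1$-level map with $\bigoplus f_{(p+2i)+}$, is exactly the intended deduction.
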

    
    \begin{corollary}
        Suppose that Conjecture \ref{conj-comp} is true. Let $d$ = $\delta_X - \delta_{X'}$. We have a map of spectral sequences
        \[
        \xymatrix{
            E_1^{p, q+2d} = \displaystyle \bigoplus_{i \geq \max(0,-p)} H_{\rm rig}^{q+2d-2i}(D_{(p+2i)}/K)(-i+d) \ar@{=>}[r] \ar[d] & \Psi H_{\rm rig}^{p+q+2d}(X_\eta/\E^\dag_K)(d) \ar[d] \\
            E_1'^{p, q} = \displaystyle \bigoplus_{i \geq \max(0,-p)} H_{\rm rig}^{q-2i}(D'_{(p+2i)}/K)(-i) \ar@{=>}[r] & \Psi H_{\rm rig}^{p+q}(X'_\eta/\E^\dag_K),
        }
        \]
        where vertical arrows are the dual of Poincar\'{e} duality.
    \end{corollary}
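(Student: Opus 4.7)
The plan is to obtain this result as a formal consequence of the previous corollary, Conjecture \ref{conj-comp}, and the known comparison theorem of \cite{comparison} for proper smooth $k$-schemes, combined with Poincar\'e duality. First I would identify the abutments: Conjecture \ref{conj-comp} provides canonical isomorphisms $H^n_\D(X_\eta/\E^\dag_K) \simeq H^n_{\rm rig}(X_\eta/\E^\dag_K)$ of $\nabla$-modules, and similarly for $X'$. Since the unipotent nearby cycle $\Psi$ depends only on the $\nabla$-module structure, applying $\Psi$ transports the abutments to rigid cohomology. For the $E_1$-terms, each $D_{(p+2i)}$ is a disjoint union of proper smooth $k$-schemes, and the comparison theorem of \cite{comparison} directly identifies $H^{\ast}_\D(D_{(p+2i)}/K)$ with $H^{\ast}_{\rm rig}(D_{(p+2i)}/K)$, with the analogous statement for $D'_{(p+2i)}$.

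Next I would identify each of the vertical arrows with the dual of a Poincar\'e duality pullback. On the $E_1$-level, each component of the top map of the previous corollary is induced by the $\D$-module pushforward $f_{I+}\one_{D_I} \to \one_{D'_{I'}}$ for $\varphi(I) = I'$, which after applying $\pi_{s+}$ and taking cohomology yields the rigid cohomological Gysin morphism $H^{\ast}_{\rm rig}(D_I/K) \to H^{\ast-2d}_{\rm rig}(D'_{I'}/K)(-d)$. Because $D_I$ and $D'_{I'}$ are proper smooth over $k$, Poincar\'e duality identifies this Gysin map with the dual of the pullback $H^{\ast}_{\rm rig}(D'_{I'}/K) \to H^{\ast}_{\rm rig}(D_I/K)$; the Tate twists $(-i+d)$ versus $(-i)$ and the shift of cohomological degree by $2d$ in the statement precisely encode this duality. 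The abutment arrow is handled analogously, using Poincar\'e duality for Lazda-P\'{a}l's rigid cohomology over $\E^\dag_K$ \cite{LPrig}, so that the pushforward induced by the proper map $f : X \to X'$ corresponds to the dual of the pullback $f^\ast : H^\ast_{\rm rig}(X'_\eta/\E^\dag_K) \to H^\ast_{\rm rig}(X_\eta/\E^\dag_K)$.

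The main obstacle is verifying that the comparison isomorphisms are compatible with the pushforward functoriality. For the $E_1$-terms this should follow by tracing through the construction of \cite{comparison} combined with the standard compatibility of rigid cohomology with Poincar\'e duality for proper smooth varieties, and is essentially known. For the abutments, Conjecture \ref{conj-comp} as stated asserts only compatibility with pullbacks; identifying the induced pushforward on $\nabla$-modules with the dual of the pullback on Lazda-P\'{a}l's rigid cohomology therefore requires a supplementary compatibility of the conjectural comparison isomorphism with Poincar\'e duality over $\E^\dag_K$. This is the substantive content hidden in the phrase ``dual of Poincar\'e duality'' in the statement. Modulo this compatibility, the corollary follows by assembling the above identifications with the map of spectral sequences furnished by the previous corollary.
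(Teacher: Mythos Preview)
The paper gives no proof for this corollary; it is stated as an immediate consequence of the preceding corollary together with Conjecture \ref{conj-comp} and the comparison theorem of \cite{comparison}. Your proposal correctly reconstructs what this derivation must be, and in fact goes further than the paper by explicitly flagging the compatibility issue: Conjecture \ref{conj-comp} only guarantees compatibility with pullbacks, so interpreting the pushforward map on the abutment as the dual of Poincar\'e duality over $\E^\dag_K$ requires an additional (unstated) compatibility of the conjectural comparison isomorphism with duality. The paper does not address this point; your observation is a genuine refinement rather than a gap in your argument.
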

    \subsection{Pullback}
    \begin{proposition}
        With the notation of Proposition \ref{f!-sheaves}, $f^+ K, \ f^+ j'_+ I_{X'}^{0, N}, \ f^+ j'_{++} I_{X'}^{0, N}, \ f^+ j'_+ I_{X'}^{0, N}, \allowbreak f^+ C$ are $(\delta_{X'} - \delta_X)$-shifted holonomic modules.
        Furthermore, $f^+ \Psi', \ f^+ F_p\Psi', \ f^+ Gr^F_p\Psi', f^+ G^q\Psi', \allowbreak f^+ Gr_G^q\Psi', f^+ F_pGr_G^q\Psi', \ f^+G^qGr^F_p\Psi', \ f^+ Gr^F_pGr_G^q\Psi'$ are $(\delta_{X'} - \delta_{X})$-shifted holonomic modules.
    \end{proposition}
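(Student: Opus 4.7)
The plan is to deduce this from Proposition \ref{f!-sheaves} by duality, using the defining identity $f^+ = \DD_\X \circ f^! \circ \DD_{\X'}$. For each object $\E'$ in the statement, I identify $\DD_{\X'} \E'$ as (a Tate twist of) either one of the objects already handled by Proposition \ref{f!-sheaves} or an iterated extension of such. Since the class of $(\delta_X - \delta_{X'})$-shifted holonomic modules is closed under extensions in $D^b_{\rm hol}$ (the heart of a t-structure is closed under extensions), this forces $f^! \DD_{\X'} \E'$ to be $(\delta_X - \delta_{X'})$-shifted holonomic. Applying $\DD_\X$ and using that duality is an involution on holonomic modules and satisfies $\DD(\mathcal{M}[k]) \simeq \DD(\mathcal{M})[-k]$ on shifted complexes, one concludes that $f^+ \E' = \DD_\X f^! \DD_{\X'} \E'$ is $(\delta_{X'} - \delta_X)$-shifted holonomic, as required.

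The relevant duality identifications are as follows. For the $I^{a,b}_{X'}$-objects, combine $\DD j'_+ \simeq j'_! \DD$, $\DD j'_! \simeq j'_+ \DD$, $\DD j'_{!+} \simeq j'_{!+} \DD$ with the isomorphism $\DD I_{X'}^{a,b} \simeq I_{X'}^{-b,-a}(\delta_{X'}-1)$ from Section \ref{nc-construct}: this produces $\DD(j'_+ I_{X'}^{0,N})$, $\DD(j'_! I_{X'}^{0,N})$, and $\DD(j'_{!+} I_{X'}^{0,N})$ as Tate twists of $j'_! I_{X'}^{-N,0}$, $j'_+ I_{X'}^{-N,0}$, and $j'_{!+} I_{X'}^{-N,0}$ respectively, each directly handled by the same argument as in the proof of Proposition \ref{f!-sheaves} (which is really about arbitrary $I_{X'}^{a,b}$). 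The kernel $K$ and cokernel $C$ of $j'_! I_{X'}^{0,N} \to j'_+ I_{X'}^{0,N}$ are interchanged (up to Tate twist) by $\DD$, so they too are covered. For $\Psi'$, Theorem \ref{nc_form} gives $\DD \Psi'(1) \simeq \Psi'(-\delta_{X'})$, compatible with multiplication by $x$; from the short exact sequence $0 \to F_p \Psi' \to \Psi' \xrightarrow{x^{p+1}} G^{p+1}\Psi'(-p-1) \to 0$ one deduces $\DD F_p \Psi' \simeq \Psi'/G^{p+1}\Psi'$ and $\DD G^q \Psi' \simeq \Psi'/F_{q-1}\Psi'$ (up to Tate twist), and consequently $\DD Gr^F_p \Psi' \simeq Gr_G^{p+1} \Psi'$ and $\DD Gr_G^q \Psi' \simeq Gr^F_{q-1} \Psi'$ (up to twist); devissage along the $F$- or $G$-filtration reduces $f^!$ of the quotient $\Psi'/G^{p+1}\Psi'$ (resp. $\Psi'/F_{q-1}\Psi'$) to $f^! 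Gr_G^k \Psi'$ (resp. $f^! Gr^F_k \Psi'$), all of which are handled by Proposition \ref{f!-sheaves}.

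The main technical point will be the bookkeeping for the mixed bifiltered pieces $F_p Gr_G^q \Psi'$, $G^q Gr^F_p \Psi'$, and $Gr^F_p Gr_G^q \Psi'$: one must verify that the self-duality of $\Psi'$ from Theorem \ref{nc_form} is compatible with both the monodromy operator $x$ and the passage to kernels and images, so that $\DD$ swaps the bifiltration in a controlled way and the duals of these mixed pieces admit filtrations whose graded pieces again lie in the Proposition \ref{f!-sheaves} list (up to Tate twist). Once these identifications are carried out explicitly, Proposition \ref{f!-sheaves}, the extension-closure of shifted holonomic modules, and the shift rule $\DD(\mathcal{M}[\delta_X - \delta_{X'}]) \simeq \DD(\mathcal{M})[\delta_{X'} - \delta_X]$ together finish the proof.
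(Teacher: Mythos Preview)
Your approach is correct and essentially the same as the paper's: both reduce to Proposition \ref{f!-sheaves} via the identity $f^+ = \DD_\X f^! \DD_{\X'}$ together with the self-duality $\DD(Gr_k \O_{\X'}(^\dag X'_s)_\Q) \simeq Gr_k \O_{\X'}(^\dag X'_s)_\Q$. The paper's proof is a single sentence citing exactly this isomorphism and declaring the argument parallel; you have simply written out the bookkeeping (duals of $j'_!$, $j'_+$, $j'_{!+}$, of $K$ and $C$, and of the various filtered pieces of $\Psi'$) that the paper leaves implicit.
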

    \begin{proof}
        Using the isomorphism $\DD(Gr_k \O_\X(^\dag X_s)_\Q) \simeq Gr_k \O_\X(^\dag X_s)_\Q$, the proof is similar to that in Proposition \ref{f!-sheaves}.
    \end{proof}
    
    We have a commutative diagram (which is dual to (\ref{push-bc}))
    \begin{equation} \label{pull-bc}
        \xymatrix{
           f^+j'_! \ar[d] \ar[r] & j_!f_\eta^+ \ar[d]\\
           f^+j'_+ \ar[r] & j_+f_\eta^+.
        }
    \end{equation}
    This induces a morphism of $(\delta_{X'} - \delta_{X})$-shifted holonomic modules
    \begin{equation} \label{pull-nc}
        f^+\Psi'(-\delta_{X'}) \rightarrow \Psi(-\delta_X)[\delta_{X'} - \delta_X]. \notag
    \end{equation}
    Rewriting this using $R\psi\one$, we have a morphism of $(1-\delta_{X})$-shifted holonomic modules
    \begin{equation} \label{pull-nc}
        f^+R\psi\one_{X'_\eta} \rightarrow R\psi\one_{X_\eta}.
    \end{equation}
    This induces a morphism on graded pieces:
    \begin{equation} \label{pull-grnc}
        f^+Gr^F_p Gr_G^q R\psi\one_{X'_\eta} = Gr^F_p Gr_G^q f^+R\psi\one_{X'_\eta} \rightarrow Gr^F_p Gr_G^qR\psi\one_{X_\eta}.
    \end{equation}
    
    On the other hand, fix $I' \subseteq \{0, \dots, r'\}$ and $I \subseteq \{0, \dots, r\}$ such that $\card I = \card I' = k$ and $f(D_I) \subseteq D'_{I'}.$ We have a commutative diagram
    \[
        \xymatrix{
            D_I \ar@{^{(}->}[r]^{a_I} \ar[d]_{f_{II'}} & X \ar[d]^f\\
            D'_{I'} \ar@{^{(}->}[r]^{a'_{I'}} & X'.
        }
    \]
    Then we have a canonical morphism $f^+ a_{I'+} \one_{D'_{I'}} \xrightarrow{f^+} a_{I+}f_{II'}^+ \one_{D'_{I'}} \simeq a_{I+} \one_{D_I}.$ By taking the direct sum, we get a morphism
    \begin{equation} \label{pull_one}
        \displaystyle \bigoplus_{\card I' = n} f^+\one_{D'_{I'}} \xrightarrow{\oplus f^+} \bigoplus_{\card I = n} \one_{D_{I}}.
    \end{equation}
    We state the conjecture which is an analogue of $l$-adic case \cite[2.11]{l-adic}:
    \begin{conjecture} \label{pullback-psi}
        The following diagram is commutative:
        \[
         \xymatrix{
             f^+Gr^F_p Gr_G^q R\psi\one_{X'_\eta} \ar[r]^{(\ref{pull-grnc})}\ar[d]^{\wr}_{(\ref{rpsi_grgr})} & Gr^F_p Gr_G^qR\psi\one_{X_\eta} \ar[d]^{(\ref{rpsi_grgr})}_{\wr}\\
             \displaystyle \bigoplus_{\card I' = p+q+1} f^+\one_{D'_{I'}}(-p)[-(p+q)] \ar[r]^{(\ref{pull_one})} & \displaystyle \bigoplus_{\card I = p+q+1} \one_{D_I}(-p)[-(p+q)]
        }
        \]
    \end{conjecture}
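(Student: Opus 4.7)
The plan is to deduce the pullback compatibility from its pushforward counterpart (Theorem \ref{pushforward-psi}) by duality, following the strategy suggested by the author's Conjecture \ref{dual-nc} and Theorem \ref{dual-and-pull}. Since $f^+ = \DD_X f^! \DD_{X'}$ by definition, once one has a suitable compatibility of $R\psi$ with the duality functor, the pullback morphism (\ref{pull-nc}) and the pushforward morphism (\ref{push-nc}) become Verdier dual to one another, and the conjectured diagram is obtained as the Verdier dual of the commutative diagram in Theorem \ref{pushforward-psi}.

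First, I would reduce to the local setting in which $f$ lifts to a morphism $\X \to \X'$ of formal schemes locally with finite $p$-basis over $\V$. By Theorem \ref{u_0^*} and Proposition \ref{u_0^*-dual}, all the constructions involved ($f^+$, $\Psi$, the filtrations $F_p$, $G^q$, and the identifications with $\RG_{D_I}(\O_{\P,\Q})$) are compatible with the gluing equivalence and with duality, so this reduction is harmless.

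Next, I would make the duality identifications on the graded pieces precise. Using the canonical isomorphism $b \colon \DD(\Psi)(1) \xrightarrow{\sim} \Psi(-\delta_X)$ of Theorem \ref{nc_form} together with $\DD x = x$ from (\ref{dual-x}), duality exchanges the kernel and image filtrations on $\Psi$, yielding an isomorphism $\DD Gr^F_p Gr_G^q R\psi\one_{X_\eta} \simeq Gr^F_q Gr_G^p R\psi\one_{X_\eta}$ up to an explicit Tate twist and shift. On the geometric side, this exchange is compatible with the isomorphism of Theorem \ref{nc_gr}, because the Koszul-type decomposition appearing in the proofs of Proposition \ref{koszul-exact} and Theorem \ref{cons_trun_nc} is symmetric in the role of the components $D_i$, so $\DD$ simply interchanges the two layers (the Koszul direction indexed by $p$ versus the twist direction indexed by $q$) of the bigraded picture. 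Applying $\DD_X$ to the diagram of Theorem \ref{pushforward-psi} and translating via $\DD_X f^! \simeq f^+ \DD_{X'}$, together with the identification $\DD \one_{D_I} \simeq \tilde{\one}_{D_I}$ and the combinatorial symmetry above, then produces the desired commutative square.

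The main obstacle is the very first step of this plan: establishing the compatibility of $\Psi$ with duality in a way that respects the monodromy filtration and the explicit geometric description of the graded pieces. This is precisely the content of Conjecture \ref{dual-nc}, which is left open in the paper. A direct attack would likely proceed by unwinding the Beilinson-type construction $\Psi \O_{X_\eta} = \coker(\lim_{\longleftrightarrow} j_! I_X^{\cdot,\cdot} \to \lim_{\longleftrightarrow} j_+ I_X^{\cdot,\cdot})$ via the perfect pairing $I^{a,b} \tilde{\otimes} I^{-b,-a} \to \O(-1)$, and in particular by showing that this pairing induces the expected self-duality at the level of the intermediate extension $j_{!+} I_X^{0,N}$, whose explicit description in Proposition \ref{ie_form} seems indispensable here. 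Without such a compatibility, the pullbacks of the graded pieces of $\Psi'$ and the graded pieces of $\Psi$ cannot be canonically compared in a way sufficient to yield the conjectured commutative diagram.
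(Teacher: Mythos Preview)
Your overall strategy coincides with the paper's: the statement is recorded as a \emph{Conjecture}, and the paper does not give an unconditional proof. What the paper does prove is Theorem \ref{dual-and-pull}, namely that Conjecture \ref{dual-nc} together with Theorem \ref{pushforward-psi} implies Conjecture \ref{pullback-psi}, by dualizing the pushforward diagram and using $f^+ = \DD f^! \DD$. Your proposal is exactly this conditional reduction, and you correctly identify in your final paragraph that the missing ingredient is Conjecture \ref{dual-nc}.

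The only place where your write-up overreaches is the middle paragraph. There you suggest that the compatibility of $b$ with the isomorphism of Theorem \ref{nc_gr} follows ``because the Koszul-type decomposition appearing in the proofs of Proposition \ref{koszul-exact} and Theorem \ref{cons_trun_nc} is symmetric in the role of the components $D_i$''. This is a heuristic, not an argument: the isomorphism $b$ is constructed as a connecting map in a snake-lemma diagram built from $j_! I_X^{-N,N} \to j_+ I_X^{-N,N}$, and there is no evident reason why that connecting map, after passing to $Gr^F_p Gr_G^q$, should match the purely cohomological self-duality $\DD \RG_{D_I}(\O_{\X,\Q})[p+q+1] \simeq \RG_{D_I}(\O_{\X,\Q})(-\delta_X+p+q+1)[p+q+1]$ on the nose. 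The Koszul complex (\ref{koszul}) and its exactness tell you about the \emph{shape} of the filtration, not about how the specific isomorphism $b$ interacts with the specific resolutions $\mathcal C^\dag_I$ used to produce the identification of Theorem \ref{nc_gr}. Tracking $b$ through those resolutions is precisely what Conjecture \ref{dual-nc} demands, and neither your symmetry remark nor anything in the paper accomplishes it. So your proposal should be read as a correct reproduction of the paper's conditional argument, with the honest caveat you already give, rather than as progress toward the conjecture itself.
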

    This conjecture induces straightforward corollaries:
    \begin{corollary} \label{pullback-wss}
        Under the Conjecture \ref{pullback-psi}, we have a morphism of spectral sequeneces
        \[
        \xymatrix{
            E_1'^{p, q} = \displaystyle \bigoplus_{i \geq \max(0,-p)} H_{\rm rig}^{q-2i}(D'_{(p+2i)}/K)(-i) \ar@{=>}[r] \ar[d]_(0.55){\bigoplus f^+_{(p+2i)}} & \Psi H_{\rm rig}^{p+q}(X'_\eta/\E^\dag_K) \ar[d]^{\Psi f^+}\\
            E_1^{p, q} = \displaystyle \bigoplus_{i \geq \max(0,-p)} H_{\rm rig}^{q-2i}(D_{(p+2i)}/K)(-i) \ar@{=>}[r] & \Psi H_{\D}^{p+q}(X_\eta/\E^\dag_K).
        }
        \]
    \end{corollary}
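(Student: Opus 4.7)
The plan is to deduce the morphism of spectral sequences by applying the functor $\pi_{s+}$ to the morphism \eqref{pull-nc} $f^+ R\psi\one_{X'_\eta} \to R\psi\one_{X_\eta}$, viewed as a filtered morphism with respect to the monodromy filtration, and then reading off the $E_1$ and abutment terms.

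First I would check that \eqref{pull-nc} is compatible with the filtrations on both sides. Since \eqref{pull-nc} comes from the base-change square \eqref{pull-bc} applied to the tower $\{I_{X'}^{0,N}\}_N$, and since the endomorphism $x$ acts naturally on this tower and commutes with the base-change transformations, the morphism commutes with $x$. Consequently it preserves the kernel filtration $F_\bullet$, the image filtration $G^\bullet$, and hence the monodromy filtration $M_r = \sum_{p - q = r}(F_p \cap G^q)$; moreover the induced map on $Gr^F_p Gr_G^q$ is precisely the top horizontal arrow of the diagram in Conjecture \ref{pullback-psi}.

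Next I would apply $\pi_{s+}$. Since $f$ is proper, $(f_s^+, f_{s+})$ is an adjoint pair with unit $\eta \colon {\rm id} \to f_{s+} f_s^+$, and combining this with the factorization $\pi_{s+} = \pi'_{s+} \circ f_{s+}$ yields the composite
\[
\pi'_{s+} R\psi\one_{X'_\eta} \xrightarrow{\pi'_{s+}(\eta)} \pi'_{s+} f_{s+} f_s^+ R\psi\one_{X'_\eta} = \pi_{s+} f_s^+ R\psi\one_{X'_\eta} \xrightarrow{\pi_{s+}\eqref{pull-nc}} \pi_{s+} R\psi\one_{X_\eta}.
\]
This composite is filtered for the monodromy filtrations and therefore induces a morphism between the corresponding spectral sequences.

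Finally I would identify the $E_1$ entries and the abutments. For the $E_1$ entries, Conjecture \ref{pullback-psi} identifies the morphism on $Gr^M$ via \eqref{rpsi_grgr} with the direct sum of canonical morphisms \eqref{pull_one} on $\bigoplus \one_{D'_{I'}}$ and $\bigoplus \one_{D_I}$; pushing forward each summand along its structure morphism to $s$ and invoking the comparison isomorphisms of \cite{comparison} together with Conjecture \ref{conj-comp} turns this into the pullback in rigid cohomology on the strata $f_{(p+2i)} \colon D_{(p+2i)} \to D'_{(p+2i)}$, which is exactly the displayed left vertical morphism. For the abutment, the calculation \eqref{E_infty_term} (applied to both $X$ and $X'$) identifies $H^{p+q}_\D(X_s; R\psi\one_{X_\eta}/K)$ with $\Psi H^{p+q}_\D(X_\eta/\E^\dag_K)$, and the naturality of that calculation in the base change from $X'$ to $X$ — with the unit $\eta$ corresponding to pullback $f_\eta^+$ on the generic fibre — makes the abutment morphism equal to $\Psi f^+$ after invoking Conjecture \ref{conj-comp}. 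The whole weight of the argument rests on Conjecture \ref{pullback-psi}, which is explicitly assumed in the statement; granting that identification of the induced maps on graded pieces, the rest is a formal assembly using adjunctions, the comparison theorems, and the computations already carried out in Section \ref{sec-wss}.
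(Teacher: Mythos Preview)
Your approach is correct and is exactly the ``straightforward'' argument the paper leaves implicit: apply $\pi_{s+}$ (via the adjunction unit for $f_s$) to the filtered morphism \eqref{pull-nc}, then read off $E_1$ and abutment using \eqref{rpsi_grgr}, \eqref{E1term}, \eqref{E_infty_term}, and Conjecture~\ref{pullback-psi}.

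One small correction: you should not invoke Conjecture~\ref{conj-comp} here, since the statement only assumes Conjecture~\ref{pullback-psi}. For the $E_1$ terms this is harmless---the strata $D^{(m)}$ and $D'^{(m)}$ are varieties over $k$, so the known comparison of \cite{comparison} already gives $H_\D \simeq H_{\rm rig}$ there. For the abutment, the appearance of $H_{\rm rig}^{p+q}(X'_\eta/\E^\dag_K)$ in the upper right is almost certainly a typo for $H_\D^{p+q}(X'_\eta/\E^\dag_K)$; the version with $H_{\rm rig}$ on both abutments is precisely the next corollary, which \emph{does} assume Conjecture~\ref{conj-comp}. So drop the reference to Conjecture~\ref{conj-comp} and the argument goes through as written.
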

    
    \begin{corollary}
        Under the Conjecture \ref{conj-comp} and \ref{pullback-psi}, we have a morphism of spectral sequeneces
        \[
        \xymatrix{
            E_1'^{p, q} = \displaystyle \bigoplus_{i \geq \max(0,-p)} H_{\rm rig}^{q-2i}(D'_{(p+2i)}/K)(-i) \ar@{=>}[r] \ar[d]_(0.55){\bigoplus f^*_{(p+2i)}} & \Psi H_{\rm rig}^{p+q}(X'_\eta/\E^\dag_K) \ar[d]^{\Psi f^*} \\
            E_1^{p, q} = \displaystyle \bigoplus_{i \geq \max(0,-p)} H_{\rm rig}^{q-2i}(D_{(p+2i)}/K)(-i) \ar@{=>}[r] & \Psi H_{\rm rig}^{p+q}(X_\eta/\E^\dag_K).
        }
        \]
    \end{corollary}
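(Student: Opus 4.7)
The plan is to deduce the statement from Corollary \ref{pullback-wss} by transporting its morphism of spectral sequences along the comparison isomorphisms of Conjecture \ref{conj-comp}. The $E_1$-pages of the two target spectral sequences already coincide with those of Corollary \ref{pullback-wss} (rigid cohomologies of the proper smooth $k$-varieties $D^{(p+2i)}$ and $D'^{(p+2i)}$), so the only work is at the abutment.

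I would first apply Corollary \ref{pullback-wss}, conditional on Conjecture \ref{pullback-psi}, to obtain a morphism of spectral sequences with abutments $\Psi H^{p+q}_\D(X_\eta/\E^\dag_K)$ and $\Psi H^{p+q}_\D(X'_\eta/\E^\dag_K)$ linked by $\Psi f^+$. Next I would invoke Conjecture \ref{conj-comp} to obtain canonical isomorphisms $H^{p+q}_\D(-/\E^\dag_K) \simeq H^{p+q}_{\rm rig}(-/\E^\dag_K)$ compatible both with the $\nabla$-module structure and with pullback along $f$. Since the unipotent nearby cycle $\Psi$ is an intrinsic operation on unipotent $\nabla$-modules over $\E^\dag_K$, defined solely through the action of the monodromy on $t^x$-twists, the $\nabla$-compatibility will force $\Psi$ to commute with the comparison, yielding natural identifications $\Psi H^{p+q}_\D(-/\E^\dag_K) \simeq \Psi H^{p+q}_{\rm rig}(-/\E^\dag_K)$, and the pullback compatibility will identify $\Psi f^+$ with $\Psi f^*$ under these identifications. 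Substituting into the abutment of the spectral sequence morphism supplied by Corollary \ref{pullback-wss} will yield the claimed diagram.

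The only real input beyond Corollary \ref{pullback-wss} is the pullback compatibility asserted in Conjecture \ref{conj-comp}, which is precisely what is being assumed; all remaining steps are formal naturality arguments. Conceptually, the hard part has already been absorbed into Conjecture \ref{pullback-psi} (at the level of the nearby cycle of $\one_{X_\eta}$) and Conjecture \ref{conj-comp} (at the level of the comparison with Lazda--P\'{a}l cohomology), and once both are granted the corollary is a direct consequence.
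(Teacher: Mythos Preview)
Your proposal is correct and matches the paper's approach: the paper presents this corollary (together with Corollary \ref{pullback-wss}) as a ``straightforward corollary'' of Conjecture \ref{pullback-psi} and gives no further argument, so your deduction---transporting the morphism of spectral sequences from Corollary \ref{pullback-wss} along the comparison isomorphisms of Conjecture \ref{conj-comp}, using their compatibility with $\nabla$-module structures and pullbacks---is exactly what is intended.
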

    \subsection{Dual} \label{func_dual}
    Let $F_p \DD(\Psi) := \DD(\Psi/G^{p+1}\Psi), \ G^q \DD(\Psi) := \DD(\Psi/F_{q-1}\Psi)$ and $M_r \DD(\Psi) := \DD(\Psi/M_{-r-1}\Psi).$ Then by the characterizations of kernel (resp. image, monodromy) filtrations, $F_p \DD(\Psi)$ (resp. $G^q \DD(\Psi), \, M_r \DD(\Psi)$) is the kernel (resp. image, monodromy) filtration of $\DD(\Psi)$ with respect to the nilpotent endomorphism $\DD(x).$ Therefore we have $Gr^F_pGr_G^q \DD(\Psi) \simeq \DD(Gr_G^pGr^F_q \Psi).$
    
    By the commutative diagram of Theorem \ref{nc_form}, $b$ induces an isomorphism $Gr^F_pGr_G^q \DD(\Psi)(1) \xrightarrow{Gr^F_pGr_G^q b} Gr^F_pGr_G^q \Psi(-\delta_X).$
    
    \begin{conjecture}[{cf. \cite[2.15]{l-adic}}] \label{dual-nc}
        We have a commutative diagram
        \[\hspace{-30pt}
        \xymatrix{
            \DD(Gr^F_pGr_G^q \Psi) \ar[r]^{\sim} \ar[d]^{\ref{nc_gr}}_{\wr} & Gr_G^pGr^F_q \DD(\Psi) \ar[r]^{Gr^F_pGr_G^q b}_{\sim} & Gr_G^p Gr^F_q \Psi(-1-\delta_X) \ar[d]^{\ref{nc_gr}}_{\wr}\\
            \displaystyle \bigoplus_{\card I = p+q+1} \DD(\RG_{D_I}(\O_{\X, \Q})(q+1)[p+q+1]) \ar[rr]^{\sim} & & \displaystyle \bigoplus_{\card I = p+q+1} \RG_{D_I}(\O_{\X, \Q})(p-\delta_X)[p+q+1],
        }
        \]
        where the bottom isomorphism is induced by $\DD(\RG_{D_I}(\O_{\X, \Q})[p+q+1]) \simeq \RG_{D_I}(\O_{\X, \Q})(-\delta_X+p+q+1)[p+q+1]$ (see {\rm \cite[9.4.11]{intro}}).
    \end{conjecture}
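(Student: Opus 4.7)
The plan is to reduce to an explicit local computation and then compare two pairings. First, fix a data of lifts $((\P_\alpha), (\X_\alpha), u)$ of the frame $(X, X, \P)$. Both vertical arrows in the conjectured diagram come from Theorem \ref{nc_gr}, whose proof shows that they are defined chart-by-chart by gluing along $\tau$, and the horizontal composite involving $Gr^F_p Gr_G^q b$ is likewise built via the chart-wise isomorphism $b$ of Theorem \ref{nc_form}. Combined with Proposition \ref{u_0^*-dual}, the diagram is commutative if and only if its restriction to each $\X_\alpha$ is, so it suffices to work under the assumption that $X$ lifts to a formal scheme $\X$ over $\DD_\S$ locally with finite $p$-basis over $\V$.

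Second, I would unwind each corner explicitly using Theorem \ref{nc_form}, which represents $\Psi$ by $\bigoplus_{k \geq 0} \O_\X(^\dag X_s)_\Q / F_k \O_\X(^\dag X_s)_\Q \, x^k t^x (\delta_X)$. In these coordinates, the $q$-th summand realizes $Gr^F_p Gr_G^q \Psi \simeq Gr_{p+q+1}\O_\X(^\dag X_s)_\Q \, x^q t^x (\delta_X)$, and the corresponding description of $\DD(\Psi)$ transported by $b$ identifies $Gr_G^p Gr^F_q \Psi(-1-\delta_X)$ with $Gr_{p+q+1}\O_\X(^\dag X_s)_\Q \, x^p t^x (-1)$ on the $p$-th summand. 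The composite in the top row is then governed by the residue pairing $(f,g) \mapsto \mathrm{Res}_{x=0} f(x)g(-x)$ on $I^{a,b}_{\DD_S} \tilde\otimes I^{-b,-a}_{\DD_S}$ (the very pairing from which $b$ was manufactured in Subsection \ref{nc-construct}), restricted from $\Psi$ to the graded piece $Gr^F_p Gr_G^q$.

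Third, one must check that this residue pairing is intertwined with the Poincar\'{e}-type pairing on $\bigoplus_{\card I = p+q+1} \RG_{D_I}(\O_{\X, \Q})$ under the isomorphism of Theorem \ref{cons_trun} and Remark \ref{cons_trun_form}. The dual of $\RG_{D_I}(\O_{\X, \Q})[p+q+1]$ is canonically isomorphic to $\RG_{D_I}(\O_{\X, \Q})(-\delta_X+p+q+1)[p+q+1]$ via the trace map for the regular closed immersion $D_I \hookrightarrow \X$, so the bottom row of the diagram is the direct sum over $I$ of these purity self-dualities, suitably Tate-twisted. Proving the conjecture therefore reduces to verifying, summand by summand on the standard local model $\Spf \V[[t]]\langle t_0, \dots, t_r \rangle / (t - t_0 \cdots t_r)$, that the residue pairing coming from $t^x$ and the trace pairing coming from the Koszul complex in $t_{i_1}, \dots, t_{i_{p+q+1}}$ agree up to the predicted Tate twist.

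The main obstacle is precisely this last reconciliation. The residue pairing on $\Psi$ is generated by the relation $\partial_t(x^k t^x) = t^{-1} x^{k+1} t^x$, whereas the trace pairing on $\RG_{D_I}(\O_{\X, \Q})$ is manufactured from the de Rham complex of $\X$ along $D_I$; no canonical comparison of these two incarnations is available within the current framework of arithmetic $\D$-modules without a log structure. In the $\ell$-adic setting, the analogous compatibility \cite[2.15]{l-adic} is obtained by exploiting the log smoothness of $X$ over $k[[t]]$ and the behaviour of the trace map for log smooth morphisms. Imitating that argument here appears to require a sufficiently developed theory of log arithmetic $\D$-modules---beyond what is presently available---in which both pairings would descend from a single Poincar\'{e} duality on the log lift of $X$. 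For this reason the statement is left as a conjecture.
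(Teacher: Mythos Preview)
The statement under consideration is a \emph{conjecture} in the paper, not a theorem, and the paper offers no proof of it; it is used only as a hypothesis in Theorem \ref{dual-and-pull}. Your proposal is therefore not being compared against any argument in the paper, and indeed your own text is not a proof either: after the reduction to a local chart and the unwinding of both rows, you correctly identify that the crux is matching the residue pairing on $I^{a,b}$ with the trace/purity self-duality of $\RG_{D_I}(\O_{\X,\Q})$, and you then concede that this comparison is unavailable in the present framework and that the statement ``is left as a conjecture.''

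That concession is accurate and consistent with the paper's stance. Your first two steps (localizing via $u_0^*$ and Proposition \ref{u_0^*-dual}, and rewriting both graded pieces through Theorem \ref{nc_form} and Remark \ref{cons_trun_form}) are the natural opening moves and would be the beginning of any eventual proof. The genuine gap is exactly the one you name: no mechanism in the paper links the pairing defining $b$ (built from $\mathrm{Res}_{x=0}$ on $I^{a,b}_{\DD_S}$) to the self-duality $\DD(\RG_{D_I}\O_{\X,\Q}[p+q+1]) \simeq \RG_{D_I}\O_{\X,\Q}(-\delta_X+p+q+1)[p+q+1]$ coming from \cite[9.4.11]{intro}. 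Until such a compatibility is established---presumably via a log arithmetic $\D$-module formalism as you suggest---the diagram remains conjectural, and your write-up should be read as an informed discussion of the obstruction rather than as a proof.
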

    
    \begin{theorem} \label{dual-and-pull}
    Conjecture \ref{dual-nc} together with Theorem \ref{pushforward-psi} implies Conjecture \ref{pullback-psi} and consequently Corollary \ref{pullback-wss}.
    \end{theorem}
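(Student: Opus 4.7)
The plan is to deduce Conjecture~\ref{pullback-psi} from Theorem~\ref{pushforward-psi} by dualizing. The key identity is $f^+ = \DD_X f^! \DD_{X'}$, so applying $\DD_X$ to the pushforward-compatibility square of Theorem~\ref{pushforward-psi} and translating the duals via Conjecture~\ref{dual-nc} (for the top row) and via the purity isomorphism $\DD_X \one_{D_I} \simeq \one_{D_I}(\delta_{D_I})[2\delta_{D_I}]$ (for the bottom row) should produce the pullback-compatibility square of Conjecture~\ref{pullback-psi}, up to relabeling the graded indices $(p,q)\mapsto(q,p)$. Once Conjecture~\ref{pullback-psi} is established, Corollary~\ref{pullback-wss} follows by running the construction of the weight spectral sequence as in Theorem~\ref{main-theorem} and tracing the pullback map through the identifications~(\ref{E1term}) and~(\ref{E_infty_term}).

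Concretely, I would first split the commutative square of Theorem~\ref{pushforward-psi} into the $Gr^F_p Gr_G^q$-summands (each a direct summand of the corresponding $Gr^M_n$) and then apply $\DD_X$. The right-hand vertical arrow involving $f^!$ becomes one involving $f^+ \DD_{X'}$ by the very definition of $f^+$, while the duals of the graded pieces on both $X$ and $X'$ are rewritten as Tate-twisted and shifted copies of $Gr_G^p Gr^F_q R\psi\one$ by Conjecture~\ref{dual-nc}. For the bottom row, the required purity identification follows from Proposition~\ref{purityA} applied to the structure morphism $\pi_{D_I}$ together with the formal equality $\DD \pi^! \simeq \pi^+ \DD$ built into the definition of $\pi^+$, and converts $\DD_X \one_{D_I}$ and $\DD_X f^! \one_{D'_{I'}}$ into appropriately shifted copies of $\one_{D_I}$ and $f^+ \one_{D'_{I'}}$ respectively; a direct computation confirms that all Tate twists and shifts match on the nose, so that after the renaming $(p,q)\mapsto(q,p)$ the diagram is exactly that of Conjecture~\ref{pullback-psi}.

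The main obstacle is to verify that the top horizontal arrow produced this way coincides with the canonical morphism~(\ref{pull-grnc}), not merely with some abstractly isomorphic arrow. Concretely, one must check that under $\DD_X$, the morphism~(\ref{push-nc}) induced by the base-change square~(\ref{push-bc}) $j_+ f_\eta^! \to f^! j'_+$ is interchanged with the morphism~(\ref{pull-nc}) induced by the dual base-change square~(\ref{pull-bc}) $f^+ j'_+ \to j_+ f_\eta^+$. This should be a formal consequence of the six-functor formalism, but verifying compatibility on the graded pieces requires careful bookkeeping through the explicit formulas of Proposition~\ref{ie_form} and the calculation leading up to Proposition~\ref{alphabeta}, together with the compatibility of the canonical isomorphism $b : \DD(\Psi)(1)\xrightarrow{\sim}\Psi(-\delta_X)$ of Theorem~\ref{nc_form} with the relevant base-change morphisms.
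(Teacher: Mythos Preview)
Your overall strategy---dualize the pushforward compatibility of Theorem~\ref{pushforward-psi}, translate the duals of the graded pieces via Conjecture~\ref{dual-nc} and purity, and assemble a cube whose remaining face is Conjecture~\ref{pullback-psi}---is exactly the paper's approach. The paper organizes the cube with Theorem~\ref{pushforward-psi} as the front face, Conjecture~\ref{dual-nc} as the two side faces, relative duality $i_+\DD\simeq\DD i_+$ as the bottom face, and the dual of the diagram in Conjecture~\ref{pullback-psi} as the back face.

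The one place where the paper is sharper than your sketch is precisely your ``main obstacle.'' You propose to check the compatibility of the dualized top arrow with~(\ref{pull-grnc}) by explicit bookkeeping through Proposition~\ref{ie_form} and the computations around Proposition~\ref{alphabeta}. The paper avoids this entirely. It first observes that the diagram
\[
\xymatrix{
    f^+\DD\Psi' \ar[r]^{\DD f_+} \ar[d]^{f^+b'}_{\wr} & \DD\Psi[-d] \ar[d]^{b}_{\wr}\\
    f^+\Psi'(-1-\delta_{X'}) \ar[r]^{f^+} & \Psi(-1-\delta_X)[-d]
}
\]
commutes simply because $b$ and $b'$ are connecting homomorphisms of the same shape of snake-lemma long exact sequence and hence are functorial in the input; then it uses the self-duality $\DD b = b$ (again a formal consequence of the construction of $b$) to pass to graded pieces and dualize. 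This gives the top face of the cube directly, with the correct canonical arrows on both sides, and no explicit local calculation is needed. So your instinct that the obstacle lies in the compatibility of $b$ with base change is correct, but the resolution is softer than you anticipate: it is pure functoriality of connecting maps, not a computation.
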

    \begin{proof}
    Let us first observe that we have a commutative diagram
    \begin{equation} \label{pull-dual}
    \xymatrix{
        f^+ \DD \Psi' \ar[r]^{\DD f_+}\ar[d]^{\wr}_{f^+b'} & \DD \Psi[-d] \ar[d]^{b}_{\wr} \\
        f^+ \Psi'(-1-\delta_{X'}) \ar[r]^{f^+} & \Psi(-1-\delta_{X}) [-d],
    }
    \end{equation}
    where $d = \delta_X - \delta_{X'}$. This comes from the functoriality of connecting homomorphisms $b$ and $b'$. Thus we have a commutative diagram
    \begin{equation} \label{pull-dual-grgr}
        \xymatrix{
            f^+\DD Gr_G^pGr^F_q \Psi' \ar[r]^{\DD Gr_G^pGr^F_q f_+} \ar[d]^\wr & \DD Gr_G^pGr^F_q \Psi [d] \ar[d]^\wr \\
            f^+ Gr^F_pGr_G^q\DD \Psi' \ar[d]^{\wr}_{f^+Gr^F_pGr_G^qb'} & Gr^F_pGr_G^q\DD \Psi[-d] \ar[d]^{Gr^F_pGr_G^q b}_{\wr} \\
            f^+ Gr^F_pGr_G^q\Psi'(-1-\delta_{X'}) \ar[r]^{Gr^F_pGr_G^qf^+} & Gr^F_pGr_G^q\Psi(-1-\delta_{X}) [-d].
        }
    \end{equation}
    Note that $\DD b' = b'$ and $\DD b = b$. This comes from the construction of $b$; since $b$ is the connecting homomorphism of a long exact sequence, it is functorial. By taking the dual of (\ref{pull-dual-grgr}), we get the following commutative diagram:
    \begin{equation} \label{push-grgr}
        \xymatrixcolsep{25mm}
        \xymatrix{
            \DD Gr^F_pGr_G^q \Psi[d] \ar[r]^{Gr_G^pGr^F_q \DD f^+}\ar[d]^\wr & f^! \DD Gr^F_p Gr_G^q \Psi' \ar[d]^\wr\\
            Gr_G^pGr^F_q \DD\Psi[d] \ar[d]^{\wr}_{Gr_G^pGr^F_q b} & f^! Gr_G^pGr^F_q \DD\Psi' \ar[d]^{Gr_G^pGr^F_q f^!b'}_{\wr}\\
            Gr_G^pGr^F_q \Psi(-1-\delta_X)[d] \ar[r]^{Gr_G^pGr^F_q f_+} & f^!Gr_G^pGr^F_q \Psi'(-1-\delta_{X'}).
        }
    \end{equation}
    Thus we can form the following diagram:
    \footnotesize
    \[\hspace{-35pt}
    \xymatrixcolsep{-12mm}
    \xymatrix{
        & \DD Gr^F_pGr_G^q\Psi[d] \ar[rr]^{Gr_G^pGr^F_q \DD f^+}\ar[ld]^\sim \ar@{-->}[dd]^(0.55){\wr} & & f^!\DD Gr^F_pGr_G^q \Psi' \ar[ld]^{\sim}\ar[dd]^\wr\\
        Gr_G^pGr^F_q \Psi(-1-\delta_X)[d] \ar[rr]^(0.60){Gr_G^pGr^F_q f_+}\ar[dd]^{\wr} & & f^!Gr_G^pGr^F_q \Psi'(-1-\delta_{X'})\ar[dd]_(0.55){\wr} & \\
        & \DD(\RG_{D^{(p+q)}}(\O_{\X, \Q})(q+1)[p+q+1])[d] \ar@{-->}[ld]^\sim \ar@{-->}[rr]^(0.45){\DD (\oplus f^+)} & & \DD(\RG_{D'^{(p+q)}}(\O_{\X', \Q})(q+1)[p+q+1]) \ar[ld]^\sim\\
        \RG_{D^{(p+q)}}(\O_{\X, \Q})(p-\delta_X)[p+q+1+d] \ar[rr]^{\oplus f_+} & & f^!\RG_{D'^{(p+q)}}(\O_{\X', \Q})(p-\delta_{X'})[p+q+1]. & \\
    }
    \]
    \normalsize
    Here $\RG_{D^{(k)}}$ is the abbreviation of $\displaystyle\bigoplus_{\card I = k+1} \RG_{D_I}$. The top square is (\ref{push-grgr}) and is commutative. The left and right squares are commutative by Conjecture \ref{dual-nc}. The commutativity of the bottom square comes from the functoriality of relative duality isomorphism $i_+ \DD \simeq \DD i_+$ for any closed immersion $i$. The square in front is commutative by Theorem \ref{pushforward-psi}. Therefore it follows that the square in the back, which is the dual of the diagram of Conjecture \ref{pullback-psi}, is commutative.
    \end{proof}
    
    \bibliographystyle{plain}
    \bibliography{weight_spectral_sequence_in_p-adic_cohomology}
\end{document}